\title{The Geometric Completion of a Doctrine}
\author{J. L. Wrigley\footnote{Università degli Studi dell'Insubria, Via Valleggio n. 11, 22100 Como CO, email: \texttt{jwrigley@uninsubria.it}}}
\tikzset{curve/.style={settings={#1},to path={(\tikztostart)
			.. controls ($(\tikztostart)!\pv{pos}!(\tikztotarget)!\pv{height}!270:(\tikztotarget)$)
			and ($(\tikztostart)!1-\pv{pos}!(\tikztotarget)!\pv{height}!270:(\tikztotarget)$)
			.. (\tikztotarget)\tikztonodes}},
	settings/.code={\tikzset{quiver/.cd,#1}
		\def\pv##1{\pgfkeysvalueof{/tikz/quiver/##1}}},
	quiver/.cd,pos/.initial=0.35,height/.initial=0}
\tikzset{tail reversed/.code={\pgfsetarrowsstart{tikzcd to}}}
\tikzset{2tail/.code={\pgfsetarrowsstart{Implies[reversed]}}}
\tikzset{2tail reversed/.code={\pgfsetarrowsstart{Implies}}}
\tikzset{no body/.style={/tikz/dash pattern=on 0 off 1mm}}
\tikzset{
	labl/.style={anchor=south, rotate=90, inner sep=.5mm}
}
\newcommand{\yo}{\text{\begin{CJK}{UTF8}{min}よ\end{CJK}}\!}
\DeclareTextSymbolDefault{\CYRABHDZE}{X2}
\DeclareTextSymbolDefault{\cyrabhdze}{X2}
\newcommand{\Ezh}{\CYRABHDZE}
\newcommand{\GC}{\text{\rm\Ezh}}
\tikzset{%
	symbol/.style={%
		draw=none,
		every to/.append style={%
			edge node={node [sloped, allow upside down, auto=false]{$#1$}}}
	}
}
\setlist{listparindent = \parindent, parsep=0pt,}
\newtheorem{thm}{Theorem}[section]
\newtheorem{lem}[thm]{Lemma}
\newtheorem{coro}[thm]{Corollary}
\newtheorem{prop}[thm]{Proposition}
\newtheorem{df}[thm]{Definition}
\newtheorem{rem}[thm]{Remark}
\newtheorem{ex}[thm]{Example}
\newtheorem{exs}[thm]{Examples}
\newtheorem{nota}[thm]{Notation}
\newtheorem{constr}[thm]{Construction}
\newcommand{\sets}{{\bf Sets}}
\newcommand{\Finsets}{{\bf FinSets}}
\newcommand{\id}{{\rm{id}}}
\newcommand{\Hom}{{\rm{Hom}}}
\newcommand{\theory}{\mathbb{T}}
\newcommand{\cat}{\mathcal{C}}
\newcommand{\Sh}{{\bf Sh}}
\newcommand{\Sub}{{\rm{Sub}}}
\newcommand{\Loc}{{\bf Loc}}
\newcommand{\Frm}{{\bf Frm}}
\newcommand{\Poset}{{\bf PoSet}}
\newcommand{\Sort}{{{\rm Sort}_\Sigma}}
\newcommand{\Geom}{{\bf Geom}}
\newcommand{\topos}{\mathcal{E}}
\newcommand{\class}[1]{\llbracket \, {#1} \, \rrbracket}
\newcommand{\form}[2]{\{\,\vec{{#2}} : {#1}\,\}}
\newcommand{\formm}[2]{(\,\vec{{#2}} \,,\, {#1}\,)}
\newcommand{\Lb}{\mathbb{L}}
\newcommand{\fb}{\mathfrak{f}}
\newcommand{\Pt}{{\rm Pt}}
\newcommand{\2}{{\bf 2}}
\newcommand{\PreOrd}{{\bf PreOrd}}
\newcommand{\Idl}{{\bf Idl}}
\newcommand{\ab}{{\bf a}}
\newcommand{\dcat}{\mathcal{D}}
\newcommand{\Bool}{{\bf Bool}}
\newcommand{\Syn}{{\bf Syn}}
\newcommand{\ftopos}{\mathcal{F}}
\newcommand{\etaPJ}{\eta^{(P,J)}}
\newcommand{\ADoc}{A\text{-}{\bf Doc}}
\newcommand{\pr}{{\text{pr}}}
\newcommand{\Con}{{{\bf Con}_\Sigma}}
\newcommand{\Free}{{\rm Fr}}
\begin{document}

	
	
	\maketitle
	
	
	\begin{abstract}
		As several different formal systems with inequivalent syntax may describe equivalent semantics, it is possible to find `completions' to more expressive syntaxes that are semantically invariant.  Doctrine theory, in the sense of Lawvere, is the natural categorical framework in which to express completions for first-order logic.  We study the suitability of a fibred generalisation of the ideal completion of a preorder to act as a completion for doctrines to the syntax of geometric logic.  In contrast to other completions of doctrines considered in the literature, our completion takes a Grothendieck topology as a second argument.  As a result, the geometric completion is idempotent, as well as being `semantically invariant' for any doctrine whose models can be expressed as a category of continuous flat functors, encompassing a wide class of the most commonly considered doctrines.
		
		We also relate the geometric completion to other completions of doctrines considered in the literature: first, by studying the behaviour of the geometric completion when the second argument is omitted, and then by studying the interaction with those completions of doctrines that complete to a fragment of geometric logic.  Throughout, we reference how these completions of doctrines yield completions of categories via the syntactic category construction.  We demonstrate that it is equivalent to represent logical theories by either doctrines or syntactic categories, in so far as they have equivalent classifying toposes.

	\end{abstract}
	

	\tableofcontents
	
	
	\section{Introduction}

	Doctrines (also called indexed or fibred preorders), as introduced by Lawvere in \cite{adjoint} and expanded upon in \cite{lawcompr}, are the natural categorical generalisation of Lindenbaum-Tarski algebras for propositional logics to the first-order setting, as evidenced by the following prototypical example.
	
	\begin{ex}\label{prototypical}
		{\rm
			Let $\theory$ be a theory in a fragment of first-order logic over a signature $\Sigma$.  By ${\bf Con}_\Sigma$, we denote the category of contexts (i.e. finite strings of sorted variables) and relabellings of contexts.  The functor $F^\theory \colon {\bf Con}_\Sigma \to \Poset$ acts as follows:
			\begin{enumerate}
				\item each context $\vec{x}$ is sent to the set $F^\theory(\vec{x})$ of $\theory$-provable equivalence classes of formulae over $\Sigma$ in context $\vec{x}$, ordered by syntactic proof in the theory $\theory$,
				\item and, for each relabelling $\sigma \colon \vec{x} \to \vec{y}$ of contexts, $F^\theory(\sigma)$ acts by substituting the free variables of a formulae $\varphi \in F^\theory(\vec{x})$ according the relabelling $\sigma$.
			\end{enumerate}
		}
	\end{ex}
	
	Doctrines are a powerful tool within categorical logic as the doctrine of a theory can be seen to express certain logical syntax, interpreted by categorical constructions, even when this is not present in the explicit symbolic syntax of the logic.  As an example, a \emph{primary doctrine}, a doctrine $P \colon \cat^{op} \to {\bf MSLat}$ which takes values in ${\bf MSLat}$, the category of meet-semilattices and their homomorphisms, and where $\cat$ is \emph{cartesian} (i.e. $\cat$ has all finite limits), interprets the binary conjunction $\land$ and truth $\top$ symbols.
	
	This categorical formulation of logic and syntax suggests the following question: given a doctrine 
	\[P \colon \cat^{op} \to \PreOrd\]
	and a certain syntax we wish $P$ to model, is there a universal way of completing $P$ to this new syntax?  Many such logical completions have been studied in recent years.  In \cite{pasquali}, Pasquali constructs a co-free completion of a primary doctrine to an elementary doctrine (a primary doctrine which, intuitively, expresses equality, see \parencite[Definition 1.2]{pasquali}).  The quotient completion of an elementary doctrine has been extensively studied by Maietti and Rosolini in \cite{quotcomp}, \cite{quotcompfound}, \cite{maiettirosolinirelating}.  The existential completion, introduced by Trotta in \cite{trotta}, universally completes a primary doctrine to an existential doctrine, a doctrine capable of interpreting existential quantification (see \parencite[Definition 3.3]{trotta}).  The existential completion is adapted by Trotta and Spadetto in \parencite[\S 3]{trottaspadetto} to give a completion of a primary doctrine to one which interprets universal quantification.  In \cite{coumans} Coumans gives a completion of coherent doctrines that generalises the canonical extension of distributive lattices.

	The purpose of this paper is to provide another logical completion: the geometric completion of a doctrine.  As we will see in Theorem \ref{thm:univprop}, this defines a universal and idempotent way of completing any doctrine to a \emph{geometric doctrine}, a member of a class of doctrines that carry the expressive power of a theory of geometric logic.  For terminology and an introduction to the syntax of geometric logic, the reader is directed to \parencite[\S D1]{elephant}.  In short, geometric logic is that fragment of infinitary first-order logic whose permissible symbols are equality $=$, truth $\top$, falsity $\bot$, finite conjunction $\land$, infinitary disjunction $\bigvee$ and existential quantification $\exists$.  Geometric logic carries a powerful spatial intuition (explored in \cite{contandlogic}) and a rich and expressive model theory, as evidenced by classifying topos theory (see \parencite[\S 2.1]{TST}).  In \S 2.3 of \cite{abramskyphd}, Abramsky gives several perspectives on geometric logic, one being that it ought to be interpreted as the logic of \emph{observable properties} -- those properties that can be determined to hold on the basis of a finite amount of information.  We use the term `geometric doctrines' to emphasise the connection with geometric first-order logic and the wider field of doctrine theory, but the term is synonymous with the nomenclature \emph{internal frames} and \emph{internal locales} also found in the literature.

	
	\paragraph{The ideal completion for preorders.}  The geometric completion we will study is a fibred generalisation of the ideal completion for preorders, which is described in \S III.4 \cite{JT} and \S II.2.11 \cite{stone}.  Recall that given a preorder $P$, the preorder $2^{P^{op}}$ of all monotone maps $f \colon P^{op} \to 2$, given their point-wise ordering, which is moreover isomorphic to the set of down-sets of $P$ ordered by inclusion, is the \emph{free join completion} of $P$ (the universal property of $2^{P^{op}}$ can be deduced from Remark \ref{rem:preorders}).  Furthermore, $2^{P^{op}}$ is additionally a frame.
	
	Recall also that if we endow $P$ with a Grothendieck topology (also sometimes called a \emph{covering system}), then the $J$-closed down-sets $I \subseteq P$, i.e. those down-sets such that $x \in I$ whenever $\{\,y_j \leqslant x \mid j \in J\,\}$ is a $J$-covering sieve with each $y_j$ in $I$, ordered by inclusion, forms a frame $J\text{-}\Idl(P)$, which we call the $J$-\emph{ideals} on $P$.  The map $\etaPJ \colon P \to J\text{-}\Idl(P)$, which sends an element $x \in P$ to the $J$-closure of the principal down-set $\downarrow \! x$ generated by $x$, constitutes a `geometric completion' of the pair $(P,J)$ in the sense that it satisfies a universal property.

	\begin{thm}[Proposition II.2.11 \cite{stone}]\label{thm:ideals}
		For a meet-semilattice $P$ and a Grothendieck topology $J$ on $P$, the frame $J\text{-}\Idl(P)$ satisfies the universal property that for each meet-semilattice homomorphism $a \colon P \to L$ into a frame $L$ which is $J$-\emph{continuous}, meaning that $x = \bigvee_{y \leqslant x \in S} y$ for each $J$-covering sieve $S$ on $x$, there exists a unique frame homomorphism $\mathfrak{a} \colon J\text{-}\Idl(P) \to L$ for which the triangle
		\[\begin{tikzcd}
			P \ar{r}{\etaPJ} \ar{rd}[']{a} & J\text{-}\Idl(P) \ar[dashed]{d}{\mathfrak{a}} \\
			& L
		\end{tikzcd}\]
		commutes.
	\end{thm}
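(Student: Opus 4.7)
The plan is to build $\mathfrak{a}$ in two steps: first extend $a$ to the frame $\mathcal{D}(P)$ of all down-sets of $P$, then use the $J$-continuity of $a$ to argue that this extension descends to the reflective quotient $J\text{-}\Idl(P)$.  For the first step, I recall from the discussion preceding the theorem that $\mathcal{D}(P) \cong 2^{P^{op}}$ is the free join completion of $P$ and is moreover a frame.  Define $\tilde{a} \colon \mathcal{D}(P) \to L$ by $\tilde{a}(D) := \bigvee_{x \in D} a(x)$; this manifestly preserves arbitrary joins, since joins in $\mathcal{D}(P)$ are unions.  Preservation of binary meets follows by writing $D \cap E = \bigvee_{x \in D,\, y \in E} \downarrow\!(x \wedge y)$ (using that $P$ is a meet-semilattice), applying $a(x \wedge y) = a(x) \wedge a(y)$, and invoking the infinite distributivity of the frame $L$.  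By construction $\tilde{a}(\downarrow\! x) = a(x)$, so $\tilde{a}$ extends $a$ along $x \mapsto \downarrow\! x$.

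For the second step, I observe that the $J$-closure operator defines a nucleus $c \colon \mathcal{D}(P) \to \mathcal{D}(P)$ whose fixed points are precisely the $J$-ideals, exhibiting $J\text{-}\Idl(P)$ as a reflective frame-quotient of $\mathcal{D}(P)$.  To obtain $\mathfrak{a}$ it therefore suffices to verify the identity $\tilde{a}(D) = \tilde{a}(c(D))$ for every down-set $D$, because then $\tilde{a}$ factors uniquely through $c$ as a frame homomorphism.  The inclusion $D \subseteq c(D)$ gives one inequality immediately, so the substance lies in proving that every $y \in c(D)$ satisfies $a(y) \leq \bigvee_{x \in D} a(x)$.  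I would argue this by transfinite induction on the standard stagewise construction of $c(D)$, wherein at each stage one adjoins those $y$ admitting a $J$-covering sieve $S$ already contained in the previous stage.  Under the inductive hypothesis that every $s \in S$ satisfies the bound, the $J$-continuity of $a$ delivers $a(y) = \bigvee_{s \in S} a(s) \leq \bigvee_{x \in D} a(x)$.  This induction --- effectively the observation that $J$-continuity of $a$ is exactly the hypothesis needed to render $\tilde{a}$ compatible with the nucleus $c$ --- is the main technical point of the proof.

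Uniqueness is then automatic: every $J$-ideal $I$ coincides with the $J$-closure of $\bigcup_{x \in I} \downarrow\! x = I$, so $I = \bigvee_{x \in I} \etaPJ(x)$ in $J\text{-}\Idl(P)$.  Any frame homomorphism extending $a$ must therefore send $I$ to $\bigvee_{x \in I} a(x)$, which coincides with the $\mathfrak{a}$ constructed above.
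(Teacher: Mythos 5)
The paper does not prove this statement itself --- it is recalled verbatim from Johnstone's \emph{Stone Spaces} (Proposition II.2.11), where $\mathfrak{a}$ is simply defined by $\mathfrak{a}(I) = \bigvee_{x \in I} a(x)$ and verified directly to be a frame homomorphism. Your proposal is correct, and its two-step organisation (extend $a$ to the free join completion $\mathcal{D}(P) \cong 2^{P^{op}}$, then descend along the nucleus whose fixed points are the $J$-ideals) is in fact the decomposition that the paper's own Remark~\ref{rem:preorders} gestures at for general preorders; it cleanly isolates where the meet-semilattice hypothesis is used (binary meets in $\mathcal{D}(P)$ via $D \cap E = \bigcup_{x \in D,\, y \in E} \downarrow\!(x \wedge y)$) from where $J$-continuity is used (compatibility with the closure operator).

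One simplification: the transfinite induction is unnecessary. For a down-set $D$ of a poset, the $J$-closure is reached in a single step, $c(D) = \{\, y \mid \{\, z \leqslant y \mid z \in D \,\} \in J(y) \,\}$, and this set is already $J$-closed by the transitivity axiom for Grothendieck topologies. So for $y \in c(D)$ one directly has a $J$-covering sieve $S \subseteq D$ on $y$, whence $a(y) = \bigvee_{s \in S} a(s) \leqslant \bigvee_{x \in D} a(x)$ by $J$-continuity (read, as intended, as the condition $a(x) = \bigvee_{y \in S} a(y)$ in $L$). This does not affect correctness --- your stagewise argument also terminates --- but it removes the only technically heavy-looking step.
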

	
	\begin{rem}[Theorem 6.2 \cite{stonetype}]\label{rem:preorders}{\rm
			In Theorem \ref{thm:ideals}, the requirement that $P$ be a meet-semilattice can be relaxed.  If $P$ is any preorder, and $J$ is a Grothendieck topology on $P$, then the frame $J\text{-}\Idl(P)$ satisfies the universal property that for each $J$-continuous monotone map $a \colon P \to L$ into a frame $L$ there exists a unique monotone map $\mathfrak{a} \colon J\text{-}\Idl(P) \to L$ such that $\mathfrak{a}$ preserves all joins and $\mathfrak{a} \circ \etaPJ = a$.  The map $\mathfrak{a}$ also preserves finite meets, and hence is a frame homomorphism, if and only if $a \colon P \to L$ defines a \emph{morphism of sites} $a \colon (P,J) \to (L,J_{\rm can})$ (see Definition \ref{morphismofsites}).
	}\end{rem}

	The ideal completion of a preorder can be internalised to a topos and this internal ideal completion can be understood externally via a fibred topos theory perspective, as developed in \S 6 of \cite{fibredsites}.  A doctrine $P \colon \cat^{op} \to \PreOrd$, viewed as an \emph{internal preorder} of the presheaf topos $\sets^{\cat^{op}}$, and a Grothendieck topology $J$ on the Grothendieck construction $\cat \rtimes P$ (see \parencite[Definition B1.3.1]{elephant} or Notation \ref{notation:groth}) acting as an internal covering system on $P$, admits a \emph{fibred ideal completion}, as established in \parencite[Theorem 6.1]{fibredsites}, that generalises the ideal completion for preorders.  This is what we call the geometric completion of the doctrine $P$, relative to the Grothendieck topology $J$ on $\cat \rtimes P$.

	
	\paragraph{Our contribution.}  The universal property of the geometric completion we present in \S \ref{subsec:univprop} is an extension (to include change of base category) of the universal property of the fibred ideal completion established in \cite{fibredsites}.  We will show that the geometric completion defines an idempotent 2-monad on the category of doctrinal sites, defined in Definition \ref{df:docsites}.  We also provide a simpler description of the geometric completion for certain doctrines, which can then be used to recover the geometric completion on an arbitrary doctrine.
	
	We also relate the geometric completion to two other classes of completions of doctrines.  The first are \emph{coarse geometric completions}, which are obtained when we `forget' that the geometric completion of a doctrinal site is geometric by equipping the resultant geometric doctrine with a coarser topology.  The coarse geometric completions thus obtained are no longer idempotent but are instead lax-idempotent.  The latter class we study are \emph{sub-geometric completions}, which are intended to capture completions of doctrines to some, but not all, of the data of geometric syntax -- for example, in \S \ref{subsec:compatible} we will prove that Trotta's existential completion \cite{trotta} is sub-geometric according to Definition \ref{df:subgeo}.

	We will also demonstrate how these above completions of doctrines yield corresponding completions of categories, such as the \emph{regular completion} \cite{carboni}, via the \emph{syntactic category} construction.  The syntactic category construction sends a doctrine to its category of \emph{provably functional relations}.  We will also construct a functor from the Grothendieck construction of a regular doctrine to its syntactic category that, when both categories are endowed with suitable Grothendieck topologies, induces an equivalence of sheaf toposes.  Thereby, we obtain the equivalence between representing a theory, or rather its classifying topos, via doctrines and via syntactic categories.

	\paragraph{The philosophical motivation.}  Why have we included a Grothendieck topology as a second argument in the geometric completion?  The topology we choose for the category $\cat \rtimes P$ is intended to capture further syntactic or semantic information of the doctrine $P \colon \cat^{op} \to \PreOrd$.
	
	In addition to the syntactic data of a first-order theory $\theory$, there is also a notion of \emph{semantics} of $\theory$: a category of models of $\theory$ and homomorphisms of these models, often considered as a subcategory of the category of doctrine morphisms $\Hom_{\bf Doc}(F^\theory, \mathscr{P})$ at the suggestion of Kock and Reyes \parencite[Definition 3.5]{kockreyes} (here, $\mathscr{P}$ denotes the powerset doctrine $\mathscr{P} \colon \sets^{op} \to {\bf CBool}$).  Written this way, it is easy to identify the desired models of $\theory$ if they can be characterised as those doctrine morphisms $F^\theory \to \mathscr{P}$ that preserve certain categorical structure.  However, such an ad hoc treatment depends on the syntax of the fragment of logic to which $\theory$ belongs and resists a unified approach.

	As becomes apparent in \parencite[\S D1]{elephant}, the semantics of a first-order theory can be rephrased in terms of \emph{continuous flat functors}.  For a doctrine $P \colon \cat^{op} \to \PreOrd$, there is an embedding of $\Hom_{\bf Doc}(P,\mathscr{P})$ into the functor category $[\cat \rtimes P,\sets]$.  As is well known, and which we recall in \S \ref{subsec:desired}, for many of the most widely considered doctrines, including those doctrines which express coherent, geometric or even classical first-order logic, by choosing a suitable Grothendieck topology $J$ on the category $\cat \rtimes P$ we can obtain equivalences
	\begin{equation}\label{eq:flatandmod}
		J\text{-}{\bf Flat}(\cat \rtimes P,\sets) \simeq {\bf Mod}(P),
	\end{equation}
	where $J\text{-}{\bf Flat}(\cat \rtimes P,\sets)$ is the category of $J$-continuous flat functors from $\cat \rtimes P$ to $\sets$, and ${\bf Mod}(P) $ is the subcategory of $ \Hom_{\bf Doc}(P,\mathscr{P})$ of desired models of $P$.  Therefore, we elect to work with pairs $(P,J)$, where $P \colon \cat^{op} \to \PreOrd$ is a doctrine and $J$ is a Grothendieck topology on $\cat \rtimes P$.
	
	While syntactic completions of doctrines are obviously of a philosophical interest for their universal property, it is also desirable that they be \emph{semantically invariant}, i.e. the category of models associated with a doctrine $P$ and its completion $TP$ are categorically equivalent.  Thus, one is able to study the semantics of the doctrine $P$ but within the potentially more familiar framework of the syntax of $TP$.  We will observe in Theorem \ref{thm:univprop} that, if the desired models of a doctrine $P$ can be expressed as $J\text{-}{\bf Flat}(\cat \rtimes P)$ for some Grothendieck topology $J$ on $\cat \rtimes P$, then the geometric completion of $(P,J)$ is semantically invariant.  It is this property which allows the intended use of the geometric completion: to re-express a study of the model theory for various doctrines by a single treatment for geometric doctrines.

	
	\paragraph{Outline.}  The paper is divided as follows.
	
	\begin{itemize}
		\item In \S \ref{sec:prelim} we recall the necessary background material.  The first half \S \ref{morphcomorphsec} is dedicated to notions used in a site-theoretic approach to the theory of geometric morphisms as can be found in \parencite[\S VII.10]{SGL} and \cite{dense}; we also recall Lemma 2.5 from \cite{myselfintlocmorph} and set out our notation.   In the latter half \S \ref{subsec:intloc} we recall properties of internal locales of Grothendieck toposes, including their external characterisation as established in \parencite[\S VI.2]{JT} and \parencite[\S 4]{fibredsites}, and the interaction between morphisms of internal locales and morphisms of sites that can be found in \cite{myselfintlocmorph}.
		
		\item Our discussion on the use of Grothendieck topologies to model logical syntax is completed in \S \ref{sec:docsites}.  In \S \ref{subsec:desired} we sketch, in the case of primary, existential, coherent and Boolean doctrines, the equivalence (\ref{eq:flatandmod}) between models of a doctrine à la Kock and Reyes \cite{kockreyes} and models of a doctrine as continuous flat functors.  In \S \ref{subsec:docsites} we define the 2-category ${\bf DocSites}$ of \emph{doctrinal sites} on which the geometric completion acts.  We also sketch that the categories of primary, existential, coherent and Boolean doctrines, as normally formulated, form full and faithful 2-subcategories of ${\bf DocSites}$.  We also take this opportunity to fix our definition of geometric doctrines and their morphisms.
		
		\item The geometric completion is developed in \S \ref{sec:geocomp} as an application of the fibred ideal completion of \parencite[\S 6]{fibredsites}.  We demonstrate in \S \ref{subsec:altcont} that, when a doctrine has (preserved) top elements, the geometric completion admits a simpler explicit description to that presented in \parencite[\S 6]{fibredsites}.  The geometric completion of an arbitrary doctrine can be recovered from this particular case, which we demonstrate in order to introduce an example of a sub-geometric completion - the \emph{free top completion}.  Sub-geometric completions are explored in greater detail in \S \ref{sec:subgeo}.  That the geometric completion of a doctrinal site is universal, semantically invariant and idempotent is proved in \S \ref{subsec:univprop}, and this universal property is used to develop the 2-monadic aspects of the geometric completion in \S \ref{subsec:monad}.
		
		\item Section \ref{sec:syn} is concerned with relating doctrinal sites and \emph{regular sites} (see Definition \ref{df:regsites}), and specifically how the geometric completion for doctrinal sites can be used to deduce a geometric completion for regular sites.  This is performed in \S \ref{subsec:geocompforregcat}.  We phrase our development in terms of \emph{existential fibred sites}, recalled from \cite[\S 5]{fibredsites} in \S \ref{subsec:existsites}.  The quasi 2-adjunction (where here we follow the terminology of \cite[\S I.7]{graycategories}) between existential doctrines and regular categories, which sends an existential doctrine to its \emph{syntactic category}, is recalled in \S \ref{subsec:syncat}, and extended in \S \ref{subsec:synsites} to give a quasi 2-adjunction between existential doctrinal sites and regular sites.  Also in \S \ref{subsec:synsites}, we show that the topos of sheaves on an existential doctrinal site and the topos of sheaves on the corresponding \emph{syntactic site} are equivalent.

		\item The geometric completion is idempotent since we can keep track of the geometricity of a geometric doctrine by assigning a suitable Grothendieck topology.  Section \ref{sec:coarse} is dedicated to the study of completions when some of this information is `forgotten' by assigning a \emph{coarser} Grothendieck topology.  We develop a general framework for \emph{coarse geometric completions} and also prove that every coarse geometric completion is lax-idempotent.

		\item Finally, we study \emph{sub-geometric completions} in \S \ref{sec:subgeo}.  Vaguely speaking, a 2-monad $T$ on a 2-subcategory of doctrines, viewed as a completion of doctrines at the suggestion of \cite{trotta2}, is `sub-geometric' if a suitable sub-class of geometric doctrines are all $T$-algebras and the data added to the completion $TP$ of a doctrine $P$ can be `seen' by a certain Grothendieck topology $J^T_P$ on $\dcat \rtimes TP$.  We will show that the geometric completion of $P$ (according to the trivial topology on $\cat \rtimes P$) is isomorphic to the doctrine obtained by completing $P$ according to $T$, keeping track of the new data by the topology $J^T_P$, and then geometrically completing.  Having previously seen that the free top completion is an example of a sub-geometric completion in \S \ref{subsec:altcont}, we begin in \S \ref{freeandex} with another motivating example by demonstrating that Trotta's existential completion of \cite{trotta} satisfies the loose sense of being sub-geometric expressed above.  The formal definition of a sub-geometric completion is introduced in \S \ref{subsec:gensubgeo}, where we also give sufficient conditions for a sub-geometric completion to be lax-idempotent.
		
		We then turn to examples of sub-geometric completions.  In \S \ref{subsec:compatible} we develop a general theory for obtaining sub-geometric completions via subdoctrines of the free geometric completion, which encompasses the existential completion and the coherent completion of a primary doctrine.  We also relate these completions to the \emph{regular} and \emph{coherent} completion of a cartesian category (see \cite{carboni}).  Finally, the free top completion, and two other completions - the \emph{free join completion} and the \emph{binary meet completion} -- are shown to be sub-geometric according to the general definition in \S \ref{subsec:exsubgeo}.
	\end{itemize}

	

	
	\section{Preliminaries}\label{sec:prelim}
	
	We first review some necessary preliminaries.  We will assume a familiarity with introductory topos theory and the theory of frames and locales, as can be found in, respectively, \cite{SGL} and \cite{picado}.  In other places it is useful to be familiar with fragments of first-order logic and their categorical semantics as can be found in \parencite[\S D1]{elephant}.  In the subsequent subsections, we include further details on two areas of topos theory of which we will make the most frequent use: morphisms and comorphisms of sites, and internal locale theory.
	
	\subsection{Morphisms and comorphisms of sites}\label{morphcomorphsec}

	A thorough treatment of the site-theoretic approach to topos theory that we recall here can be found in \cite{dense}.  We recall comorphisms of sites and morphisms of sites, which are, respectively, covariant and contravariant ways of generating geometric morphisms between toposes via functors on their presenting sites.  Finally, we recall in Lemma \ref{fibrcomoprhmorphthm} a result concerning the commutativity of the geometric morphisms induced by a certain mixed diagram of comorphisms and morphisms of sites.
	
	\begin{df}[Definition 2.1, Exposé III \cite{SGA4-3}]\label{comorphismofsites}
		{\rm
			Let $(\cat,J)$ and $(\mathcal{D},K)$ be sites.  A \emph{comorphism of sites} $F \colon (\cat,J) \to (\mathcal{D},K)$ is a functor $F \colon \cat \to \mathcal{D}$ with the cover lifting property: for each object $c$ of $\cat$ and $K$-covering sieve $S$ on $F(c)$, there exists a $J$-covering sieve $R$ on $c$ such that $F(R) \subseteq S$.
		}
	\end{df}
	A comorphism of sites $F \colon (\cat,J) \to (\mathcal{D},K)$ induces a geometric morphism $C_F \colon \Sh(\cat,J) \to \Sh(\mathcal{D},K)$ (see \parencite[Theorem VII.10.5]{SGL}) whose inverse image $C_F^\ast$ is given by $\ab_J(- \circ F)$.  The composite of two comorphisms of sites $F$ and $G$ is still a comorphism of sites for which $C_{F \circ G} = C_F \circ C_G$.

	\begin{df}[Definition 1.1, Exposé III \cite{SGA4-3}]\label{morphismofsites}{\rm Let $(\cat,J)$ and $(\mathcal{D},K)$ be sites.  A \emph{morphism of sites} $F \colon (\cat,J) \to (\mathcal{D},K)$ is a functor $F \colon \cat \to \mathcal{D}$ satisfying the following conditions.
			\begin{enumerate}
				\item If $S$ is a $J$-covering sieve on $c \in \cat$, then $F(S)$ is a $K$-covering family of morphisms on $F(c)$.
				\item Every object $d$ of $\mathcal{D}$ admits a $K$-covering sieve $\{\,d_i \to d \mid i \in I\,\}$ such that each $d_i$, for $i \in I$, has a morphism $d_i \to F(c_i)$ to the image of some $c_i \in \cat$.
				\item For any pair of objects $c_1,\,c_2$ of $\cat$ and any pair of morphisms
				\[g_1 \colon d \to F(c_1), \ g_2 \colon d \to F(c_2)\]
				of $\mathcal{D}$, there exists a $K$-covering family \[\{\,h_i \colon d_i \to d \mid i \in I\,\}\] of morphisms in $\mathcal{D}$, a pair of families
				\[\{\,f^1_i \colon c_i \to c_1 \mid i \in I\,\}, \ \{\,f^2_i \colon c_i \to c_2\mid i \in I\,\} \]
				of morphisms in $\cat$, and, for each $i \in I$, a morphism $k_i \colon d_i \to F(c'_i)$ such that the squares
				\[\begin{tikzcd}
					d_i \ar{r}{h_i} \ar{d}{k_i} & d \ar{d}{g_1} & d_i \ar{r}{h_i} \ar{d}{k_i} & d \ar{d}{g_2} \\
					F(c_i) \ar{r}{F(f^1_i)} & F(c_1) & F(c_i) \ar{r}{F(f^2_i)} & F(c_2)
				\end{tikzcd}\]
				commute.
				\item For any pair of parrallel arrows $f_1, \, f_2 \colon c' \to c$ of $\cat$, and any arrow $g \colon d \to F(c')$ of $\mathcal{D}$ such that $F(f_1) \circ g = F(f_2) \circ g$, there exists a $K$-covering family
				\[\{\,h_i \colon d_i \to d\mid i \in I\,\}\] of morphisms of $\mathcal{D}$, a family of morphisms
				\[\{\,e_i \colon c_i \to c'\mid i \in I\,\}\]
				of $\cat$ such that $f_1 \circ e_i = f_2 \circ e_i$ for all $ i \in I$, and, for each $i \in I$, a morphism $k_i \colon d_i \to F(c_i)$ such that the square
				\[\begin{tikzcd}
					d_i \ar{r}{h_i} \ar{d}{k_i} & d \ar{d}{g} \\
					F(c_i) \ar{r}{F(e_i)} & F(c')
				\end{tikzcd}\]
				commutes for each $i \in I$.
			\end{enumerate}
	}\end{df}
	A morphism of sites $F \colon (\cat,J) \to (\mathcal{D},K)$ induces a geometric morphism $\Sh(F) \colon \Sh(\mathcal{D},K) \to \Sh(\cat,J)$ (see \parencite[Theorem VII.10.2]{SGL}) whose direct image $\Sh(F)_\ast$ sends a sheaf $P \colon \mathcal{D}^{op} \to \sets$ of $\Sh(\mathcal{D},K)$ to $P \circ F^{op}$.  Just as with comorphisms of sites, the composite of two morphisms of sites $F$ and $G$ is still a morphism of sites for which $\Sh(F \circ G) = \Sh(G) \circ \Sh(F)$.
	
	A \emph{dense morphism of sites} is a functor $F \colon \cat \to \dcat$ that induces an equivalence $\Sh(\cat,J) \simeq \Sh(\mathcal{D},K)$.  By \parencite[Theorem 11.2]{shulman}, each dense morphism of sites is a morphism of sites.
	
	\begin{df}[\S 2 \cite{kockmoer}]\label{densemorph}
		{\rm 
			A \emph{dense morphism of sites} $F \colon (\cat,J) \to (\mathcal{D},K)$ is a functor $F \colon \cat \to \mathcal{D}$ such that:
			\begin{enumerate}
				\item\label{enumdesnsemorph1} $S$ is a $J$-covering family in $\cat$ if and only if $F(S)$ is $K$-covering in $\mathcal{D}$,
				\item\label{enumdensemorph2} for every $d \in \mathcal{D}$, there exists a $K$-covering family of morphisms $F(c_i) \to d$,
				\item\label{enumdensemorph3} for every $c_1, c_2 \in \cat$ and arrow $g \colon F(c_1) \to F(c_2)$ in $\mathcal{D}$, there is a $J$-covering family of arrows $f_i \colon c'_i \to c_1$ and a family of arrows $k_i \colon c'_i \to c_2$ such that $g \circ F(f_i) = F(k_i)$,
				\item\label{enumdesnemorph4} for any arrows $f_1, f_2 \colon c_1 \to c_2$ in $\cat$ such that $F(f_1) = F(f_2)$, there exists a $J$-covering family of arrows $k_i \colon c'_i \to c_1$ such that $f_1 \circ k_i = f_2 \circ k_i$.
			\end{enumerate}
		}
	\end{df}

	We can recover the comparison lemma (see \parencite[\S 2]{kockmoer} or \cite[Theorem C2.2.3]{elephant}) via the special case of a dense morphism of sites $F \colon (\cat,J) \to (\mathcal{D},K)$ when $F \colon \cat \to \mathcal{D}$ is an inclusion of a subcategory.
	\begin{df}\label{densesubcat}{\rm
			A subcategory $\cat \subseteq \mathcal{D}$ of a site $(\mathcal{D},K)$ is \emph{dense} for $K$ if:
			\begin{enumerate}
				\item for every $d \in \mathcal{D}$, there is a covering family $S \in K(d)$ generated by morphisms whose domains are in~$\mathcal{C}$,
				\item for every arrow $c \xrightarrow{g} d \in \mathcal{D}$, there is a covering family $S \in J(c)$ generated by morphisms $b \xrightarrow{f} c$ such that $g \circ f$ is in $\mathcal{C}$.
			\end{enumerate}
	}\end{df}
	
	\begin{lem}[The Comparison Lemma]\label{complem}
		Let $(\mathcal{D},K)$ be a site and let $\mathcal{C}$ be a dense subcategory.  There is an equivalence $\Sh(\mathcal{D},K) \simeq \Sh(\mathcal{C},K|_{\mathcal{C}})$.
	\end{lem}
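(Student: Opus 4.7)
The plan is to exhibit the inclusion functor $\iota \colon \mathcal{C} \hookrightarrow \mathcal{D}$ as a dense morphism of sites in the sense of Definition \ref{densemorph}, with $(\mathcal{C}, K|_{\mathcal{C}})$ as domain and $(\mathcal{D}, K)$ as codomain. Once this is done, the statement follows immediately from the definition of a dense morphism of sites as one that induces an equivalence of sheaf toposes (and from the note after Definition \ref{morphismofsites} that such a functor is automatically a morphism of sites, so that the induced geometric morphism $\Sh(\iota) \colon \Sh(\mathcal{D},K) \to \Sh(\mathcal{C},K|_{\mathcal{C}})$ exists and is in fact an equivalence).

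First I would check condition \ref{enumdesnsemorph1}: by the very definition of the induced topology $K|_{\mathcal{C}}$, a family $S$ in $\mathcal{C}$ is $K|_{\mathcal{C}}$-covering precisely when the sieve it generates in $\mathcal{D}$ is $K$-covering, so this condition is tautological. Condition \ref{enumdensemorph2} is exactly clause (i) of Definition \ref{densesubcat}: every $d \in \mathcal{D}$ is $K$-covered by morphisms whose domains lie in $\mathcal{C}$, and these are, by definition, of the form $\iota(c_i) \to d$. Condition \ref{enumdesnemorph4} is trivial because $\iota$ is an inclusion and hence faithful: if $\iota(f_1) = \iota(f_2)$ then $f_1 = f_2$ already, and the maximal sieve $\{\mathrm{id}_{c_1}\}$ supplies the required $K|_{\mathcal{C}}$-covering family.

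The step that requires the most attention is condition \ref{enumdensemorph3}, where the analogous density on morphisms is needed. Given $c_1, c_2 \in \mathcal{C}$ and an arrow $g \colon \iota(c_1) \to \iota(c_2)$ in $\mathcal{D}$, I apply clause (ii) of Definition \ref{densesubcat} to the arrow $g \colon c_1 \to c_2$ of $\mathcal{D}$: this yields a $K$-covering family of morphisms $f_i \colon b_i \to c_1$ in $\mathcal{D}$ such that each composite $g \circ f_i$ lies in $\mathcal{C}$. Because $g \circ f_i$ is a morphism of $\mathcal{C}$ with codomain $c_2 \in \mathcal{C}$, its domain $b_i$ must also lie in $\mathcal{C}$, and in fact both $f_i$ and the composite belong to $\mathcal{C}$. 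Setting $c'_i := b_i$ and $k_i := g \circ f_i$ then furnishes the data demanded by condition \ref{enumdensemorph3}; the family $\{f_i\}$ is $K|_{\mathcal{C}}$-covering by the agreement established in condition \ref{enumdesnsemorph1}.

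The potential subtlety -- and the main thing I would pin down carefully -- is precisely the interplay between clause (ii) of Definition \ref{densesubcat} and the membership of the $b_i$ in $\mathcal{C}$: one must be explicit that "the composite lies in $\mathcal{C}$" forces the domain into $\mathcal{C}$, so that the resulting covering family is genuinely a $K|_{\mathcal{C}}$-covering in $\mathcal{C}$ rather than merely a $K$-covering in $\mathcal{D}$. Once this verification is in hand, $\iota$ satisfies all four conditions of Definition \ref{densemorph} and the desired equivalence $\Sh(\mathcal{D},K) \simeq \Sh(\mathcal{C},K|_{\mathcal{C}})$ follows.
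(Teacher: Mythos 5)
Your strategy is exactly the one the paper gestures at (it states the lemma as the special case of a dense morphism of sites for a subcategory inclusion and leaves the verification to the reader), and your checks of conditions \ref{enumdesnsemorph1}, \ref{enumdensemorph2} and \ref{enumdesnemorph4} of Definition \ref{densemorph} are fine. The gap is in condition \ref{enumdensemorph3}, at precisely the point you single out as the one to "pin down". From $g \circ f_i \in \mathcal{C}$ you may conclude that the domain $b_i$ is an \emph{object} of $\mathcal{C}$ (a morphism of a subcategory has its domain and codomain there), but you may \emph{not} conclude that $f_i$ itself is a \emph{morphism} of $\mathcal{C}$: a $\mathcal{D}$-arrow between two objects of $\mathcal{C}$ lies in $\mathcal{C}$ only when $\mathcal{C}$ is a full subcategory, and neither Lemma \ref{complem} nor Definition \ref{densesubcat} assumes fullness; clause (ii) of that definition only asserts that the \emph{composites} $g \circ f$ lie in $\mathcal{C}$, not the generators $f$ themselves. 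Consequently the family $\{f_i\}$ you produce is not known to consist of arrows of $\mathcal{C}$ at all, so it cannot serve as the $K|_{\mathcal{C}}$-covering family of arrows $f_i \colon c'_i \to c_1$ that condition \ref{enumdensemorph3} demands. Your assertion "in fact both $f_i$ and the composite belong to $\mathcal{C}$" is where the argument breaks.

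The gap is harmless for every use the paper makes of the lemma (Lemma \ref{addingtopeq} and the subdoctrine inclusions in \S \ref{subsec:compatible} all concern full subcategories, where the inference is valid), but as a proof of the statement as given it is incomplete. Note also that the obvious repair by iteration does not immediately work: the class of arrows $e$ with $f \circ e \in \mathcal{C}$ is not closed under precomposition by arbitrary $\mathcal{D}$-arrows and hence is not a sieve, so one cannot simply refine the cover from clause (ii) applied to $g$ by covers from clause (ii) applied to its generators and expect both membership conditions to survive composition. The clean fixes are either to assume $\mathcal{C}$ full (in which case your one-step argument is complete), or to read clause (ii) of Definition \ref{densesubcat} as requiring the generating morphisms $f$ to lie in $\mathcal{C}$ in addition to the composites $g \circ f$ — under that reading condition \ref{enumdensemorph3} follows exactly as you describe.
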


	


	\paragraph{A mixed diagram of morphisms and comorphisms of sites.}
	
	Finally, we recall Lemma 2.5 of \cite{myselfintlocmorph} that concerns the commutativity of a certain mixed diagram of morphisms and comorphisms of sites.
	
	Recall from \parencite[Definition B1.3.4]{elephant} that a fibration $A \colon \cat \to \topos$ is a functor such that, for each object $c$ of $\cat$ and an arrow $e \xrightarrow{f} A(c)$, there exists a \emph{cartesian lifting} $d \xrightarrow{g} c$ of $f$, i.e. an arrow of $\cat$ such that $A(g) = f$ and, for any arrows $d' \xrightarrow{h} c$ of $\cat$ and $A(d') \xrightarrow{k} A(d)$ of $\topos$ for which
	\[\begin{tikzcd}
		A(d') \ar{r}{k} \ar{rd}[']{A(h)} & A(d) \ar{d}{A(g)} \\
		& A(c)
	\end{tikzcd}\]
	commutes, there exists an arrow $d' \xrightarrow{k'} d$ of $\cat$ such that $A(k') =k$ (note that we are using the terminology `cartesian arrow' where Johnstone uses `prone').  An arrow $d \xrightarrow{g} c$ of $\cat$ with this property is said to be cartesian.  Recall also that, given a pair of fibrations $A \colon \cat \to \topos$ and $B \colon \dcat \to \ftopos$, a morphism of the fibrations $A \to B$ consists of a pair of functors $F \colon \cat \to \dcat$ and $G \colon \topos \to \ftopos$ such that the square
	\begin{equation*}
		\begin{tikzcd}
			\cat \ar{r}{F} \ar{d}{A} & \dcat \ar{d}{B} \\
			\topos \ar{r}{G} & \mathcal{F}
		\end{tikzcd}
	\end{equation*}
	commutes and, if $d \xrightarrow{g} c \in \cat$ is cartesian, so is $F(d) \xrightarrow{F(g)} F(c)$.

	\begin{lem}[Lemma 2.5 \cite{myselfintlocmorph}]\label{fibrcomoprhmorphthm}
		Let $(\cat,J)$, $(\dcat,K)$, $(\topos,L)$ and $(\mathcal{F},M)$ be sites.  Let $A \colon \cat \to \topos$ and $B \colon \dcat \to \mathcal{F}$ both be fibrations and let $F \colon \cat \to \dcat$ and $G \colon \topos \to \mathcal{F}$ be functors such that the square
		\begin{equation}\label{morphoffibr}
			\begin{tikzcd}
				\cat \ar{r}{F} \ar{d}{A} & \dcat \ar{d}{B} \\
				\topos \ar{r}{G} & \mathcal{F}
			\end{tikzcd}
		\end{equation}
		is a morphism of fibrations.  Suppose that the functors $A$ and $B$ yield comorphisms of sites $A \colon (\cat,J) \to (\topos,L)$ and $B \colon (\dcat,K) \to (\mathcal{F},M)$, and that the functors $F$ and $G$ yield morphisms of sites $F \colon (\cat,J) \to (\dcat,K)$ and $G \colon (\topos,L) \to (\mathcal{F},M)$.  Then the induced square of geometric morphisms
		\[\begin{tikzcd}
			\Sh(\cat,J) \ar{d}{C_A} & \ar{l}[']{\Sh(F)} \Sh(\dcat,K) \ar{d}{C_B} \\
			\Sh(\topos,L) & \ar{l}[']{\Sh(G)} \Sh(\mathcal{F},M)
		\end{tikzcd}\]
		also commutes.
	\end{lem}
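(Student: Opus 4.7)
The plan is to compare the direct images of the two composite geometric morphisms. Using the adjunctions together with the explicit formulas $\Sh(F)_\ast = (-) \circ F^{op}$, $\Sh(G)_\ast = (-) \circ G^{op}$, $C_A^\ast = \ab_J((-) \circ A)$ and $C_B^\ast = \ab_K((-) \circ B)$, and recalling that for any $J$-sheaf $Q$ on $\cat$ one has $(C_A)_\ast Q(e) = \mathrm{Nat}(\yo_e \circ A, Q)$ by sheaf-Yoneda, the claim reduces to exhibiting, for each $K$-sheaf $P$ on $\dcat$ and object $e \in \topos$, a bijection
\[
\mathrm{Nat}(\yo_e \circ A,\, P \circ F^{op}) \;\cong\; \mathrm{Nat}(\yo_{G(e)} \circ B,\, P),
\]
natural in $e$ and $P$; here $P \circ F^{op}$ is a $J$-sheaf because $F$ is a morphism of sites.

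The forward map exploits the strict commutativity of (\ref{morphoffibr}): for $f \colon A(c) \to e$ in $\topos$, $G(f)$ is an arrow $BF(c) \to G(e)$ in $\mathcal{F}$. Thus, given $\beta \colon \yo_{G(e)} \circ B \to P$, I set $\alpha_c(f) := \beta_{F(c)}(G(f))$, with naturality in $c$ following from the functoriality of $G$ and the naturality of $\beta$.

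Constructing the inverse is the main obstacle, and interlocks all four hypotheses of the lemma. Given $\alpha \colon \yo_e \circ A \to P \circ F^{op}$, $d \in \dcat$ and $g \colon B(d) \to G(e)$, one first applies the covering condition (ii) of Definition \ref{morphismofsites} for $F$ to obtain a $K$-cover $\{h_i \colon d_i \to d\}$ with factorisations $k_i \colon d_i \to F(c_i)$. For each $i$, one applies condition (iii) of Definition \ref{morphismofsites} for $G$ to the pair
\[
B(k_i) \colon B(d_i) \to GA(c_i), \qquad g \circ B(h_i) \colon B(d_i) \to G(e),
\]
producing an $M$-cover of $B(d_i)$ along which both arrows factor through $G$-images of a common span $A(c_i) \leftarrow c'_{ij} \to e$ in $\topos$. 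The fibration property of $A$ cartesian-lifts $c'_{ij} \to A(c_i)$ to an arrow $c_{ij} \to c_i$ in $\cat$ with $A(c_{ij}) = c'_{ij}$; the cover-lifting property of $B$ refines $\{h_i\}$ to a $K$-cover of $d$ whose $B$-image factors through the $M$-covers; and the fibration property of $B$ provides cartesian arrows in $\dcat$ connecting the refined cover to $F(c_{ij})$. Feeding $f^2_{ij} \colon c'_{ij} \to e$ into $\alpha$ yields elements of $P(F(c_{ij}))$, which restrict along the cartesian arrows in $\dcat$ to a matching family. Gluing via the sheaf condition on $P$ produces a well-defined element $\tilde\alpha_d(g) \in P(d)$.

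It remains to verify that the two constructions are mutually inverse and natural in $e$ and $P$; both properties reduce to routine applications of the sheaf condition and the naturality of the families involved. The essential subtlety is the well-definedness of $\tilde\alpha_d(g)$, which amounts to showing that any two refinements produce elements agreeing on a common refinement: here the morphism-of-fibrations hypothesis on (\ref{morphoffibr}) is decisive, since it forces the lifts through $A$ and through $B$ to align compatibly in the fibres, so that the sheaf condition on $P$ equates the corresponding sections.
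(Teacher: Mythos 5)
Your reduction of the statement to a natural bijection $\mathrm{Nat}(\yo_e \circ A,\, P \circ F^{op}) \cong \mathrm{Nat}(\yo_{G(e)} \circ B,\, P)$ via the direct-image formulas is the right strategy, and your forward map is correct. The gaps are both in the inverse. The first is concrete: the step that is supposed to produce an arrow of $\dcat$ from a piece $d'$ of the refined cover into $F(c_{ij})$ does not follow from ``the fibration property of $B$''. A cartesian lift along $B$ of $n_{ij}\circ s \colon B(d') \to BF(c_{ij})$ is an arrow $\tilde d \to F(c_{ij})$ whose domain merely satisfies $B(\tilde d) = B(d')$; nothing connects $\tilde d$ to $d'$ in $\dcat$, so you get no arrow out of $d'$ and hence no element of $P(d')$ to glue. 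What actually does the job is the morphism-of-fibrations hypothesis \emph{at this step} (not only at the well-definedness stage, where you park it): since $F$ preserves cartesian arrows, the image $F(c_{ij}) \to F(c_i)$ of the cartesian lift of $f^1_{ij}$ is $B$-cartesian over $G(f^1_{ij})$, and the arrow $k_i \circ r \colon d' \to F(c_i)$ satisfies $B(k_i \circ r) = G(f^1_{ij}) \circ (n_{ij}\circ s)$; the universal property of that cartesian arrow then yields $w \colon d' \to F(c_{ij})$ with $B(w) = n_{ij}\circ s$, and $P(w)\bigl(\alpha_{c_{ij}}(f^2_{ij})\bigr)$ is the local section you need.

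The second gap is the dismissal of well-definedness and the matching-family condition as ``routine''. They are the bulk of the proof: your construction depends on a choice of cover from condition (i\/i) for $F$, a choice of spans from condition (iii) for $G$, and choices of cartesian lifts, and you must show that any two systems of choices yield local sections that agree after passing to a common $K$-cover, both to define $\tilde\alpha_d(g)$ at all and to verify that the resulting family is matching for the covering sieve on $d$. This local-injectivity argument is precisely where condition (iv) of Definition \ref{morphismofsites} (for $G$, and for $F$) must enter --- your proposal never invokes it, which is a reliable sign that the uniqueness half of the argument is missing rather than routine. Until that is written out, the map $\tilde\alpha$ is not defined, so the mutual-inverse and naturality checks cannot even be started.
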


	\begin{nota}\label{notation:groth}
		{\rm
			We will be concerned exclusively with fibrations arising from functors $P \colon \cat^{op} \to \PreOrd$, and so take some time to fix our notation.  Given a functor $P \colon \cat^{op} \to \PreOrd$, we denote by $\cat \rtimes P$ the Grothendieck construction (see \parencite[Definition B1.3.1]{elephant} for the general definiton), the category which has:
			\begin{enumerate}
				\item as objects, pairs $(c,x)$ where $c$ is an object of $\topos$ and $x$ is an element of $P(c)$,
				\item and an arrow $f \colon (c,x) \to (d,y)$ for each arrow $f \colon c \to d$ in $\topos$ such that $x \leqslant P(f)(y)$.
			\end{enumerate}
			Given a second functor $Q \colon \dcat \to \PreOrd$, a functor $F \colon \cat \to \dcat$ and a natural transformation $a \colon P \to A \circ F^{op}$, we denote by
			\[F \rtimes a \colon \cat \rtimes P \to \dcat \rtimes Q\]
			the functor that sends an object $(c,x) \in \cat \rtimes P$ to $(F(c),a_c(x)) \in \dcat \rtimes Q$ and an arrow $(c,x) \xrightarrow{f} (d,y) \in \cat\rtimes P$ to $(F(c),a_c(x)) \xrightarrow{F(f)} (F(d),a_d(y)) \in \dcat \rtimes Q$.  The evident projection functors $p_P \colon \cat \rtimes P \to \cat$ and $p_Q \colon \dcat \rtimes Q \to \dcat$ are both fibrations and the commutative square
			\[\begin{tikzcd}
				\cat \rtimes P \ar{r}{F \rtimes a} \ar{d}{p_P} & \dcat \rtimes Q \ar{d}{p_Q} \\
				\cat \ar{r}{F} & \dcat,
			\end{tikzcd}\]
			constitutes a morphism of fibrations.
		}
	\end{nota}

	

	\subsection{Internal locales}\label{subsec:intloc}

	In this subsection we list some facts regarding internal locales that will prove necessary in our development.  An internal locale $\Lb$ of a topos $\topos$ is an object that behaves as a locale according to the internal logic of $\topos$.  Internal locales are, of course, also internal frames (although their morphisms will differ).  We will also use the terminology `geometric doctrines' (the 2-category of geometric doctrines ${\bf GeomDoc}$ is defined in \S \ref{subsec:docsites}) to emphasise their role in interpreting geometric logic, mirroring the use of locales and frames to model geometric propositional logic, and to distinguish that we will be comparing internal locales/geometric doctrines that are internal to different toposes.

	We first recall the following fact recounted on p. 528 \parencite[\S C1.6]{elephant}: for each geoemtric morphism $f \colon \ftopos \to \topos$, the direct image functor $f_\ast \colon \ftopos \to \topos$ preserves internal locales.  In particular, every internal locale of $\topos$ is of the form $f_\ast (\Omega_{\ftopos})$ for some geometric morphism $f \colon \ftopos \to \topos$.
	
	In addition we recall the characterisation of the internal locales of an arbitrary Grothendieck topos and the characterisation of their internal locale morphisms.

	\paragraph{Internal locales of presheaf toposes.}
	
	We first require an external classification of the internal locales of a Grothendieck topos $\topos$, i.e. if $(\cat,J)$ is a site for $\topos$, a classification of which $J$-sheaves on $\cat$ define internal locales.
	
	Recall from \parencite[\S V.1]{JT} that a frame homomorphism $f^{-1} \colon L \to K$ is \emph{open} if $f^{-1}$ has a left adjoint satisfying the Frobenius condition, equivalently $f^{-1}$ is a morphism of complete Heyting algebras by \parencite[Proposition V.1.1]{JT}.  We denote the category of frames and open frame homomorphisms by $\Frm_{\rm open}$.
	
	Let $\Lb \colon \cat^{op} \to \Frm_{\rm open}$ be a doctrine.  For each arrow $d \xrightarrow{f} c \in \cat$, we denote the left adjoint to $\Lb(f) \colon \Lb(c) \to \Lb(d)$ by $\exists_{\Lb(f)}$, and when there is no ambiguity by just $\exists_f$.  Every internal locale of the topos $\Sh(\cat,J)$ is of the form $\Lb \colon \cat^{op} \to \Frm_{\rm open}$ for some $\Lb$, but the converse is not true.  The classification when $\cat$ is cartesian was first established in \cite{JT}, while the classification over an arbitrary base category is established in \cite{fibredsites}.
	
	\begin{prop}[Proposition 5.10 \cite{fibredsites} \& Proposition VI.2.2 \cite{JT}]\label{prop:intloc}
		The internal locales of the presheaf topos $\sets^{\cat^{op}}$ are precisely those functors $\Lb \colon \cat^{op} \to \Frm_{\rm open}$ which satisfy the \emph{relative Beck-Chevalley condition}: given an arrow $e \xrightarrow{h} d$ of $\cat$, and a sieve $S$ of $\cat \rtimes \Lb$ on the object $(d,V)$ for which $V = \bigvee_{f \in S} \exists_f U$, then
		\[h^{-1}(V) = \bigvee_{g \in h^\ast(S)} \exists_g W,\]
		where $h^\ast(S)$ is the sieve on $(e,h^{-1}(V))$ consisting of those arrows $(c,W) \xrightarrow{g} (e,h^{-1}(V))$ such that the composite $(c,W) \xrightarrow{g} (e,h^{-1}(V)) \xrightarrow{h} (d,V)$ is in $S$ (if $S$ is a large sieve, when we write $V = \bigvee_{f \in S} \exists_f U$, we mean that there is a small $S' \subseteq S$ such that $V = \bigvee_{f \in S'} \exists_f U$).

		If $\cat$ has all pullbacks, the relative Beck-Chevalley condition for $\Lb$ is equivalent to the Beck-Chevalley condition: for each pullback square \[\begin{tikzcd}
			c \times_e d \ar{d}{k} \ar{r}{g} & d \ar{d}{h} \\
			c \ar{r}{f} & e
		\end{tikzcd}\]
		of $\cat$, the square
		\[\begin{tikzcd}
			\mathbb{L}({c \times_e d}) \ar{r}{\exists_{\Lb(g)}} & \mathbb{L}(d) \\
			\mathbb{L}(c) \ar{r}{\exists_{\Lb(f)}} \ar{u}{\Lb(k)} & \mathbb{L}(e) \ar{u}{\Lb(h)}
		\end{tikzcd}\]
		commutes.
	\end{prop}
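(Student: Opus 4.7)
The plan is to unwind the internal frame and internal locale axioms of $\sets^{\cat^{op}}$ into external conditions on a functor $\Lb \colon \cat^{op} \to \PreOrd$ via the Kripke-Joyal / Yoneda semantics. First, I would observe that every statement of the internal language can be evaluated at the representables $\yo(c)$, so that the fiberwise finite operations in an internal frame reduce to the corresponding operations in each $\Lb(c)$; thus each $\Lb(c)$ must carry a frame structure and each $\Lb(f)$ must be a frame homomorphism. The distinction between an internal frame and an internal locale then forces openness of each $\Lb(f)$: the existence of left adjoints $\exists_f$ satisfying Frobenius reflects the fact that internally one can quantify over ``the set of morphisms into $c$'', which is what is needed for internal arbitrary joins to be compatible with restriction along morphisms.

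Next I would identify the relative Beck-Chevalley condition as the external manifestation of the assertion ``internal joins are genuine joins''. An internally specified join $V = \bigvee \alpha$ of a family $\alpha \colon F \to \Lb$ at stage $d$ can, after decomposing $F$ as a colimit of representables, be rewritten externally as $V = \bigvee_{f \in S} \exists_f U$ for a sieve $S$ on $(d,V) \in \cat \rtimes \Lb$. Restriction along $h \colon e \to d$ must send such an internal join to the corresponding internal join at stage $e$, which externally is precisely the identity $h^{-1}(V) = \bigvee_{g \in h^\ast(S)} \exists_g W$ appearing in the statement. I would verify both implications by direct computation, invoking the small cofinal subfamily provided by the parenthetical remark whenever $S$ is large.

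For the second part of the proposition, I would prove each implication of the equivalence between the relative and ordinary Beck-Chevalley condition separately, using that $\cat$ has all pullbacks. Relative BC implies the pullback-square version by specialising to a principal sieve generated by a single arrow lifting $f$, whose pullback along $h$ is generated by the cartesian lift of $h$ over the chosen pullback of $f$ and $h$ in $\cat$. Conversely, any relative BC identity for a sieve $S$ can be assembled from pullback-square BC instances, one for each generator of $S$, using that joins commute with joins. The hard part throughout is faithfully translating internal large joins into external data: such joins are indexed by arbitrary objects of the topos rather than by small sets, so one must simultaneously keep track of size, via the cofinal small subfamily that the parenthetical remark provides, and of the correct external indexing by sieves on $\cat \rtimes \Lb$.
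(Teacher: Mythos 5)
This proposition is recalled verbatim from the literature (Joyal--Tierney's Proposition VI.2.2 for cartesian $\cat$, and Proposition 5.10 of the fibred-sites paper in general), so the paper contains no proof of its own; what it does record, in the surrounding text of \S \ref{subsec:intloc}, is the strategy of the cited proof, and that strategy is genuinely different from yours. There one does not touch the internal language at all: one uses the bijection between internal locales of $\sets^{\cat^{op}}$ and localic geometric morphisms into it (Theorem \ref{localicmorph}), together with the observation that for a functor $\Lb \colon \cat^{op} \to \Frm_{\rm open}$ the assignment $K_\Lb$ of sieves on $\cat \rtimes \Lb$ always satisfies maximality and transitivity, and satisfies stability precisely when the relative Beck--Chevalley condition holds. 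Hence $\Lb$ is an internal locale if and only if $K_\Lb$ is a Grothendieck topology, and the localic morphism $C_{p_\Lb} \colon \Sh(\cat\rtimes\Lb, K_\Lb) \to \sets^{\cat^{op}}$ recovers $\Lb$ as ${C_{p_\Lb}}_\ast(\Omega)$. Your route --- unwinding the internal frame axioms via Kripke--Joyal semantics --- is essentially the original Joyal--Tierney argument and can be pushed through over a non-cartesian base; what the site-theoretic route buys is that the size bookkeeping and the verification that ``internal joins restrict to internal joins'' are absorbed into standard sheaf-theoretic machinery, whereas your approach must handle them by hand at precisely the step you describe as ``verify both implications by direct computation,'' which is where essentially all of the content of the theorem lives.

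One genuine conceptual slip: you attribute the openness of the transition maps $\Lb(f)$ (existence of left adjoints $\exists_f$ satisfying Frobenius) to ``the distinction between an internal frame and an internal locale.'' Internal frames and internal locales are the same objects --- only their morphisms differ, as the paper notes at the start of \S \ref{subsec:intloc} --- so that distinction cannot be the source of any property of a single $\Lb$. The left adjoints come from internal cocompleteness of the indexed poset $\Lb$ (restriction along $f$ must preserve internal joins, which forces an adjoint by the indexed adjoint functor theorem), and Frobenius is the external shadow of the internal infinite distributive law. This does not derail your plan, since the facts you need are true, but the justification as written is wrong. The second half of your proposal (relative Beck--Chevalley is equivalent to the ordinary pullback-square version when $\cat$ has pullbacks, via principal sieves in one direction and assembling joins of pullback instances in the other) is correct and is the standard argument.
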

	
	The relative Beck-Chevalley condition is one component in a wider theory of \emph{existential sites} developed in \parencite[\S 5]{fibredsites}, which we recall in the doctrinal case in \S \ref{subsec:existsites}.

	Let $\Lb \colon \cat^{op} \to \Frm_{\rm open}$ be an arbitrary functor.  We denote by $K_\Lb$ the function that assigns to each $(d,V) \in \cat \rtimes \Lb$ the collection $K_\Lb(d,V)$ of sieves $\{\,(c_i,U_i) \xrightarrow{f_i} (d,V) \mid i \in I\,\}$ in $\cat \rtimes \Lb$ such that
	\[V = \bigvee_{i \in I} \exists_{f_i} U_i.\]
	The function $K_\Lb$ is not necessarily a Grothendieck topology on the category $\cat \rtimes \Lb$.  It clearly satisfies the maximality and transitivity conditions of Definition III.2.1 \cite{SGL}.  However, the stability of $K_\Lb$ is equivalent to the relative Beck-Chevalley condition (see \parencite[Theorem 5.1]{fibredsites}), and hence $\Lb$ is an internal locale of $\sets^{\cat^{op}}$ if and only if $K_\Lb$ defines a Grothendieck topology on $\cat \rtimes \Lb$.
	
	\begin{df}\label{df:intsheaves}
		{\rm 
			Let $\Lb \colon \cat^{op} \to \Frm_{\rm open}$ be an internal locale of the presheaf topos $\sets^{\cat^{op}}$.  The topos $\Sh(\cat \rtimes \Lb,K_\Lb)$ is called the \emph{topos of internal sheaves} on $\Lb$, and we will employ the shorthand $\Sh(\Lb)$ for this topos.
		}
	\end{df}

	\paragraph{Localic geometric morphisms.}
	The projection $p_\Lb \colon \cat \rtimes \Lb \to \cat$ yields a comorphism of sites
	\[p_\Lb \colon (\cat \rtimes \Lb,K_\Lb) \to (\cat, J_{\rm triv})\]
	and therefore a geometric morphism $C_{p_\Lb} \colon \Sh(\Lb) \to \sets^{\cat^{op}}$.  Since the projection functor $p_\Lb$ is faithful, by \parencite[Proposition 7.11]{dense} it follows that $C_{p_\Lb}$ is a \emph{localic geometric morphism}.  In fact, the characterisation of internal locales in \cite{fibredsites} as given above is proved by exploiting the bijection between internal locales and localic geometric morphisms.
	\begin{df}
		{\rm
			A geometric morphism $f \colon \mathcal{F} \to \topos$ is \emph{localic} if every object $F$ of $\mathcal{F}$ is a subquotient of $f^\ast(E)$ for some $E \in \topos$, i.e. there exists $F' \in \mathcal{F}$ and a diagram
			\[\begin{tikzcd}
				F & F' \ar[two heads]{l} \ar[tail]{r} & f^\ast(E).
			\end{tikzcd}\]
		}
	\end{df}

	\begin{thm}[Theorem 5.37 \cite{topos}, Lemma 1.2 \cite{fact1}, \& Proposition 4.2 \cite{fibredsites}]\label{localicmorph}
		Let $\Sh(\cat,J)$ be a Grothendieck topos.  There is, up to equivalence, a bijective correspondence between internal locales of $\Sh(\cat,J)$ and localic geometric morphisms $f \colon \ftopos \to \Sh(\cat,J)$ given by associating:
		\begin{enumerate}
			\item each localic geometric morphism $f \colon \ftopos \to \Sh(\cat,J)$ with the internal locale $f_\ast(\Omega_{\ftopos})\colon \cat^{op} \to \Frm_{\rm open}$ of $\Sh(\cat,J)$,
			\item and each internal locale $\Lb \colon \cat^{op} \to \Frm_{\rm open}$ of $\Sh(\cat,J)$ with the localic geometric morphism \[C_{p_\Lb} \colon \Sh(\Lb) \to \Sh(\cat,J).\]
		\end{enumerate}
	\end{thm}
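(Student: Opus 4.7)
The plan is to establish that both assignments are well-defined and that they are mutually inverse up to canonical equivalence. That $f_\ast(\Omega_\ftopos)$ is an internal locale of $\Sh(\cat,J)$ was recalled earlier in the subsection (direct image functors preserve internal locales), so the first assignment needs no further justification. For the second assignment, I would first check that when $\Lb \colon \cat^{op} \to \Frm_{\rm open}$ is an internal locale of $\Sh(\cat,J)$, the projection $p_\Lb$ extends to a comorphism of sites whose codomain carries $J$ rather than the trivial topology, by a routine cover-lifting verification using that the elements of $\Lb(c)$ reindex along $J$-covers. Since $p_\Lb$ is faithful, Proposition 7.11 of \cite{dense} then gives that $C_{p_\Lb} \colon \Sh(\Lb) \to \Sh(\cat,J)$ is localic.

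For the composition starting from an internal locale, I would compute $(C_{p_\Lb})_\ast(\Omega_{\Sh(\Lb)})$ pointwise. The subsheaves of a representable $y(c,U)$ in $\Sh(\Lb)$ are by the very definition of $K_\Lb$ in order-preserving bijection with the down-set of $U$ in $\Lb(c)$; taking the filtered colimit over $U \in \Lb(c)$, or equivalently evaluating on the sheafification of $y \circ p_\Lb$, recovers $\Lb(c)$, and naturality in $c$, together with the frame operations and the adjoints $\exists_f$, transport along this identification by construction.

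For the composition starting from a localic geometric morphism $f \colon \ftopos \to \Sh(\cat,J)$ with $\Lb \coloneqq f_\ast(\Omega_\ftopos)$, I would produce a functor $\cat \rtimes \Lb \to \ftopos$ sending $(c,U)$ to the subobject of $f^\ast y(c)$ classified by $U$, and verify it is a dense morphism of sites from $(\cat \rtimes \Lb, K_\Lb)$ into the canonical site of $\ftopos$. The localic hypothesis—every object of $\ftopos$ is a subquotient of some $f^\ast E$—together with the site-theoretic presentation of $\Sh(\cat,J)$ by $(\cat,J)$, supplies conditions \ref{enumdensemorph2} and \ref{enumdensemorph3} of Definition \ref{densemorph}; condition \ref{enumdesnsemorph1} follows from the relative Beck-Chevalley condition of Proposition \ref{prop:intloc}. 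The Comparison Lemma (or its dense-morphism generalisation) then yields $\ftopos \simeq \Sh(\Lb)$ over $\Sh(\cat,J)$, and one checks directly that this equivalence identifies $f_\ast(\Omega_\ftopos)$ with $\Lb$.

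The main obstacle will be matching the topology transferred from $\ftopos$ with $K_\Lb$ on the nose: concretely, showing that an epimorphic family in $\ftopos$ between subobjects of $f^\ast$-objects corresponds exactly to the frame-theoretic join condition $V = \bigvee_{i} \exists_{f_i} U_i$ defining $K_\Lb$. This is where the relative Beck-Chevalley condition does its real work, and it is precisely the content isolated in Proposition 4.2 of \cite{fibredsites}; invoking that result discharges the obstacle and completes the bijection.
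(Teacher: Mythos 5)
The paper does not prove this statement: it is recalled from the literature (Johnstone's Theorem 5.37, and Propositions 4.2--4.3 of \cite{fibredsites}), so there is no in-paper proof to compare against. Your sketch is a correct reconstruction of the standard argument and follows essentially the same route as the cited sources: localicity of $C_{p_\Lb}$ from faithfulness of the comorphism $p_\Lb$, the pointwise computation $\Sub_{\Sh(\Lb)}(C_{p_\Lb}^\ast y(c)) \cong \Lb(c)$ for one composite, and a dense morphism of sites $\cat \rtimes f_\ast(\Omega_\ftopos) \to \ftopos$ for the other, with the relative Beck--Chevalley condition matching $K_\Lb$ against the canonical topology. The only place I would ask for more is the density verification: you account for conditions \ref{enumdesnsemorph1} and \ref{enumdensemorph2} of Definition \ref{densemorph}, but conditions \ref{enumdensemorph3} and \ref{enumdesnemorph4} (arrows between images being locally induced from $\cat \rtimes \Lb$, and local reflection of equality of arrows) are not automatic from the subquotient property alone when $\cat$ lacks products, and they require the same kind of frame-theoretic bookkeeping you isolate for the topology comparison. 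Since that bookkeeping is precisely the content of the cited Proposition 4.2 of \cite{fibredsites}, which you explicitly invoke, this is an omission of detail rather than a gap in the argument.
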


	\paragraph{Internal locales of sheaf toposes.}
	Although not used in our main argument, we include for completeness the external classification of internal locales of an arbitrary Grothendieck topos $\Sh(\cat,J)$.  Since the inclusion $\Sh(\cat,J) \rightarrowtail \sets^{\cat^{op}}$ is localic (see \parencite[Example A4.6.2(a)]{elephant}), we can deduce the classification of internal locales of $\Sh(\cat,J)$ by the classification for $\sets^{\cat^{op}}$ mentioned above.
	
	\begin{coro}[Proposition 5.10 \cite{fibredsites} \& Corollary C1.6.10 \cite{elephant}]\label{intlocoversites}
		Let $\Sh(\cat,J)$ be a Grothendieck topos.  The internal locales of $\Sh(\cat,J)$ are precisely the internal locales $\Lb \colon \cat^{op} \to \Frm_{\rm open}$ of $\sets^{\cat^{op}}$ that satisfy one of the equivalent conditions:
		\begin{enumerate}
			\item\label{shflem:2} $\Lb$ is an internal locale of $\sets^{\cat^{op}}$ and a $J$-sheaf,
			\item\label{shflem:3} $K_\Lb$ is stable and contains the \emph{Giraud topology} $J_{p_\Lb}$ (see \parencite[Proposition 2.1]{giraud}),
			\item\label{shflem:4} $K_\Lb$ is stable and there exists a factorisation
			\[\begin{tikzcd}
				& \Sh(\Lb) \ar{d}{C_{p_\Lb}} \ar[dashed]{ld} \\
				\Sh(\cat,J) \ar[tail]{r} & \sets^{\cat^{op}}.
			\end{tikzcd}\]
		\end{enumerate} 
	\end{coro}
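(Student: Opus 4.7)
The plan is to apply Theorem \ref{localicmorph} both to the ambient presheaf topos $\sets^{\cat^{op}}$ and to the sheaf subtopos $\Sh(\cat,J)$, and to exploit the crucial fact (recalled from \parencite[Example A4.6.2(a)]{elephant}) that the inclusion $i \colon \Sh(\cat,J) \hookrightarrow \sets^{\cat^{op}}$ is itself a localic geometric morphism. Since localic morphisms compose, the correspondence of Theorem \ref{localicmorph} identifies the internal locales of $\Sh(\cat,J)$ with those internal locales $\Lb$ of $\sets^{\cat^{op}}$ whose associated localic morphism $C_{p_\Lb} \colon \Sh(\Lb) \to \sets^{\cat^{op}}$ factors through $i$. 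The three conditions in the statement are then three reformulations of this factorization; note that in each one, the condition ``$K_\Lb$ is stable'' (equivalently, $\Lb$ is an internal locale of $\sets^{\cat^{op}}$) is already built in by Proposition \ref{prop:intloc}.

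For the implication (\ref{shflem:4}) $\Rightarrow$ (\ref{shflem:2}), one argues: if $C_{p_\Lb} = i \circ g$ for some $g \colon \Sh(\Lb) \to \Sh(\cat,J)$, then $\Lb = (C_{p_\Lb})_\ast(\Omega_{\Sh(\Lb)}) = i_\ast g_\ast(\Omega_{\Sh(\Lb)})$ lies in the essential image of $i_\ast$ and is thus a $J$-sheaf. Conversely, if $\Lb$ is a $J$-sheaf one must show that its internal frame structure descends to $\Sh(\cat,J)$: this uses that $i_\ast$ creates limits, that the meet/join operations of $\Lb$ and the Frobenius/openness data are given by maps between $J$-sheaves (hence lie in $\Sh(\cat,J)$), and that $i$ being localic means $i_\ast$ preserves and reflects internal-locale structure. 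This gives a localic morphism $g \colon \Sh(\Lb) \to \Sh(\cat,J)$ with $g_\ast(\Omega_{\Sh(\Lb)}) = \Lb$; applying Theorem \ref{localicmorph} to $\Sh(\cat,J)$ and invoking the uniqueness clause yields $C_{p_\Lb} \simeq i \circ g$, producing the factorization.

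For the equivalence (\ref{shflem:3}) $\Leftrightarrow$ (\ref{shflem:4}), the comorphism-of-sites description $p_\Lb \colon (\cat \rtimes \Lb, K_\Lb) \to (\cat, J_{\rm triv})$ of $C_{p_\Lb}$ is the key. By the defining property of the Giraud topology recalled in \parencite[Proposition 2.1]{giraud}, the functor $p_\Lb$ is a comorphism of sites $(\cat \rtimes \Lb, K_\Lb) \to (\cat, J)$ if and only if $K_\Lb \supseteq J_{p_\Lb}$; and this upgrade of the comorphism along $J$ is precisely what produces a factorization of $C_{p_\Lb}$ through $\Sh(\cat,J) \hookrightarrow \sets^{\cat^{op}}$, by the functoriality of the assignment $F \mapsto C_F$ with respect to the target topology.

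The main obstacle is the careful verification in (\ref{shflem:2}) $\Rightarrow$ (\ref{shflem:4}) that ``$\Lb$ is an internal locale of $\sets^{\cat^{op}}$ and a $J$-sheaf'' really entails ``$\Lb$ is internally a locale of $\Sh(\cat,J)$'' — i.e., that no internal-locale data are lost or altered by viewing $\Lb$ through the inclusion $i$. Once this descent argument is in place (it is essentially the content of \parencite[Corollary C1.6.10]{elephant}), the remainder of the corollary consists of routine unravelling of Theorem \ref{localicmorph} and the Giraud-topology characterisation of factorizations through subtoposes.
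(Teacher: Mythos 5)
Your proposal is correct and follows essentially the same route the paper intends: the corollary is stated with only the preceding remark that the inclusion $\Sh(\cat,J) \rightarrowtail \sets^{\cat^{op}}$ is localic (plus the citations to \parencite[Corollary C1.6.10]{elephant} and the Giraud-topology characterisation), and your argument is precisely the elaboration of that remark via Theorem \ref{localicmorph} applied to both toposes. One small imprecision: in the step (\ref{shflem:2}) $\Rightarrow$ (\ref{shflem:4}), the descent of the internal-locale structure rests on $i$ being an \emph{inclusion} of a subtopos (so that $J$-sheaves form a full subcategory closed under limits and $i_\ast$ is full and faithful onto it), not on $i$ being localic per se — that is exactly the content of the cited Corollary C1.6.10, as you note.
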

	
	\paragraph{Internal locale morphisms.}  We will also need an external classification of internal locale morphisms.  Internal locale morphisms were identified in \parencite[Proposition VI.2.1]{JT}, while their connection with localic geometric morphisms is well known.  We will require also the link with morphisms of sites established in \cite{myselfintlocmorph}.

	\begin{df}[Proposition VI.2.1 \cite{JT}]
		{\rm
			An \emph{internal locale morphism} $\fb \colon \mathbb{L}_1 \to \mathbb{L}_2$, between internal locales $\mathbb{L}_1,\mathbb{L}_2\colon \cat^{op} \to \Frm_{\rm open}$ of the topos $\Sh(\cat,J)$, is a natural transformation $\fb^{-1} \colon \mathbb{L}_2 \to \mathbb{L}_1$ (as functors into $\sets$, or indeed $\Frm$) such that, for each object $c$ of $\cat$, $\fb^{-1}_c \colon \mathbb{L}_2(c)\to \mathbb{L}_1(c)$ is a frame homomorphism and, for each morphism $g \colon c \to d$ of $\cat$, the diagram
			\[\begin{tikzcd}
				\Lb_2(d) \ar{d}{\fb_d^{-1}} \ar[shift right]{r}[']{\Lb_2(g)} & \Lb_2(c) \ar[shift right]{l}[']{\exists_{\Lb_2(g)}} \ar{d}{\fb_c^{-1}} \\
				\Lb_1(d)\ar[shift right]{r}[']{\Lb_1(g)} & \Lb_1(c) \ar[shift right]{l}[']{\exists_{\Lb_1(g)}}
			\end{tikzcd}\]
			is a \emph{morphism of adjunctions}: that is, $\Lb_1(g) \circ \fb_d^{-1} = \fb_c^{-1} \circ \Lb_2(g)$ and $\exists_{\Lb_1(g)} \circ \fb_c^{-1} = \fb_d^{-1}\circ \exists_{\Lb_2(g)}$.
		}
	\end{df}
	
	\begin{prop}[Proposition 4.2 \& Proposition 4.3 \cite{myselfintlocmorph}]\label{prop:intlocmorph}
		Let $\Lb_1 , \Lb_2 \colon \cat^{op} \to \Frm_{\rm open}$ be internal locales of $\Sh(\cat,J)$.  The map that sends a morphism of internal locales $\fb \colon \Lb_1 \to \Lb_2$ to the morphism of sites 
		\[\id_\cat \rtimes \fb^{-1} \colon (\cat \rtimes \Lb_2,K_{\Lb_2}) \to (\cat \rtimes \Lb_1,K_{\Lb_1})\]
		and the map that sends a morphism of sites $G \colon (\cat \rtimes \Lb_2,K_{\Lb_2}) \to (\cat \rtimes \Lb_1,K_{\Lb_1})$ to the geometric morphism $\Sh(G) \colon \Sh(\Lb_2) \to \Sh(\Lb_1)$ induce a bijective correspondence between:
		\begin{enumerate}
			\item the internal locale morphisms $\fb \colon \Lb_1 \to \Lb_2$,
			\item the morphisms of sites $G \colon (\cat \rtimes \Lb_2,K_{\Lb_2}) \to (\cat \rtimes \Lb_1,K_{\Lb_1})$ for which the triangle
			\[\begin{tikzcd}[column sep = tiny]
				\cat \rtimes \Lb_2 \ar{rd}[']{p_{\Lb_2}}\ar{rr}{G} && \cat \rtimes \Lb_1 \ar{ld}{p_{\Lb_1}} \\
				& \cat &
			\end{tikzcd}\]
			commutes,
			\item the geometric morphisms $f \colon \Sh(\Lb_1) \to \Sh(\Lb_2)$ for which the triangle
			\[\begin{tikzcd}
				\Sh(\Lb_1) \ar{rr}{f}\ar{rd}[']{C_{p_{\Lb_1}}} && \Sh(\Lb_2)\ar{ld}{C_{p_{\Lb_2}}} \\
				&\Sh(\cat,J) &
			\end{tikzcd}\]
			commutes.
		\end{enumerate}
	\end{prop}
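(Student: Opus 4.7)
The plan is to establish the three-way bijection by going around a triangle, passing through (1) $\to$ (2) $\to$ (3) $\to$ (1), and verifying that the constructions composed one way are inverses of the constructions composed the other way. By Theorem \ref{localicmorph}, the object-level correspondence already holds, so the arguments focus on morphisms.

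For the direction (1) $\to$ (2), I would define $G := \id_\cat \rtimes \fb^{-1}$, sending $(c,V) \mapsto (c,\fb^{-1}_c(V))$. Functoriality of $G$ follows from naturality of $\fb^{-1}$: an arrow $f \colon (c,V) \to (d,W)$ in $\cat \rtimes \Lb_2$ witnesses $V \leqslant \Lb_2(f)(W)$, so applying $\fb^{-1}_c$ gives $\fb^{-1}_c(V) \leqslant \fb^{-1}_c(\Lb_2(f)(W)) = \Lb_1(f)(\fb^{-1}_d(W))$ by the morphism-of-adjunctions condition, yielding the required arrow in $\cat \rtimes \Lb_1$. The crucial part is then to check the four conditions of Definition \ref{morphismofsites}. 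Condition (i) is the key one: a $K_{\Lb_2}$-covering sieve at $(d,W)$ corresponds to the equation $W = \bigvee_{f \in S} \exists_f U$, and its image is a $K_{\Lb_1}$-cover precisely because $\fb^{-1}_d$ preserves joins (as a frame homomorphism) and commutes with the left adjoints $\exists_{\Lb_i(g)}$ by the morphism-of-adjunctions identity $\fb^{-1}_d \circ \exists_{\Lb_2(g)} = \exists_{\Lb_1(g)} \circ \fb^{-1}_c$. Conditions (ii)--(iv) reduce to near-triviality because $G$ is the identity on the base $\cat$: for condition (iii), given two arrows $g_j \colon (c,V) \to (c_j, \fb^{-1}_{c_j}(W_j))$ one uses the object $(c, \Lb_2(g_1)(W_1) \wedge \Lb_2(g_2)(W_2))$ in $\cat \rtimes \Lb_2$ and meet-preservation plus naturality of $\fb^{-1}$ to produce the required factorisation, with the covering family being the singleton identity; condition (iv) is analogous.

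For the passage (2) $\to$ (3), $\Sh(G)$ is the induced geometric morphism of the morphism of sites, and commutativity of the triangle over $\Sh(\cat,J)$ follows from Lemma \ref{fibrcomoprhmorphthm} applied to the evident square of functors in which $\id_\cat$ is regarded simultaneously as both a morphism and a comorphism of sites $(\cat,J_{p_{\Lb_2}}) \to (\cat,J_{p_{\Lb_1}})$; the triangle over $\Sh(\cat,J)$ is then obtained by postcomposition with the common factorisation $\Sh(\Lb_i) \to \Sh(\cat,J)$ guaranteed by Corollary \ref{intlocoversites}. Conversely, given a geometric morphism $f$ over $\Sh(\cat,J)$, the direct image of $f$ restricts on subobject lattices of the subterminals coming from $\cat$ to produce the natural transformation $\fb^{-1}$, reversing the construction. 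To reverse (2) $\to$ (1) directly, given $G$ over $\cat$ one recovers $\fb^{-1}_c$ by setting $G(c,V) = (c,\fb^{-1}_c(V))$; monotonicity is immediate from the action on vertical arrows, naturality follows from $G$ preserving the cartesian lifts of the fibration $p_{\Lb_2}$ (since it is an identity on the base, any such lift is sent to the corresponding cartesian lift), and the frame homomorphism / morphism-of-adjunctions properties are extracted from conditions (i) and (iii) on the singleton-identity covering, where the existence of a factorisation becomes an actual equality because the base component is forced to be an identity.

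I expect the main obstacle to be the back-and-forth verification that these three constructions are mutually inverse, particularly showing that the extraction of $\fb^{-1}$ from a general morphism of sites $G$ produces a genuine frame homomorphism (not merely a monotone map) and satisfies the morphism-of-adjunctions identity exactly. The cleanest workaround is to argue that one need only verify the triangle of constructions closes on one pair; the remaining coincidences then follow from Theorem \ref{localicmorph} together with the fact that each of the three sets of morphisms embeds faithfully into the others via the maps described.
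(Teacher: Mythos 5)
This proposition is recalled from \cite{myselfintlocmorph} (Propositions 4.2 and 4.3) and the paper gives no proof of it, so there is nothing in-text to compare against; I can only assess your argument on its own terms. Your directions (1) $\to$ (2) and (2) $\to$ (3) are essentially sound: the verification that $\id_\cat \rtimes \fb^{-1}$ satisfies the four conditions of Definition \ref{morphismofsites} works as you describe (cover preservation from join-preservation plus $\fb^{-1}_d \circ \exists_{\Lb_2(g)} = \exists_{\Lb_1(g)} \circ \fb^{-1}_c$, conditions (ii)--(iv) via singleton identity covers and meet-preservation), and the triangle over $\Sh(\cat,J)$ follows from Lemma \ref{fibrcomoprhmorphthm} over $\sets^{\cat^{op}}$ together with essential uniqueness of factorisations through the subtopos inclusion.

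The genuine gap is in recovering an internal locale morphism from a morphism of sites $G$ over $\cat$, which is where all the content of the proposition lives. Your claim that $G$ "preserves the cartesian lifts of the fibration $p_{\Lb_2}$ since it is an identity on the base" is false as stated: a functor commuting with the projections need not send cartesian arrows to cartesian arrows (take $\cat$ with a single non-identity arrow $g \colon c \to d$, $\Lb_1=\Lb_2$ constant at $\2$ with identity transition map, $\fb^{-1}_d = \id$ and $\fb^{-1}_c$ constant at $0$: this commutes with the projections but sends the cartesian lift $(c,1)\to(d,1)$ to the non-cartesian $(c,0)\to(d,1)$). Functoriality of $G$ only yields the lax inequality $\fb^{-1}_c(\Lb_2(g)(W)) \leqslant \Lb_1(g)(\fb^{-1}_d(W))$; the reverse inequality (strict naturality), finite-meet preservation, and the identity $\exists_{\Lb_1(g)}\circ\fb^{-1}_c = \fb^{-1}_d\circ\exists_{\Lb_2(g)}$ each require a real argument extracted from the covering conditions or, more cleanly, from the induced geometric morphism (strict naturality is most naturally seen by identifying $\fb^{-1}_c$ with $\Sh(G)^*$ restricted to $\Sub(C_{p_{\Lb_2}}^*\yo(c))$ and using that inverse images preserve pullbacks). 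Your proposed fallback does not repair this: Theorem \ref{localicmorph} is a statement about objects only (internal locales versus localic geometric morphisms) and says nothing about morphisms, and "each of the three sets embeds faithfully into the others via the maps described" presupposes exactly the well-definedness and injectivity you are trying to prove. Note also that $K_{\Lb_1}$ is not subcanonical, so even faithfulness of $G \mapsto \Sh(G)$ on this class of morphisms of sites is not free. A correct closing of the loop should establish (3) $\to$ (1) via the classical correspondence between geometric morphisms over the base and internal locale morphisms, and then verify the two composites are identities.
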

	
	Let $\topos$ be a Grothendieck topos.  We will use ${\bf Loc}(\topos)$ to denote the 2-category of internal locales of $\topos$, their internal locale morphisms, while 2-cells are equivalently 2-cells between the corresponding geometric morphisms
	\[\begin{tikzcd}
		{{\bf Sh}(\mathbb{L}_1)} && {{\bf Sh}(\mathbb{L}_2)} \\
		\\
		& {{\bf Sh}(\mathcal{C},J),}
		\arrow[""{name=0, anchor=center, inner sep=0}, "f"', curve={height=18pt}, from=1-1, to=1-3]
		\arrow["{C_{p_{\mathbb{L}_1}}}"', from=1-1, to=3-2]
		\arrow["{C_{p_{\mathbb{L}_2}}}", from=1-3, to=3-2]
		\arrow[""{name=1, anchor=center, inner sep=0}, "g", curve={height=-18pt}, from=1-1, to=1-3]
		\arrow["\alpha", shorten <=3pt, shorten >=3pt, Rightarrow, from=1, to=0]
	\end{tikzcd}\]
	i.e. a natural transformation $g^\ast \to f^\ast$ between the inverse image functors, or natural transformations
	\[\begin{tikzcd}
		{(\mathcal{C}\rtimes \mathbb{L}_2 ,K_{\mathbb{L}_2})} && {(\mathcal{C}\rtimes \mathbb{L}_1 ,K_{\mathbb{L}_1})}
		\arrow[""{name=0, anchor=center, inner sep=0}, "{\id_{\mathcal{C}} \rtimes \mathfrak{f}^{-1}}"', curve={height=12pt}, from=1-1, to=1-3]
		\arrow[""{name=1, anchor=center, inner sep=0}, "{\id_{\mathcal{C}} \rtimes \mathfrak{g}^{-1}}", curve={height=-12pt}, from=1-1, to=1-3]
		\arrow["{\alpha'}", shorten <=3pt, shorten >=3pt, Rightarrow, from=1, to=0]
	\end{tikzcd}\]
	between the corresponding morphisms of sites (see \parencite[Theorem 4.4]{myselfintlocmorph}).

	
	
	\section{Doctrines and doctrinal sites}\label{sec:docsites}

	Doctrines have come in many flavours since their original introduction by Lawvere in \cite{adjoint}.  For us, a doctrine is simply any functor $P \colon \cat^{op} \to \PreOrd$, where there are no restrictions on $\cat$ (these are also called fibred preorders).  Being fibred categories, doctrines naturally form a 2-category ${\bf Doc}$ as follows.
	\begin{enumerate}
		\item The objects of ${\bf Doc}$ are doctrines $P \colon \cat^{op} \to \PreOrd$.
		\item The arrows of ${\bf Doc}$ between two doctrines $P \colon \cat^{op} \to \PreOrd$ and $Q \colon \dcat^{op} \to \PreOrd $ consist of pairs $(F,a)$ where $F$ is a functor $F \colon \cat \to \dcat$ and $a$ is a natural transformation $a \colon P \to Q \circ F^{op}$.
		\item A 2-cell $\alpha \colon (F,a) \to (F',a')$ is a natural transformation $\alpha \colon F \to F'$ such that $a_c(x) \leqslant Q(\alpha_c)(a'_c(x))$ for each object $c \in \cat$ and $x \in P(c)$.
	\end{enumerate}

	As currently formulated, an arbitrary doctrine does not interpret any particular logical syntax.  To model a certain logical syntax, we can require a doctrine to possess the appropriate categorical structure.
	
	\begin{exs}\label{exs:doctrines}
		{\rm
			\begin{enumerate}
				\item\label{ex:primdoc} A \emph{primary doctrine} is a doctrine $P \colon \cat^{op} \to \PreOrd$ for which $\cat$ is cartesian and $P$ factors through the subcategory ${\bf MSLat} \subseteq \PreOrd$ of meet-semilattices and meet-semilattice homomorphisms (for us, a meet-semilattice has all finite meets, including the empty one - i.e. a top element).  Primary doctrines thus carry the necessary categorical structure to interpret the symbols $\{\, \land , \, \top\,\}$.  A morphism $(F,a) \colon P \to Q$ in ${\bf Doc}$, between primary doctrines, is said to be a morphism of primary doctrines if the finite limit data are preserved, i.e. $F \colon \cat \to \dcat$ is left exact and $a_c \colon P(c) \to Q(F(c))$ is a meet-semilattice homomorphism for each $c \in \cat$.  We denote by ${\bf PrimDoc}$ the 2-full 2-subcategory of ${\bf Doc}$ of primary doctrines and primary doctrine morphisms (by 2-full we mean that the 2-subcategory is full on 2-cells).
				
				\item An \emph{existential doctrine} is a primary doctrine where in addition, for each arrow $d \xrightarrow{f} c\in \cat$, $P(f)$ has a left adjoint $\exists_{P(f)}$ such that both the Frobenius and Beck-Chevalley conditions are satisfied, and so is able to interpret the logical symbols $\{\,\land, \, \top, \, \exists, \, =\,\}$.  A morphism of existential doctrines $(F,a) \colon P \to Q$ is one which preserves the interpretation of these symbols, i.e. a morphism of primary doctrines such that the square
				\[\begin{tikzcd}
					P(c)  \ar{d}{a_c} & \ar{l}[']{\exists_{P(f)}} P(d) \ar{d}{a_d} \\
					Q(F(c)) & \ar{l}[']{\exists_{Q(f)}} Q(F(d))
				\end{tikzcd}\]
				commutes for each arrow $d \xrightarrow{f} c$ of $\cat$.  We denote the resultant 2-full 2-subcategory of ${\bf Doc}$ by ${\bf ExDoc}$.
				
				\item A \emph{coherent doctrine} is an existential doctrine that takes values in the category ${\bf DLat}$ of distributive lattices and lattice homomorphisms, and thus interprets the symbols $\{\,\land, \, \top, \, \exists, \, =, \, \lor, \, \bot \,\}$.  Morphisms of coherent doctrines are those morphisms $(F,a)$ of existential doctrines where $a$ is a natural transformation of ${\bf DLat}$-valued functors.  We denote the resultant 2-full 2-subcategory of ${\bf Doc}$ by ${\bf CohDoc}$.
				
				
				\item A \emph{Boolean (classical) doctrine} is a coherent doctrine that takes values in the category ${\bf Bool}$ of Boolean algebras and Boolean algebra homomorphisms, and for each arrow $d \xrightarrow{f} c \in\cat$, $P(f)$ also has a right adjoint $\forall_{P(f)}$.  A Boolean doctrine interprets classical first-order logic.  A morphism of Boolean doctrines is a morphism $(F,a)$ of coherent doctrines that preserves the interpretation of classical first order logic.  We denote the resultant 2-full 2-subcategory of ${\bf Doc}$ by ${\bf BoolDoc}$.
			\end{enumerate}
		}
	\end{exs}
	
	\begin{rem}\label{subscriptcart}{\rm
			We have required existential and coherent doctrines to be examples of primary doctrines, but we shall observe in Example \ref{exmorecoherentdoctrines} how one might define existential doctrines, etc., over non-cartesian base categories.
	}\end{rem}

	What does it mean to say a doctrine `interprets' a certain logical theory?  The precise relationship between doctrines and logical theories was elucidated in Theorem 6.1 and Theorem 6.2 in \cite{seely} -- in short, given a theory $\theory$ in a certain fragment of logic, the doctrine $F^\theory$ from Example \ref{prototypical} is of the appropriate form (e.g. if $\theory$ is a coherent theory, i.e. a theory in the fragment of intuitionistic first order logic containing the symbols $\{\,\land, \, \top, \, \exists, \, =, \, \lor, \, \bot \,\}$, then $F^\theory$ is a coherent doctrine) while conversely, given a doctrine $P \colon \cat^{op} \to \PreOrd$ of a certain form, e.g. a Boolean doctrine, there exists a theory $\theory'$ of the appropriate fragment of logic, in this case classical first-order logic, such that there exists an isomorphism $P \cong F^{\theory'}$. 
	
	While a particular fragment of logical syntax can be effectively modelled by requiring certain categorical structure on a case to case basis, to describe the geometric completion of a doctrine we will need a unified method to keep track of which fragment of logical syntax our doctrine is intended to interpret.  To that end, in the following subsections we discuss the use of Grothendieck topologies in this role.  We illustrate, as is well known, that all the classes of doctrines mentioned in Examples \ref{exs:doctrines}
	can be entirely described by the assignment of certain Grothendieck topologies to each doctrine in the class.
	
	We first motivate this switch from categorical syntax to Grothendieck topologies by discussing models of doctrines in \S \ref{subsec:desired}, before defining the 2-category ${\bf DocSites}$ of doctrinal sites in \S \ref{subsec:docsites}, as well as introducing the 2-subcategory ${\bf GeomDoc} \subseteq {\bf DocSites}$ of geometric doctrines.

	
	\subsection{Desired models}\label{subsec:desired}

	In addition to the syntactic data of a logical theory $\theory$, represented by the doctrine $F^\theory \colon {\bf Con}_\Sigma \to \Poset$, there is also an intuitive notion of the \emph{semantics} of the theory $\theory$, consisting of a category of models and model homomorphisms.  Similarly, one can define models and model homomorphisms for doctrines interpreting certain syntax.  Importantly, the notion of model of a doctrine depends on the syntax we wish that doctrine to interpret.
	
	\begin{df}[Definition 3.5 \cite{kockreyes}]{\rm
			A \emph{model} of a (primary/existential/coherent/
			Boolean) doctrine $P$ is a (primary/existential/coherent/
			Boolean) doctrine morphism
			\[(F,a) \colon P \to \mathscr{P} ,\]
			where $\mathscr{P} \colon \sets^{op} \to {\bf C}\Bool$ is the powerset doctrine.
	}\end{df}
	
	We define the category of models ${\bf Mod}_{\rm Prim}(P)$ of a primary doctrine $P$ (respectively, ${\bf Mod}_{\rm Ex}(P)$ of a coherent doctrine $P$, etc.) as the category $\Hom_{\bf PrimDoc}(P,\mathscr{P})$ of primary doctrine morphisms $P \to \mathscr{P}$ (resp., $\Hom_{\bf ExDoc}(P,\mathscr{P})$, etc.).  
	
	Let $\theory$ be a theory over a signature $\Sigma$.  The models of $\theory$ are easily seen to coincide with the models of $F^\theory$, considered as a doctrine in the appropriate class: for a model $(F,a) \colon F^\theory \to \mathscr{P}$, the functor $F \colon {\bf Con}_\Sigma \to \sets$ picks out the interpretation of each context in the model, while $a_{\vec{x}} \colon F^\theory(\vec{x}) \to \mathscr{P}(F(\vec{x}))$ sends a proposition in context $\vec{x}$ to its interpretation as a subset of $F(\vec{x})$.  
	
	Note that if $P$ is, say, a coherent doctrine, then there is a chain of inclusions
	\[{\bf Mod}_{\rm Coh}(P) \subseteq {\bf Mod}_{\rm Ex}(P) \subseteq {\bf Mod}_{\rm Prim}(P).\]
	We observe further that, if $P$ is a Boolean doctrine, there is an equivalence
	\[{\bf Mod}_{\rm Coh}(P) \simeq {\bf Mod}_{\rm Bool}(P).\]
	This is because a doctrine morphism $(F,a) \colon P \to \mathscr{P}$ is coherent if and only if it is Boolean.  In one direction, every morphism of Boolean doctrines is necessarily coherent.  Conversely, suppose $(F,a) \colon P\to \mathscr{P}$ is a coherent doctrine morphism.  We deduce that, since complements are unique (see \parencite[\S 4.13]{order}), and we have that, for each $x \in P(c)$,
	\[a_c(x \land \neg x) = a_c(\bot) = \bot, \ \ a_c(x \lor \neg x) = a_c(1) = 1,\]
	the lattice homomorphism $a_c \colon P(c) \to \mathscr{P}(F(c))$ must also preserve negation and hence is a Boolean aglebra homomorphism.  Then, using that, for a Boolean doctrine, $\forall_{P(f)} = \neg \exists_{P(f)} \neg$ for each arrow $d \xrightarrow{f} c$ of $\cat$, we conclude that $(F,a)$ also preserves the interpretation of universal quantification, completing the equivalence.
	
	\paragraph{Models as flat functors.}  A topos-theorist may be more familiar with models of theories in terms of \emph{continuous flat functors}.  We sketch the equivalence of the two approaches for a primary doctrine $P \colon\cat^{op} \to {\bf MSLat}$.  Let $(F,a) \colon P \to \mathscr{P}$ be a model of a doctrine à la Kock and Reyes as above.  We can construct a functor
	\[F \rtimes a \colon \cat \times P \to \sets\]
	where an object $(c,x) \in \cat \rtimes P$ is sent to the subset $a_c(x) \subseteq F(c)$ and an arrow $(c,x) \xrightarrow{f} (d,y)$ is sent to $F(f)|_{a_c(x)} \colon a_c(x) \to a_d(y)$.  If $P \colon \cat^{op} \to \sets$ is a primary doctrine, the functors $G \colon \cat \rtimes P \to \sets$ of the form $F \rtimes a $, for a model
	\[(F,a) \colon P \to \mathscr{P}\]
	of $P$ as a primary doctrine, are precisely the left exact functors (i.e. finite limit preserving), which we also call \emph{flat} functors.  It is easily checked that $F \rtimes a \colon \cat \rtimes P \to \sets$ is flat for each model $(F,a) \colon P \to \mathscr{P}$ of $P$ as a primary doctrine.  Conversely, given a flat functor $G \colon \cat \rtimes P \to \sets$, we can construct a model $(F^G,a^G) \colon P \to \mathscr{P}$ of $P$ by setting $F^G \colon \cat \to \sets$ as the functor that sends $c \in \cat$ to $G(c,\top_c) \in \sets$ (where $\top_c$ is the top element in $P(c)$) and $a^G_c \colon P(c) \to \mathscr{P}(G(c,\top_c))$ sends $x \in P(c)$ to $G(c,x) \subseteq G(c,\top_c)$.  That these constructions extend to an equivalence is easily shown.
	
	\begin{prop}[cf. Theorem D1.4.7 \cite{elephant}]
		For each primary doctrine $P \colon \cat^{op} \to {\bf MSLat}$, there is an equivalence of categories
		\begin{equation}\label{eq:flatandmod2}{\bf Flat}(\cat \rtimes P,\sets) \simeq {\bf Mod}_{\rm Prim}(P)\end{equation}
		where ${\bf Flat}(\cat \rtimes P,\sets)$ represents the category of flat functors $G \colon \cat \rtimes P \to \sets$ and natural transformations between these.
	\end{prop}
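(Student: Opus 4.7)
The plan is to check that the two assignments sketched just above the statement---sending $(F,a) \colon P \to \mathscr{P}$ to $F \rtimes a \colon \cat \rtimes P \to \sets$, and a flat $G \colon \cat \rtimes P \to \sets$ to the pair $(F^G, a^G)$ with $F^G(c) = G(c, \top_c)$ and $a^G_c(x) = G(c,x) \subseteq G(c,\top_c)$---extend to mutually quasi-inverse functors. First I would verify that the constructions are well-defined on objects. For flatness of $F \rtimes a$ it suffices to show its category of elements is filtered; this follows because $F \colon \cat \to \sets$ is left exact and each $a_c$ preserves finite meets and the top element, so cones over finite diagrams in $\cat \rtimes P$ can be built by first taking cones in $\cat$ and then restricting the fibrewise predicates via the meet of their $a$-images. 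Conversely, I would show that $F^G$ is left exact on $\cat$, using that the section $c \mapsto (c,\top_c)$ preserves finite limits in $\cat \rtimes P$ (since $\top$ is the top element of each fibre and is preserved by reindexing), and that $a^G_c$ preserves finite meets using the identity $(c, x \wedge y) = (c,x) \times_{(c, \top_c)} (c,y)$ in $\cat \rtimes P$, which flatness of $G$ carries to a pullback of the corresponding subsets of $G(c, \top_c)$.

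Second, I would verify the two constructions are mutually inverse up to natural isomorphism. Starting from $(F,a)$: since $a_c$ preserves $\top$ so that $a_c(\top_c) = F(c)$, one gets $F^{F \rtimes a}(c) = (F \rtimes a)(c, \top_c) = F(c)$ and $a^{F \rtimes a}_c(x) = a_c(x)$ on the nose. Starting from $G$: the identity $(F^G \rtimes a^G)(c,x) = a^G_c(x) = G(c,x)$, together with the matching action on morphisms, will yield a natural isomorphism $F^G \rtimes a^G \cong G$.

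Finally I would extend both assignments to 2-cells: a 2-cell $\alpha \colon (F,a) \to (F',a')$ in ${\bf PrimDoc}$ is a natural transformation $\alpha \colon F \to F'$ such that $a_c(x) \subseteq F(\alpha_c)^{-1}(a'_c(x))$, i.e.\ each $\alpha_c$ restricts to a map $a_c(x) \to a'_c(x)$, which is precisely the data of a natural transformation $F \rtimes a \to F' \rtimes a'$. Conversely, any natural transformation of flat functors $G \to G'$ restricted to objects of the form $(c, \top_c)$ yields the corresponding transformation $F^G \to F^{G'}$, and its naturality at the canonical arrows $(c,x) \to (c, \top_c)$ enforces the 2-cell condition. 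I expect the main technical point to be the identification of finite meets in a fibre $P(c)$ with pullbacks in $\cat \rtimes P$ along the arrows to $(c, \top_c)$: this is what allows flatness of $G$ to translate into $a^G_c$ preserving finite meets, and symmetrically allows the left exactness of $F$ together with meet-preservation of $a_c$ to yield the flatness of $F \rtimes a$.
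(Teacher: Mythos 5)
Your proposal is correct and follows exactly the route the paper intends: it uses the same two constructions $(F,a)\mapsto F\rtimes a$ and $G\mapsto (F^G,a^G)$ sketched immediately before the statement, and fills in the verifications (flatness, meet-preservation via the identification of $(c,x\wedge y)$ with the pullback of $(c,x)$ and $(c,y)$ over $(c,\top_c)$, mutual inverseness, and the correspondence of 2-cells) that the paper dismisses as "easily shown". No gaps.
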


	\paragraph{Models as continuous flat functors.}
	
	If $P \colon \cat^{op} \to {\bf MSLat}$ interprets a richer syntax, e.g. $P$ is an existential doctrine, restricting to the (full) subcategory ${\bf Mod}_{\rm Ex}(P) \subseteq {\bf Mod}_{\rm Prim}(P)$ of models of $P$ as an existential doctrine coincides with restricting ${\bf Flat}(\cat \rtimes P,\sets)$ to the full subcategory of continuous flat functors, relative to a certain Grothendieck topology on the category $\cat \rtimes P$.
	
	Recall that, for a cartesian category $\dcat$ endowed with a Grothendieck topology $K$, a flat functor 
	\[G \colon \dcat \to \sets\]
	is said to be $K$-\emph{continuous} if each $K$ covering sieve is sent to a jointly surjective family of functions.  The classes of doctrines we have seen in Examples \ref{exs:doctrines} have their models expressed by continuous flat functors for certain Grothendieck topologies.  Since these equivalences are relatively well-known, and can easily be obtained by restricting the equivalence (\ref{eq:flatandmod2}), we omit the proof.
	
	\begin{prop}[cf. Theorem D3.1.4 \& Theorem D3.1.9 \cite{elephant}]\label{prop:exandcohmodels}
		Let $P \colon \cat^{op} \to {\bf MSLat}$ be a primary doctine.
		\begin{enumerate}
			\item\label{exmodelsi} If $P$ is an existential doctrine, there is an equivalence
			\[J_{\rm Ex}\text{-}{\bf Flat}(\cat \rtimes P,\sets) \simeq {\bf Mod}_{\rm Ex}(P),\]
			where $J_{\rm Ex}$ is the Grothendieck topology on $\cat \rtimes P$ generated by covering families of the form
			\[\begin{tikzcd}
				(d,x) \ar{r}{g} &(c, \exists_g x),
			\end{tikzcd}\]
			for $x \in P(d)$ and $d \xrightarrow{g} c \in \cat$.
			\item\label{cohmodelsi} If $P$ is a coherent doctrine, there is an equivalence
			\[J_{\rm Coh}\text{-}{\bf Flat}(\cat \rtimes P,\sets) \simeq {\bf Mod}_{\rm Coh}(P),\]
			where $J_{\rm Coh}$ is the Grothendieck topology on $\cat \rtimes P$ generated by covering families of the form
			\[\begin{tikzcd}
				(d,x) \ar{r}{g} & (c, \exists_g x \lor \exists_h y ) & \ar{l}[']{h} (e,y),
			\end{tikzcd}\]
			for $x \in P(d)$, $y \in P(d)$, and arrows $d \xrightarrow{g} c , e \xrightarrow{h} c \in \cat$.
			\item If $P$ is a Boolean doctrine, there is an equivalence
			\[J_{\rm Coh}\text{-}{\bf Flat}(\cat \rtimes P,\sets) \simeq {\bf Mod}_{\rm Coh}(P) \simeq {\bf Mod}_{\rm Bool}(P).\]
		\end{enumerate}
	\end{prop}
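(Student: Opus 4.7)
The plan is to obtain each of the three equivalences by restricting the equivalence ${\bf Flat}(\cat \rtimes P,\sets) \simeq {\bf Mod}_{\rm Prim}(P)$ from the preceding proposition along the relevant fully faithful inclusions. In one direction, for a morphism $(F,a) \colon P \to \mathscr{P}$ of the appropriate richer type, I check that $F \rtimes a$ is continuous with respect to the specified topology. In the other direction, given a continuous flat functor $G$, I verify that the primary doctrine morphism $(F^G,a^G)$ extracted from $G$ under the primary equivalence automatically preserves the additional logical structure. The connective tissue across all three cases is the observation that in the powerset doctrine $\mathscr{P}$, the left adjoint $\exists_{F(g)}$ is the direct image $F(g)(-)$, and $\lor$ is set-theoretic union.

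For (i), preservation of existential quantifiers by $(F^G,a^G)$ is the equality
\[F^G(g)\bigl(a^G_d(x)\bigr) = a^G_c(\exists_g x)\]
for every arrow $d \xrightarrow{g} c$ and $x \in P(d)$. Under the construction $G = F^G \rtimes a^G$, the image of the generating singleton $(d,x) \xrightarrow{g} (c,\exists_g x)$ is precisely the restricted map $F^G(g)|_{a^G_d(x)} \colon a^G_d(x) \to a^G_c(\exists_g x)$, whose surjectivity is the displayed equation. Since the generating families determine $J_{\rm Ex}$-continuity, this gives (i).

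For (ii), the same template applies to the binary generators: the joint surjectivity of
\[\{\,(d,x) \xrightarrow{g} (c,\exists_g x \lor \exists_h y), \ (e,y) \xrightarrow{h} (c,\exists_g x \lor \exists_h y)\,\}\]
is equivalent, after invoking preservation of $\exists$ (obtained by specialising the binary cover to $h=g$, $y=x$, which reproduces the $J_{\rm Ex}$-covers), to $a^G_c(\exists_g x \lor \exists_h y) = a^G_c(\exists_g x) \cup a^G_c(\exists_h y)$, i.e.\ preservation of binary joins. Preservation of $\bot$ corresponds to the nullary instance of the natural finitary generating pattern $\{\,(d_i,x_i) \xrightarrow{g_i} (c,\bigvee_i \exists_{g_i} x_i) \mid i \in I\,\}$, whose empty family is a cover on $(c,\bot_c)$ and forces $a^G_c(\bot_c) = \emptyset$. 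Combined with preservation of finite meets and equality already guaranteed by flatness via the primary equivalence, this identifies $J_{\rm Coh}$-continuous flat functors with coherent doctrine morphisms, giving (ii). Part (iii) is then immediate by composing (ii) with the equivalence ${\bf Mod}_{\rm Coh}(P) \simeq {\bf Mod}_{\rm Bool}(P)$ established in the discussion preceding the proposition.

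The main obstacle I anticipate is bookkeeping in (ii): one must confirm that the generators of $J_{\rm Coh}$, interpreted across all finite arities, really capture preservation of the full coherent signature $\{\,\land,\top,\exists,=,\lor,\bot\,\}$, and not merely the binary join. Preservation of $\land, \top, =$ is inherited from the primary equivalence via flatness; preservation of $\exists$ is subsumed by the unary specialisation of the binary generator; and preservation of $\bot$ requires the nullary reading outlined above. Once these pieces are assembled, the three equivalences drop out as restrictions of the primary one, and the functorial naturality in $(F,a)$ versus $G$ is inherited likewise.
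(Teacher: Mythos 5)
Your proof is correct and follows exactly the route the paper takes: the paper explicitly omits the proof, remarking only that the equivalences ``can easily be obtained by restricting the equivalence (\ref{eq:flatandmod2})'', which is precisely the restriction argument you carry out, with the translation of continuity on generators into preservation of $\exists$ (via surjectivity of $F(g)|_{a_d(x)}$) and of $\lor$ (via the specialisation $g=h=\id_c$) being the standard computations. Your observation that the nullary (empty) covering family on $(c,\bot_c)$ must be read into the generators of $J_{\rm Coh}$ in order to force $a^G_c(\bot_c)=\emptyset$ is a genuine and necessary refinement of the literal statement, since preservation of $\bot$ does not follow from the binary generators alone.
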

	



\paragraph{Desired models.}  Since the models of a doctrine $P \colon \cat^{op} \to \PreOrd$ that one wishes to consider depend on the syntax that doctrine is intended to interpret, in what follows we will be deliberately vague and make reference only to the \emph{desired models} of $P$, denoted ${\bf Mod}(P)$, where the category ${\bf Mod}(P)$ is intended to have been intuitively defined, and only clarify as to what the desired models are when needed.


\subsection{Doctrinal sites}\label{subsec:docsites}

Since we wish to exploit the unified treatment of doctrines and their desired models afforded by assigning a Grothendieck topology, we elect to work with the category of \emph{doctrinal sites}.  We sketch that ${\bf PrimDoc}$, ${\bf ExDoc}$, ${\bf CohDoc}$ and ${\bf BoolDoc}$ form 1-full 2-subcategories of the category of doctrinal sites (by 1-full, we mean that the 2-subcategory is full on both 1-cells and 2-cells).  Finally, the category of geometric doctrines is also defined.

Let $\cat$ and $\dcat$ be categories.  If $\cat$ is non-cartesian, we say that a functor $F \colon \cat \to \dcat$ is \emph{flat} if it defines a morphism of sites
\[F \colon (\cat, J_{\rm triv}) \to (\dcat,J_{\rm triv})\]
when both $\cat$ and $\dcat$ are endowed with the trivial topology.  We note that this generalises the definition of flat as synonymous with left exact we have used previously.  A flat functor $F \colon \cat \to \dcat$ induces a geometric morphism $\Sh(F)$ such that the square
\[\begin{tikzcd}
\cat \ar{r}{F} \ar{d}{\yo_\cat} & \dcat \ar{d}{\yo_\dcat} \\
\sets^{\cat^{op}} \ar{r}{\Sh(F)^\ast} & \sets^{\dcat^{op}}
\end{tikzcd}\]
commutes, where $\yo_\cat$ and $\yo_\dcat$ are the Yoneda embeddings which both preserve and reflect limits.  Hence, a flat functor $F \colon \cat \to \dcat$ preserves any finite limits that exist in $\cat$ (cf. \parencite[Corollary VII.6.4]{SGL}).

\begin{df}\label{df:docsites}{\rm 
\begin{enumerate}
\item A \emph{doctrinal site} (also called a \emph{fibred preorder site} in \cite{fibredsites}) consists of a doctrine 
\[P \colon \cat^{op} \to \PreOrd\]
and a Grothendieck topology $J$ on the category $\cat \rtimes P$.
\item The \emph{2-category of doctrinal sites}, denoted by ${\bf DocSites}$, has doctrinal sites as objects.  An arrow 
\[(P,J) \to (Q,K)\]
of ${\bf DocSites}$ is a pair $(F,a)$ consisting of a \emph{flat} functor $F \colon \cat \to \dcat$ and a natural transformation $a \colon P \to Q \circ F$ such that the induced functor
\[F \rtimes a \colon (\cat \rtimes P,J) \to (\dcat \rtimes Q,K)\]
is a morphism of sites.  A 2-cell between two morphisms of doctrines $(F,a),(F',a') \colon (P,J) \to (Q,K)$ is a natural transformation $\alpha \colon F \to F'$ such that $a_c(x) \leqslant Q(\alpha_c)(a'_c(x))$ for each object $c \in \cat$ and $x \in P(c)$.
\end{enumerate}
}\end{df}

We could have also chosen natural transformations $F \rtimes a \to F' \rtimes a'$ as the 2-cells of ${\bf DocSites}$.  We observe that, in the non-pathological cases, the two coincide.  Firstly, every 2-cell $\alpha \colon (F,a) \to (F',a')$ of ${\bf DocSites}$ induces a natural transformation $\breve{\alpha} \colon F \rtimes a \to F' \rtimes a'$, with components ${\breve{\alpha}_{(c,x)} \colon (F(c),a_c(x)) \to (F'(c),a'_c(x))}$ named by the arrows $\alpha_c \colon F(c) \to F'(c)$.  Conversely, if $P \colon \cat^{op} \to \PreOrd$ has non-empty fibres ({$P(c) \neq \emptyset$} for all $c \in \cat$), then any natural transformation $\beta \colon F \rtimes a \to F' \rtimes a'$ yields a natural transformation $\beta' \colon F\to F'$ for which $a_c(x) \leqslant Q(\beta'_c)(a'_c(x))$ by taking $\beta'_c \colon F(c) \to F'(c)$ as the $\dcat$-arrow $\beta_{(c,x)} \colon (F(c),a_c(x)) \to (F'(c),a'_c(x))$, for some $x \in P(c)$.

\begin{ex}{\rm
Although the definition of a morphism of doctrinal sites may appear initially unmotivated within the context of standard doctrine theory, we note that, for many of the examples of doctrines that we have encountered, the definition coincides with the pre-existing notions we have for morphisms of doctrines.  

If $P \colon \cat^{op} \to {\bf MSLat}$ and $Q \colon \dcat^{op} \to {\bf MSLat}$ are both primary doctrines, then a morphism
\[(P,J) \xrightarrow{(F,a)} (Q,K) \in {\bf DocSites}\]
of doctrinal sites is a morphism of primary doctrines in the sense of Example \ref{exs:doctrines}\ref{ex:primdoc} such that
\[F \rtimes a \colon (\cat \rtimes P,J) \to (\dcat \rtimes Q,K)\]
is cover preserving.  In particular, the morphisms $(P,J_{\rm triv}) \to (Q,J_{\rm triv})$ of ${\bf DocSites}$ are precisely morphisms of primary doctrines.  Thus, there exists a full and faithful 2-embedding
\[{\bf PrimDoc} \hookrightarrow {\bf DocSites}\]
that sends a primary doctrine $P \colon \cat^{op} \to {\bf MSLat}$ to the doctrinal site $(P,J_{\rm triv})$.

Similarly, there exist full and faithful 2-embeddings
\[\begin{tikzcd}
{\bf ExDoc} \ar[hook]{rd} & \\
{\bf CohDoc} \ar[hook]{r} & {\bf DocSites} \\
{\bf BoolDoc} \ar[hook]{ru} & 
\end{tikzcd}\]
which send an existential (respectively, coherent/Boolean) doctrine $P \colon \cat^{op} \to {\bf MSLat}$ to the doctrinal site $(P,J_{\rm Ex})$ (respectively, $(P,J_{\rm Coh})$).
}\end{ex}

\paragraph{Geometric doctrines.}  Finally, we describe geometric doctrines, defining the category of geometric doctrines as a full 2-subcategory of ${\bf DocSites}$.

\begin{df}
{\rm
A \emph{geometric doctrine} is a doctrine $\Lb \colon \cat^{op} \to \Frm_{\rm open}$ that defines an internal locale of the presheaf topos $\sets^{\cat^{op}}$, equivalently $\Lb$ satisfies the relative Beck-Chevalley condition (see \cite[Definition 5.1(e)(i)]{fibredsites} or Definition \ref{df:exsites}\ref{df:exsites:relbc}).
}
\end{df}

If $\cat$ has all pullbacks, recall from \S \ref{subsec:intloc} that a geometric doctrine $\Lb \colon \cat^{op} \to \Frm_{\rm open}$ satisfies the relative Beck-Chevalley condition if and only if the standard Beck-Chevalley condition is satisfied.  Thus, over a cartesian category $\cat$, a geometric doctrine $\Lb \colon \cat^{op} \to \Frm_{\rm open}$ is just a coherent doctrine that is valued in $\Frm$.  By the theorems of Seely \parencite[Theorem 6.1 \& Theorem 6.2]{seely}, a geometric doctrine $\Lb \colon \cat^{op} \to \Frm_{\rm open}$, where $\cat$ is a cartesian category, is isomorphic to $F^\theory$ for some theory $\theory$ of geometric first-order logic.

Recall that, to each geometric doctrine $\Lb \colon \cat^{op} \to \Frm_{\rm open}$, we can endow the category $\cat \rtimes \Lb$ with the Grothendieck topology $K_\Lb$.  We denote by ${\bf GeomDoc}$ the full subcategory of ${\bf DocSites}$ on objects of the form $(\Lb,K_\Lb)$ for a geometric doctrine $\Lb$. Observe also, by Proposition \ref{prop:intlocmorph}, that given two geometric doctrines $\Lb, \Lb' \colon \cat^{op} \to \Frm_{\rm open}$ fibred over the same category $\cat$, a morphism $(\Lb,K_\Lb) \to (\Lb',K_{\Lb'}) \in {\bf DocSites}$ is precisely the data of a morphism of internal locales $\Lb' \to \Lb \in \Loc\left(\sets^{\cat^{op}}\right)$, i.e. for each category $\cat$ there is a full and faithful 2-embedding 
\[\begin{tikzcd}
\Loc\left(\sets^{\cat^{op}}\right)^{op} \ar[hook]{r} & {\bf GeomDoc} \subseteq {\bf DocSites}.
\end{tikzcd}\]
In analogy with classifying topos theory (see \parencite[\S 2.1.2]{TST}), we also prescribe that the desired models ${\bf Mod}(\Lb)$ of a geometric doctrine $\Lb \colon \cat^{op} \to \Frm_{\rm open}$ to be one of the equivalent categories:
\begin{itemize}
\item $\Hom_{\bf GeomDoc}(\Lb,\mathscr{P})$,
\item $K_{\Lb}\text{-}{\bf Flat}(\cat \rtimes \Lb,\sets)$,
\item or the category of geometric morphisms $\sets \to \Sh(\Lb)$ and their natural transformations
\end{itemize}
(the equivalence of the above categories can be deduced from Proposition \ref{prop:intlocmorph} and \parencite[Theorem 4.4]{myselfintlocmorph}).

\section{The geometric completion of a doctrine}\label{sec:geocomp}

In this section we describe the geometric completion of a doctrinal site.  As aforementioned, this is an application of the fibred ideal completion established in \cite{fibredsites} that generalises the ideal completion of a preorder with a covering system.  We are able to define the geometric completion of a doctrinal site using only the results on internal locales recalled in \S \ref{subsec:intloc}.

Let $(P,J) \in {\bf DocSites}$ be a doctrinal site.  Since $J$ is a Grothendieck topology on the category $\cat \rtimes P$, we can construct the topos of sheaves $\Sh(\cat \rtimes P,J)$.  The projection $p_P \colon \cat \rtimes P \to \cat$ yields a comorphism of sites
\[(\cat \rtimes P) \xrightarrow{p_P} (\cat,J_{\rm triv})\]
whose induced geometric morphism $C_{p_P} \colon \Sh(\cat \rtimes P,J) \to \sets^{\cat^{op}}$ factors as
\[\begin{tikzcd}
\Sh(\cat \rtimes P,J) \ar[tail]{r} & \sets^{(\cat \rtimes P)^{op}} \ar{r} &\sets^{\cat^{op}}.
\end{tikzcd}\]
Since both factors are localic (see \parencite[Examples A4.6.2(a) \& (c)]{elephant}), $C_{p_P}$ is localic by \parencite[Lemma 1.1]{fact1} (alternatively, $C_{p_P}$ is localic by \parencite[Proposition 7.11]{dense} alone).  Thus, by Theorem \ref{localicmorph}, the topos $\Sh(\cat \rtimes P,J)$ is the topos of sheaves on an internal locale (i.e. geometric doctrine) ${C_{p_P}}_\ast(\Omega_{\Sh(\cat \rtimes P, J)}) $ of $\sets^{\cat^{op}}$.

\begin{df}[cf. Definition 6.2 \cite{fibredsites}]\label{df:gc}
{\rm
The \emph{geometric completion} of a doctrinal site $(P,J)$ is the geometric doctrine 
\[{C_{p_P}}_\ast(\Omega_{\Sh(\cat \rtimes P, J)}) \colon \cat^{op} \to \Frm_{\rm open}.\]
We denote the geometric completion of $(P,J)$ by $\GC(P,J)$.  Recall (from \cite[Proposition 4.2]{fibredsites} or \cite[\S 3.1]{myselfintlocmorph}) that the doctrine ${\GC(P,J) \colon \cat^{op} \to \Frm_{\rm open}}$ is isomorphic to the functor
\[\Sub_{\Sh(\cat \rtimes P,J)}(C_{p_P}^\ast\circ \yo_\cat(-)) \colon \cat^{op} \to \Frm_{\rm open}.\]
}
\end{df}

We claim that this choice of geometric completion is 2-functorial in ${\bf DocSites}$, universal, idempotent and semantically invariant.  The proof of these facts is delayed until \S \ref{subsec:univprop} and \S \ref{subsec:monad}.  We proceed as follows.
\begin{itemize}
\item Immediately below, in \S \ref{subsec:altcont}, we recall the explicit description of the geometric completion $\GC(P,J) $ of a doctrinal site $(P,J)$, as described in \cite[\S 6]{fibredsites}.  We also demonstrate that, in the special case where each fibre of $P$ has a top element and these are preserved by transition maps, the geometric completion admits a simpler description.  By showing that the \emph{free top completion} is sub-geometric, we can recover the geometric completion of an arbitrary doctrine.

\item In \S \ref{subsec:univprop}, the unit of the geometric completion is defined and the universal property is proved.
\item We demonstrate the 2-monadic aspects of the geometric completion in \S \ref{subsec:monad} and identify the algebras of the monad as the geometric doctrines.
\end{itemize}

\subsection{Calculating the geometric completion}\label{subsec:altcont}

An explicit description of the geometric completion $\GC(P,J) \colon \cat^{op} \to \Frm_{\rm open}$ for a doctrinal site $(P,J)$ can be computed directly using the description of the subobject classifier of a Grothendieck topos found in \parencite[\S III.7]{SGL}, as is done in \parencite[Proposition 6.2]{fibredsites}.  This returns $\GC(P,J) \colon \cat^{op} \to \Frm_{\rm open}$ as the doctrine where:
\begin{enumerate}
\item for each $c \in \cat$, $\GC(P,J)(c)$ is the frame of $J$-\emph{closed subobjects} in $\sets^{(\cat \rtimes P)^{op}}$ of the presheaf 
\[\Hom_\cat(p_P(-),c) \colon (\cat \rtimes P)^{op} \to \sets\]
(for a description of $J$-closed subobjects, see \cite[\S 2.1]{dense}),
\item and for each arrow $d \xrightarrow{f} c$ of $\cat$, the transition map $\GC(P,J)(f) \colon \GC(P,J)(c) \to \GC(P,J)(d)$ sends a $J$-closed subobject 
\[\varsigma \rightarrowtail \Hom_\cat(p_P(-),c)\]
to the pullback
\[
\begin{tikzcd}
f^\ast (\varsigma) \ar[tail]{d} \ar{r} & \varsigma \ar[tail]{d} \\
\Hom_\cat(p_P(-),d)  \ar{r} & \Hom_\cat(p_P(-),c) .  
\end{tikzcd}
\]
\end{enumerate}
By unravelling definitions, this is equivalent to the concrete description presented below. 
\begin{constr}\label{constrgc}{\rm
Let $P \colon \cat^{op} \to \PreOrd$ be a doctrine and $J$ a Grothendieck topology on $\cat \rtimes P$.  The geometric completion $\GC(P,J) \colon \cat^{op} \to \Frm_{\rm open}$ admits the following description.
\begin{enumerate}
\item For each object $c$ of $\cat$, an element $S$ of $\GC(P,J)(c)$ is a set of pairs $(f,x)$, where $d \xrightarrow{f} c$ is an arrow of $\cat$ and $x \in P(d)$, such that:
\begin{enumerate}
	\item if $(f,x) \in S$, then $(f \circ g, y) \in S$ for each arrow $e \xrightarrow{g} d$ of $\cat$ and $y \in P(e)$ with $y \leqslant P(g)(x)$;
	\item for each arrow $d \xrightarrow{f} c$ of $\cat$, given a subset $\{\,(g_i,y_i) \mid i \in I\,\} \subseteq S$ such that, for each $i \in I$, $g_i$ factors as
	\[\begin{tikzcd}
		e_i \ar{d}{h_i} \ar{rd}{g_i} & \\
		d \ar{r}{f} & c,
	\end{tikzcd}\]
	if there is an $x \in P(d)$ and, for all $i \in I$, $y_i \leqslant P(h_i)(x)$ for which the family
	\[\{\,(e_i,y_i) \xrightarrow{h_i} (d,x) \mid i \in I\,\}\]
	of morphisms in $\cat \rtimes P$ is $J$-covering, then $(f,x) \in S$.
\end{enumerate}
We then order $\GC(P,J)(c)$ by inclusion.
\item For each arrow $d \xrightarrow{f} c$ of $\cat$, $\GC(P,J)(f) \colon \GC(P,J)(c) \to \GC(P,J)(d)$ sends $S \in \GC(P,J)(c)$ to $f^\ast(S)$, where
\[f^\ast(S) = \{\,(g,y) \mid (f \circ g,y) \in S\,\} \in \GC(P,J)(d).\]
\end{enumerate}
}\end{constr}

Clearly, if $J$ and $J'$ are Grothendieck topologies on $\cat \rtimes P$ with $J' \subseteq J$, then $\GC(P,J)(c) \subseteq \GC(P,J')(c)$ for each object $c$ of $\cat$.  Hence, for every $J$, $\GC(P,J)(c)$ is a subset of $\GC(P,J_{\rm triv})(c)$, where $J_{\rm triv}$ is the trivial topology on $\cat \rtimes P$.  An element $S \in \GC(P,J_{\rm triv})(c)$ that is contained in the subset $\GC(P,J) (c)\subseteq \GC(P,J_{\rm triv})(c)$, i.e. $S$ satisfies property (b) above, is said to be $J$-\emph{closed}.  This is precisely what it means for the subobject $\varsigma \rightarrowtail \Hom_\cat(p_P(-),c)$ corresponding to $S$ to be $J$-closed in the sense of \cite{dense}.

A \emph{closure operation for subobjects} is described in \S 2.1 \cite{dense}.  In the particular case of subobjects of the presheaf $\Hom_\cat(p_P(-),c)$, i.e. elements $S \in \GC(P,J)(c)$, the $J$-closure can be understood entirely in terms of internal locale theory.  Since the embedding $\Sh(\cat \rtimes P, J) \rightarrowtail \sets^{(\cat\rtimes P)^{op}}$ is a geometric morphism for which the triangle
\[\begin{tikzcd}[column sep = tiny]
\Sh(\cat \rtimes \GC(P,J),K_{\GC(P,J)}) \simeq \Sh(\cat \rtimes P,J) \ar[tail]{rr} \ar{rd}[']{C_{p_P}} && \sets^{(\cat \rtimes P)^{op}} \simeq \Sh(\cat \rtimes \GC(P,J_{\rm triv}),K_{\GC(P,J_{\rm triv})}) \ar{ld} \\
& \sets^{\cat^{op}} &    
\end{tikzcd}\]
commutes, by \parencite[Theorem 5.7]{myselfintlocmorph}, the geometric morphism is induced by an embedding of internal locales $\GC(P,J) \rightarrowtail \GC(P,J_{\rm triv})$: that is, for each $c \in \cat$, a surjective frame homomorphism 
\[{\overline{(-)}}_c \colon \GC(P,J_{\rm triv})(c) \to \GC(P,J)(c)\]
such that, for each arrow $d \xrightarrow{g} c \in \cat$, the diagram
\[\begin{tikzcd}
\GC(P,J_{\rm triv})(d) \ar{dd}{\overline{(-)}_d} \ar[shift left = 2]{rr}{\exists_{\GC(P,J_{\rm triv})(f)}} && \ar[shift left = 2]{ll}{\GC(P,J_{\rm triv})(f)} \GC(P,J_{\rm triv})(c) \ar{dd}{\overline{(-)}_c} \\
& \\
\GC(P,J)(d) \ar[shift left=2]{rr}{\exists_{\GC(P,J)(c)}} & &\GC(P,J)(c) \ar[shift left=2]{ll}{\GC(P,J)(f)}
\end{tikzcd}\]
is a morphism of adjunctions.

\begin{df}\label{df:closure}{\rm
Let $S$ be an element of $\GC(P,J_{\rm triv})(c)$.  We call the image of $S$ under 
\[{\overline{(-)}}_c \colon \GC(P,J_{\rm triv})(c) \to \GC(P,J)(c)\]
the $J$-\emph{closure} of $S$, and denote it by $\overline{S}$.  The corresponding subobject $\overline{\varsigma}$ of $\Hom_\cat(p_P(-),c)$ is precisely the $J$-closure of $\varsigma$.
}\end{df}

\paragraph{When top elements are available.} 

In the remainder of this subsection we demonstrate that, in the special case of a doctrine $P \colon \cat^{op} \to \PreOrd$ where $P(c)$ has a top element, for each object $c \in \cat$ and $P(f)$ preserves that top element, for each arrow $d \xrightarrow{f} c$ of $ \cat$, the description of the geometric completion given in Construction \ref{constrgc} can be simplified further.

We will then show how the description in Construction \ref{constrgc} can be recovered for an arbitrary doctrinal site $(P,J)$ by freely adding (preserved) top elements to the doctrine $P$.  We do so to illustrate our first example of a \emph{sub-geometric completion}.  A sub-geometric completion is a partial completion to the data of a geometric doctrine which can be `subsumed' by the geometric completion.  This will be explored further in \S \ref{sec:subgeo}.  To avoid confusion in the subsequent paragraphs, we will temporarily relabel the doctrine described in Construction \ref{constrgc} as $\GC'(P,J) \colon \cat^{op} \to \Frm_{\rm open}$ while we prove the isomorphism $\GC(P,J) \cong \GC'(P,J)$.

If, for each object $c $ of $ \cat$, $P(c)$ has a top element $\top_c$ which is preserved by $P(f)$ for each arrow $d \xrightarrow{f} c$ of $ \cat$, then the projection $p_P \colon \cat \rtimes P \to \cat$ has a right adjoint: the functor
\[ t_P \colon \cat \to \cat \rtimes P\]
which sends $c \in \cat$ to $(c, \top_c) \in \cat \rtimes P$.  Thus, we can apply the description of the direct image of $C_{p_P}$ given in \parencite[Theorem VII.10.4]{SGL}, to obtain that
\[\GC(P,J) \cong {C_{p_P}}_\ast (\Omega_{\Sh(\cat \rtimes P,J)}) = \Omega_{\Sh(\cat \rtimes P, J)} \circ t_P^{op} \colon \cat^{op} \to \Frm_{\rm open}.\]
Therefore, using the description of the subobject classifier of $\Sh(\cat \rtimes P,J)$ found in \parencite[\S III.7]{SGL}, for each $c \in\cat$, an element of $\GC(P,J)(c)$ is a $J$-closed sieve $S$ on $(c,\top_c)$ and, for each $d \xrightarrow{f} c \in \cat$, $\GC(P,J)(f)$ sends $S$ to
\[f^\ast(S) = \{\,g \colon (e,V) \to (d,\top_d) \mid f \circ g \colon (e,V) \to (c,\top_c) \in S\,\}.\]
We therefore observe that $\GC(P,J)$ is indeed isomorphic to the doctrine $\GC'(P,J)$ as described in Construction \ref{constrgc}.  The witnessing isomorphism is given by sending a $J$-closed sieve $S$ on $(c,\top_c)$ to the set
\[\left\{\,(f,x) \,\middle\vert\, (d,x) \xrightarrow{f} (c,\top_c) \in S\,\right\} \in \GC'(P,J)(c).\]

\paragraph{The free top completion.}  In the absence of (preserved) top elements, we can freely add them to the doctrine $P \colon \cat^{op} \to \PreOrd$ and demonstrate that, by carefully selecting a Grothendieck topology, we obtain a doctrinal site whose geometric completion is isomorphic to $\GC(P,J)$ -- that is to say, adding top elements is a sub-geometric completion.

\begin{df}{\rm
Let $P \colon \cat^{op} \to \PreOrd$ be a doctrine and let $J$ be a Grothendieck topology on $\cat \rtimes P$.

\begin{enumerate}
\item Denote by $P^\top \colon \cat^{op} \to \sets$ the \emph{free top completion}, the doctrine where:
\begin{enumerate}
	\item for each object $c$ of $\cat$, $P^\top(c)$ is the preorder $P(c) \oplus \top_c$, where a top element $\top_c$ has been freely added to $P(c)$;
	\item for each arrow $d \xrightarrow{f} c$ of $\cat$, $P^\top(f) \colon P^\top(c) \to P^\top(d)$ is the monotone map
	\[
	P^\top(f)(x) =
	\begin{cases}
		P(f)(x) & \text{ if $x \in P(c)$,} \\
		\top_d & \text{ if $x=\top_c$.}
	\end{cases}
	\]
\end{enumerate}
\item We define a Grothendieck topology $J^\top$ on $\cat \rtimes P^\top$ in the following way:
\begin{enumerate}
	\item for each object of the form $(c,x)$, with $x \in P(c)$, a sieve $S $ on $(c,x)$ is $J^\top$-covering if and only if $S$ is $J$-covering;
	\item for an object of the form $(c,\top_c)$, a sieve $S$ on $(c,\top_c)$ is $J^\top$-covering if and only if, for each arrow $(d,x) \xrightarrow{f} (c,\top_c)$ with $x \in P(d)$, the sieve $f^\ast(S)$ on $(d,x)$ is $J$-covering.
\end{enumerate}
\end{enumerate}
}\end{df}
The terminology free top completion is justified as a universal property is clearly satisfied: for any natural transformation $a \colon P \to Q$ between doctrines $P, Q \colon \cat^{op} \to \PreOrd$ where $Q(c)$ has a top element for each $c \in \cat$ which is preserved by $Q(f)$ for each $d \xrightarrow{f} c \in \cat$, there is a unique natural transformation $a^\top \colon P^\top \to Q$ such that the triangle
\[\begin{tikzcd}
P\ar{rd}[']{a} \ar[hook]{r} & P^\top \ar[dashed]{d}{a^\top}\\
& Q
\end{tikzcd}\]
commutes and, for each $c \in \cat$, $a_c^\top$ sends $\top_c \in P^\top(c)$ to the top element of $Q(c)$.

Note that $\cat \rtimes P$ defines a subcategory of $\cat \rtimes P^\top$ and $J$ is the restriction of $J^\top$ to this subcategory.  Note also that, for each $c \in\cat$, the family
\[\{\,(c,x) \xrightarrow{\id_c} (c,\top_c)\mid c \in P(c)\,\}\]
generates a $J^\top$-covering sieve.

\begin{lem}
For each doctrine $P \colon \cat^{op} \to \PreOrd$ and Grothendieck topology $J$ on $\cat \rtimes P$, $J^\top$ is a Grothendieck topology on $\cat \rtimes P^\top$.
\end{lem}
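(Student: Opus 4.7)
I verify the three axioms of a Grothendieck topology for $J^\top$ by case analysis on whether the codomain of the relevant sieve is of the form $(c,x)$ with $x \in P(c)$ or of the form $(c,\top_c)$.  A preliminary observation simplifies matters considerably: since $\top_d$ is strictly greater than every element of $P(d)$ in $P^\top(d)$, any arrow $(d,y) \to (c,x)$ in $\cat \rtimes P^\top$ with $x \in P(c)$ must in fact have $y \in P(d)$; arrows into a top object $(c,\top_c)$, by contrast, can come from either shape of domain.  Consequently, on the subcategory $\cat \rtimes P \hookrightarrow \cat \rtimes P^\top$, the topology $J^\top$ agrees with $J$, and the genuinely new data lies entirely in the behaviour of $J^\top$-covering sieves on top objects, which is defined precisely so as to be controlled by pullback along arrows from non-top objects.

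\textbf{Maximality.}  The maximal sieve on $(c,x)$ with $x \in P(c)$ is $J$-covering by maximality of $J$.  The maximal sieve on $(c,\top_c)$ pulls back along any arrow $(d,x) \to (c,\top_c)$ with $x \in P(d)$ to the maximal sieve on $(d,x)$, which is again $J$-covering.

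\textbf{Stability.}  Given a $J^\top$-covering sieve $S$ on some object and an arbitrary arrow $g$, I check that $g^\ast(S)$ is $J^\top$-covering.  If the codomain of $S$ is $(c,x)$ with $x \in P(c)$, then $S$ is $J$-covering; if $g$ has domain $(e,y)$ with $y \in P(e)$, stability of $J$ gives $g^\ast(S)$ is $J$-covering directly, while if $g$ has domain $(e,\top_e)$, I further pull back along any $f \colon (d,z) \to (e,\top_e)$ with $z \in P(d)$ and use $f^\ast(g^\ast(S)) = (g \circ f)^\ast(S)$ together with stability of $J$ on $(d,z)$.  If the codomain of $S$ is $(c,\top_c)$ and $g$ has domain $(e,y)$ with $y \in P(e)$, then $g$ is itself one of the arrows witnessing the $J^\top$-covering property of $S$, so $g^\ast(S)$ is $J$-covering; if instead $g$ has domain $(e,\top_e)$, the same reduction by pullback along arrows $f$ from non-top objects, combined with the defining property of $S$ on $(c,\top_c)$ applied to $g \circ f$, yields the required property of $g^\ast(S)$.

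\textbf{Transitivity.}  Let $S$ be $J^\top$-covering and $T$ a sieve with $g^\ast(T)$ $J^\top$-covering for every $g \in S$.  For codomain $(c,x)$ with $x \in P(c)$, every arrow $g \in S$ has domain $(d,y)$ with $y \in P(d)$ by the preliminary observation, so $S$ is $J$-covering and each $g^\ast(T)$ is $J$-covering; transitivity of $J$ gives $T$ is $J$-covering.  For codomain $(c,\top_c)$, fix $f \colon (d,y) \to (c,\top_c)$ with $y \in P(d)$; then $f^\ast(S)$ is $J$-covering, and for each $g \in f^\ast(S)$ (necessarily with domain of the form $(e,z)$ with $z \in P(e)$), $g^\ast(f^\ast(T)) = (f \circ g)^\ast(T)$ is $J^\top$-covering and hence $J$-covering.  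Transitivity of $J$ then yields $f^\ast(T)$ is $J$-covering.  No step presents a genuine obstacle; the only care required is in consistently tracking the two shapes of object and using the defining pullback condition on top objects to reduce everything to the corresponding axiom for $J$.
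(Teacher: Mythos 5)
Your proof is correct and follows essentially the same route as the paper: the paper dismisses maximality, stability, and transitivity on non-top objects as immediate from $J$ being a topology, and its only substantive argument is exactly your transitivity case on $(c,\top_c)$ — pulling back along $f \colon (d,y) \to (c,\top_c)$ with $y \in P(d)$ and applying transitivity of $J$ to $f^\ast(S)$ via the identity $g^\ast(f^\ast(T)) = (f\circ g)^\ast(T)$. The only cosmetic remark is that your stability sub-case of an arrow $(e,\top_e) \to (c,x)$ with $x \in P(c)$ is vacuous by your own preliminary observation, so it need not be treated.
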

\begin{proof}
The maximality and stability conditions for $J^\top$ are trivially satisfied since $J$ is a Grothendieck topology on $\cat\rtimes P$, as is the transitivity condition $J^\top$ for sieves on objects of the form $(d,x)$ with $x \in P(d)$.

It thus remains to show that if $S$ is a $J^\top$-covering sieve on $(c,\top_c)$ and $R$ is a sieve on $(c,\top_c)$ such that $h^\ast(R) \in J^\top(d,x)$ for each arrow $(e,y) \xrightarrow{h} (c,\top_c)$ in $S$, then $R$ is $J^\top$-covering, i.e. $f^\ast(R) \in J(d,x)$ for each arrow $(d,x) \xrightarrow{f} (c,\top_c) $ with $x \in P(d)$.  As $f^\ast(S) \in J(d,x)$ and, for each arrow $(e,y) \xrightarrow{k} (d,x)$ of $f^\ast(S)$, i.e. for which $(e,y) \xrightarrow{k} (d,x) \xrightarrow{f} (c,\top_c)$ is an element of $S$, we have that $f^\ast(R)$ is $J$-covering since $k^\ast(f^\ast(R)) = (f \circ k)^\ast(R)$ is $J^\top$-covering (and so $J$-covering) by the transitivity condition for $J$.  Thus, by definition, $R$ is $J^\top$-covering.
\end{proof}

\begin{lem}\label{addingtopeq}
The site $(\cat \rtimes P,J)$ is a dense subsite of $(\cat \rtimes P^\top,J^\top)$.
\end{lem}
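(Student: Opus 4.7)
The plan is to verify that the inclusion $\iota \colon \cat \rtimes P \hookrightarrow \cat \rtimes P^\top$ satisfies the four conditions of a dense morphism of sites (Definition \ref{densemorph}) from $(\cat \rtimes P, J)$ to $(\cat \rtimes P^\top, J^\top)$. I begin with a key observation: for any arrow $(e, z) \xrightarrow{h} (d, y)$ in $\cat \rtimes P^\top$ whose codomain lies in $\cat \rtimes P$ (that is, $y \in P(d)$), the domain must also lie in $\cat \rtimes P$. Indeed, $P^\top(h)(y) = P(h)(y)$ is an element of $P(e)$, and the freely added element $\top_e$ is strictly greater than every element of $P(e)$, so the required inequality $z \leqslant P^\top(h)(y)$ forces $z \in P(e)$. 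As a corollary, sieves on an object of $\cat \rtimes P$ are identical whether computed in $\cat \rtimes P$ or in $\cat \rtimes P^\top$, and by the very definition of $J^\top$ on such objects they are $J^\top$-covering if and only if they are $J$-covering. This discharges conditions (i) and (iv) of Definition \ref{densemorph}, and also condition (iii): any $\cat \rtimes P^\top$-arrow between two $\cat \rtimes P$-objects is itself an arrow of $\cat \rtimes P$, so the trivial cover by identities witnesses the required factorisation.

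The crux is condition (ii): every $(c, x) \in \cat \rtimes P^\top$ must admit a $J^\top$-covering family from $\cat \rtimes P$-objects. When $x \in P(c)$ the identity suffices, so the only case that needs attention is $(c, \top_c)$. I propose using the sieve
\[ T \;=\; \{\,(e, z) \xrightarrow{h} (c, \top_c) \text{ in } \cat \rtimes P^\top \mid z \in P(e)\,\} \]
of all arrows into $(c, \top_c)$ whose domain is a $\cat \rtimes P$-object. To show $T$ is $J^\top$-covering, let $(d, w) \xrightarrow{g} (c, \top_c)$ be any arrow with $w \in P(d)$. By the key observation above, every arrow into $(d, w)$ in $\cat \rtimes P^\top$ already lies in $\cat \rtimes P$, and its composite with $g$ therefore lies in $T$; hence $g^\ast(T)$ is the maximal sieve on $(d, w)$, which is $J$-covering by the maximality axiom.

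One subtle point I would flag: it is tempting to use the narrower family $\{(c, y) \xrightarrow{\id_c} (c, \top_c) \mid y \in P(c)\}$ alluded to just before the lemma, but this may fail to generate a $J^\top$-covering sieve when $P(c)$ is empty, or more generally when no $y \in P(c)$ satisfies $w \leqslant P(g)(y)$ for an incoming arrow $g \colon (d,w) \to (c, \top_c)$. Taking the full sieve $T$ sidesteps this obstruction uniformly, and the verification of condition (ii) via $T$ is essentially the only non-routine step in the proof.
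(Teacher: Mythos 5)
Your proof is correct, and its overall strategy is the same as the paper's: the inclusion $\cat \rtimes P \hookrightarrow \cat \rtimes P^\top$ is full, every arrow of $\cat \rtimes P^\top$ whose codomain lies in $\cat \rtimes P$ already lies in $\cat \rtimes P$, and the only substantive point is that each new object $(c,\top_c)$ is covered from within the subcategory. (The paper phrases this via the two conditions of Definition \ref{densesubcat}, where fullness makes the second condition automatic; you verify the four conditions of Definition \ref{densemorph} instead, which subsumes the dense-subsite claim and changes nothing essential.) Where you genuinely depart from the paper --- and improve on it --- is in the choice of witnessing cover for $(c,\top_c)$. The paper's proof implicitly leans on the remark preceding the lemma, that the family $\{\,(c,y) \xrightarrow{\id_c} (c,\top_c) \mid y \in P(c)\,\}$ generates a $J^\top$-covering sieve, and your ``subtle point'' is well taken: that remark is false in general. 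Pulling the generated sieve back along an arrow $(d,w) \xrightarrow{g} (c,\top_c)$ with $w \in P(d)$ yields only those $h \colon (e,z) \to (d,w)$ with $z \leqslant P(g \circ h)(y)$ for some $y \in P(c)$, and nothing guarantees such a $y$ exists: take $\cat$ to be the arrow category $d \xrightarrow{g} c$, $P(c) = \{x\}$, $P(d) = \{u,v\}$ with $u, v$ incomparable, $P(g)(x) = u$, and $J$ trivial; then the pullback along $(d,v) \xrightarrow{g} (c,\top_c)$ is empty, hence not $J$-covering. Your sieve $T$ of \emph{all} arrows into $(c,\top_c)$ with domain in $\cat \rtimes P$ repairs this uniformly: by your key observation its pullback along any such $g$ is the maximal sieve on $(d,w)$, so $T$ is $J^\top$-covering with no hypotheses on $P$ or $J$ (including the degenerate case where $T$ is empty, since the defining condition for $J^\top$-covering on $(c,\top_c)$ is then vacuous). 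Your version of the argument is the one that should be recorded.
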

\begin{proof}
Immediate since $\cat \rtimes P$ is a full subcateogory of $\cat \rtimes P^\top$ and the only objects not contained in $\cat \rtimes P $, i.e. those objects of the form $(c,\top_c)$, are covered by objects contained in the subcategory.
\end{proof}

\paragraph{The free top completion is sub-geometric.}  Having developed the free top completion for a doctrinal site, we can finally observe that this constitutes a sub-geometric completion in the current loose sense that $\GC(P,J) \cong \GC(P^\top,J^\top)$ (a formal definition of sub-geometricity is provided in \S \ref{subsec:gensubgeo}).  As a consequence, we obtain the isomorphism $\GC(P,J) \cong \GC'(P,J)$ as desired.

\begin{prop}\label{topissubgeo}
There is a chain of isomorphisms of doctrines:
\[\GC(P,J) \cong \GC(P^\top,J^\top) \cong \GC'(P^\top,J^\top) \cong \GC'(P,J).\]
\end{prop}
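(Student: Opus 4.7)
The plan is to verify the three isomorphisms in the chain one at a time, in the order written. The middle isomorphism $\GC(P^\top,J^\top) \cong \GC'(P^\top,J^\top)$ has already been established in the discussion ``When top elements are available'' preceding the proposition (using that $p_{P^\top}$ has the right adjoint $t_{P^\top}$ and applying Theorem VII.10.4 of \cite{SGL}), so the remaining work is to establish the first and last links.

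For $\GC(P,J) \cong \GC(P^\top,J^\top)$, I would invoke Lemma \ref{addingtopeq}: since $(\cat \rtimes P,J)$ is a dense subsite of $(\cat \rtimes P^\top,J^\top)$, the comparison lemma (Lemma \ref{complem}) gives an equivalence $C_\iota \colon \Sh(\cat \rtimes P,J) \simeq \Sh(\cat \rtimes P^\top,J^\top)$, with the inclusion $\iota$ a comorphism of sites since $J = J^\top|_{\cat \rtimes P}$. Because $p_{P^\top} \circ \iota = p_P$, functoriality of the $C_{(-)}$ construction yields $C_{p_P} = C_{p_{P^\top}} \circ C_\iota$, identifying the two localic geometric morphisms over $\sets^{\cat^{op}}$. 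The bijective correspondence between internal locales and localic geometric morphisms into $\sets^{\cat^{op}}$ (Theorem \ref{localicmorph}, equivalently Proposition \ref{prop:intlocmorph}) then forces $\GC(P,J) \cong \GC(P^\top,J^\top)$ as internal locales.

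For $\GC'(P^\top,J^\top) \cong \GC'(P,J)$, I would construct mutually inverse natural transformations directly. For each $c \in \cat$, define the restriction
\[\psi_c(S^\top) = \{(f\colon d \to c,\, x) \in S^\top \mid x \in P(d)\}\]
and its candidate inverse $\phi_c(S) = S \cup T_S$, where $T_S$ consists of those pairs $(f\colon d \to c,\top_d)$ such that $(f \circ g, y) \in S$ for every arrow $g\colon e \to d$ in $\cat$ and every $y \in P(e)$. That $\psi_c(S^\top)$ satisfies the closure conditions of Construction \ref{constrgc} for $(P,J)$ follows directly from the fact that $J^\top$ restricts to $J$ on $\cat \rtimes P$. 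The identity $\psi_c \circ \phi_c = \id$ is immediate by inspection, while $\phi_c \circ \psi_c = \id$ follows by applying condition (b) of Construction \ref{constrgc} to the $J^\top$-covering family $\{(d,x) \xrightarrow{\id_d} (d,\top_d) \mid x \in P(d)\}$ on each $(d,\top_d)$ to recover any pair $(f,\top_d) \in S^\top$ from its restrictions. Naturality in $c$ follows from the explicit formulas, and order-preservation in both directions yields preservation of meets and joins; compatibility with the $\exists$-adjoints is then automatic, since they are determined as left adjoints to the transition maps.

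The main obstacle is verifying that $\phi_c(S) = S \cup T_S$ itself satisfies Construction \ref{constrgc}(b) for $(P^\top,J^\top)$. The only nontrivial case is when the pair to be added has the form $(f,\top_d)$: here I would pull back the given $J^\top$-covering family along arrows $(e,y) \xrightarrow{g} (d,\top_d)$ with $y \in P(e)$, observe that the resulting sieve on $(e,y)$ is $J$-covering by the definition of $J^\top$, and apply condition (b) for $S$ (together with the defining property of $T_S$ in the case where $y_i = \top_{e_i}$ among the generators of the original family) to conclude that $(f \circ g, y) \in S$, which is exactly what $T_S$-membership of $(f,\top_d)$ demands.
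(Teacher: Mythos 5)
Your proposal is correct and follows essentially the same route as the paper: the first isomorphism via the dense-subsite Lemma \ref{addingtopeq}, the second via the preceding ``top elements available'' discussion, and the third via the restriction map (the paper's $l_c$ is your $\psi_c$), with the key step in both being the recovery of pairs $(f,\top_d)$ from the $J^\top$-covering family $\{\,(d,x) \xrightarrow{\id_d} (d,\top_d) \mid x \in P(d)\,\}$. The only difference is one of detail: where the paper proves injectivity of $l_c$ and declares the isomorphism ``evident,'' you explicitly construct the inverse $\phi_c(S) = S \cup T_S$ and verify its $J^\top$-closure, which is a welcome but not essentially different elaboration.
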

\begin{proof}
That $\GC(P,J) \cong \GC(P^\top,J^\top)$ follows since the toposes $\Sh(\cat \rtimes P,J)$ and $\Sh(\cat \rtimes P^\top,J^\top)$ are equivalent by Lemma \ref{addingtopeq}.  That $\GC(P^\top,J^\top) \cong \GC'(P^\top,J^\top)$ follows as $P^\top$ has (preserved) top elements.

We will sketch the isomorphism between $\GC'(P^\top,J^\top)$ and $\GC'(P,J)$.  We observe that each $J^\top$-closed sieve $S$ on $(c, \top_c)$ is uniquely determined by the set
\[l_c(S) = \{\,(f,x) \mid (d,x) \xrightarrow{f} (c,\top_c) \in S, \, x \in P(d) \,\} \in \GC'(P,J).\]
If $l_c(S) = l_c(S')$, then $S$ and $S'$ agree on arrows of the form $(d,x) \xrightarrow{f} (c,\top_c)$ with $x \in P(d) \subseteq P^\top(d)$.  Conversely, if $(d,\top_d) \xrightarrow{f} (c,\top_c) \in S$, then both $S$ and $S'$ contain the family
\[R = \{\,(d,x) \xrightarrow{\id_d} (d,\top_d) \xrightarrow{f} (c,\top_c) \mid x \in P(d)\,\}\]
which covers $(d,\top_d) \xrightarrow{f} (c,\top_c)$.  Hence, $(d,\top_d) \xrightarrow{f} (c,\top_c) \in S'$ too.  The same argument with $S$ and $S'$ swapped completes the proof that $l_c(S) = l_c(S')$ implies that $S = S'$.  The maps $l_c$, for each $c \in\cat$, are thus evidently the components of a natural isomorphism between $\GC'(P^\top,J^\top)$ and $\GC'(P,J)$.
\end{proof}


\subsection{Universal property of the geometric completion}\label{subsec:univprop}

We are now able to prove that the geometric completion of a doctrinal site is universal in ${\bf DocSites}$, idempotent and semantically invariant as claimed.  We first recall the construction of the unit of the geometric completion, which, unsurprisingly, is the same as the unit of the fibred ideal completion defined in \cite[Proposition 6.2]{fibredsites}, before turning to the universal property of the geometric completion, which extends the universal property of \cite{fibredsites}.  Finally, we discuss some of the basic preservation properties of the unit.

\paragraph{The unit of the geometric completion.}

The unit generalises the notion of taking the closure of a principal down-set for a preorder with a covering system.  Let $P \colon \cat^{op} \to \PreOrd$ be a doctrine and let $J$ be a Grothendieck topology on $\cat \rtimes P$. For each object $c \in \cat$ and $x \in P(c)$, the set
\[\downarrow \! x = \{\,(g,y) \mid e \xrightarrow{g} c \in \cat, \, y \in P(e) \text{ and } y \leqslant P(g)(x)\,\}\]
is an object of $\GC(P,J_{\rm triv}) (c)$.  For each arrow $d \xrightarrow{f} c$, $\GC(P,J_{\rm triv})(f)$ acts on this element by
\begin{equation*}
\begin{split}
\GC(P,J_{\rm triv})(f)(\downarrow\! x) &= \{\,(h,y) \mid (f \circ h,y) \in \, \downarrow \! x\,\}, \\
& = \{\,(h,y) \mid e \xrightarrow{h} c \in \cat, \, y \in P(e) \text{ and } y \leqslant P(f \circ g)(x)\,\}, \\
& = \{\,(h,y) \mid e \xrightarrow{h} c \in \cat, \, y \in P(e) \text{ and } y \leqslant P(g)P(f)(x)\,\} = \,\downarrow\! P(f)(x).
\end{split}
\end{equation*}
Hence, we obtain a natural transformation $\downarrow\!(-) \colon P \to \GC(P,J_{\rm triv})$.

\begin{df}
{\rm
Let $(P,J)$ be a doctrinal site.  We will use $\eta^{(P,J)} \colon P \to \GC(P,J)$ to denote the the composite natural transformation
\[\begin{tikzcd}
P \ar{r}{\downarrow\!(-)} & \GC(P,J_{\rm triv}) \ar{r}{\overline{(-)}} & \GC(P,J).
\end{tikzcd}\]
}
\end{df}

The natural transformation $\eta^{(P,J)}$ yields a morphism of doctrinal sites
\[(\id_\cat,\eta^{(P,J)}) \colon ( P,J) \to (\GC(P,J),K_{\GC(P,J)}).\]
In fact, as shown in \parencite[Proposition 7.2]{fibredsites}, one can prove a stronger statement.

\begin{prop}[Proposition 7.2 \cite{fibredsites}]\label{unitisdense}
The induced functor $\id_\cat \rtimes \eta^{(P,J)} \colon \cat \rtimes P \to \cat \rtimes \GC(P,J)$ yields a dense morphism of sites
\[\id_\cat \rtimes \eta^{(P,J)} \colon (\cat \rtimes P,J) \to (\cat \rtimes \GC(P,J),K_{\GC(P,J)}).\]
\end{prop}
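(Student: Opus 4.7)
The plan is to verify each of the four conditions in Definition \ref{densemorph} for the functor $F = \id_\cat \rtimes \eta^{(P,J)} \colon \cat \rtimes P \to \cat \rtimes \GC(P,J)$, which sends $(c,x)$ to $(c, \overline{\downarrow\! x})$ and acts as the identity on underlying $\cat$-arrows.

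Condition \ref{enumdesnemorph4} is immediate: an arrow in either $\cat \rtimes P$ or $\cat \rtimes \GC(P,J)$ is uniquely determined by its underlying $\cat$-arrow (the element inequality being merely a condition), so $F$ is automatically injective on hom-sets. For condition \ref{enumdensemorph2}, I would exhibit, for each object $(c,V) \in \cat \rtimes \GC(P,J)$, the $K_{\GC(P,J)}$-covering family $\{(d,\overline{\downarrow\! y}) \xrightarrow{f} (c,V) \mid (f,y) \in V\}$. Each member is a well-defined morphism because the downward closure of $V$ (clause (a) of Construction \ref{constrgc}) gives $\overline{\downarrow\! y} \leq f^\ast(V)$, and the covering condition reduces to the identity $V = \bigvee_{(f,y) \in V} \exists_f \overline{\downarrow\! y}$ in $\GC(P,J)(c)$: the union $\bigcup_{(f,y)\in V}\exists_f \downarrow\! y$ recovers $V$ verbatim by downward closure, and $V$ is already $J$-closed so coincides with the join.

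The crucial technical input for conditions \ref{enumdesnsemorph1} and \ref{enumdensemorph3} is the characterization of the $J$-closure of a downward-closed set $S \in \GC(P,J_{\rm triv})(c)$: a pair $(f,x)$ with $f \colon d \to c$ belongs to $\overline{S}$ precisely when there is a $J$-covering sieve $R$ on $(d,x)$ in $\cat \rtimes P$ such that $(f \circ h, y) \in S$ for every $(e,y) \xrightarrow{h} (d,x) \in R$. This is the one-step closure operator associated with the Grothendieck topology $J$, which is idempotent by transitivity of $J$; in the notation of Definition \ref{df:closure}, it presents $\overline{S}$ explicitly as an element of $\GC(P,J)(c)$.

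For condition \ref{enumdesnsemorph1}, a family $S = \{(d_i,x_i) \xrightarrow{f_i} (c,x)\}$ satisfies that $F(S)$ is $K_{\GC(P,J)}$-covering exactly when $\overline{\downarrow\! x} = \overline{\bigcup_i \exists_{f_i} \downarrow\! x_i}$ in $\GC(P,J)(c)$. The inclusion $\supseteq$ is automatic since $x_i \leq P(f_i)(x)$ for each $i$, and the reverse inclusion amounts to $(\id_c, x) \in \overline{\bigcup_i \exists_{f_i} \downarrow\! x_i}$, which by the closure characterization holds precisely when the sieve on $(c,x)$ generated by $S$ contains a $J$-covering sieve — equivalently, when $S$ itself is $J$-covering. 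Condition \ref{enumdensemorph3} follows by the same mechanism: an arrow $g \colon F(c_1, x_1) \to F(c_2, x_2)$ gives $\overline{\downarrow\! x_1} \leq g^\ast(\overline{\downarrow\! x_2})$, hence $(g,x_1) \in \overline{\downarrow\! x_2}$, and the closure characterization supplies a $J$-covering sieve $R$ on $(c_1,x_1)$ whose arrows $(e,y) \xrightarrow{h} (c_1,x_1)$ each satisfy $y \leq P(g \circ h)(x_2)$. Setting $k_h := g \circ h$ yields the required factorizing morphisms into $(c_2, x_2)$. The main obstacle is cleanly establishing the one-step closure characterization and confirming that the frame map $g^\ast$ in $\GC(P,J)$ agrees with the set-theoretic pullback inherited from $\GC(P,J_{\rm triv})$, so that membership of $(g,x_1)$ in $\overline{\downarrow\! x_2}$ can be read off from $(\id_{c_1}, x_1) \in g^\ast(\overline{\downarrow\! x_2})$; both are consequences of the material recalled in \S \ref{subsec:intloc}, after which all four conditions become routine verifications.
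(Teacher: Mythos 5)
Your argument is correct, and it is worth noting that the paper itself does not prove this proposition --- it is imported wholesale from \cite{fibredsites} (Proposition 7.2) --- so your proposal supplies a self-contained verification where the text offers none. The structure is sound: conditions \ref{enumdensemorph2} and \ref{enumdesnemorph4} of Definition \ref{densemorph} are indeed immediate from the explicit description in Construction \ref{constrgc} (the unit being the identity on underlying $\cat$-arrows, and equation (\ref{eq:unitiscovering}) being exactly the covering family you exhibit), while conditions \ref{enumdesnsemorph1} and \ref{enumdensemorph3} both reduce, as you say, to the one-step description of the $J$-closure of a subobject of $\Hom_\cat(p_P(-),c)$: a pair $(f,x)$ lies in $\overline{S}$ iff the sieve $\{\,(e,y) \xrightarrow{h} (d,x) \mid (f \circ h, y) \in S\,\}$ is $J$-covering, with idempotency guaranteed by the transitivity axiom. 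This is the standard closure operator of \cite[\S 2.1]{dense} that Definition \ref{df:closure} alludes to, and the compatibility of $\GC(P,J)(f)$ with the set-theoretic pullback $f^\ast$ is already stated explicitly in Construction \ref{constrgc}, so the ``main obstacle'' you flag at the end is in fact already discharged by material in \S \ref{subsec:altcont}. Two small points to tighten: in condition \ref{enumdesnsemorph1} you should remark that a sieve containing a $J$-covering sieve is itself $J$-covering (so that membership of $(\id_c,x)$ in the closure of $\bigcup_i \downarrow\!(f_i,x_i)$ really is equivalent to the sieve generated by $S$ being $J$-covering), and in condition \ref{enumdensemorph2} the inequality $\overline{\downarrow\! y} \leqslant f^\ast(V)$ uses not just downward closure of $V$ but also that $f^\ast(V)$ is itself $J$-closed; both are routine.
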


From Proposition \ref{unitisdense}, we immediately deduce that for each object $c$ of $\cat$ and each $S \in \GC(P,J)(c)$,
\begin{equation}\label{eq:unitiscovering}
S = \bigvee_{(f,x) \in S} \exists_{\GC(P,J)(f)}\left( \eta_d^{(P,J)}({x})\right).
\end{equation}
We will frequently abuse notation and write $\eta^{(P,J)}$ for the natural transformation, the morphism of doctrinal sites $(\id_\cat,\eta^{(P,J)}) \colon ( P,J) \to (\GC(P,J),K_{\GC(P,J)})$, and the functor $\id_\cat \rtimes \etaPJ \colon \cat \rtimes P \to \cat \rtimes \GC(P,J)$.

\paragraph{The universal property of the geometric completion.}

\begin{thm}\label{thm:univprop}
To each doctrine $P \colon \cat^{op} \to \PreOrd$ and Grothendieck topology $J$ on $\cat \rtimes P$, the natural transformation $\eta^{(P,J)} \colon P \to \GC(P,J)$ constitutes the unit of the geometric completion of $(P,J)$ for which the following properties are satisfied.
\begin{enumerate}
\item {\bf Universality:} for each morphism of doctrinal sites $(F,a) \colon (P,J) \to (\Lb,K_\Lb)$ to a geometric doctrine $\Lb \colon \dcat^{op} \to \Frm_{\rm open}$, there exists a unique morphism of geometric doctrines $(F,\mathfrak{a}) \colon \GC(P,J) \to \Lb $ such that the triangle
\begin{equation*}
\begin{tikzcd}
	P \ar{rd}[']{a} \ar{r}{\eta^{(P,J)}} & \GC(P,J) \ar[dashed]{d}{\mathfrak{a}} \\
	& \Lb \circ F^{op}
\end{tikzcd}
\end{equation*}
commutes;
\item {\bf Semantic invariance:} if the desired models ${\bf Mod}(P)$ of $P$ are equivalent to $J \text{-{\bf Flat}}(\cat \rtimes P,\sets)$, then ${\bf Mod}(P) \simeq {\bf Mod}(\GC(P,J))$;
\item {\bf Idempotency:} for each doctrinal site $(P,J)$, we have that $\GC(P,J) \cong \GC(\GC(P,J),K_{\GC(P,J)})$.
\end{enumerate}
\end{thm}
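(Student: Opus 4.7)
The plan is to establish the three claims by leveraging the bijective correspondences of Proposition \ref{prop:intlocmorph} and Theorem \ref{localicmorph} between internal locales, localic geometric morphisms, and morphisms of sites, combined with the density of the unit (Proposition \ref{unitisdense}) and the diagram-commutativity result Lemma \ref{fibrcomoprhmorphthm}. All three properties ultimately reduce to the observation that $\GC(P,J)$ is, by construction, the internal locale of $\sets^{\cat^{op}}$ corresponding under Theorem \ref{localicmorph} to the localic geometric morphism $\Sh(\cat \rtimes P, J) \to \sets^{\cat^{op}}$ induced by the fibration $p_P$.

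For \textbf{Universality}, given $(F,a) \colon (P,J) \to (\Lb, K_\Lb)$ with $\Lb \colon \dcat^{op} \to \Frm_{\rm open}$ geometric, the functor $F \rtimes a \colon (\cat \rtimes P, J) \to (\dcat \rtimes \Lb, K_\Lb)$ is by definition a morphism of sites, while the projections $p_P$ and $p_\Lb$ give comorphisms of sites into $(\cat, J_{\rm triv})$ and $(\dcat, J_{\rm triv})$ respectively. The commutative square relating $F$, $F \rtimes a$, $p_P$ and $p_\Lb$ is a morphism of fibrations, so Lemma \ref{fibrcomoprhmorphthm} yields a commutative square of geometric morphisms whose vertical sides are the localic projections $C_{p_P}$ and $C_{p_\Lb}$. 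Composing with the equivalence $\Sh(\cat \rtimes P, J) \simeq \Sh(\GC(P,J))$ from Proposition \ref{unitisdense} and reading the resulting data through the appropriate change-of-base version of Proposition \ref{prop:intlocmorph}, one extracts a natural transformation $\mathfrak{a} \colon \GC(P,J) \to \Lb \circ F^{op}$ whose components are frame homomorphisms commuting with the $\exists$-operators, i.e.\ a morphism of geometric doctrines. Commutativity of the triangle $\mathfrak{a} \circ \eta^{(P,J)} = a$ follows by tracing the construction on representables. Uniqueness is handled separately and cleanly: by equation (\ref{eq:unitiscovering}), every element of $\GC(P,J)(c)$ is a join of elements of the form $\exists_{\GC(P,J)(f)}(\eta^{(P,J)}_d(x))$, and any candidate $\mathfrak{a}$ preserves arbitrary joins and commutes with $\exists$, so its value is entirely determined by its composition with $\eta^{(P,J)}$.

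For \textbf{Semantic invariance}, the desired models of $\GC(P,J)$ are by definition $K_{\GC(P,J)}\text{-}\Flat(\cat \rtimes \GC(P,J),\sets)$, equivalently the category of points of $\Sh(\GC(P,J))$. Proposition \ref{unitisdense} provides an equivalence $\Sh(\GC(P,J)) \simeq \Sh(\cat \rtimes P, J)$, hence an equivalence of their point categories, and by hypothesis the latter is ${\bf Mod}(P)$. For \textbf{Idempotency}, the same equivalence is compatible with the localic projections to $\sets^{\cat^{op}}$, so by the uniqueness (up to equivalence) clause of Theorem \ref{localicmorph} the two internal locales of $\sets^{\cat^{op}}$ producing these equivalent localic geometric morphisms must themselves be isomorphic, giving $\GC(P,J) \cong \GC(\GC(P,J),K_{\GC(P,J)})$.

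The main obstacle I anticipate is the construction of $\mathfrak{a}$ in the universality argument when $F$ is not the identity, since Proposition \ref{prop:intlocmorph} is stated for internal locales of a single topos. To handle the change of base, one needs to factor the induced geometric morphism $\Sh(\Lb) \to \Sh(\GC(P,J))$ through the geometric morphism $\sets^{\dcat^{op}} \to \sets^{\cat^{op}}$ determined by the flat functor $F$, and then verify that the resulting natural transformation genuinely lands in $\Lb \circ F^{op}$ and that $(F, \mathfrak{a})$ is a morphism of doctrinal sites with respect to $K_{\GC(P,J)}$ and $K_\Lb$ (this last point amounts to checking that $F \rtimes \mathfrak{a}$ inherits the cover-preservation already enjoyed by $F \rtimes a$, which will follow from the density of $\eta^{(P,J)}$ together with the transitivity of $K_\Lb$).
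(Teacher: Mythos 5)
Your proposal is correct and follows essentially the same route as the paper: Lemma \ref{fibrcomoprhmorphthm} produces the commutative square of geometric morphisms, the change of base is handled by pushing $\Lb$ forward along $\Sh(F)$ to the internal locale $\Lb \circ F^{op}$ of $\sets^{\cat^{op}}$ (the paper does this via the hyperconnected--localic factorisation of the composite $\Sh(F)\circ C_{p_\Lb}$) and then Proposition \ref{prop:intlocmorph} is applied over the common base, while semantic invariance and idempotency follow from the equivalence $\Sh(\cat\rtimes P,J)\simeq \Sh(\GC(P,J))$ exactly as you describe. Your explicit uniqueness argument via equation (\ref{eq:unitiscovering}) is a welcome addition that the paper leaves implicit.
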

\begin{proof}
Let $(F,a) \colon (P,J) \to (\Lb,K_\Lb)$ be a morphism of doctrinal sites.  By Lemma \ref{fibrcomoprhmorphthm}, there exists a commutative square of geometric morphisms
\[\begin{tikzcd}
\Sh(\Lb) \simeq \Sh(\dcat \rtimes \Lb,K_\Lb) \ar{d}{C_{p_\Lb}} \ar{rr}{\Sh(F \rtimes a)} &&\Sh(\cat\rtimes P,J) \ar{d}{C_{p_P}} \\
\sets^{\dcat^{op}} \ar{rr}{\Sh(F)} &&\sets^{\cat^{op}}.
\end{tikzcd}\]
Let $g \colon \Sh(\Lb) \to \sets^{\cat^{op}}$ denote the composite geometric morphism
\[\begin{tikzcd}
\Sh(\Lb) \ar{r}{C_{p_\Lb}} & \sets^{\dcat^{op}} \ar{r}{\Sh(F)} & \sets^{\cat^{op}}.
\end{tikzcd}\]
By \parencite[Proposition 7.2]{fibredsites}, the factoring topos in the hyperconnected-localic factorisation of $g$ is given by the topos of sheaves on the internal locale $g_\ast(\Omega_{\Sh(\Lb)}) \in \sets^{\cat^{op}}$.  Whence, we have that
\begin{align*}
g_\ast(\Omega_{\Sh(\Lb)}) &= \Sh(F)_\ast \circ {C_{p_\Lb}}_\ast(\Omega_{\Sh(\Lb)}), \\
& = {C_{p_\Lb}}_\ast(\Omega_{\Sh(\Lb)}) \circ F^{op}, \\
& \cong \Lb \circ F^{op}.
\end{align*}
Therefore, as $C_{p_P}$ is localic, there is a factorisation of $\Sh(F \rtimes a)$:
\[\begin{tikzcd}
\Sh(\Lb) \ar{d}{C_{p_\Lb}} \ar[bend left]{rr}{\Sh(F\rtimes a)} \ar{r} & \Sh(\Lb \circ F^{op}) \ar{d}{C_{p_{\Lb \circ F^{op}}}} \ar[dashed]{r} &\Sh(\cat\rtimes P,J) \ar[bend left = 1.5em]{ld}{C_{p_P}} \\
\sets^{\dcat^{op}} \ar{r}{\Sh(F)} &\sets^{\cat^{op}}. &
\end{tikzcd}\]
Thus, as $\Sh(\cat \rtimes P,J) \simeq \Sh(\GC(P,J))$, there is a commutative triangle of geometric morphisms
\[\begin{tikzcd}[column sep = tiny]
\Sh(\Lb \circ F^{op}) \ar{rd}[']{C_{p_{\Lb \circ F^{op}}}} \ar{rr} && \Sh(\GC(P,J)) \ar{ld}{C_{p_{\GC(P,J)}}} \\
& \sets^{\cat^{op}}. &
\end{tikzcd}\]
Therefore, by Propositon \ref{prop:intlocmorph}, we obtain a morphism of internal locales $\Lb \circ F^{op} \to \GC(P,J)$, or rather a morphism of geometric doctrines $(F,\mathfrak{a}) \colon \GC(P,J) \to \Lb$, satisfying the required conditions.

That the geometric completion is semantically invariant follows from Diaconescu's equivalence:
\begin{align*}
{\bf Mod}(P)\simeq J\text{-{\bf Flat}}(\cat \rtimes P,\sets) 
& \simeq \Geom(\sets, \Sh(\cat\rtimes P,J)) , \\
& \simeq \Geom(\sets, \Sh(\cat\rtimes \GC(P,J),K_{\GC(P,J)})), \\
& \simeq K_{\GC(P,J)}\text{-{\bf Flat}}(\cat \rtimes \GC(P,J),\sets) \simeq {\bf Mod}(\GC(P,J)).
\end{align*}
That the geometric completion is idempotent follows from the fact that
\[\GC(P,J) \cong {C_{p_{\GC(P,J)}}}_\ast(\Omega_{\Sh(\GC(P,J))}) = \GC(\GC(P,J),K_{\GC(P,J)}).\]
\end{proof}

\begin{rem}\label{rem:direct}
{\rm
A direct proof of Theorem \ref{thm:univprop}, without mention of internal locales, could also be given.  Given a morphism of doctrinal sites $(F,a) \colon (P,J) \to (\Lb,K_\Lb)$, where $\Lb \colon \dcat^{op} \to \Frm_{\rm open}$ is a geometric doctrine,  we obtain the unique morphism of geometric doctrines $(F,\mathfrak{a}) \colon \GC(P,J) \to \Lb$ that makes the triangle
\begin{equation*}
\begin{tikzcd}
	P\ar{rd}[']{a} \ar{r}{\eta^{(P,J)}} & \GC(P,J) \ar[dashed]{d}{\mathfrak{a}} \\
	& \Lb \circ F^{op}
\end{tikzcd}
\end{equation*}
commute by defining, for each $S \in \GC(P,J)(c)$,
\[\mathfrak{a}_c(S) = \bigvee_{(g,x) \in S} \exists_{\Lb(F(g))} a_d(x).\]
}
\end{rem}

\begin{rem}
{\rm
Given a theory $\theory$ over a signature $\Sigma$, the category ${\bf Con}_\Sigma$ of contexts is normally considered to be entirely \emph{algebraic} in content.  That is to say, the semantics of the empty theory $\mathbb{O}_\Sigma$ over the signature $\Sigma$ are equivalent to the flat functors ${\bf Flat} ({\bf Con}_{\Sigma},\sets)$ (cf. \parencite[Corollary D3.1.2]{elephant} or \parencite[\S D3.2]{elephant}).  In order to amplify the analogy with theories, we have elected to work with doctrinal sites $(P,J)$, where only the category $\cat \rtimes P$ is endowed with a Grothendieck topology $J$ representing richer syntax, while the base category $\cat$ is effectively treated as being endowed with the trivial topology.

If we wished, we could rectify this myopia by considering the 2-category ${\bf DocSites}_{\rm WTB}$, the 2-category of doctrinal sites with topologies on the base category,
\begin{enumerate}
\item whose objects are quadruples $(\cat,J,P,K)$ where $J$ is a Grothendieck topology on the category $\cat$, $P \colon \cat^{op} \to \PreOrd$ is a doctrine fibred over $\cat$, and $K$ is a Grothendieck topology on $\cat \rtimes P$ that contains the Giraud topology $J_{p_P}$ (see \parencite[Proposition 2.1]{giraud}),
\item a 1-cell $(F,a) \colon (\cat,J,P,K) \to (\dcat,J',Q,K')$ of ${\bf DocSites}_{\rm WTB}$ consists of the data of a morphism of sites
\[F \colon (\cat,J) \to (\dcat,J')\]
and a natural transformation $a \colon P \to Q \circ F^{op}$ such that
\[F \rtimes a \colon (\cat \rtimes P,K) \to (\dcat \rtimes Q,K')\]
is also a morphism of sites.  The 2-cells we include are the same as for ${\bf DocSites}$.
\end{enumerate}

We note that since for each object $(\cat,J,P,K) \in {\bf DocSites}_{\rm WTB}$ the geometric morphism 
\[C_{p_P} \colon \Sh(\cat \rtimes P,K) \to \sets^{\cat^{op}}\]
factors through $\Sh(\cat ,J) \rightarrowtail \sets^{\cat^{op}}$, by Corollary \ref{intlocoversites} the Grothendieck topology $K_{\GC(P,K)}$ on $\cat \rtimes \GC(P,K)$ contains the Giraud topology $J_{p_{\GC(P,K)}}$, where $\GC(P,K)$ denotes the geometric completion of $(P,K)$ as in Definition \ref{df:gc}.  Hence, $(\cat,J,\GC(P,K),K_{\GC(P,K)})$ defines an object of ${\bf DocSites}_{\rm WTB}$.

Now for each arrow $(F,a) \colon (\cat,J,P,K) \to (\dcat,J',Q,K')$ of ${\bf DocSites}_{\rm WTB}$, there is a commutative square
\[\begin{tikzcd}
\cat \rtimes P \ar{r}{F \rtimes a} \ar{d}{p_P} & \dcat \rtimes Q \ar{d}{p_Q} \\
\cat \ar{r}{F} & \dcat
\end{tikzcd}\]
such that, when each category is endowed with its respective Grothendieck topology, the vertical arrows $p_P$, $p_Q$ become comorphisms of sites while the horizontal arrows $F \rtimes a$ and $F$ are, by definition, morphisms of sites.  Thus, by applying Lemma \ref{fibrcomoprhmorphthm}, there is a commutative square of geometric morphisms
\[\begin{tikzcd}
\Sh(\cat \rtimes P,K)  \ar{d}{C_{p_P}} & \ar{l}[']{\Sh(F \rtimes a)} \Sh(\dcat \rtimes Q,K') \ar{d}{C_{p_Q}} \\
\Sh(\cat,J) & \ar{l}[']{\Sh(F)}  \Sh(\dcat,J').
\end{tikzcd}\]
Therefore, by applying a similar method to that employed in Theorem \ref{thm:univprop}, we can deduce that
\[(\cat,J,\GC(P,K),K_{\GC(P,K)})\]
is the universal completion of $(\cat,J,P,K)$ to an object of ${\bf DocSites}_{\rm WTB}$ of the form $(\dcat,J',\Lb,K_\Lb)$ for an internal locale $\Lb \colon \dcat \to \Frm_{\rm open}$ of $\Sh(\dcat,J')$.  

The universal property of the geometric completion as stated in Theorem \ref{thm:univprop} is therefore the restriction of this more general statement to the 1-full 2-subcategory of ${\bf DocSites}_{\rm WTB}$ on objects of the form $(\cat,J_{\rm triv},P,K)$, i.e. the 2-category ${\bf DocSites}$ from Definition \ref{df:docsites}.  However, as explained above, for the purposes of our intended applications the extra generality is not needed.
}
\end{rem}



\paragraph{Preservation properties of the unit.}

The unit $\eta^{(P,J)} \colon P \to \GC(P,J)$ preserves finite meets.  This can be seen directly.  If the meet $x \land y$ of two elements $x,y \in P(c)$ exists, or if $P(c)$ has a top element $\top_c$, then the meet $\downarrow \!  x \, \land \downarrow \! y \in \GC(P,J_{\rm triv})(c)$ is given by $\downarrow \! (x \land y)$ and $\downarrow\! \top_c$ defines a top element of $\GC(P,J_{\rm triv})(c)$.  Thus, as $\overline{(-)}_c \colon \GC(P,J_{\rm triv}) \to \GC(P,J)$ preserves finite meets as well, so does their composite $\eta^{(P,J)}_{c}$.

It is also easily recognised that joins and existential quantifiers are preserved by the unit $\eta^{(P,J)} $ if and only if the Grothendieck topology $J$ is of a certain form.  Given a subset $\{\,y_i \mid i \in I\,\} \subseteq P(c)$ whose join $\bigvee_{i \in I} y_i$ exists in $P(c)$, since 
\[\id_\cat \rtimes \eta^{(P,J)}\colon (\cat \rtimes P,J) \to (\cat \rtimes \GC(P,J),K_{\GC(P,J)})\]
is cover preserving and reflecting by Proposition \ref{unitisdense}, we have that
\[\eta^{(P,J)}_c\left(\bigvee_{i \in I} y_i\right) = \bigvee_{i \in I} \eta^{(P,J)}_c(y_i)\]
if and only if
\[\left\{\, (c,y_i) \xrightarrow{\id_c} \left(c,\bigvee_{i \in I} y_i\right) \,\middle\vert\,  i \in I \,\right\}\]
is a $J$-covering family.  Identically, if $P(f) \colon P(c) \to P(d)$ has a left adjoint $\exists_{P(f)}$, then, for each $x \in P(d)$,
\[\eta^{(P,J)}_c \circ \exists_{P(f)}(x) = \exists_{\GC(P,J)} \circ \eta^{(P,J)}_d(x),\] if and only if the singleton
\[\left\{\,(d,x) \xrightarrow{f} (c,\exists_{P(f)} x)\,\right\}\]
is a $J$-covering arrow.

\subsection{The geometric completion as a monad}\label{subsec:monad}

In \S 5 of \cite{trotta}, the language of 2-monad theory is used to describe the universal property of the existential completion.  This is expanded upon in \cite{trotta2} into a rich description of the logical completions of elementary doctrines via 2-monad theory.  Thus inspired, we will use the language of 2-monad theory for investigating the geometric completion.

Recall that a 2-monad on a 2-category $\cat$ is a triple $(T,\eta,\mu)$ consisting of an 2-endofunctor $T \colon \cat \to \cat$, and 2-natural transformations $\eta \colon \id_\cat \to T$ and $\mu \colon T \circ T \to T$ such that the diagrams
\begin{equation}\label{monadaxioms}
\begin{tikzcd}
T^3 \ar{d}{\mu T} \ar{r}{T \mu} & T^2 \ar{d}{\mu} & & \id_\cat \circ T \ar{r}{\eta T} \ar[equals]{rd} &T^2 \ar{d}{\mu}& \ar{l}[']{T \eta} T \circ \id_\cat \ar[equals]{ld} \\
T^2 \ar{r}{\mu} & T, && & T & 
\end{tikzcd}
\end{equation}
strictly commute.  The geometric completion will be a 2-monad on the 2-category ${\bf DocSites}$.

We will initially develop the 1-monadic structure, and add the 2-monadic structure in Proposition \ref{prop:strict2adj}.  For any morphism $ (F,a) \colon (P,J)\to (Q,K)$ of doctrinal sites, there exists a morphism of geometric doctrinal sites $(F,\mathfrak{a}) \colon \GC(P,J) \to \GC(Q,K)$ by the universal property of the geometric completion:
\[\begin{tikzcd}
(P,J) \ar{rr}{(\id_\cat,\eta^{(P,J)})} \ar{d}{(F,a)} && (\GC(P,J),K_{\GC(P,J)}) \ar[dashed]{d}{(F,\mathfrak{a})} \\
(Q,K) \ar{rr}{(\id_\dcat,\eta^{(Q,K)})} && (\GC(Q,K),K_{\GC(Q,K)}).
\end{tikzcd}\]
Thus, the geometric completion is \emph{1-functorial} in that it yields a 1-functor
\[\GC \colon {\bf DocSites} \to {\bf GeomDoc}.\]

The universal property of the geometric completion ensures that the functor $\GC$ is a left 1-adjoint to the inclusion of geometric doctrines into doctrinal sites:
\begin{equation}\label{geocompadj}
\begin{tikzcd}
{{\bf DocSites}} && {{\bf GeomDoc}.}
\arrow[""{name=0, anchor=center, inner sep=0}, shift left=2, hook', from=1-3, to=1-1]
\arrow[""{name=1, anchor=center, inner sep=0}, "\GC", shift left=2, from=1-1, to=1-3]
\arrow["\dashv"{anchor=center, rotate=-90}, draw=none, from=1, to=0]
\end{tikzcd}
\end{equation}
The unit of the adjunction is given by natural transformation $\eta \colon \id_{\bf DocSites} \to \GC$ whose component at a doctrinal site $(P,J) \in {\bf DocSites}$ is $\etaPJ \colon (P,J) \to (\GC(P,J),K_{\GC(P,J)})$.  The counit of the adjunction is the natural transformation whose component at a geometric doctrine $\Lb$ is the isomorphism of geometric doctrines $\Lb \cong \GC(\Lb,K_\Lb)$ induced by the equivalence of toposes
\[\Sh(\Lb) \simeq \Sh(\GC(\Lb,K_\Lb)).\]

In Proposition \ref{prop:strict2adj} below we add the 2-monadic aspects.  The strict 2-adjunction we prove extends the 2-adjunction found in \parencite[Theorem 7.1]{fibredsites}, which presents the universal property of the geometric completion without base change (i.e. all doctrines considered are fibred over the same base category).

\begin{prop}[cf. Theorem 7.1 \cite{fibredsites}]\label{prop:strict2adj}
The geometric completion $\GC \colon {\bf DocSites} \to {\bf GeomDoc}$ can be made into a 2-functor such that 
\[\begin{tikzcd}
{{\bf DocSites}} && {{\bf GeomDoc}.}
\arrow[""{name=0, anchor=center, inner sep=0}, shift left=2, hook', from=1-3, to=1-1]
\arrow[""{name=1, anchor=center, inner sep=0}, "\GC", shift left=2, from=1-1, to=1-3]
\arrow["\dashv"{anchor=center, rotate=-90}, draw=none, from=1, to=0]
\end{tikzcd}\]
is a strict 2-adjunction.
\end{prop}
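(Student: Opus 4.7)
The plan is to extend the 1-adjunction (\ref{geocompadj}) to a strict 2-adjunction by defining $\GC$ on 2-cells as the identity on underlying natural transformations, and then verifying that the hom-set bijection of Theorem \ref{thm:univprop} promotes to an isomorphism of hom-categories.

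First I would make $\GC$ act on 2-cells. A 2-cell $\alpha \colon (F,a) \to (F',a')$ in ${\bf DocSites}$ between morphisms $(P,J) \to (Q,K)$ is a natural transformation $\alpha \colon F \to F'$ satisfying $a_c(x) \leqslant Q(\alpha_c)(a'_c(x))$; since the induced factorisations $(F,\mathfrak{a}), (F',\mathfrak{a}') \colon \GC(P,J) \to \GC(Q,K)$ share the base functors $F$ and $F'$, I set $\GC(\alpha) := \alpha$. The content is to verify
\[
\mathfrak{a}_c(S) \leqslant \GC(Q,K)(\alpha_c)\bigl(\mathfrak{a}'_c(S)\bigr) \qquad \text{for each } S \in \GC(P,J)(c).
\]
Using the explicit formula $\mathfrak{a}_c(S) = \bigvee_{(g,x) \in S} \exists_{\GC(Q,K)(F(g))}\bigl(\eta^{(Q,K)}_{F(d)}(a_d(x))\bigr)$ from Remark \ref{rem:direct}, and noting that $\GC(Q,K)(\alpha_c)$ is an open frame homomorphism and so preserves arbitrary joins, it suffices to check the inequality term-by-term. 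Transposing across the adjunction $\exists_{\GC(Q,K)(F'(g))} \dashv \GC(Q,K)(F'(g))$, invoking naturality of $\alpha$ to rewrite $\GC(Q,K)(\alpha_c \circ F(g)) = \GC(Q,K)(\alpha_d) \circ \GC(Q,K)(F'(g))$, and then exploiting naturality of $\eta^{(Q,K)}$, the required bound reduces to $a_d(x) \leqslant Q(\alpha_d)(a'_d(x))$, which is precisely the 2-cell condition on $\alpha$.

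Strict 2-functoriality is then immediate: $\GC$ preserves identity 2-cells, vertical composition and horizontal composition on the nose because it acts as the identity on underlying natural transformations. The unit $\eta$ is 2-natural by direct inspection, and the counit (an isomorphism guaranteed by the idempotency statement of Theorem \ref{thm:univprop}) is 2-natural by uniqueness in the universal property. The triangle identities hold strictly because they already do for the underlying 1-adjunction. It remains to confirm that the bijection
\[
{\bf GeomDoc}(\GC(P,J), \Lb) \;\cong\; {\bf DocSites}\bigl((P,J), \Lb\bigr)
\]
is a strict isomorphism of categories: bijectivity on objects is the universal property, and for morphisms, the reverse direction is immediate by specialising the geometric inequality to $S = \eta^{(P,J)}_c(x)$ and using $\mathfrak{a}_c \circ \eta^{(P,J)}_c = a_c$ together with naturality of $\eta^{(Q,K)}$ to extract the ${\bf DocSites}$ inequality $a_c(x) \leqslant Q(\alpha_c)(a'_c(x))$.

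The main obstacle I anticipate is not conceptual but notational bookkeeping in the term-by-term inequality of the first step, where the interchange of $\exists_{\GC(Q,K)(F'(g))}$ with the action of $\GC(Q,K)(\alpha_c)$ relies crucially on openness of the latter; the remaining steps are formal consequences of the universal property already established in Theorem \ref{thm:univprop}.
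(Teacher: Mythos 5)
Your proposal is correct and follows essentially the same route as the paper's proof: the 2-cell inequality $\mathfrak{a}_c(S) \leqslant \GC(Q,K)(\alpha_c)(\mathfrak{a}'_c(S))$ is verified term-by-term via the join formula of Remark \ref{rem:direct}, transposition across the relevant adjunctions, and naturality of $\alpha$ and $\eta^{(Q,K)}$, and the converse direction of the hom-category isomorphism is obtained by restricting to $S = \eta^{(P,J)}_c(x)$, exactly as in the paper. (Your appeal to join-preservation of $\GC(Q,K)(\alpha_c)$ is more than is needed — monotonicity already gives the required direction of the inequality — but it is certainly valid.)
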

\begin{proof}
We first show that $\GC$ can be made 2-functorial.  Let $(F,a), (F',a') \colon (P,J) \rightrightarrows (Q,K)$ be morphisms of doctrinal sites.  We will show that every natural transformation $\alpha \colon F \to F'$ that defines a 2-cell between morphisms of doctrinal sites
\[\begin{tikzcd}
{(P,J)} && {(Q,K)}
\arrow[""{name=0, anchor=center, inner sep=0}, "{(F,a)}", curve={height=-12pt}, from=1-1, to=1-3]
\arrow[""{name=1, anchor=center, inner sep=0}, "{(F',a')}"', curve={height=12pt}, from=1-1, to=1-3]
\arrow["\alpha", shorten <=3pt, shorten >=3pt, Rightarrow, from=0, to=1]
\end{tikzcd}\]
also yields a 2-cell of morphisms of geometric doctrines
\[\begin{tikzcd}
{\GC(P,J)} && {\GC(Q,K).}
\arrow[""{name=0, anchor=center, inner sep=0}, "{(F,\mathfrak{a})}", curve={height=-12pt}, from=1-1, to=1-3]
\arrow[""{name=1, anchor=center, inner sep=0}, "{(F',\mathfrak{a}')}"', curve={height=12pt}, from=1-1, to=1-3]
\arrow["\alpha", shorten <=3pt, shorten >=3pt, Rightarrow, from=0, to=1]
\end{tikzcd}\]
I.e. we must show that, for each $c \in \cat$ and $S \in \GC(P,J)(c)$,
\begin{equation}\label{ineqwewant}
\mathfrak{a}_c(S) \leqslant \GC(Q,K)(\alpha_c)(\mathfrak{a}'_c(S)).
\end{equation}

If $S$ is of the form $\eta^{(P,J)}_c(x)$, for some $x \in P(c)$, then the inequality (\ref{ineqwewant}) follows by applying the monotone map $\eta^{(Q,K)}_c$ to both sides of the existing inequality $a_c(x) \leqslant Q(\alpha_c)(a'_c(x))$.

For an arbitrary $S \in \GC(P,J)$, we use the description of $\mathfrak{a}$ and $\mathfrak{a}'$ given in Remark \ref{rem:direct}.  For each $(f,x) \in S$, we have that
\begin{align*}
\eta_{F(d)}^{(Q,K)}(a_c(x)) \leqslant \GC(Q,K)(\alpha_d) \left(\eta_{F'(d)}^{(Q,K)}(a'_c(x))\right) & \implies \exists_{\alpha_d} \eta_{F(d)}^{(Q,K)}(a_c(x)) \leqslant  \eta_{F'(d)}^{(Q,K)}(a'_c(x)), \\
& \implies \exists_{F'(f)} \exists_{\alpha_d} \eta_{F(d)}^{(Q,K)}(a_c(x)) \leqslant  \exists_{F'(f)}\eta_{F'(d)}^{(Q,K)}(a'_c(x)).
\end{align*}
Since $\alpha \colon F \to F'$ is natural, $F'(f) \circ \alpha_d = \alpha_c \circ F(f)$, and so
\[\exists_{\alpha_c} \exists_{F(f)} \eta_{F(d)}^{(Q,K)}(a_c(x)) \leqslant  \exists_{F'(f)}\eta_{F'(d)}^{(Q,K)}(a'_c(x)),\]
from which we deduce that $ \exists_{F(f)} \eta_{F(d)}^{(Q,K)}(a_c(x)) \leqslant \GC(Q,K)(\alpha_c) \left( \exists_{F'(f)}\eta_{F'(d)}^{(Q,K)}(a'_c(x))\right)$.  Hence, we obtain the desired inequality:
\[
\mathfrak{a}_c(S) = \bigvee_{(f,x) \in S}  \exists_{F(f)} \eta_{F(d)}^{(Q,K)}(a_c(x)) \leqslant \bigvee_{(f,x) \in S}\GC(Q,K)(\alpha_c) \left( \exists_{F'(f)}\eta_{F'(d)}^{(Q,K)}(a'_c(x)) \right)= \GC(Q,K)(\alpha_c)(\mathfrak{a}'_c(S)).
\]

It remains to show that, for each doctrinal site $(P,J) \in {\bf DocSites}$ and geometric doctrine $\Lb \in {\bf GeomDoc}$, there is a natural isomorphism of categories
\begin{equation}\label{isoof2cats}
\Hom_{{\bf DocSites}}((P,J),(\Lb,K_\Lb)) \cong \Hom_{{\bf GeomDoc}}(\GC(P,J),\Lb).
\end{equation}
The isomorphism on objects is provided by the universal property of the geometric completion.  Given a pair of morphisms of doctrines $(F,a),(F',a') \colon (P,J) \to (\Lb,K_\Lb)$ and a natural transformation $\alpha \colon F \to F'$, if $a_c(x) \leqslant \Lb(\alpha_c)(a'_c(x))$ for all $c \in \cat$ and $x \in P(c)$, i.e. $\alpha$ defines a 2-cell $\alpha \colon (F,a) \to (F',a')$, then $\alpha$ also defines a 2-cell $\alpha \colon (F,\mathfrak{a}) \to (F',\mathfrak{a}')$ by above.  Conversely, if $\mathfrak{a}_c(S) \leqslant \Lb(\alpha_c)(\mathfrak{a}'_c(S))$ for all $ c \in \cat$ and $S \in \GC(P,J)(c)$, then 
\[a_c(x) = \mathfrak{a}_c(\eta_c^{(P,J)}(x)) \leqslant \Lb(\alpha_c)(\mathfrak{a}'_c(\eta_{c}^{(P,J)}(x))) = \Lb(\alpha_c)(a'_c(x)),\]
and so $\alpha$ defines a 2-cell $\alpha \colon (F,a) \to (F',a')$.  Thus, we obtain the isomorphism 
(\ref{isoof2cats}).    
\end{proof}



\begin{rem}\label{rem:fullfaithfulunit}
{\rm
The isomorphism (\ref{isoof2cats}) could also be obtained by the more general observation: whenever $(Q,K)$ is a doctrinal site such that each component $\eta^{(Q,K)}_d \colon Q(d) \to \GC(Q,K)(d)$ of the unit is injective,  for any other doctrinal site $(P,J)$ and a pair of morphisms of doctrinal sites
\[
(F,a), (F,a') \colon (P,J) \rightrightarrows (Q,K),
\]
a natural transformation $\alpha \colon F \to F'$ defines a 2-cell $(F,a) \to (F',a')$ if and only if $\alpha$ defines a 2-cell $(F,\mathfrak{a}) \to (F',\mathfrak{a}')$.  The proof is almost identical to Proposition \ref{prop:strict2adj}.  One direction is shown in Proposition \ref{prop:strict2adj}, while the converse follows by the implication
\begin{align*}
\mathfrak{a}_c\left(\eta^{(P,J)}_c(x)\right) \leqslant \GC(Q,K)(\alpha_c)\left(\mathfrak{a}'_c\left(\eta^{(P,J)}_c(x)\right)\right)
& \implies \eta^{(Q,K)}_c(a_c(x)) \leqslant \eta^{(Q,K)}_c(Q(\alpha_c)(a_c(x))), \\
& \implies a_c(x) \leqslant Q(\alpha_c(x)),
\end{align*}
for all $c \in \cat$ and $x \in P(c)$.

Therefore, the induced functor on hom-categories
\[\Hom_{\bf DocSites}((P,J),(Q,K)) \to \Hom_{\bf GeomDoc}(\GC(P,J),\GC(Q,K))\]
is full and faithful.  The specific isomorphism (\ref{isoof2cats}) is then obtained by noting that $\eta^{(\Lb,K_\Lb)} \colon \Lb \to \GC(\Lb,K_\Lb)$ is an isomorphism for any geometric doctrine $\Lb$.

}
\end{rem}



It remains to describe the algebras of the geometric completion monad $(\GC,\eta,\mu)$ of the adjunction (\ref{geocompadj}).  Since the geometric completion is an idempotent monad, a simple application of \parencite[Corollary 4.2.4, volume 2]{borvol2} (extended to the 2-categorical setting) yields the following corollary.

\begin{coro}
The algebras for the monad $(\GC,\eta,\mu)$ coincide with geometric doctrines:
\[{\bf DocSites}^\GC \simeq {\bf GeomDoc}.\]
In particular, by restricting the adjunction (\ref{geocompadj}), for each category $\cat$ there is a 2-equivalence
\[({\bf DocSites}/\cat)^\GC \simeq \Loc\left(\sets^{\cat^{op}}\right)^{op},\]
where $({\bf DocSites}/\cat)$ denotes the 1-full 2-subcategory of ${\bf DocSites}$ whose objects are doctrinal sites fibred over the category $\cat$.
\end{coro}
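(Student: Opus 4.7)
The plan is to invoke the standard fact that the Eilenberg--Moore category of an idempotent monad is 2-equivalent to the full 2-subcategory of fixed points of the unit. First I would confirm that the 2-monad $(\GC, \eta, \mu)$ associated to the adjunction (\ref{geocompadj}) is idempotent: by Theorem \ref{thm:univprop}(iii), for each doctrinal site $(P,J)$ we have $\GC(P,J) \cong \GC(\GC(P,J), K_{\GC(P,J)})$, and the counit of (\ref{geocompadj}) at a geometric doctrine $\Lb$ is the isomorphism $\Lb \cong \GC(\Lb, K_\Lb)$ witnessed by $\eta^{(\Lb, K_\Lb)}$ (see the proof of Proposition \ref{prop:strict2adj}). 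Equivalently, the multiplication $\mu$ is a 2-natural isomorphism, making $(\GC, \eta, \mu)$ a lax-idempotent (indeed strictly idempotent) 2-monad.

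Next I would apply the 2-categorical analogue of Corollary 4.2.4 in volume 2 of \cite{borvol2}, which identifies the 2-category of algebras of an idempotent 2-monad with the full reflective 2-subcategory of its fixed points. Since ${\bf GeomDoc}$ is, by construction, the 2-full 2-subcategory of ${\bf DocSites}$ on objects of the form $(\Lb, K_\Lb)$, and since every such object is fixed by $\GC$ up to canonical isomorphism, the fixed points of the unit are precisely the geometric doctrines. Combined with the fact that the forgetful 2-functor from ${\bf GeomDoc}$ to ${\bf DocSites}$ is fully faithful (also part of the adjunction (\ref{geocompadj})), this yields the first 2-equivalence ${\bf DocSites}^\GC \simeq {\bf GeomDoc}$.

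For the second 2-equivalence, I would restrict (\ref{geocompadj}) to the 1-full 2-subcategory ${\bf DocSites}/\cat$ of doctrinal sites fibred over $\cat$. By inspection of the definitions, $\GC$ preserves the base category, so this restriction lands in the 1-full 2-subcategory of ${\bf GeomDoc}$ on geometric doctrines $\Lb \colon \cat^{op} \to \Frm_{\rm open}$. The discussion following the definition of ${\bf GeomDoc}$ in \S \ref{subsec:docsites} gives, via Proposition \ref{prop:intlocmorph}, a full and faithful 2-embedding $\Loc(\sets^{\cat^{op}})^{op} \hookrightarrow {\bf GeomDoc}$ whose essential image is exactly this subcategory. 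The first 2-equivalence therefore restricts to $({\bf DocSites}/\cat)^\GC \simeq \Loc(\sets^{\cat^{op}})^{op}$.

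No substantive obstacle is expected, because all the heavy lifting has been done in Theorem \ref{thm:univprop} and Proposition \ref{prop:strict2adj}; the only care required is the verification that the cited 1-categorical result on idempotent monads transfers faithfully to the strict 2-categorical setting, which holds because the adjunction isomorphism (\ref{isoof2cats}) in the proof of Proposition \ref{prop:strict2adj} is a strict isomorphism of hom-categories, so the unit $\eta$ induces a 2-natural equivalence on the relevant hom-2-functors.
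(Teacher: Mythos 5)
Your proposal is correct and follows essentially the same route as the paper, which likewise deduces the result by observing that $(\GC,\eta,\mu)$ is an idempotent 2-monad (via Theorem \ref{thm:univprop}) and then invoking the 2-categorical extension of \parencite[Corollary 4.2.4, volume 2]{borvol2} identifying the algebras of an idempotent monad with the reflective subcategory of fixed points. Your additional verifications — that the counit is the isomorphism $\Lb \cong \GC(\Lb,K_\Lb)$, and that the restriction to $({\bf DocSites}/\cat)$ lands in the image of the embedding $\Loc\left(\sets^{\cat^{op}}\right)^{op} \hookrightarrow {\bf GeomDoc}$ from Proposition \ref{prop:intlocmorph} — simply make explicit what the paper leaves implicit.
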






\section{Syntactic sites and the geometric completion}\label{sec:syn}

So far, we have used doctrines to categorically represent a logical theory.  However, it is also common to see theories represented by \emph{syntactic categories} (e.g., in \cite[\S D1.4]{elephant}).  Intuitively, the syntactic category of a first-order theory $\theory$ is the category whose objects are formulae in context and whose arrows are those formulae that express functional relations.  More generally, for any doctrine $P$ expressing regular logic, one can define a `syntactic category' ${\bf Syn}(P)$ of $P$, recalled in Definition \ref{df:syncat}.

It is therefore of interest to study how the geometric completion of a doctrine interacts with taking its syntactic category.  In \S\ref{subsec:geocompforregcat} we will observe that the geometric completion for a doctrinal site yields a geometric completion for \emph{regular sites}, those sites $(\cat,K)$ whose underlying category $\cat$ is regular and where all regular epimorphisms are $K$-covers.

We proceed as follows.  
\begin{itemize}
\item In \S\ref{subsec:existsites}, we recall the theory of \emph{existential sites} from \cite{fibredsites} in the particular context of doctrines in which language we express our development.  We observe that the geometric completion of an existential doctrinal site can be computed `point-wise'.
\item An \emph{existential doctrinal site} whose underlying doctrine is also a primary doctrine has enough expressive power to construct a `syntactic category'.  This construction is recalled in \S\ref{subsec:syncat}, as well as the quasi 2-adjunction ${\bf Syn} \dashv \Sub_{(-)}$ between the syntactic category construction and taking the doctrine of subobjects.
\item In \S\ref{subsec:synsites}, the quasi 2-adjunction ${\bf Syn} \dashv \Sub_{(-)}$ is extended to give an quasi 2-adjunction between existential doctrinal sites and regular sites.  Also in this subsection, we compare the two toposes of sheaves we can now associate with an existential doctrinal site -- $\Sh(\cat \rtimes P,J_\rtimes)$ and $\Sh({\bf Syn}(P),J_{\bf Syn})$.  For an existential doctrine $P$, we exhibit a functor $\zeta^P \colon \cat \rtimes P \to {\bf Syn}(P)$ that yields a dense morphism of sites $\zeta^P \colon (\cat \rtimes P,J_\rtimes) \to ({\bf Syn}(P),J_{\bf Syn})$, and hence an equivalence $\Sh(\cat \rtimes P,J_\rtimes)\simeq\Sh({\bf Syn}(P),J_{\bf Syn})$.
\item Finally, we describe the geometric completion of a regular site in \S\ref{subsec:geocompforregcat}.
\end{itemize}


\subsection{Existential doctrinal sites}\label{subsec:existsites}

We saw that, given a geometric doctrine $\Lb \colon \cat^{op} \to \Frm_{\rm open}$, the category $\cat \rtimes \Lb$ can be given a Grothendieck topology $K_\Lb$ whose covering sieves are those sieves $\{\, (c_i,U_i) \xrightarrow{f_i} (d,V) \mid  i \in I\,\}$ such that
\[V = \bigvee_{i \in I} \exists_{f_i} U_i.\]
We now consider variations of this topology for other doctrines $P \colon \cat^{op} \to \PreOrd$ using the language of \emph{existential fibred sites} introduced in \cite{fibredsites}.  We will observe in Proposition \ref{prop:geocompforexsites} that the geometric completion of these doctrinal sites can be computed `point-wise'.

First, we recall some definitions from \parencite[\S 5]{fibredsites}.  Note that the exposition in \cite{fibredsites} exists in the more general framework of indexed categories $F \colon \cat^{op} \to {\bf Cat}$, whereas we have elected to study only indexed preorders (or doctrines in our language).

\begin{df}[Definition 5.1 \cite{fibredsites}]\label{df:exsites}
{\rm
Let $P \colon \cat^{op} \to \PreOrd$ be a doctrine such that, for each arrow $d \xrightarrow{f} c \in \cat$, the map $P(f) \colon P(c) \to P(d)$ has a left adjoint $\exists_f$.  Suppose we are also given, for each $c \in \cat$, a Grothendieck topology $J_c$ on the preorder $P(c)$ for which $\exists_f \colon P(c) \to P(d)$ sends $J_c$-covers to $J_d$-covers.
\begin{enumerate}
\item We say that the pair $(P,(J_c)_{c \in \cat})$ satisfies the \emph{relative Frobenius condition} if for each sieve $S$ on the object $(d,y) \in \cat \rtimes P$ for which the sieve
\[\left\{\,\exists_{f} z \leqslant y \,\middle\vert\,  (c,z) \xrightarrow{f} (d,y) \in S\,\right\}\]
is $J_d$-covering then the sieve
\[\left\{\,\exists_{f} z \leqslant x \,\middle\vert\,  (c,z) \xrightarrow{f} (d,y) \in S, \, z \leqslant P(f)(x)\,\right\}\]
is $J_d$-covering too for any $x \in P(d)$ with $x \leqslant y$.
\item\label{df:exsites:relbc} We say that the pair $(P,(J_c)_{c \in \cat})$ satisfies the \emph{relative Beck-Chevalley condition} if for each sieve $S$ on $(d,y) \in \cat \rtimes P$ for which the sieve
\[\left\{\,\exists_{f} z \leqslant y \,\middle\vert\,  (c,z) \xrightarrow{f} (d,y) \in S\,\right\}\]
is $J_d$-covering, given an arrow $e \xrightarrow{h} d \in \cat$, the sieve
\[\left\{\,\exists_g z \leqslant P(h)(y) \,\middle\vert\,  (c,z) \xrightarrow{f} (d,y) \in S, \, h\circ g = f\,\right\}  \]
is $J_d$-covering too.
\end{enumerate}
The pair $(P,(J_c)_{c \in \cat})$ is said to be a \emph{existential doctrinal site} if the relative Frobenius and relative Beck-Chevalley conditions are both satisfied.
}
\end{df}

Let $P \colon \cat^{op} \to \PreOrd$ be a doctrine such that each fibre $P(c)$ has a Grothendieck topology $J_c$ and, for each arrow $d \xrightarrow{f} c \in \cat$, the map $P(f) \colon P(c) \to P(d)$ has a cover-preserving left adjoint $\exists_f$.  In light of Proposition \ref{prop:exandcohmodels}, we would want to define a Grothendieck topology $J_\rtimes$ on the category $\cat \rtimes P$ for which the $J_\rtimes$-covering sieves are precisely those sieves $\{\,(c_i,x_i) \xrightarrow{f_i}(d,y) \mid i \in I\,\}$ for which $\{\, (d,\exists_{f_i} x_i) \xrightarrow{f_i} (d,y) \mid i \in I\,\}$ is $J_d$-covering.  Such an assignment of sieves to objects is reflexive and transitive, but not necessarily stable.  We observe that the stability of $J_\rtimes$ under arrows of the form $(d,x) \xrightarrow{\id_d} (d,y)$ (respectively, $(e,P(h)(y)) \xrightarrow{h} (d,y)$) is precisely given by the relative Frobenius condition (resp., the relative Beck-Chevalley condition), and since any arrow $(e,x) \xrightarrow{h} (d,y) \in \cat \rtimes P$ can be factored as
\[(e,x) \xrightarrow{\id_e} (e,P(h)(y)) \xrightarrow{h} (d,y)\]
we obtain the following proposition.

\begin{prop}[Theorem 5.1 \cite{fibredsites}]
For the doctrine $P \colon \cat^{op} \to \PreOrd$ above, $J_\rtimes$ defines a Grothendieck topology on the category $\cat \rtimes P$ if and only if the pair $(P,(J_c)_{c \in \cat})$ satisfies both the relative Frobenius and relative Beck-Chevalley conditions.
\end{prop}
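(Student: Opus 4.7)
The plan is to verify the three axioms of a Grothendieck topology for $J_\rtimes$---maximality, transitivity, and stability---and to observe that, of these, maximality and transitivity hold without any further assumption, so that the entire content of the proposition concerns stability. Maximality on $(d,y)$ is automatic: the maximal sieve contains $\id_{(d,y)}$ and hence produces the family $\{\exists_{\id_d} y = y \leqslant y\}$, which generates the maximal sieve on $y$ in $P(d)$. Transitivity follows from the transitivity of each $J_d$ together with the standing assumption that every $\exists_f$ preserves $J$-covers.

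The key idea for stability is to factor every arrow $(e,x) \xrightarrow{h} (d,y)$ of $\cat \rtimes P$ canonically as
\[
(e,x) \xrightarrow{\id_e} (e, P(h)(y)) \xrightarrow{h} (d,y),
\]
a composite of a \emph{vertical} arrow (lying inside the single fibre $P(e)$) followed by the canonical \emph{cartesian} lifting of $h$ at $y$. Since pullback of sieves respects composition, stability of $J_\rtimes$ under an arbitrary $h$ reduces to the combination of stability under vertical arrows $(e,x) \xrightarrow{\id_e} (e,y')$ with $x \leqslant y'$ in $P(e)$, and stability under cartesian arrows of the form $(e,P(h)(y)) \xrightarrow{h} (d,y)$.

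Translating the $J_\rtimes$-covering condition into a $J_e$-covering condition via the adjunctions $\exists_g \dashv P(g)$, one then recognises stability under vertical arrows as exactly the relative Frobenius condition (restricting a $J_e$-covering family of $y'$ to a $J_e$-covering family of the smaller element $x$), and stability under cartesian arrows as exactly the relative Beck-Chevalley condition (transporting a $J_d$-covering family of $y$ to a $J_e$-covering family of $P(h)(y)$). The converse direction is immediate: each of the two conditions is obtained by evaluating the stability axiom on a single special instance of the factorisation above.

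The step I expect to be the main obstacle is the faithful translation between sieves in $\cat \rtimes P$ and sieves in the fibres of $P$: a generating family for a sieve $S$ on $(d,y)$ will not in general yield, upon pullback along $h$, a generating family for $h^*(S)$, so the verification must proceed at the level of entire sieves rather than of chosen generators. This is precisely the reason the Frobenius and Beck-Chevalley conditions in Definition \ref{df:exsites} are formulated in terms of arbitrary sieves whose $\exists$-images happen to form a cover, rather than in terms of generating families, and matching the two formulations precisely will require the care alluded to above.
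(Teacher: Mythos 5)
Your proposal is correct and follows essentially the same route as the paper: the text immediately preceding the proposition observes that the assignment is automatically reflexive and transitive, and reduces stability via the factorisation $(e,x) \xrightarrow{\id_e} (e,P(h)(y)) \xrightarrow{h} (d,y)$ to stability under vertical arrows (the relative Frobenius condition) and under cartesian arrows (the relative Beck-Chevalley condition). Your closing caution about working with whole sieves rather than generating families is well taken and is indeed why Definition \ref{df:exsites} is phrased in terms of arbitrary sieves, but it does not change the argument.
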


\begin{df}[Theorem 5.1 \cite{fibredsites}]\label{df:existentialtopology}
{\rm
Let $(P,(J_c)_{c \in \cat})$ be an existential doctrinal site.  We call the Grothendieck topology $J_\rtimes$ on $\cat \rtimes P$, for which the $J_\rtimes$-covering sieves are precisely those sieves $\{\,(c_i,x_i) \xrightarrow{f_i}(d,y) \mid  i \in I\,\}$ for which $\{\, (d,\exists_{f_i} x_i) \xrightarrow{f_i} (d,y) \mid  i \in I\,\}$ is $J_d$-covering, the \emph{existential topology} for the pair $(P,(J_c)_{c \in \cat})$.  
}
\end{df}

The relative Frobenius and relative Beck-Chevalley conditions are related to the usual Frobenius and Beck-Chevalley conditions by the following proposition.

\begin{prop}[Proposition 5.3 \cite{fibredsites}]\label{prop:normalfrobandbc}
For an existential doctrinal site $(P,(J_c)_{c \in \cat})$,
\begin{enumerate}
\item if $P(c)$ has all finite meets for each $c \in \cat$, then the pair $(P,(J_c)_{c \in \cat})$ satisfies the relative Frobenius condition if and only if $P$ satisfies the Frobenius condition - i.e. $\exists_f z \land x = \exists_f ( z \land P(f)(x))$.
\item if $\cat$ has all pullbacks, then the pair $(P,(J_c)_{c \in \cat})$ satisfies the relative Beck-Chevalley condition if and only if $P$ satisfies the Beck-Chevalley condition - i.e. for each pullback square \[\begin{tikzcd}
c \times_e d \ar{d}{k} \ar{r}{g} & d \ar{d}{h} \\
c \ar{r}{f} & e
\end{tikzcd}\]
of $\cat$, the square
\[\begin{tikzcd}
P({c \times_e d}) \ar{r}{\exists_{g}} & P(d) \\
P(c) \ar{r}{\exists_{f}} \ar{u}{P(k)} & P(e) \ar{u}{P(h)}
\end{tikzcd}\]
commutes.
\end{enumerate}
\end{prop}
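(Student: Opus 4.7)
The plan is to verify each direction of both equivalences using principal sieves in $\cat \rtimes P$ and exploiting the adjunction $\exists_f \dashv P(f)$.

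For Frobenius $(\Leftarrow)$, assume the standard identity $\exists_f z \land x = \exists_f(z \land P(f)(x))$. Given a sieve $S$ on $(d,y)$ whose image sieve $\{\exists_f z \mid (c,z) \xrightarrow{f} (d,y) \in S\}$ is $J_d$-covering on $y$ and given $x \leqslant y$, the stability axiom of $J_d$ allows us to restrict the cover from $y$ to $x$ via meets; each restricted element $\exists_f z \land x = \exists_f(z \land P(f)(x))$ arises from the arrow $(c, z \land P(f)(x)) \xrightarrow{f} (d,y)$, which still lies in $S$ by downward closure and satisfies $z \land P(f)(x) \leqslant P(f)(x)$. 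Monotonicity of $J_d$ (a formal consequence of transitivity) yields the relative Frobenius conclusion. Conversely $(\Rightarrow)$, fix $f \colon c \to d$, $z \in P(c)$, and $x \in P(d)$, and apply relative Frobenius to the principal sieve $S$ on $(d, \exists_f z)$ generated by $(c,z) \xrightarrow{f} (d, \exists_f z)$, whose image sieve in $P(d)$ contains $\exists_f z$ itself and is therefore maximal. Taking target $x' := \exists_f z \land x$, every arrow of $S$ has the form $(c',w) \xrightarrow{f \circ h} (d, \exists_f z)$ with $w \leqslant P(h)(z)$, and the additional requirement $w \leqslant P(h)P(f)(x')$ gives $w \leqslant P(h)(z \land P(f)(x))$ using the fibrewise meets; by adjunction $\exists_{f \circ h} w \leqslant \exists_f(z \land P(f)(x))$. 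We therefore obtain a $J_d$-covering sieve on $x'$ bounded above by $\exists_f(z \land P(f)(x))$, from which the non-trivial Frobenius inequality $\exists_f z \land x \leqslant \exists_f(z \land P(f)(x))$ follows; the reverse inequality is automatic from the adjunction.

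For the Beck-Chevalley equivalence, the strategy is entirely analogous, with pullbacks in $\cat$ playing the role of fibrewise meets. In $(\Leftarrow)$, each $(c, z) \xrightarrow{f} (d, y) \in S$ is pulled back along $h \colon e \to d$ to yield $(c \times_d e, P(p_1)(z)) \xrightarrow{p_2} (e, P(h)(y)) \in \cat \rtimes P$, and the standard BC identity $\exists_{p_2} P(p_1)(z) = P(h) \exists_f z$, together with downward closure of $S$ along $p_1$, delivers the relative BC cover of $P(h)(y)$. For $(\Rightarrow)$, applying relative BC to the principal sieve on $(d, \exists_f z)$ and analysing the factorisations $h \circ g = f \circ p_1$ extracted from the pullback recovers the standard BC identity.

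The main obstacle is the step in both $(\Rightarrow)$ directions requiring us to extract a pointwise inequality between specific elements of $P(d)$ (respectively $P(e)$) from the mere existence of a covering sieve bounded above by a given element. This relies on carefully combining the stability and transitivity axioms of $J_d$ with the explicit structure of the principal sieves we have chosen, and is the technical heart of the argument.
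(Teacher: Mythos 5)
The paper gives no proof of this proposition to compare against --- it is imported verbatim from \cite{fibredsites} --- so I can only assess your argument on its own terms. Your architecture (stability of $J_d$ plus the standard identities for the ``if'' directions, principal sieves for the ``only if'' directions) is the natural one, and your ``if'' direction for Frobenius is correct. But there is a genuine gap exactly where you locate ``the technical heart'': the step extracting $\exists_f z \land x \leqslant \exists_f(z\land P(f)(x))$ from the existence of a $J_d$-covering sieve on $x' = \exists_f z\land x$ all of whose members lie below $\exists_f(z\land P(f)(x))$. This does \emph{not} follow from stability and transitivity, however carefully combined: the topology in which \emph{every} sieve (including the empty one) is covering satisfies all the axioms, makes $\exists_f$ vacuously cover-preserving, and renders relative Frobenius and relative Beck--Chevalley vacuously true, while the pointwise identities can still fail. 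Concretely, let $\cat$ have a single non-identity arrow $f\colon c \to d$, let $P(d)$ be the four-element lattice $0 < a,b < 1$ with $a\land b = 0$, let $P(c) = \{0' < 1'\}$, and let $P(f)$ send $1\mapsto 1'$ and everything else to $0'$; then $\exists_f$ exists, $\exists_f(1')\land a = 1 \land a = a$, yet $\exists_f(1'\land P(f)(a)) = \exists_f(0') = 0$. What actually closes your step is the assumption that each $J_c$ is \emph{subcanonical} in the sense spelled out in Examples \ref{exmorecoherentdoctrines} (a covering family $\{\,x_i\leqslant y\,\}$ forces $y\leqslant\bigvee_i x_i$): then your bound on the members of the cover immediately yields $x'\leqslant\exists_f(z\land P(f)(x))$. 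So either the statement must be read with that standing hypothesis (as it is in every instance the paper actually uses), or the ``only if'' directions are false; in either case your proof cannot be completed from the axioms you invoke, and you should say explicitly which extra hypothesis you are using.

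A second, quieter gap sits in your ``if'' direction for Beck--Chevalley. After applying the pointwise identity $P(h)\exists_f z = \exists_{p_2}P(p_1)(z)$, you need the family $\{\,P(h)(\exists_f z)\leqslant P(h)(y)\,\}$, obtained by applying the \emph{transition map} $P(h)$ to the given $J_d$-cover of $y$, to be a $J_e$-cover of $P(h)(y)$. The standing hypotheses only assert that the left adjoints $\exists_f$ send covers to covers, not the maps $P(h)$, so this too needs an additional assumption --- again automatic in the subcanonical setting when $P(h)$ preserves the relevant joins, as it does for frame-valued doctrines and for the trivial and coherent topologies, which are the only cases the paper deploys, but not in general. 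Your adjunction manipulations elsewhere (deriving $w\leqslant P(h)(z\land P(f)(x))$ from $\exists_h w\leqslant z\land P(f)(x)$, and the automatic reverse Frobenius inequality) are fine.
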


Hence we note that, for any existential doctrinal site $(P,(J_c)_{c \in \cat})$, if $P$ is a primary doctrine, then $P$ is automatically an existential doctrine, and the existential topology $J_\rtimes$ contains the topology $J_{\rm Ex}$.  Such an existential doctrinal site should be understood as a doctrine which interprets regular logic, if not further richer syntax.

\paragraph{The geometric completion of an existential doctrinal site.}

Since the arrow $(d,x) \xrightarrow{f} (c,\exists_f x)$ is a $J_\rtimes$-cover for any arrow $d \xrightarrow{f} c$ of $\cat$, an element $S \in \GC(P,J)(c)$ is entirely determined by its elements of the form $(\id_c,x) \in S$.  Hence, by Construction \ref{constrgc}, we obtain the following.

\begin{prop}\label{prop:geocompforexsites}
For an existential doctrinal site $(P,(J_c)_{c \in \cat})$, the geometric completion $\GC(P,J_\rtimes)$ is isomorphic to its point-wise ideal completion, that is: 
\begin{enumerate}
\item for each object $c $ of $ \cat$, $\GC(P,J)(c)$ is the frame $J_c \text{-}{\bf Idl}(P(c))$,
\item for each arrow $d \xrightarrow{f} c$ of $\cat$, $\GC(P,J)(f) \colon \GC(P,J)(c) \to \GC(P,J)(d)$ sends a $J_c$-ideal $I$ to the $J_d$-ideal
\[f^\ast(I) = \{\,y \in P(d) \mid \exists_f y \in I \,\}.\]
\end{enumerate}
\end{prop}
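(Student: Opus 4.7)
My plan is to use the explicit description of $\GC(P,J_\rtimes)$ given in Construction \ref{constrgc}, together with the key feature of the existential topology $J_\rtimes$ (Definition \ref{df:existentialtopology}): the singleton family $\{(d,x) \xrightarrow{f} (c,\exists_f x)\}$ is $J_\rtimes$-covering for every arrow $d \xrightarrow{f} c$ of $\cat$. This observation is what collapses the complicated description of $\GC(P,J_\rtimes)(c)$ down to its fibrewise ideal completion.

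Concretely, for each $c \in \cat$ and each $S \in \GC(P,J_\rtimes)(c)$, I would set
\[ I_S := \{\,x \in P(c) \mid (\id_c,x) \in S\,\}, \]
and conversely, for each $J_c$-ideal $I$ of $P(c)$, I would set
\[ S_I := \{\,(f,x) \mid \exists_f x \in I\,\}. \]
First I would check that $I_S$ is a $J_c$-ideal: it is a down-set by Construction \ref{constrgc}(a) applied with $g = \id_c$, and it is $J_c$-closed because any $J_c$-covering family $\{x_i \leqslant x\}$ produces a $J_\rtimes$-covering family $\{(c,x_i) \xrightarrow{\id_c} (c,x)\}$, so Construction \ref{constrgc}(b) forces $x \in I_S$. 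Next I would check that $S_I$ satisfies both closure conditions of Construction \ref{constrgc}: condition (a) follows from $\exists_{f \circ g} y = \exists_f \exists_g y \leqslant \exists_f x$ whenever $y \leqslant P(g)(x)$, using that $I$ is a down-set.

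The main step, and the only place where the existential doctrinal site hypothesis does real work, is condition (b) for $S_I$: if $\{(e_i,y_i) \xrightarrow{h_i} (d,x)\}$ is $J_\rtimes$-covering and each $(f \circ h_i, y_i) \in S_I$, then by Definition \ref{df:existentialtopology} the family $\{\exists_{h_i} y_i \leqslant x\}$ is $J_d$-covering in $P(d)$; applying $\exists_f$ (which by hypothesis sends $J_d$-covers to $J_c$-covers) yields a $J_c$-cover $\{\exists_{f \circ h_i} y_i \leqslant \exists_f x\}$ whose members all lie in $I$, so $J_c$-closedness of $I$ forces $\exists_f x \in I$, i.e., $(f,x) \in S_I$.

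Finally, the identifications $S_{I_S} = S$ and $I_{S_I} = I$ follow from the fact that, because $(d,x) \xrightarrow{f} (c,\exists_f x)$ is a $J_\rtimes$-covering singleton, Construction \ref{constrgc}(b) implies $(f,x) \in S$ iff $(\id_c, \exists_f x) \in S$. Both assignments $S \mapsto I_S$ and $I \mapsto S_I$ are evidently order-preserving, hence define an order (and therefore frame) isomorphism $\GC(P,J_\rtimes)(c) \cong J_c\text{-}\Idl(P(c))$. For the action on morphisms, a direct computation gives
\[ I_{f^\ast(S)} = \{\, y \in P(d) \mid (f,y) \in S\,\} = \{\,y \in P(d) \mid \exists_f y \in I_S\,\}, \]
matching the stated formula and establishing naturality. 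The only subtle point is the verification of closure condition (b) for $S_I$, but once the existential topology is unpacked it reduces to the preservation of $J_d$-covers by $\exists_f$, so no substantial new argument beyond Construction \ref{constrgc} and Definition \ref{df:existentialtopology} is required.
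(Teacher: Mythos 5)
Your proposal is correct and follows exactly the route the paper takes: the paper's (one-sentence) justification is precisely that the $J_\rtimes$-covering singletons $(d,x)\xrightarrow{f}(c,\exists_f x)$ force any $S\in\GC(P,J_\rtimes)(c)$ from Construction \ref{constrgc} to be determined by its elements of the form $(\id_c,x)$, which is the content of your bijection $S\mapsto I_S$, $I\mapsto S_I$. You have simply written out the verifications (ideal-hood of $I_S$, closure conditions for $S_I$ via cover-preservation of $\exists_f$, and the action on transition maps) that the paper leaves implicit.
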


\begin{exs}\label{exmorecoherentdoctrines}
{\rm

Let $P \colon \cat^{op} \to \PreOrd$ be a doctrine such that, for each arrow $d \xrightarrow{f} c \in \cat$, the map $P(f) \colon P(c) \to P(d)$ has a left adjoint $\exists_f$.  Of particular interest is the case where the Grothendieck topology $J_c$ assigned to each fiber $P(c)$ of an existential doctrinal site $(P,(J_c)_{c \in \cat})$ is subcanonical, i.e. a family $\{\,x_i \leqslant y\mid i \in I\,\}$ of inequalities in $P(c)$ is $J_c$-covering only if the join $\bigvee_{i \in I} x_i$ exists and $y \leqslant \bigvee_{i \in I} x_i$ (if $P(c)$ is a poset, $y =\bigvee_{i \in I} x_i$).  For each arrow $d \xrightarrow{f} c$ of $\cat$, being a left adjoint, $\exists_f$ preserves all joins that exist in $P(d)$ and so $\exists_f$ is automatically cover-preserving if $J_d$ and $J_c$ are both subcanonical.  Consideration of certain cases will allow us to generalise existential doctrines and coherent doctrines to non-cartesian settings, as mentioned in Remark \ref{subscriptcart}.

\begin{enumerate}
\item\label{noncartex} There exists an existential doctrinal site $(P,(J_{{\rm triv}})_{c \in \cat})$, where each fibre $P(c)$ has been given the trivial topology $J_{\rm triv}$, if and only if:
\begin{enumerate}
	\item {\it (relative Frobenius condition)} for each arrow $d \xrightarrow{f} c$ of $\cat$, $x, y \in P(c)$ with $x \leqslant y$ and $z \in P(d)$ such that
	\[\exists_f z \leqslant y \text{ and } y \leqslant \exists_f z,\]
	there exists some $w \in P(d)$ with $w \leqslant z$ such that 
	\[\exists_f w \leqslant x \text{ and } x \leqslant \exists_f w;\]
	\item {\it (relative Beck-Chevalley condition)} for each pair of arrows
	\[\begin{tikzcd}
		& e \ar{d}{h} \\
		d \ar{r}{f} & c
	\end{tikzcd}\]
	of $\cat$, $y \in P(c)$ and $z \in P(d)$ such that
	\[\exists_f z \leqslant y \text{ and } y \leqslant \exists_f z,\]
	there exists a commutative square
	\[\begin{tikzcd}
		e' \ar{r}{g} \ar{d}{k}& e \ar{d}{h} \\
		d \ar{r}{f} & c
	\end{tikzcd}\]
	in $\cat$ such that
	\[\exists_g P(k)(z) \leqslant x \text{ and } x \leqslant \exists_g P(k)(z).\]
\end{enumerate}
Note that if $P \colon \cat^{op} \to {\bf MSLat}$ is an existential doctrine, $(P,(J_{\rm triv})_{c \in \cat})$ is an existential doctrinal site.  We denote by $J_{\rm Ex}$ the existential topology on $\cat \rtimes P$ induced by the existential doctrinal site $(P,(J_{\rm triv})_{c \in \cat})$, i.e. the Grothendieck topology generated by covering families of the form
\[\begin{tikzcd}
	(d,x) \ar{r}{g} &(c, \exists_g x),
\end{tikzcd}\]
for $x \in P(d)$ and $d \xrightarrow{g} c \in \cat$, in analogy with Proposition \ref{prop:exandcohmodels}\ref{exmodelsi}.  We define ${\bf RelExDoc} $, the 2-category of \emph{relative existential doctrines}, as the full 2-subcategory of $ {\bf DocSites}$ on objects of the form $(P,J_{\rm Ex})$, where $(P,(J_{\rm triv})_{c \in \cat})$ is an existential doctrinal site.  The category ${\bf ExDoc}$ of existential doctrines is thus now a full 2-subcategory of ${\bf RelExDoc}$.

\item\label{noncartcoh} Suppose that we can endow each fibre $P(c)$ with the Grothendieck topology $J_{\rm Coh}$, where a sieve $S$ on $y \in P(c)$ is $J_{\rm Coh}$-covering precisely if $S$ contains a finite family
\[\{\,x_i \leqslant y \mid i \in I\,\} \subseteq S\]
such that $y \leqslant \bigvee_{i \in I} x_i$.  If $P(c)$ is a lattice, $J_{\rm Coh}$ defines a Grothendieck topology on $P(c)$ if and only if $P(c)$ is a distributive lattice.  The pair $(P,(J_{\rm Coh})_{c \in \cat})$ defines an existential doctrinal site if and only if:
\begin{enumerate}
	\item {\it (relative Frobenius condition)} for each pair $x, y \in P(c)$ with $x \leqslant y$, and each finite collection of pairs $d_i \xrightarrow{f_i} c$ and $z \in P(d_i)$, indexed by $i \in I$, such that
	\[\bigvee_{i \in I} \exists_{f_i} z_i \leqslant y \text{ and } y \leqslant \bigvee_{i \in I} \exists_{f_i} z_i,\]
	for each $i \in I$ there exists some $w_i \in P(d_i)$ with $w_i \leqslant z_i$ such that 
	\[\bigvee_{i \in I} \exists_{f_i} w_i \leqslant x \text{ and } x \leqslant \bigvee_{ i \in I}  \exists_{f_i} w_i;\]
	\item {\it (relative Beck-Chevalley condition)} for each arrow $e \xrightarrow{h} c$ of $\cat$, $y \in P(c)$, and each finite collection of pairs $d_i \xrightarrow{f_i} c$ and $z \in P(d_i)$, indexed by $i \in I$, such that
	\[\bigvee_{i \in I} \exists_{f_i} z_i \leqslant y \text{ and } y \leqslant \bigvee_{i \in I} \exists_{f_i} z_i,\]
	for each $i \in I$, there exists a finite collection of pairs of arrows $g_j$ and $k_j$, indexed by $j \in J_i$, such that there is a commutative square
	\[\begin{tikzcd}
		e_j' \ar{r}{g_j} \ar{d}{k_j}& e \ar{d}{h} \\
		d \ar{r}{f} & c
	\end{tikzcd}\]
	of $\cat$ and secondly
	\[\bigvee_{i \in I} \bigvee_{j \in J_i} \exists_{g_j} P(k_j)(z_i) \leqslant x \text{ and } x \leqslant \bigvee_{i \in I} \bigvee_{j \in J_i} \exists_{g_j} P(k_j)(z_i).\]
\end{enumerate}
Just as with relative existential doctrines, by analogy with Proposition \ref{prop:exandcohmodels}\ref{cohmodelsi}, we denote the existential topology on $\cat \rtimes P$ induced by the existential doctrinal site $(P,(J_{\rm Coh})_{c \in \cat})$ by $J_{\rm Coh}$, i.e. the Grothendieck topology generated by covering families of the form
\[\begin{tikzcd}
	(d,x) \ar{r}{g} & (c, \exists_g x \lor \exists_h y ) & \ar{l}[']{h} (e,y),
\end{tikzcd}\]
for $x \in P(d)$, $y \in P(d)$, and arrows $d \xrightarrow{g} c , e \xrightarrow{h} c \in \cat$.  We call the resultant doctrinal site $(P,J_{\rm Coh})$ a \emph{relative coherent doctrine} and denote the full 2-subcategory of ${\bf DocSites}$ on relative coherent doctrines by ${\bf RelCohDoc}$.  We once again have that ${\bf CohDoc}$ is a full 2-subcategory of ${\bf RelCohDoc}$.

When $P \colon \cat^{op} \to {\bf DLat}$ is a coherent doctrine, and $\cat \rtimes P$ is equipped with the topology $J_{\rm Coh}$, we recognise by Proposition \ref{prop:geocompforexsites} that the fibre of the geometric completion $\GC(P,J_{\rm Coh})(c)$ is the coherent locale associated with the distributive lattice $P(c)$ under the (point-free) Stone duality for distributive lattices (see \cite[\S II.3.3]{stone}).

In particular, if $\mathbb{B} \colon \cat^{op} \to {\bf Bool}$ is a Boolean doctrine, then $\GC(\mathbb{B},J_{\rm Coh}) \colon \cat^{op} \to {\bf StFrm}_{\rm open}$ sends $c \in \cat$ to the Stone frame corresponding to the Boolean algebra $\mathbb{B}(c)$.  If there is an isomorphism $\mathbb{B} \cong F^\theory$ for some single-sorted classical theory $\theory$ over a signature $\Sigma$, then $\GC(\mathbb{B},J_{\rm Coh})(\vec{x})$ (where $\vec{x} \in {\bf Con}_\Sigma$ is a context/tuple of variables of length $n$) coincides with the frame of opens of the familiar $n$th Stone space of the theory $\theory$ (see \parencite[\S 6.3]{hodges}).  Doctrines of this form (or rather, since Stone frames are spatial, the doctrines $\Pt \circ \GC(\mathbb{B},K^{\rm fin}_{\mathbb{B}}) \colon \Con \to {\bf StSpace}$) were dubbed \emph{polyadic spaces} in the note \cite{polyadicnote} and suggested for use in categorically proving standard theorems of classical logic, a desire realised in \cite{VGM}.

\end{enumerate}

}
\end{exs}



\subsection{Syntactic categories}\label{subsec:syncat}

We can always obtain a primary doctrine from a category $\cat$ with all finite limits by taking its \emph{doctrine of subobjects} $\Sub_\cat \colon \cat^{op} \to {\bf MSLat}$, the doctrine where:
\begin{enumerate}
\item for each object $c $ of $ \cat$, $\Sub_\cat(c)$ is the meet-semilattice of subobjects of $c$,
\item for each arrow $d \xrightarrow{f} c$ of $\cat$, $\Sub_\cat(f) \colon \Sub_\cat(c) \to \Sub_\cat(d)$ is the map which sends a subobject $e \rightarrowtail c$ to the pullback
\[
\begin{tikzcd}
f^\ast(e) \ar[tail]{d} \ar{r} & e \ar[tail]{d} \\
d \ar{r}{f} & c.
\end{tikzcd}
\]
\end{enumerate}
Taking the doctrine of subobjects of a cartesian category naturally defines a 2-functor 
\[\Sub_{(-)} \colon {\bf Cart} \to {\bf PrimDoc}.\]
\begin{enumerate}
\item Each cartesian functor $F \colon \cat \to \dcat$ restricts to a meet-semilattice homomorphism 
\[{a^F_c \colon \Sub_\cat(c) \to \Sub_\dcat(F(c))}\]
natural in $c \in \cat$.  Hence, the pair $(F,a^F)$ defines a morphism of primary doctrines
\[(F,a^F) \colon \Sub_\cat \to \Sub_\dcat.\]

\item Each natural transformation $\alpha \colon F \to F'$ defines a 2-cell $\alpha \colon (F,a^F) \to (F',a^{F'})$ of ${\bf PrimDoc}$.  For each $c \in \cat$ and $x \in \Sub_\cat(c)$, the required inequality $a^F(x) \leqslant \Sub_\cat(f)(a^{F'}(x))$ follows by the universal property of the pullback:
\[\begin{tikzcd}
F(x) \ar[tail]{rd} \ar[dashed]{r} \ar[bend left]{rr}{\alpha_x} & \Sub_\cat(\alpha_c)(F'(x)) \ar{r} \ar[tail]{d} & F'(x) \ar[tail]{d} \\
& F(c) \ar{r}{\alpha_c} & F'(c).
\end{tikzcd}\]
\end{enumerate}
Moreover, the 2-functor $\Sub_{(-)}$ can easily be checked to be full and faithful.

From a doctrine $P \colon \cat^{op} \to \PreOrd$, it is possible to construct a \emph{syntactic category} ${\bf Syn}(P)$, and this construction is converse to taking the doctrine of subobjects in the sense that ${\bf Syn}(\Sub_\cat) \simeq \cat$ for a certain subclass of cartesian categories (regular categories, see below).  It is only possible to construct the syntactic category of a doctrine $P$ when $P$ is rich enough to interpret \emph{provably functional relations}.  These are predicates that, according to the internal logic of a doctrine (see \cite[\S 4.3]{jacobs}), encode the graph of a function between two other predicates.  For that, we need at least regular logic.  In this subsection we review material found in \cite{triposphd}, \cite{pittsopen} and \cite{triposretro} regarding the syntactic category construction.  Our exposition is similar to the explanation found in \cite[\S 3]{coumans}.

\paragraph{Building a syntactic category.} 

It seems superfluous to recall that, given two subsets $A \subseteq X$ and $B \subseteq Y$, the graph of a function $f \colon A \to B$ consists of a subset $f \subseteq X \times Y$ such that
\[
(x,y) \in f \implies x \in A, \, y \in B, \ \ (x,y), \, (x,y') \in f \implies y = y', \ \ x \in A \implies \exists \, y \in B \ (x,y) \in f.
\]
The conceit behind provably functional relations is to translate these implications into the internal language of a doctrine, as is done below.  Recall from \cite{lawcompr} that the internal equality predicate of an existential doctrine $P \colon \cat^{op} \to {\bf MSLat}$, for an object $c \in \cat$, is given by $\exists_{\Delta_c} \top_c$, where $\Delta_c \colon c \to c \times c$ is the diagonal.

\begin{df}\label{df:syncat}{\rm
Let $P \colon \cat^{op} \to {\bf MSLat}$ be an existential doctrine.  The \emph{syntactic category} ${\bf Syn}(P)$ of $P$ is the category:
\begin{enumerate}
\item whose objects are pairs $(c,U)$ where $c$ is an object of $\cat$ and $U \in P(c)$,
\item\label{dfii:syncat} and each arrow $(c,U) \to (d,V)$ is given by some $W \in P(c \times d)$ that defines a \emph{provably functional relation}, i.e. the inequalities
\[W \leqslant P(\pr_1)(U) \land P(\pr_2)(V),\ \ P(\pr_{1,2})(W) \land P(\pr_{1,3})(W) \leqslant P(\pr_{2,3}) \exists_{\Delta_d} \top_d, \ \ U \leqslant \exists_{\pr_1} W,\]
are satisfied, where $\pr_1$ and $\pr_2$ are the projections
\[\begin{tikzcd}
	c & \ar{l}[']{\pr_1} c \times d \ar{r}{\pr_2} & d,
\end{tikzcd}\]
$\pr_{1,2}$, $\pr_{1,3}$ and $\pr_{2,3}$ are the projections
\[\begin{tikzcd}
	& c \times d \times d \ar[bend right]{ld}[']{\pr_{1,3}} \ar{d}{\pr_{1,2}} \ar[bend left]{rd}{\pr_{2,3}} & \\
	c \times d & c \times d & d \times d,
\end{tikzcd}\]
and $\Delta_d \colon d \to d \times d$ is the diagonal.
\end{enumerate}
The identity morphism on $(c,U)$ is given by $\exists_{\Delta_c} U \in P(c \times c)$, while the composite of two arrows
\[\begin{tikzcd}
(c,U) \ar{r}{W} & (d,V) \ar{r}{W'} & (e,V')
\end{tikzcd}\]
is given by $\exists_{\pr_{1,3}} (P(\pr_{1,2})( W) \land P(\pr_{2,3})( W')$.
}\end{df}

\begin{rem}
{\rm 
We have presented the syntactic category construction for an existential doctrine.  This is analogous to the \emph{category of maps} construction for an allegory (see \cite[\S 2.132]{allegories}).  Indeed, for each existential doctrine $P$, the two constructions coincide in that ${\bf Syn}(P) \simeq \mathcal{MAP}({\bf A}(P))$, where ${\bf A}(P)$ is the allegory of relations on $P$, i.e. the allegory whose objects are elements $x \in P(c \times d)$ (note that $P$ must be a regular doctrine in order to express the relational composite of $x \in P(c \times d)$ and $y\in P(d \times e)$).
}
\end{rem}

\begin{ex}{\rm
Let $\theory$ be a theory over a signature $\Sigma$ in a fragment of first order logic that contains \emph{regular logic} (see \parencite[Definition D1.1.3]{elephant}).  The syntactic category ${\bf Syn}(F^\theory)$ of the regular doctrine $F^\theory \colon \Sort \to {\bf MSLat}$ from Example \ref{prototypical}, by definition, coincides with the usual syntactic category $\cat_\theory$ for $\theory$ as described in \parencite[\S D1.4]{elephant}.
}\end{ex}

\paragraph{The syntax-subobject adjunction.}  Taking the syntactic category of an existential doctrine yields a left inverse to the restriction of the 2-functor $\Sub_{(-)} \colon {\bf Cart} \to {\bf PrimDoc}$ to a suitable 2-subcategory: the 2-category of regular categories.

\begin{df}[\S A1.3 \cite{elephant}]
{\rm 
By ${\bf Reg}$, we denote the 2-category:
\begin{enumerate}
\item whose objects are \emph{regular categories} -- categories with finite limits and image factorisations that are stable under pullback (we will also require that a regular category is \emph{well-powered}),
\item whose 1-cells are \emph{regular functors} (also called \emph{exact functors} in \cite{barrexact} and \cite{carbonivitale}) -- finite limit preserving functors that also preserve regular epimorphisms,
\item and whose 2-cells are natural transformations between regular functors.
\end{enumerate}
}
\end{df}

For each regular category $\cat$, the subobject doctrine $\Sub_\cat \colon \cat^{op} \to \PreOrd$ is an existential doctrine (see \cite[Theorem 4.4.4]{jacobs}).  The left adjoint to $\Sub_\cat(f)$, for an arrow $f$ of $\cat$, is given by the existence of images (see \cite[Lemma A1.3.1]{elephant}).  Thus, the subobject 2-functor $\Sub_{(-)} \colon {\bf Cart} \to {\bf PrimDoc}$ from above restricts to a 2-functor
\[\Sub_{(-)} \colon {\bf Reg} \to {\bf ExDoc}.\]

The syntactic category construction also induces a 2-functor ${\bf Syn} \colon {\bf ExDoc} \to {\bf Reg}$.  For an existential doctrine $P$, the category ${\bf Syn}(P)$ is regular: it has finite limits and the image factorisation of an arrow $(c,U) \xrightarrow{W} (d,V) \in {\bf Syn}(P)$ is given by
\[\begin{tikzcd}
(c,U) \ar[two heads]{r}{W} & (d,\exists_{\pr_2} W) \ar[tail]{r} & (d,V)
\end{tikzcd}\]
(see \cite[\S2.4 \& 2.5]{triposphd}).  Since a morphism of existential doctrines $(F,a) \colon P \to Q$ preserves the interpretation of regular logic, $(F,a)$ induces a regular functor ${\bf Syn}(F,a) \colon {\bf Syn}(P) \to {\bf Syn}(Q)$.

The two functors form a quasi 2-adjunction
\begin{equation}\label{synsubadj}
\begin{tikzcd}
{{\bf ExDoc}} && {{\bf Reg}}
\arrow[""{name=0, anchor=center, inner sep=0}, "{{\rm Sub}_{(-)}}", shift left=3, from=1-3, to=1-1]
\arrow[""{name=1, anchor=center, inner sep=0}, "{{\bf Syn}}", shift left=3, from=1-1, to=1-3]
\arrow["\dashv"{anchor=center, rotate=-90}, draw=none, from=1, to=0]
\end{tikzcd}
\end{equation}
such that, since $\Sub_{(-)}$ is full and faithful, the counit is a natural equivalence of categories ${\bf Syn}(\Sub_\cat) \simeq \cat$ for each $\cat \in {\bf Reg}$.  The quasi 2-adjunction (\ref{synsubadj}) can be deduced from the analogous quasi 2-adjunction found in \cite[Proposition 1.3]{pittsopen} and \cite[Theorem 3.6]{triposretro}.


\subsection{Syntactic sites}\label{subsec:synsites}

To apply the geometric completion as currently formulated to regular categories, we will need a version of the quasi 2-adjunction (\ref{synsubadj}) that also incorporates Grothendieck topologies.  We introduce this extension in the first half of this subsection.

We will introduce the 2-category of \emph{regular sites} and a particular 2-category of existential doctrinal sites which we denote by ${\bf ExDocSites}$.  We then extend the 2-functors
\[{\bf Syn} \colon {\bf ExDoc} \to {\bf Reg}, \ \ \Sub_{(-)} \colon {\bf Reg} \to {\bf ExDoc}\]
by showing that for each existential doctrinal site $(P,(J_c)_{c \in \cat})$ (respectively, regular site $(\cat,K)$) there exists a natural choice of Grothendieck topology $J_{\bf Syn}$ on ${\bf Syn}(P)$ making $({\bf Syn}(P),J_{\bf Syn})$ a regular site (resp., a natural choice of Grothendieck topology $K|_{\Sub_\cat(c)}$ on $\Sub_\cat(c)$, for each $c \in \cat$, making $(\Sub_\cat,(K|_{\Sub_\cat(c)})_{c \in \cat})$ an existential site).  Finally, we demonstrate that these extended 2-functors are quasi 2-adjoint.

For each existential doctrinal site $(P,(J_c)_{c \in \cat}) \in {\bf ExDocSites}$, there are now two choices, $\Sh(\cat \rtimes P,J_{\rtimes})$ and $\Sh({\bf Syn}(P),J_{\bf Syn})$, for the toposes we can associate with the doctrinal site.  In particular, if $\theory$ is a theory in a fragment of logic that contains regular logic, then there are two choices for the classifying topos of $\theory$.  The second half of this subsection is devoted to showing that this is a monochotomous choice
by demonstrating an equivalence of toposes
\[\Sh(\cat \rtimes P,J_{\rtimes})\simeq \Sh({\bf Syn}(P),J_{\bf Syn}),\]
natural in $(P,(J_c)_{c \in \cat}) \in {\bf ExDocSites}$.

\paragraph{Regular sites.}  

\begin{df}\label{df:regsites}
{\rm
\begin{enumerate}
\item Let ${\bf RegSites}$ be the 2-category whose objects are \emph{regular sites}, by which we mean pairs $(\cat,K)$ consisting of a regular category $\cat$ and a Grothendieck topology $K$ on $\cat$ such that the sieve generated by each regular epimorphism $c \twoheadrightarrow d$ is a $K$-cover.  The 1-cells of ${\bf RegSites}$ are cover preserving regular functors and the 2-cells are all natural transformations between these.

\item By ${\bf ExDocSites}$ we denote the the full 2-subcategory of ${\bf DocSites}$ on objects of the form $(P,J_{\rtimes})$ for an existential doctrinal site $(P,(J_c)_{c \in \cat})$ whose underlying doctrine $P$ is also a primary doctrine (and therefore, by Proposition \ref{prop:normalfrobandbc}, an existential doctrine).
\end{enumerate}
}
\end{df}

\begin{rem}\label{rem:subobjtop}{\rm
Let $(\cat,K) \in {\bf RegSites}$ be a regular site.  As every morphism $d \xrightarrow{f} c \in \cat$ can be factored as a regular epimorphism followed by a monomorphism
\[\begin{tikzcd}
d \ar[two heads]{r} & f(d) \ar[tail]{r} & c,
\end{tikzcd}\]
the Grothendieck topology $K$ on $\cat$ is entirely determined by which families of subobjects
\[\{\,e_i \rightarrowtail c \mid i \in I\,\}\]
are $K$-covering for each $c \in \cat$.
}\end{rem}

For a regular site $(\cat,K)$, by endowing each subobject lattice $\Sub_\cat(c)$ with the Grothendieck topology $K|_{\Sub_\cat(c)}$, we obtain an existential site $(\cat,(K|_{\Sub_\cat(c)})_{c \in \cat})$.  We note that the left adjoint
\[\exists_f \colon \Sub_\cat(c) \to \Sub_\cat(d),\]
for each arrow $c \xrightarrow{f} d \in \cat$, preserves covers because if a family of arrows $\{\,a_i \rightarrowtail b \mid i \in I\,\}$ in $\Sub_\cat(c)$ is $K$-covering then, using the diagram
\[\begin{tikzcd}
a_i \ar[tail]{r} \ar[tail]{rd} \ar[bend left, two heads]{rrr} & b\ar[tail]{d} \ar[bend left, two heads]{rrr}&& f(a_i) \ar[tail]{rd} \ar[tail]{r} & f(b) \ar[tail]{d} \\
& c \ar{rrr}{f} &&& d,
\end{tikzcd}\]
the fact that $a_i \twoheadrightarrow f(a_i)$ and $b \twoheadrightarrow f(b)$ are both $K$-covers, and transitivity of $K$-covers, we observe that $\{\,f(a_i) \rightarrowtail f(b) \mid i \in I\,\}$ is a $K$-covering family too.  We denote the resulting topology on $\cat \rtimes \Sub_\cat$ by $K_{\Sub}$.  It is easily checked that, given regular sites $(\cat,K)$ and $(\dcat,K')$ and a regular functor $F \colon \cat \to \dcat$, $F$ sends $K$-covers to $K'$-covers if and only if the induced morphism of doctrines
\[(F,a^F) \colon \Sub_\cat \to \Sub_\dcat\]
sends $K_\Sub$-covers to $K'_\Sub$-covers.  Thus, we obtain a 2-functor $\Sub_{(-)} \colon {\bf RegSites} \to {\bf ExDocSites}$ which, moreover, is full and faithful since $\Sub_{(-)} \colon {\bf Reg} \to {\bf ExDoc}$ is full and faithful.

Conversely, for each existential site $(P,(J_c)_{c \in \cat}) \in {\bf ExDocSites}$, we can endow the syntactic site ${\bf Syn}(P)$ with the Grothendieck topology $J_{\bf Syn}$ where a family of arrows 
\[\left\{\,(c_i , U_i) \xrightarrow{W_i} (d,V) \,\middle\vert\,  i \in I\,\right\}\]
is $J_{\bf Syn}$-covering if and only if the family
\[\{\,\exists_{\pr_2} W_i \leqslant V \mid i \in I\,\} \]
is $J_d$-covering.  Recall that the image factorisation of an arrow $(c,U) \xrightarrow{W} (d,V) \in {\bf Syn}(P)$ is given by
\[\begin{tikzcd}
(c,U) \ar[two heads]{r}{W} & (d,\exists_{\pr_2} W) \ar[tail]{r} & (d,V),
\end{tikzcd}\]
whose left factor $(c,U) \xrightarrow{W} (d,\exists_{\pr_2}W)$ is trivially a $J_{\bf Syn}$-cover.  Thus, regular epimorphisms are $J_{\rm Syn}$-covers and so $({\bf Syn}(P),J_{\bf Syn})$ is a regular site.  It is equally trivially observed that if $(F,a) \colon (P,J_\rtimes) \to (Q,J'_\rtimes)$ is a morphism of ${\bf ExDocSites}$ then the induced functor
\[{\bf Syn}(F,a) \colon {\bf Syn}(P) \to {\bf Syn}(Q)\]
sends $J_{\bf Syn}$-covers to $J'_{\bf Syn}$-covers, and hence there is a functor ${\bf Syn} \colon {\bf ExDocSites} \to {\bf RegSites}$.

\begin{prop}\label{prop:subsynadjforsites}
The quasi 2-adjunction (\ref{synsubadj}) extends to give a second quasi 2-adjunction and a morphism of adjunctions
\begin{equation*}
\begin{tikzcd}
{{\bf ExDoc}} && {{\bf ExDocSites}} \\
\\
{{\bf Reg}} && {{\bf RegSites},}
\arrow[""{name=0, anchor=center, inner sep=0}, "{{\bf Syn}}"', shift right=3, from=1-3, to=3-3]
\arrow[""{name=1, anchor=center, inner sep=0}, "{{\rm Sub}_{(-)}}"', shift right=3, from=3-3, to=1-3]
\arrow["{U'}"', from=3-3, to=3-1]
\arrow["U"', from=1-3, to=1-1]
\arrow[""{name=2, anchor=center, inner sep=0}, "{{\rm Sub}_{(-)}}"', shift right=3, from=3-1, to=1-1]
\arrow[""{name=3, anchor=center, inner sep=0}, "{{\bf Syn}}"', shift right=3, from=1-1, to=3-1]
\arrow["\dashv"{anchor=center}, draw=none, from=0, to=1]
\arrow["\dashv"{anchor=center}, draw=none, from=3, to=2]
\end{tikzcd}
\end{equation*}
i.e. ${\bf Syn} \circ U = U' \circ {\bf Syn}$ and $\Sub_{(-)} \circ U' = U \circ \Sub_{(-)}$, where $U$ and $U'$ are the forgetful 2-functors.
\end{prop}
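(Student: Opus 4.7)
The 2-functors ${\bf Syn} \colon {\bf ExDocSites} \to {\bf RegSites}$ and $\Sub_{(-)} \colon {\bf RegSites} \to {\bf ExDocSites}$ have been constructed in the paragraphs immediately preceding the proposition. The compatibility equalities ${\bf Syn} \circ U = U' \circ {\bf Syn}$ and $\Sub_{(-)} \circ U' = U \circ \Sub_{(-)}$ are then immediate: in both cases the underlying category (respectively, doctrine) produced by the site-enriched functor agrees by construction with the one produced by the non-enriched functor of (\ref{synsubadj}), the only new data being the Grothendieck topology. So it remains to produce the quasi 2-adjunction at the site-enriched level, and this I would do by lifting the unit and counit of (\ref{synsubadj}) to ${\bf ExDocSites}$ and ${\bf RegSites}$ respectively.

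For the counit at $(\cat,K) \in {\bf RegSites}$, the equivalence $\epsilon_\cat \colon {\bf Syn}(\Sub_\cat) \xrightarrow{\simeq} \cat$ sends $(c,U \rightarrowtail c)$ to $U$. I would verify that under this equivalence the topology $(K|_{\Sub_\cat})_{\bf Syn}$ on ${\bf Syn}(\Sub_\cat)$ corresponds to $K$: a family $\{(c_i,U_i) \xrightarrow{W_i} (d,V)\}$ is $(K|_{\Sub_\cat})_{\bf Syn}$-covering iff $\{\exists_{\pr_2} W_i \leqslant V\}$ is $K|_{\Sub_\cat(d)}$-covering, which by Remark \ref{rem:subobjtop} and the regular epi--mono factorisation of morphisms in $\cat$ matches $K$-coverage under the equivalence. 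For the unit at $(P,(J_c)_{c\in \cat}) \in {\bf ExDocSites}$, the morphism $\eta_P = (F^P,a^P) \colon P \to \Sub_{{\bf Syn}(P)}$ sends $c$ to $(c,\top_c) \in {\bf Syn}(P)$ and $x \in P(c)$ to the subobject $(c,x) \rightarrowtail (c,\top_c)$. I would check that $F^P \rtimes a^P$ preserves covers for the existential topology on $\cat \rtimes P$ and $(J_{\bf Syn})_\Sub$ on ${\bf Syn}(P) \rtimes \Sub_{{\bf Syn}(P)}$, which reduces to showing that a generating $J_\rtimes$-cover $(d,y) \xrightarrow{f} (c,\exists_f y)$ is sent to a covering morphism of subobjects in ${\bf Syn}(P)$; this follows directly from the definition of $J_{\bf Syn}$ since the image of the induced morphism is precisely the subobject corresponding to $\exists_f y$.

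Finally, for the quasi 2-adjunction itself, I would show that the equivalence of hom-categories coming from (\ref{synsubadj}),
\[\Hom_{\bf ExDoc}(P,\Sub_\cat) \simeq \Hom_{\bf Reg}({\bf Syn}(P),\cat),\]
restricts to an equivalence
\[\Hom_{{\bf ExDocSites}}((P,J_\rtimes),(\Sub_\cat,(K|_{\Sub_\cat})_\rtimes)) \simeq \Hom_{{\bf RegSites}}(({\bf Syn}(P),J_{\bf Syn}),(\cat,K)).\]
Concretely, a morphism $(G,b) \colon P \to \Sub_\cat$ in ${\bf ExDoc}$ gives a site morphism at the $\rtimes$-level iff its transpose ${\bf Syn}(G,b) \colon {\bf Syn}(P) \to \cat$ is cover preserving: a $J_\rtimes$-cover in $\cat \rtimes P$ is generated by arrows $(d,y) \to (c,\exists_f y)$, whose images under $G \rtimes b$ (followed by the counit equivalence) are precisely the regular images of $b_d(y) \to b_c(\exists_{G(f)}b_d(y))$ in $\cat$, and these lie in $K$ iff ${\bf Syn}(G,b)$ sends each $J_{\bf Syn}$-generator to a $K$-cover. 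The 2-cell component of the equivalence is then automatic since both categories of site-morphisms are full on 2-cells in their non-enriched counterparts. The main obstacle is precisely this last verification: carefully tracing how existential covers in $\cat \rtimes P$ translate, via the provably functional relation encoding, into $J_{\bf Syn}$-covers in ${\bf Syn}(P)$ and back, at the level of arbitrary morphisms of existential doctrines. Once this correspondence is established, the remaining 2-categorical coherence (naturality in $(P,J_\rtimes)$ and $(\cat,K)$, the triangular identities) is formal and inherited from (\ref{synsubadj}).
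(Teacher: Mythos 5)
Your proposal is correct and takes essentially the same route as the paper: the paper's entire proof is the observation that the hom-category equivalence of (\ref{synsubadj}) restricts to the subcategories of cover-preserving morphisms on each side (its diagram (\ref{diag:2adjforregsites})), which is precisely your final step, with your unit/counit verifications being an equivalent repackaging of the same data. One small caveat: for a general object of ${\bf ExDocSites}$ the topology $J_\rtimes$ is generated not only by the singletons $(d,y) \xrightarrow{f} (c,\exists_f y)$ but also by the fibrewise $J_c$-covers $\{(c,x_i) \xrightarrow{\id_c} (c,y)\}$, so your cover-preservation checks must include these as well; they go through immediately since $J_{\bf Syn}$-coverage is itself defined fibrewise in terms of the $J_d$.
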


Proposition \ref{prop:subsynadjforsites} is obtained by restricting the equivalence
\begin{equation}\label{diag:2adjforregsites}
\begin{tikzcd}[column sep = tiny]
{\textup{ }\textup{ }\textup{ }\textup{ }\textup{ }\textup{ }\textup{ }\textup{ }\textup{ }\textup{ }\textup{ }\textup{ }\textup{ }\textup{ }\textup{ }{\rm Hom}_{\bf Reg}({\bf Syn}(P),\mathcal{C})} & {{\rm Hom}_{\bf ExDoc}(P,{\rm Sub}_\mathcal{C})\textup{ }\textup{ }\textup{ }\textup{ }\textup{ }\textup{ }\textup{ }\textup{ }\textup{ }\textup{ }\textup{ }\textup{ }\textup{ }\textup{ }\textup{ }\textup{ }\textup{ }\textup{ }} \\
{{\rm Hom}_{\bf RegSites}(({\bf Syn}(P),J_{\bf Syn}),(\mathcal{C},K))} & {{\rm Hom}_{{\bf ExDocSites}}((P,J_\rtimes),({\rm Sub}_\mathcal{C},K_{\rm Sub})).}
\arrow["\simeq"{description}, draw=none, from=2-1, to=2-2]
\arrow[hook, from=2-1, to=1-1]
\arrow[hook, from=2-2, to=1-2]
\arrow["\simeq"{description}, draw=none, from=1-1, to=1-2]
\end{tikzcd}
\end{equation}



\paragraph{Comparing the toposes of sheaves.}


We now wish to compare the two toposes $\Sh(\cat \rtimes P,J_\rtimes)$ and $\Sh({\bf Syn}(P),J_{\bf Syn})$ for an existential doctrinal site $(P,(J_c)_{c \in \cat}) \in {\bf ExDocSites}$.  Already, from the natural equivalence (\ref{diag:2adjforregsites}), for a Grothendieck topos $\topos$ we obtain a natural equivalence
\begin{align*}
\Geom(\topos,\Sh({\bf Syn}(P),J_{\bf Syn})) & \simeq J_{\bf Syn}\text{-}{\bf Flat}({\bf Syn}(P),\topos) \\
& \simeq {\rm Hom}_{\bf RegSites}(({\bf Syn}(P),J_{\bf Syn}),(\topos,J_{\rm can})), \\
& \simeq  {\rm Hom}_{{\bf ExDocSites}}((P,J_\rtimes),({\rm Sub}_\topos,K_{\rm Sub_\topos})), \\
& \simeq J_\rtimes \text{-}{\bf Flat}(\cat \rtimes P,\topos), \\
& \simeq \Geom(\topos,\Sh(\cat \rtimes P,J_\rtimes))
\end{align*}
and hence an equivalence of toposes $\Sh(\cat \rtimes P,J_\rtimes)\simeq\Sh({\bf Syn}(P),J_{\bf Syn})$.  However, it is instructive to see where this equivalence comes from.

For each $(P,(J_c)_{c \in \cat}) \in {\bf ExDocSites}$, we will construct a functor $\zeta^P \colon \cat \rtimes P \to {\bf Syn}(P)$ and then demonstrate that 
\[\zeta^P \colon (\cat \rtimes P,J_{\rtimes}) \to ({\bf Syn}(P) ,J_{\bf Syn})\]
is a dense morphism of sites.  Thus, we obtain an equivalence of toposes 
\[\Sh(\cat \rtimes P,J_\rtimes)\simeq\Sh({\bf Syn}(P),J_{\bf Syn}).\]

Since $\cat \rtimes P$ and ${\bf Syn}(P)$ share the same objects, it is obvious how we would wish $\zeta^P$ to act on objects.  Our first task therefore is to conjure a provably functional relation from an arrow $(c,U) \xrightarrow{f} (d,V)$ of $\cat \rtimes P$.  This is simple in the case of the existential doctrine $F^\theory \colon \Con \to {\bf MSLat}$ associated to a regular theory $\theory$.  For each arrow $\form{\varphi}{x} \xrightarrow{\sigma} \form{\psi}{y}$ of $\Con \rtimes F^\theory$, the formula $\varphi \land  \bigwedge_{y_i \in \vec{y}} y_i = \sigma(y_i) $, in context $\vec{x},\vec{y}$, is easily shown to be a provably functional formula
\[\begin{tikzcd}
\form{\varphi}{x} \xrightarrow{\varphi \land  \bigwedge_{y_i \in \vec{y}} y_i = \sigma(y_i)} \form{\psi}{y}.
\end{tikzcd}\]
We demonstrate below that the same intuition holds for arbitrary existential doctrines.

\begin{lem}
Let $P \colon \cat^{op} \to {\bf MSLat}$ be an existential doctrine.  For each arrow $(c,U) \xrightarrow{f} (d,V)$ of $\cat \rtimes P$, i.e. whenever $U \leqslant P(f)(V)$, the proposition
\[\exists_{\id_c \times f}U\in P(c \times d)\]
is a \emph{provably functional relation} $(c,U) \to(d,V) \in {\bf Syn}(P)$.
\end{lem}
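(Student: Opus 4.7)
The plan is to write $W = \exists_{\id_c \times f} U$, interpreting $\id_c \times f$ as the arrow $\langle \id_c, f \rangle \colon c \to c \times d$ satisfying $\pr_1 \circ (\id_c \times f) = \id_c$ and $\pr_2 \circ (\id_c \times f) = f$, and to verify the three defining inequalities of Definition \ref{df:syncat}\ref{dfii:syncat} in turn. The two side conditions are straightforward manipulations with the adjunction $\exists_{\id_c \times f} \dashv P(\id_c \times f)$: for $W \leqslant P(\pr_1)(U) \land P(\pr_2)(V)$, the first projection identity gives $P(\id_c \times f)(P(\pr_1)(U)) = U$, yielding $W \leqslant P(\pr_1)(U)$, while the second gives $P(\id_c \times f)(P(\pr_2)(V)) = P(f)(V) \geqslant U$ by hypothesis, yielding $W \leqslant P(\pr_2)(V)$; for totality, $\exists_{\pr_1}(W) = \exists_{\pr_1 \circ (\id_c \times f)}(U) = \exists_{\id_c}(U) = U$.

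The main obstacle is the functionality inequality $P(\pr_{1,2})(W) \land P(\pr_{1,3})(W) \leqslant P(\pr_{2,3})(\exists_{\Delta_d}(\top_d))$. My strategy is first to establish the identity
\[ P(\pr_{1,2})(W) \land P(\pr_{1,3})(W) = \exists_{\langle \id_c, f, f\rangle} U, \]
where $\langle \id_c, f, f\rangle \colon c \to c \times d \times d$ sends $x$ to $(x, f(x), f(x))$. To this end, Beck-Chevalley applied to the pullback of $\id_c \times f$ along $\pr_{1,2}$ rewrites $P(\pr_{1,2})(W)$ as $\exists_{h_1}(P(\pr_1)(U))$, where $h_1 \colon c \times d \to c \times d \times d$, $(x, z) \mapsto (x, f(x), z)$. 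Then Frobenius reciprocity gives
\[ P(\pr_{1,2})(W) \land P(\pr_{1,3})(W) = \exists_{h_1}\bigl(P(\pr_1)(U) \land P(h_1)(P(\pr_{1,3})(W))\bigr), \]
and since $\pr_{1,3} \circ h_1 = \id_{c \times d}$ we have $P(h_1)(P(\pr_{1,3})(W)) = W$; combined with $W \leqslant P(\pr_1)(U)$ from the first step, the expression collapses to $\exists_{h_1}(W) = \exists_{h_1 \circ (\id_c \times f)} U = \exists_{\langle \id_c, f, f\rangle} U$, as claimed.

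The required inequality then follows because $\pr_{2,3} \circ \langle \id_c, f, f\rangle = \Delta_d \circ f$, so by the unit of the adjunction $\exists_{\pr_{2,3}} \dashv P(\pr_{2,3})$,
\[ \exists_{\langle \id_c, f, f\rangle} U \leqslant P(\pr_{2,3})\bigl(\exists_{\pr_{2,3}} \exists_{\langle \id_c, f, f\rangle} U\bigr) = P(\pr_{2,3})\bigl(\exists_{\Delta_d}(\exists_f U)\bigr) \leqslant P(\pr_{2,3})\bigl(\exists_{\Delta_d}(\top_d)\bigr). \]
The hard part of the argument thus lies entirely in the Frobenius/Beck-Chevalley manipulation producing the identity for $\exists_{\langle \id_c, f, f\rangle} U$; once this is in hand, the factorisation $\langle f, f\rangle = \Delta_d \circ f$ delivers the functionality requirement.
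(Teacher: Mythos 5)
Your proof is correct, and while it uses the same basic toolkit (Frobenius plus Beck--Chevalley for pullback squares, which the existential doctrine is assumed to satisfy), the argument for the functionality inequality is organised quite differently from the paper's. The paper proves the inequality first for $\top_c$ in place of $U$, by setting up four pullback squares and chasing the chain
$\exists_{\id_c \times f} P(\id_c \times f)(\top_{c\times d}) \leqslant \exists_{\id_c \times f}P(f)(\top_d)$
symmetrically through Beck--Chevalley on both $\pr_{1,2}$ and $\pr_{1,3}$, and only at the very end specialises via $U \leqslant \top_c$ and monotonicity. You instead work directly with $U$ and collapse the left-hand side to the single existential image $\exists_{\langle\id_c,f,f\rangle}U$: one Beck--Chevalley application (the paper's square \textcircled{\raisebox{-.5pt}{3}}) handles $P(\pr_{1,2})(W)$, the identity $\pr_{1,3}\circ h_1 = \id_{c\times d}$ makes the second factor disappear for free under Frobenius, and the already-verified inequality $W \leqslant P(\pr_1)(U)$ absorbs the remaining meet. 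The conclusion then drops out of the factorisation $\pr_{2,3}\circ\langle\id_c,f,f\rangle = \Delta_d \circ f$ together with the unit of $\exists_{\pr_{2,3}} \dashv P(\pr_{2,3})$, replacing the paper's squares \textcircled{\raisebox{-.5pt}{1}}, \textcircled{\raisebox{-.5pt}{2}} and \textcircled{\raisebox{-.5pt}{4}} entirely. Your route is more economical and makes the conceptual content (the graph of $f$ doubled in the second and third coordinates lands in the diagonal) more visible; the paper's version buys a statement about $\top_c$ that is independent of $U$, but pays for it with a longer diagram chase. The treatment of the two side conditions is the same in both arguments.
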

\begin{proof}
We must check the three conditions from Definition \ref{df:syncat}\ref{dfii:syncat} are satisfied.  Using 
\[U \leqslant U = P(\pr_1 \circ \id_c \times f)(U)\text{ and }U \leqslant P(f)(V) = P(\pr_2 \circ \id_c \times f)(V),\]
we obtain the first inequality 
\begin{align*}
U \leqslant  P(\pr_1 \circ \id_c \times f)(U), \  P(\pr_2 \circ \id_c \times f)(V) & \implies U \leqslant P(\id_c \times f)P(\pr_1)(U) , \ P(\id_c \times f)P(\pr_2)(V), \\
& \implies \exists_{\id_c \times f}(U) \leqslant P(\pr_1)(U) \land P(\pr_2)(V).
\end{align*}


The second desired inequality,
\[P(\pr_{1,2})\exists_{\id_c \times f} U \land P(\pr_{1,3})\exists_{\id_c \times f} U \leqslant  P(\pr_{2,3})\exists_{\Delta_d}\top_d,\]
is effectively transitivity of the internal equality predicate.  We first note that all the squares in the diagram
\[\begin{tikzcd}
d && c && {c \times d} && c \\
\\
{d\times d} && {c \times d} && {c \times d \times d} && {c \times d} \\
\\
&& c && {c \times d}
\arrow["{\Delta_d}"', from=1-1, to=3-1]
\arrow[""{name=0, anchor=center, inner sep=0}, "f"', from=1-3, to=1-1]
\arrow["{\id_c \times f}", from=1-3, to=3-3]
\arrow[""{name=1, anchor=center, inner sep=0}, "{(f,\id_d)}"', from=3-3, to=3-1]
\arrow[""{name=2, anchor=center, inner sep=0}, "{\id_c \times f}", from=1-3, to=1-5]
\arrow["{(\id_c,f,\id_d)}", from=1-5, to=3-5]
\arrow[""{name=3, anchor=center, inner sep=0}, "{\pr_{1,2}}", from=3-5, to=3-7]
\arrow["{\id_c \times f}", from=1-7, to=3-7]
\arrow[""{name=4, anchor=center, inner sep=0}, "{\pr_1}", from=1-5, to=1-7]
\arrow[""{name=5, anchor=center, inner sep=0}, "{\id_c \times f}", from=5-3, to=5-5]
\arrow["{\pr_{1,3}}", from=3-5, to=5-5]
\arrow["{\pr_1}"', from=3-3, to=5-3]
\arrow[""{name=6, anchor=center, inner sep=0}, "{(\id_c,\id_d,f)}", from=3-3, to=3-5]
\arrow["{\text{\textcircled{\raisebox{-.5pt}{1}}}}"{description}, draw=none, from=0, to=1]
\arrow["{\text{\textcircled{\raisebox{-.5pt}{2}}}}"{description}, draw=none, from=2, to=6]
\arrow["{\text{\textcircled{\raisebox{-.5pt}{3}}}}"{description}, draw=none, from=4, to=3]
\arrow["{\text{\textcircled{\raisebox{-.5pt}{4}}}}"{description}, draw=none, from=6, to=5]
\end{tikzcd}\]
are pullbacks, where $(\id_c,f,\id_d)$ and $(\id_c,\id_d,f)$ denote the universally obtained maps
\[\begin{tikzcd}
c & {c \times d} & d \\
& {c \times d \times d} \\
c & d & d,
\arrow["{(\id_c,f,\id_d)}", dashed, from=1-2, to=2-2]
\arrow["{\pr_2'}", from=2-2, to=3-2]
\arrow["{\pr_3'}"'{pos=0.6}, curve={height=-12pt}, from=2-2, to=3-3]
\arrow["{\pr_2}", from=1-2, to=1-3]
\arrow["{\pr_1}"', from=1-2, to=1-1]
\arrow["{\id_c}"', curve={height=6pt}, from=1-1, to=3-1]
\arrow["f"'{pos=0.2}, curve={height=6pt}, from=1-1, to=3-2]
\arrow["{\pr_1'}"{pos=0.6}, curve={height=12pt}, from=2-2, to=3-1, crossing over]
\arrow["{\id_d}", curve={height=-6pt}, from=1-3, to=3-3]
\end{tikzcd} \ \ \ \begin{tikzcd}
c & {c \times d} & d \\
& {c \times d \times d} \\
c & d & d.
\arrow["{(\id_c,f,\id_d)}", dashed, from=1-2, to=2-2]
\arrow["{\pr_3'}"'{pos=0.6}, curve={height=-12pt}, from=2-2, to=3-3]
\arrow["{\pr_2}", from=1-2, to=1-3]
\arrow["{\pr_1}"', from=1-2, to=1-1]
\arrow["{\id_c}"', curve={height=6pt}, from=1-1, to=3-1]
\arrow["{\id_d}", curve={height=-6pt}, from=1-3, to=3-3]
\arrow["f"'{pos=0.1}, curve={height=24pt}, from=1-1, to=3-3]
\arrow["{\pr_2'}", from=2-2, to=3-2, crossing over]
\arrow["{\pr_1'}"{pos=0.6}, curve={height=12pt}, from=2-2, to=3-1, crossing over]
\end{tikzcd}\]
We note also that $(f,\id_d) = \pr_{2,3} \circ (\id_c,f,\id_d)$.  Beginning with the identity inequality $\exists_{\id_c \times f} \top_c \leqslant \exists_{\id_c \times f} \top_c$, we conclude that
\begingroup
\renewcommand{\arraystretch}{1.5} 
\[\begin{array}{l@{\hskip 6pt}r@{\hskip 6pt}c@{\hskip 6pt}l@{\hskip -30pt}l}
&\exists_{\id_c \times f} P(\id_c \times f)(\top_{c\times d}) &\leqslant &\exists_{\id_c \times f}P(f)( \top_d), & \\
\implies & P(\id_c,f,\id_d) \exists_{(\id_c,\id_d,f)}\top_{c \times d}& \leqslant & P(f,\id_d)\exists_{\Delta_d}\top_d & \text{ by \textcircled{\raisebox{-.5pt}{1}}, \textcircled{\raisebox{-.5pt}{2}} and B.-C.,} \\
\implies & P(\id_c,f,\id_d) \exists_{(\id_c,\id_d,f)}\top_{c \times d} & \leqslant & P(\id_c,f,\id_d)P(\pr_{2,3})\exists_{\Delta_d}\top_d, & \\
\implies & \exists_{(\id_c,f,\id_d)}( \top_{c \times d} \land P(\id_c,f,\id_d)\exists_{(\id_c,\id_d,f)} \top_{c \times d}) & \leqslant & P(\pr_{2,3})\exists_{\Delta_d}\top_d & 
\\
\implies & \exists_{(\id_c,f,\id_d)} \top_{c \times d} \land \exists_{(\id_c,\id_d,f)} \top_{c \times d} & \leqslant &  P(\pr_{2,3})\exists_{\Delta_d}\top_d & \text{ by Frobenius,}\\
\implies & \exists_{(\id_c,f,\id_d)}P(\pr_1)( \top_c ) \land \exists_{(\id_c,\id_d,f)} P(\pr_1)( \top_c) &\leqslant  & P(\pr_{2,3})\exists_{\Delta_d}\top_d, & \\
\implies & P(\pr_{1,2})\exists_{\id_c \times f} \top_c \land P(\pr_{1,3})\exists_{\id_c \times f} \top_c &\leqslant  & P(\pr_{2,3})\exists_{\Delta_d}\top_d, & \text{ by \textcircled{\raisebox{-.5pt}{3}}, \textcircled{\raisebox{-.5pt}{4}} and B.-C.}
\end{array}\]
\endgroup
We need now only note that $U \leqslant \top_c$ to achieve our desired inequality that
\[P(\pr_{1,2})\exists_{\id_c \times f} U \land P(\pr_{1,3})\exists_{\id_c \times f} U \leqslant P(\pr_{2,3})\exists_{\Delta_d} \top_d.\]


The final inequality is obtained via 
\[U = \exists_{\pr_1 \circ( \id_c \times f)} U = \exists_{\pr_1} \exists_{\id_c \times f} U.\]
\end{proof}

This assignment of arrows in functorial in the sense that, given composable arrows
\[\begin{tikzcd}
(c,U) \ar{r}{f} & (d,V) \ar{r}{g}& (e,W)
\end{tikzcd}\]
in $\cat \rtimes P$, the composite of 
\[\begin{tikzcd}
(c,U) \ar{r}{\exists_{\id_c \times f} U} & (d,V) \ar{r}{\exists_{\id_d \times g} V} &(e,W)
\end{tikzcd}\]
in ${\bf Syn}(P)$, i.e. the predicate 
\[\exists_{\pr_{1,3}} ( P(\pr_{1,2}) \exists_{\id_c \times f} U \land P(\pr_{2,3}) \exists_{\id_d \times g} V) ,\]
is equal to $\exists_{\id_c \times gf} U $.  To demonstrate this equality, we first note that all the squares in the diagram
\[\begin{tikzcd}
c && {c \times d} && d \\
\\
{c \times e} && {c \times d \times e} && {d \times e} \\
\\
c && {c \times d}
\arrow[""{name=0, anchor=center, inner sep=0}, "{\id_c \times f}", from=1-1, to=1-3]
\arrow["{\id_c \times gf}"', from=1-1, to=3-1]
\arrow["{\pr_1}"', from=3-1, to=5-1]
\arrow[""{name=1, anchor=center, inner sep=0}, "{(\id_c \times f,\id_e)}", from=3-1, to=3-3]
\arrow[""{name=2, anchor=center, inner sep=0}, "{\id_c \times f}", from=5-1, to=5-3]
\arrow["{\pr_{1,2}}", from=3-3, to=5-3]
\arrow["{(\id_c,\id_d \times g)}", from=1-3, to=3-3]
\arrow[""{name=3, anchor=center, inner sep=0}, "{\pr_{2,3}}", from=3-3, to=3-5]
\arrow[""{name=4, anchor=center, inner sep=0}, "{\pr_2}", from=1-3, to=1-5]
\arrow["{\id_d \times g}", from=1-5, to=3-5]
\arrow["{{\text{\textcircled{\raisebox{-.5pt}{1}}}}}"{description}, draw=none, from=0, to=1]
\arrow["{{\text{\textcircled{\raisebox{-.5pt}{2}}}}}"{description}, draw=none, from=1, to=2]
\arrow["{{\text{\textcircled{\raisebox{-.5pt}{3}}}}}", draw=none, from=4, to=3]
\end{tikzcd}\]
are pullbacks.  Therefore, we have that $\exists_{\pr_{1,3}} ( P(\pr_{1,2}) \exists_{\id_c \times f} U \land P(\pr_{2,3}) \exists_{\id_d \times g} V) $ is equal to
\begingroup
\renewcommand{\arraystretch}{1.5} 
\[\begin{array}{l@{\hskip 4pt}l@{\hskip 4pt}l}
& \exists_{\pr_{1,3}}(\exists_{(\id_c \times f,\id_e)} P(\pr_1) (U) \land \exists_{(\id_c,\id_d \times g)} P(\pr_2 ) (V)), & \text{ by {{\text{\textcircled{\raisebox{-.5pt}{2}}}}}, {{\text{\textcircled{\raisebox{-.5pt}{3}}}}} and B.-C.}, \\
= & \exists_{\pr_{1,3}} \exists_{(\id_c \times f,\id_e)} (P(\pr_1) (U) \land P((\id_c \times f,\id_e)) \exists_{(\id_c,\id_d \times g)} P(\pr_2)( V)), & \text{ by Frobenius}, \\
= & P(\pr_1) (U)\land P((\id_c \times f,\id_e)) \exists_{(\id_c,\id_d \times g)} P(\pr_2)( V)), & \\
= & P(\pr_1) (U) \land \exists_{\id_c \times gf} P(\id_c \times f)P(\pr_2)( V), & \text{ by {{\text{\textcircled{\raisebox{-.5pt}{2}}}}} and B.-C.}, \\
= & \exists_{\id_c \times gf}( P(\id_c \times gf)P(\pr_1) (U) \land P(\id_c \times f)P(\pr_2)( V)), & \text{ by Frobenius}, \\
= &  \exists_{\id_c \times gf} (U \land P(f) (V)), & \\
= & \exists_{\id_c \times gf} U, & \text{ since }U \leqslant P(f)(V).
\end{array}\]
\endgroup
Thus, we achieve the desired equality $\exists_{\pr_{1,3}} ( P(\pr_{1,2}) \exists_{\id_c \times f} U \land P(\pr_{2,3}) \exists_{\id_d \times g} V) =\exists_{\id_c \times gf} U $.  We observe also that the identity arrow $(c,U) \xrightarrow{\id_c} (c,U)$ in $\cat \rtimes P$ gets assigned to the identity arrow $(c,U) \xrightarrow{\exists_{\Delta_c}U}(c,U)$ in ${\bf Syn}(P)$.  Hence, we obtain a functor $\cat \rtimes P \to {\bf Syn}(P)$.

\begin{df}
{\rm
We denote by $\zeta^P \colon \cat \rtimes P \to {\bf Syn}(P)$ the functor that sends an arrow $(c,U) \xrightarrow{f} (d,V)$ of $\cat \rtimes P$ to the arrow $(c,U) \xrightarrow{\exists_{\id_c \times f} U} (d,V)$ of ${\bf Syn}(P)$.
}
\end{df}

Intuitively, the functor $\zeta^P \colon \cat \rtimes P \to {\bf Syn}(P)$ is ‘adjoining those arrows that ought to exist’ (i.e. those for which a provably functional relation
exists) and ‘identifying those arrows that ought to be the same’ (i.e. those for which the internal language of $P$ proves an identity of
arrows).

\begin{prop}\label{prop:zetaisdense}
Let $(P,(J_c)_{c \in \cat})$ be an existential doctrinal site.  The functor $\zeta^P \colon \cat \rtimes P \to {\bf Syn}(P)$ defines a dense morphism of sites
\[\zeta^P \colon (\cat \rtimes P,J_{\rtimes}) \to ({\bf Syn}(P) ,J_{\bf Syn}),\]
and hence there is an equivalence of toposes $\Sh(\cat \rtimes P,J_{\rtimes}) \simeq \Sh({\bf Syn}(P) ,J_{\bf Syn})$.
\end{prop}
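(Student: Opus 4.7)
The plan is to verify directly the four conditions of Definition \ref{densemorph} characterising a dense morphism of sites; the equivalence $\Sh(\cat \rtimes P, J_\rtimes) \simeq \Sh({\bf Syn}(P), J_{\bf Syn})$ then follows formally, via Shulman's theorem cited in the excerpt just before Definition \ref{densemorph}. Conditions (i) and (ii) are almost immediate. For (i), since $\pr_2 \circ \langle \id_{c_i}, f_i\rangle = f_i$, one obtains $\exists_{\pr_2}\bigl(\exists_{\langle \id_{c_i}, f_i\rangle} U_i\bigr) = \exists_{f_i} U_i$; thus the image family $\{\zeta^P(f_i)\}$ is $J_{\bf Syn}$-covering iff $\{\exists_{f_i} U_i \leqslant V \mid i \in I\}$ is $J_d$-covering, iff the original family is $J_\rtimes$-covering. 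Condition (ii) is trivial since $\zeta^P$ is the identity on objects.

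For condition (iii), given a provably functional relation $W \colon (c_1, U_1) \to (c_2, U_2)$ in ${\bf Syn}(P)$, the two projections give valid $\cat \rtimes P$-arrows $\pr_1 \colon (c_1 \times c_2, W) \to (c_1, U_1)$ and $\pr_2 \colon (c_1 \times c_2, W) \to (c_2, U_2)$ by the first clause defining a provably functional relation. The third clause yields $U_1 \leqslant \exists_{\pr_1} W$, while the first clause together with the counit of $\exists_{\pr_1} \dashv P(\pr_1)$ gives $\exists_{\pr_1} W \leqslant U_1$; hence $\exists_{\pr_1} W = U_1$ and the singleton $\{\pr_1\}$ is $J_\rtimes$-covering. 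The remaining identity $W \circ \zeta^P(\pr_1) = \zeta^P(\pr_2)$ in ${\bf Syn}(P)$ will follow from a bookkeeping calculation in the style of the functoriality verification of $\zeta^P$ already performed in the excerpt: one unfolds the relational composite over the projections from $(c_1 \times c_2) \times c_1 \times c_2$, applies Beck-Chevalley and Frobenius, and finally collapses the mediating existential quantifier using the functionality of $W$ (the second clause in the definition of a provably functional relation).

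For condition (iv), suppose $f_1, f_2 \colon (c_1, U_1) \rightrightarrows (c_2, U_2)$ in $\cat \rtimes P$ satisfy $\exists_{\langle \id_{c_1}, f_1\rangle} U_1 = \exists_{\langle \id_{c_1}, f_2\rangle} U_1$. Since $P$ is primary, $\cat$ is cartesian, so the equalizer $e \colon E \to c_1$ of $f_1, f_2$ in $\cat$ exists. A direct check shows that the square
\[\begin{tikzcd}
E \ar{r}{e} \ar{d}{e} & c_1 \ar{d}{\langle \id_{c_1}, f_1\rangle} \\
c_1 \ar{r}{\langle \id_{c_1}, f_2\rangle} & c_1 \times c_2
\end{tikzcd}\]
is a pullback (an element $(x,y)$ satisfies $x=y$ and $f_1(x) = f_2(y)$, which characterises $E$), so Beck-Chevalley yields $\exists_e P(e) U_1 = P(\langle \id, f_2\rangle)\,\exists_{\langle \id, f_1\rangle}\, U_1$. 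Combining this with the hypothesis and the unit of $\exists_{\langle \id, f_2\rangle} \dashv P(\langle \id, f_2\rangle)$ gives $U_1 \leqslant \exists_e P(e) U_1$, with the reverse inequality supplied by the counit of $\exists_e \dashv P(e)$. Hence the singleton $\{e \colon (E, P(e) U_1) \to (c_1, U_1)\}$ is $J_\rtimes$-covering and $f_1 \circ e = f_2 \circ e$ by construction. I expect the main obstacle to be the Beck-Chevalley and Frobenius bookkeeping inside condition (iii); the other three conditions reduce cleanly once the identity $\exists_{\pr_2} \circ \exists_{\langle \id, f\rangle} = \exists_f$ and the pullback realisation of the equalizer of $f_1,f_2$ via the graphs $\langle \id, f_i\rangle$ are in hand.
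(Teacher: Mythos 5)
Your proposal is correct and follows essentially the same route as the paper: both verify the four conditions of Definition \ref{densemorph}, using the identity $\exists_{\pr_2}\circ\exists_{\langle\id_c,f\rangle}=\exists_f$ for cover preservation/reflection, the projection $\pr_1\colon(c\times d,W)\to(c,U)$ as the covering singleton for condition (iii), and the equalizer of $f_1,f_2$ realised as the pullback of the two graphs for condition (iv). The one step you defer, the identity $W\circ\zeta^P(\pr_1)=\zeta^P(\pr_2)$, is exactly the Beck--Chevalley/Frobenius computation over $c\times d\times c\times d$ that the paper writes out in full (collapsing the mediating quantifier via the functionality clause), so your outline of it is accurate.
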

\begin{proof}
We check the four conditions of Definition \ref{densemorph} one by one.
\begin{enumerate}
\item The first condition, Definition \ref{densemorph}\ref{enumdesnsemorph1}, is immediate once we recall that a family of morphisms
\[ \{\,(c_i,U_i) \xrightarrow{f_i} (d,V) \mid i \in I\,\} \text{ in } \cat \rtimes P\]
is $J_\rtimes$-covering if and only if $\{\,\exists_{f_i} U_i \to V \mid i \in I\,\}$ is $J_d$-covering, while simultaneously the family of morphisms
\[\{\,(c_i,U_i) \xrightarrow{\exists_{\id_{c_i} \times f_i} U_i} (d,V) \mid i \in I\,\} \text{ in }{\bf Syn}(P)\]
is $J_{\bf Syn}$-covering if and only if $\{\,\exists_{f_i} U_i = \exists_{\pr_2} \exists_{\id_{c_i} \times f_i} U_i \to V\mid i \in I\,\}$ is $J_d$-covering.
\item Condition \ref{enumdensemorph2} follows since the functor $\zeta^P$ is surjective on objects.
\item Let $(c,U) \xrightarrow{W} (d,V)$ be a provably functional relation, i.e. an arrow of ${\bf Syn}(P)$.  As $W \leqslant P(\pr_1)(U)$, there is an arrow $(c \times d,W) \xrightarrow{\pr_1} (c,U)$ of $\cat \rtimes P$.  Consider the diagram
\begin{equation}\label{diag:composite1}
\begin{tikzcd}
	(c \times d ,W) \ar{rrrr}{\zeta^P(\pr_1) = \exists_{\id_{c \times d} \times \pr_1}W} &&&& (c,U) \ar{r}{W} & (d,V)
\end{tikzcd}
\end{equation}
of ${\bf Syn}(P)$.  To satisfy condition \ref{enumdensemorph3}, it suffices to show that the arrow $(c \times d ,W) \xrightarrow{\zeta^P(\pr_1)}  (c,U)$ is $J_{\bf Syn}$-covering and the composite of (\ref{diag:composite1}) is in the image of $\zeta^P$.  The former follows from the inequality
\[U \leqslant \exists_{\pr_1} W = \exists_{\pr_2} \exists_{\id_{c \times d} \times \pr_1} W.\]

The composite of (\ref{diag:composite1}) is given by $\exists_{\pr_{1,2,4}} (\pr_{1,2,3} \exists_{\id_{c \times d} \times \pr_1} (W) \land \pr_{2,4} (W))$.  We first gather the necessary observations: that both the squares
\begin{equation}\label{diag:uglypb1}
\begin{tikzcd}
	c \times d \times d \ar{rr}{\pr_{1,2}} \ar{d}[']{(\id_{c \times d} \times \pr_1,\id_d)} && c\times d \ar{d}{\id_{c \times d} \times \pr_1} \\
	c \times d \times c \times d \ar{rr}{\pr_{1,2,3}} && c \times d \times c
\end{tikzcd}
\end{equation}
and
\begin{equation}\label{diag:uglypb2}
\begin{tikzcd}
	c \times d \ar{rr}{\id_{ c \times d} \times \pr_2}\ar{d}[']{\pr_2} && c \times d \times d \ar{d}{\pr_{2,3}} \\
	d \ar{rr}{\Delta_d} && d \times d
\end{tikzcd}
\end{equation}
are both pullbacks, and that there is the inequality
\begin{equation}\label{provfuncrel:cond2}
P(\pr_{1,2})(W) \land P(\pr_{1,3})(W) \leqslant P(\pr_{2,3}) \exists_{\Delta_d} \top_d.
\end{equation}
Therefore, we can demonstrate that $\exists_{\pr_{1,2,4}} (P(\pr_{1,2,3}) \exists_{\id_{c \times d} \times \pr_1} (W) \land P( \pr_{2,4}) (W))$ is equal to
\begingroup
\renewcommand{\arraystretch}{1.5} 
\[\begin{array}{l@{\hskip 4pt}l@{\hskip 4pt}l}
& \exists_{\pr_{1,2,4}} (\exists_{(\id_{c \times d} \times \pr_1,\id_d)} P(\pr_{1,2} ) (W) \land P(\pr_{2,4})( W)), & \text{ by (\ref{diag:uglypb1}) and B.-C.}, \\
= & \exists_{\pr_{1,2,4}} \exists_{(\id_{c \times d} \times \pr_1,\id_d)} (P(\pr_{1,2} ) (W) \land P(\id_{c \times d} \times \pr_1,\id_d)P(\pr_{2,4}) (W))& \text{ by Frobenius}, \\
= & P(\pr_{1,2})(W) \land P(\pr_{1,3})(W), & \\
= & P(\pr_{1,2})(W) \land P(\pr_{1,3})(W) \land P(\pr_{2,3}) \exists_{\Delta_d} \top_d, & \text{ using (\ref{provfuncrel:cond2}),} \\
= & P(\pr_{1,2})(W) \land P(\pr_{1,3})(W) \land   \exists_{\id_{ c \times d} \times \pr_2} P(\pr_2)(\top_{d}), & \text{ by (\ref{diag:uglypb2}) and B.-C.,} \\
= & P(\pr_{1,2})(W) \land P(\pr_{1,3})(W) \land   \exists_{\id_{ c \times d} \times \pr_2} \top_{c \times d}, & \\
= & \exists_{\id_{ c \times d} \times \pr_2} (P(\id_{ c \times d} \times \pr_2)P(\pr_{1,2})(W )\land P(\id_{ c \times d} \times \pr_2)P(\pr_{1,3})(W) \land \top_{c \times d} ), & \text{ by Frobenius}, \\
= &  \exists_{\id_{ c \times d} \times \pr_2} ( P(\id_{c \times d})(W) \land P(\id_{c \times d})(W)), & \\
= &  \exists_{\id_{ c \times d} \times \pr_2} W. &
\end{array}\]
\endgroup
Hence, since $\exists_{\pr_{1,2,4}} (P(\pr_{1,2,3}) \exists_{\id_{c \times d} \times \pr_1} (W) \land P( \pr_{2,4}) (W)) = \exists_{\id_{ c \times d} \times \pr_2} W$, we conclude that the composite of (\ref{diag:composite1}) is the image under $\zeta^P$ of the arrow
\[(c \times d,W) \xrightarrow{ \pr_2} (d,V) \in \cat \rtimes P.\]

\item Let
\[\begin{tikzcd}
(c,U) \ar[shift left]{r}{f} \ar[shift right]{r}[']{g} & (d,V)
\end{tikzcd}\]
be a pair of parallel arrows of $\cat \rtimes P$ that are identified in the image of $\zeta^P$, i.e. $\exists_{\id_c \times f} U = \exists_{\id_c \times g} U$.  To satisfy condition \ref{enumdesnemorph4}, we aim to find a $J_\rtimes$ cover $S$ of $(c,U)$ such that, for all $h\in S$, $f \circ h = g \circ h$.  Let
\[\begin{tikzcd}
e \ar{r}{h} & c \ar[shift left]{r}{f} \ar[shift right]{r}[']{g} & d
\end{tikzcd}\]
be an equalizer diagram in $\cat$, and hence the square
\[\begin{tikzcd}
e \ar{r}{h} \ar{d}{h} & c \ar{d}{\id_c \times f} \\
c \ar{r}{\id_c \times g} & c \times d
\end{tikzcd}\]
is a pullback.  The fork
\[\begin{tikzcd}
(c,P(h)(U)) \ar{r}{h} &(c,U) \ar[shift left]{r}{f} \ar[shift right]{r}[']{g} & (d,V)
\end{tikzcd}\]
in $\cat \rtimes P$ commutes, and the arrow $(c,P(h)(U)) \xrightarrow{h}(c,U)$ is $J_\rtimes$-covering since
\begin{align*}
\exists_{\id_c \times f} U = \exists_{\id_c \times g} U & \implies U \leqslant P(\id_c \times f)\exists_{\id_c \times g} U, \\
& \implies U \leqslant \exists_h P(h)(U).
\end{align*}
\end{enumerate}
\end{proof}

We also observe that the choice of functor $\zeta^P$ is suitably natural in the following sense.  Let ${\bf Sites}$ denote the 2-category of sites, morphisms of sites and natural transformations between these.  By the above, there exist two ways of assigning a site to an existential doctrinal site $(P,(J_c)_{c \in \cat}) \in {\bf ExDocSites}$:
\[(P,(J_c)_{c \in \cat}) \mapsto (\cat \rtimes P,J_\rtimes) \text{ and } (P,(J_c)_{c \in \cat}) \mapsto ({\bf Syn}(P),J_{\bf Syn}),\]
and moreover these assignments yield two 2-functors ${\bf ExDocSites} \rightrightarrows {\bf Sites}$, which we respectively denote as
\[\rtimes \colon {\bf ExDocSites} \to {\bf Sites}, \ \ {\bf Syn} \colon {\bf ExDocSites} \to {\bf RegSites} \subseteq {\bf Sites}.\]
It is easily checked that the morphisms of sites $\zeta^P \colon (\cat \rtimes P,J_\rtimes) \to ({\bf Syn}(P),J_{\bf Syn})$, for each existential doctrinal site $(P,(J_c)_{c \in \cat}) \in {\bf ExDocSites}$, consitute the components of a natural transformation 
\[\begin{tikzcd}
{{\bf ExDocSites}} && {{\bf Sites}.}
\arrow[""{name=0, anchor=center, inner sep=0}, "{{\bf Syn}}"', shift right=4, from=1-1, to=1-3]
\arrow[""{name=1, anchor=center, inner sep=0}, "\rtimes", shift left=4, from=1-1, to=1-3]
\arrow["\zeta", shorten <=2pt, shorten >=2pt, Rightarrow, from=1, to=0]
\end{tikzcd}\]

\begin{rem}
{\rm

Let $\theory$ be a geometric theory over a signature $\Sigma$.  The textbook construction of the classifying topos $\topos_\theory$ of $\theory$, as can be found in \cite[\S D1.4]{elephant}, \cite[\S X]{SGL} or \cite[\S 1.4]{TST}, uses the \emph{syntactic site} $(\cat_\theory,J_\theory)$ of the theory.  Recall that
\begin{enumerate}
\item the syntactic category $\cat_\theory$ of $\theory$ is the category
\begin{enumerate}
	\item whose objects are the $\theory$-provable equivalence classes of formulae $\form{\varphi}{x}$ over $\Sigma$,
	\item and whose arrows $[\theta] \colon \form{\varphi}{x} \to \form{\psi}{y}$ are $\theory$-provable equivalence classes of $\theory$-provably functional formulae, that is formulae $\theta$ in the context $\vec{x},\vec{y}$ such that $\theory$ proves the sequents
	\[\theta \vdash_{\vec{x},\vec{y}} \varphi\land \psi, \ \varphi \vdash_{\vec{x}} \exists \vec{y} \, \theta , \ \theta \land \theta[\vec{z}/\vec{y}] \vdash_{\vec{x},\vec{y},\vec{z} } \vec{y} = \vec{z};\]
\end{enumerate}
\item in the syntactic topology $J_\theory$ on $\cat_\theory$, a family of arrows $\{\,[\theta_i] \colon \form{\varphi_i}{x_i} \to \form{\psi}{y}\mid i \in I \,\}$ is $J_\theory$-covering if and only if $\theory$ proves the sequent
\[\psi \vdash_{\vec{y}} \bigvee_{i \in I} \exists \vec{x}_i \, \theta_i.\]
\end{enumerate}
We recognise the category $\cat_\theory$ as the syntactic category construction ${\bf Syn}(F^\theory)$ for the geometric doctrine $F^\theory \colon \Con \to \Frm_{\rm open}$ considered in Example \ref{prototypical}.  Similarly, the syntactic topology $J_\theory$ is precisely the topology $J_{\bf Syn}$ obtained from the existential doctrinal site $(F^\theory,(J_{\vec{x}})_{\vec{x} \in \Con})$, where each fibre $F^\theory(\vec{x})$ has been endowed with the canonical topology (the induced topology $J_\rtimes$ on $\Con \rtimes F^\theory$ is precisely the topology $K_{F^\theory}$ that we have already encountered).

Thus, by Proposition \ref{prop:zetaisdense}, we conclude that both $(\Con \rtimes F^\theory,K_{F^\theory})$ and $(\cat_\theory,J_\theory)$ are sites of definition for the classifying topos $\topos_\theory$, as visualised in the `bridge' diagram:
\[\begin{tikzcd}
&& {\begin{matrix} \mathcal{E}_\mathbb{T} \\ \textit{the classifying} \\ \textit{topos of } \mathbb{T},\end{matrix}} \\
\\
{\begin{matrix} F^\mathbb{T} \colon {\bf Con}_\Sigma \to {\bf Frm}_{\rm open} \\ \textit{the geometric doctrine},\end{matrix}} &&&& {\textup{ }\textup{ }\textup{ }\begin{matrix} (\mathcal{C}_\mathbb{T},J_{\mathbb{T}}) \\ \textit{the syntactic site.} \end{matrix}}
\arrow[curve={height=-30pt}, dashed, tail reversed, from=1-3, to=3-5]
\arrow[curve={height=30pt}, dashed, tail reversed, from=1-3, to=3-1]
\end{tikzcd}\]
Recall that every internal locale of $\sets^\Con$ is of the form $F^\theory$ for some theory over the same sorts as the signature $\Sigma$.  Thus, we recover the bijection between the internal locales of $\sets^\Con$ and geometric theories (up to Morita-equivalence) with the same sorts as $\Sigma$ that was observed in the single-sorted case (when $\Con \simeq \Finsets$) in \cite[Theorem D3.2.5]{elephant}.  This equivalence can also be deduced via the theory of \emph{localic expansions} (see \cite[\S 7.1]{TST}), since every theory $\theory$ with the same sorts as $\Sigma$ is a localic expansion of $\mathbb{O}_\Sigma$ -- the empty theory over the sorts of $\Sigma$, whose classifying topos is $\sets^\Con$.

The site $(\Con \rtimes F^\theory,K_{F^\theory})$ was dubbed the \emph{alternative syntactic site} of the theory $\theory$ in \cite{myselfpresentation}.  The site $(\Con \rtimes F^\theory,K_{F^\theory})$ can be more amenable than the standard syntactic site for some calculations.  Notably, every arrow $\formm{\varphi}{x} \xrightarrow{\sigma} \formm{\psi}{y}$ is a restriction of the arrow
\[
\begin{tikzcd}
\formm{\varphi}{x} \ar{r}{\sigma} \ar[tail]{d} & \formm{\psi}{y} \ar[tail]{d} \\
\formm{\top}{x} \ar{r}{\sigma} & \formm{\top}{y},
\end{tikzcd}
\]
and moreover, since $\formm{\top}{z}$ is the product $\prod_{z_i \in \vec{z}} (\,z_i,\top\,)$ in $\Con \rtimes F^\theory$, the arrow $\formm{\top}{x} \xrightarrow{\sigma} \, \formm{\top}{y}$, labelled by a relabelling $\sigma \colon \vec{y} \to \vec{x}$ -- i.e. a sort-preserving map between finite sets, is induced universally as in the diagram
\[
\begin{tikzcd}
\formm{\top}{x} \ar[dashed]{rr}{\sigma} \ar{d}{\pr_{\sigma(y_i)}} && \formm{\top}{y} \ar{d}{\pr_{y_i}} \\
(\,\sigma(y_i),\top\,) \ar{rr}{\id_{\sigma(y_i)} = \id_{y_i}} && (\,y_i,\top\,).
\end{tikzcd}
\]
There are, of course, desirable properties of the syntactic site $(\cat_\theory,J_\theory)$ that are not shared by the site $(\Con \rtimes F^\theory,K_{F^\theory})$.  For example, the topology $J_\theory$ is subcanonical (see \cite[Lemma X.4.5]{SGL}) while the topology $K_{F^\theory}$ on $\Con \rtimes F^\theory$ is not (see \cite[Remark 3.11]{myselfintlocmorph}).

}\end{rem}


\subsection{The geometric completion of a regular site}\label{subsec:geocompforregcat}

Finally, we are able to combine the above to define the geometric completion of a regular site, which sends a regular site to a geometric category.  Unsurprisingly, this amounts to assigning to each regular site $(\cat,K)$ the full subcategory of $\Sh(\cat,K)$ spanned by subobjects of representables.

\begin{df}
{\rm 
We denote by ${\bf GeomCat}$ the 2-category of \emph{geometric categories}, the 2-category
\begin{enumerate}
\item whose objects are geometric categories -- regular categories whose subobject lattices have arbitrary joins that are preserved by pullback,
\item whose 1-cells are \emph{geometric functors} -- regular functors that also preserve joins of subobjects,
\item and whose 2-cells are natural transformations between these.
\end{enumerate}
}
\end{df}

Each geometric category $\mathcal{G}$ can be equipped with the \emph{geometric topology} $J_{\rm Geom}$, the Grothendieck topology whose covering families are the jointly epimorphic ones, to obtain a regular site $(\mathcal{G},J_{\rm Geom})$.  In light of Remark \ref{rem:subobjtop}, this is the topology whose restriction $J_{\rm Geom}|_{\Sub_\mathcal{G}(g)}$ to the subobject lattice $\Sub_\mathcal{G}(g)$, for $g \in \mathcal{G}$, is the topology where
\[\{\,e_i \rightarrowtail h \mid i \in I\,\} \text{ is a  $J_{\rm Geom}|_{\Sub_\mathcal{G}(g)}$-cover} \iff h = \bigvee_{i \in I} e_i.  \]
This assignment of a regular site to a geometric category $\mathcal{G}$ is easily observed to determine a full and faithful 2-embedding ${\bf GeomCat} \hookrightarrow {\bf RegSites}$.

\begin{thm}
There is a quasi 2-adjunction
\[\begin{tikzcd}
{{\bf RegSites}} && {{\bf GeomCat}}
\arrow[""{name=0, anchor=center, inner sep=0}, shift left=3, hook', from=1-3, to=1-1]
\arrow[""{name=1, anchor=center, inner sep=0}, "{\GC^{\rm Cat}}", shift left=3, from=1-1, to=1-3]
\arrow["\dashv"{anchor=center, rotate=-90}, draw=none, from=1, to=0]
\end{tikzcd}\]
for which each square in the diagram
\begin{equation}\label{diag:squareof2adjs}\begin{tikzcd}
{{\bf ExDocSites}} && {{\bf GeomDoc}_{\rm cart}} \\
\\
{{\bf RegSites}} && {{\bf GeomCat}}
\arrow[""{name=0, anchor=center, inner sep=0}, shift left=3, hook', from=3-3, to=3-1]
\arrow[""{name=1, anchor=center, inner sep=0}, "{\GC^{\rm Cat}}", shift left=3, from=3-1, to=3-3]
\arrow[""{name=2, anchor=center, inner sep=0}, "{{\bf Syn}}"', shift right=3, from=1-3, to=3-3]
\arrow[""{name=3, anchor=center, inner sep=0}, "{\Sub_{(-)}}"', shift right=3, from=3-3, to=1-3]
\arrow[""{name=4, anchor=center, inner sep=0}, "{\Sub_{(-)}}"', shift right=3, from=3-1, to=1-1]
\arrow[""{name=5, anchor=center, inner sep=0}, "{{\bf Syn}}"', shift right=3, from=1-1, to=3-1]
\arrow[""{name=6, anchor=center, inner sep=0}, shift left=3, hook', from=1-3, to=1-1]
\arrow[""{name=7, anchor=center, inner sep=0}, "\GC", shift left=3, from=1-1, to=1-3]
\arrow["\dashv"{anchor=center, rotate=-90}, draw=none, from=1, to=0]
\arrow["\dashv"{anchor=center}, draw=none, from=2, to=3]
\arrow["\dashv"{anchor=center}, draw=none, from=5, to=4]
\arrow["\dashv"{anchor=center, rotate=-90}, draw=none, from=7, to=6]
\end{tikzcd}\end{equation}
commutes.
\end{thm}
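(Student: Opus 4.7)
The plan is to define $\GC^{\rm Cat}$ as the composite 2-functor ${\bf Syn} \circ \GC \circ \Sub_{(-)}$, so that on a regular site $(\cat, K)$ we set $\GC^{\rm Cat}(\cat, K) = {\bf Syn}(\GC(\Sub_\cat, K_{\Sub}))$. The first step is to verify that this composite lands in ${\bf GeomCat}$. Given a regular site $(\cat, K)$, the pair $(\Sub_\cat, K_{\Sub})$ is an existential doctrinal site whose base $\cat$ is cartesian, so $\GC(\Sub_\cat, K_{\Sub})$ is a geometric doctrine $\Lb \colon \cat^{op} \to \Frm_{\rm open}$ with cartesian base. By Proposition \ref{prop:normalfrobandbc}, such an $\Lb$ is a coherent doctrine whose fibres are frames and which satisfies both Frobenius and Beck--Chevalley. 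The standard computation of subobjects in ${\bf Syn}(\Lb)$ via provably functional relations shows that a subobject of $(c, U)$ corresponds to an element of $\Lb(c)$ below $U$; combined with Frobenius and Beck--Chevalley this yields arbitrary joins of subobjects in ${\bf Syn}(\Lb)$ that are stable under pullback, so ${\bf Syn}(\Lb)$ is a geometric category.

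Next I would verify the commutativity of the square of right adjoints. For a geometric category $\mathcal{G}$, the subobject doctrine $\Sub_\mathcal{G}$ is itself a geometric doctrine over the cartesian base $\mathcal{G}$: its fibres are frames by the defining property of a geometric category, while Frobenius and Beck--Chevalley follow from stability of joins of subobjects under pullback. Moreover the topology $K_{\Sub_\mathcal{G}}$ induced by viewing $\Sub_\mathcal{G}$ as a geometric doctrine coincides strictly with the topology obtained by restricting the geometric topology $J_{\rm Geom}$ on $\mathcal{G}$ to subobjects as in Remark \ref{rem:subobjtop}, since both have as covering families precisely those $\{(c_i, x_i) \xrightarrow{f_i} (d, y) \mid i \in I\}$ with $y = \bigvee_{i \in I} \exists_{f_i} x_i$. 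Hence the two paths ${\bf GeomCat} \hookrightarrow {\bf RegSites} \xrightarrow{\Sub_{(-)}} {\bf ExDocSites}$ and ${\bf GeomCat} \xrightarrow{\Sub_{(-)}} {\bf GeomDoc}_{\rm cart} \hookrightarrow {\bf ExDocSites}$ agree on the nose. In particular the quasi 2-adjunction ${\bf Syn} \dashv \Sub_{(-)}$ of Proposition \ref{prop:subsynadjforsites} restricts to a quasi 2-adjunction ${\bf GeomDoc}_{\rm cart} \rightleftarrows {\bf GeomCat}$.

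With these verifications in place, the quasi 2-adjunction $\GC^{\rm Cat} \dashv {\rm incl}$ is assembled by composing Theorem \ref{thm:univprop}, the restricted ${\bf Syn} \dashv \Sub_{(-)}$ adjunction just obtained, and Proposition \ref{prop:subsynadjforsites}, yielding the natural chain of equivalences
\[
\Hom_{\bf GeomCat}(\GC^{\rm Cat}(\cat, K), \mathcal{G})
\simeq \Hom_{{\bf ExDocSites}}((\Sub_\cat, K_{\Sub}), (\Sub_\mathcal{G}, K_{\Sub_\mathcal{G}}))
\simeq \Hom_{\bf RegSites}((\cat, K), \mathcal{G}).
\]
Commutativity of the remaining squares of left adjoints follows from the uniqueness of left adjoints combined with the already-verified commutativity of the right adjoints. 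I expect the main obstacle to be the step in the first paragraph showing that ${\bf Syn}(\Lb)$ is genuinely geometric when $\Lb$ is a geometric doctrine over a cartesian base: one must check that arbitrary joins of subobjects in ${\bf Syn}(\Lb)$, constructed from joins in the fibres of $\Lb$, are both well-defined on provably functional relations and stable under pullback along arbitrary morphisms of ${\bf Syn}(\Lb)$. This is where the frame-valued nature of the fibres, as opposed to merely the distributive-lattice-valued setting appropriate for a coherent completion, is essential, and the bookkeeping with Frobenius and Beck--Chevalley is the most delicate part of the argument.
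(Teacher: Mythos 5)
Your proposal is correct and takes essentially the same route as the paper: the paper also defines $\GC^{\rm Cat}$ as the composite ${\bf Syn} \circ \GC \circ \Sub_{(-)}$ and establishes the quasi 2-adjunction by the same chain of natural equivalences through $\Hom_{{\bf GeomDoc}_{\rm cart}}(\GC(\Sub_\cat,K_\Sub),\Sub_\mathcal{G})$ and $\Hom_{{\bf ExDocSites}}((\Sub_\cat,K_\Sub),(\Sub_\mathcal{G},K_{\Sub_\mathcal{G}}))$, using fullness and faithfulness of $\Sub_{(-)}$ in the last step. The preliminary verifications you flag (that ${\bf Syn}(\Lb)$ is genuinely geometric for a geometric doctrine $\Lb$ over a cartesian base, and that the two topologies on $\Sub_\mathcal{G}$ coincide) are left implicit in the paper but are correct and do need to hold for the argument to go through.
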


\begin{proof}
In order to obtain the commutativity of (\ref{diag:squareof2adjs}), we define $\GC^{\rm Cat} \colon {\bf RegSites} \to {\bf GeomCat}$ as the composite ${\bf Syn} \circ \GC \circ \Sub_{(-)}$.  The quasi 2-adjunction is then obtained by the natural equivalences, for each regular site $(\cat,K) \in {\bf RegSites}$ and geometric category $\mathcal{G} \in {\bf GeomCat}$,
\begin{align*}
\Hom_{\bf GeomCat}({\bf Syn}( \GC(\Sub_{\cat},K_{\Sub})),\mathcal{G}) &\simeq \Hom_{\bf GeomDoc_{\rm cart}}(\GC(\Sub_{\cat},K_{\Sub}),\Sub_{\mathcal{G}}), \\
& \simeq \Hom_{\bf ExDocSites}((\Sub_\cat,K_\Sub),(\Sub_\mathcal{G},K_{\Sub_{\mathcal{G}}})), \\
& \simeq \Hom_{\bf RegSites}((\cat,K),(\mathcal{G},J_{\rm Geom})),
\end{align*}
where in the last equivalence we have used that $\Sub_{(-)} \colon {\bf RegSites} \to {\bf ExDocSites}$ is full and faithful.
\end{proof}






\section{Coarse geometric completions}\label{sec:coarse}

Because the geometric completion takes a Grothendieck topology as a second argument, it is an idempotent completion (see Theorem \ref{thm:univprop}).  This is in contrast to many of the other completions of doctrines considered in the literature (e.g. Trotta's existential completion \cite{trotta}).  The geometric completion would not be idempotent if we did not have the ability to take suitable topologies as a second argument.

Consider the 2-element frame $\2$.  Being a frame, there is an isomorphism $J_{\rm can} \text{-}{\bf Idl}(\2) \cong \2$, but one can easily calculate that $J_{\rm triv}\text{-}{\bf Idl}(\2)$ is the 3-element frame ${\bf 3}$ (i.e. the opens of the Sierpinski space).  We can interpret this behaviour as a `loss of information' by taking a \emph{coarser} Grothendieck topology $J_{\rm triv} \subseteq J_{\rm can}$ on $\2$.  In order to relate the geometric completion to other completions of doctrines considered in the literature, we consider in this section the behaviour of the geometric completion for doctrines when, for each geometric doctrine $\Lb$, we deliberately choose a coarser Grothendieck topology $J_\Lb^A \subseteq K_\Lb$ on the category $\cat \rtimes \Lb$ (or indeed forget the Grothendieck topology entirely by assigning the trivial topology $J_{\rm triv}$ to $\cat\rtimes \Lb$).  

We thus arrive at the notion of a \emph{coarse geometric completion} - a monad $\GC_A$ acting on a 2-full 2-subcategory of ${\bf DocSites}$.  As evidenced by the example given above, this monad $\GC_A$ is no longer idempotent (unless each $J_\Lb^A$ is chosen to be $K_\Lb$), unlike the geometric completion monad $\GC$.  We will observe that each coarse geometric completion is instead \emph{lax-idempotent}.  After establishing the lax-idempotency of a coarse geometric completion in Corollary \ref{coro:coarseislax}, we demonstrate in Corollary \ref{coro:freegcforcat} how this yields a lax-idempotent geometric completion for cartesian, regular and coherent categories.




\begin{df}\label{df:coarsecompl}
{\rm  
A \emph{coarse geometric completion} consists of the following data.
\begin{enumerate}
\item We are given a 2-full 2-subcategory $A\text{-}{\bf Doc} \subseteq {\bf DocSites}$.  The objects of $A\text{-}{\bf Doc}$ we call $A$-doctrines and their morphisms we call $A$-doctrine morphisms.

\item There is a 2-subcategory ${\bf GeomDoc}_A \subseteq {\bf GeomDoc}$ which is full on 1-cells and 2-cells satisfying the following conditions.
\begin{enumerate}
	\item For each $\Lb \in {\bf GeomDoc}_A$, there is a choice of Grothendieck topology $J^A_\Lb$ on the category $\cat \rtimes \Lb$ which is coarser than the topology $K_\Lb$, i.e. $J^A_\Lb \subseteq K_\Lb$, such that $(\Lb,J^A_\Lb)$ is an object of $A\text{-}{\bf Doc}$.  Moreover, the choice of topology $J^A_\Lb$ is functorial in the sense that, for each morphism of geometric doctrines $(F,a) \colon \Lb \to \Lb'$, there is a morphism of $A$-doctrines
	\[(F,a) \colon (\Lb,J^A_\Lb) \to (\Lb',J^A_{\Lb'}).\]
	In other words, there is a 2-full 2-embedding ${\bf GeomDoc}_A \hookrightarrow A\text{-}{\bf Doc}$.
	
	\item For each object $(P,J) \in A\text{-}{\bf Doc}$, the geometric completion $\GC(P,J) \colon \cat^{op} \to \Frm_{\rm open}$ is contained in ${\bf GeomDoc}_A$ and the unit $\eta^{(P,J)} \colon P \to \GC(P,J)$ defines a morphism of $A$-doctrines:
	\[\eta^{(P,J)} \colon (P,J) \to (\GC(P,J),J^A_{\GC(P,J)}).\]
\end{enumerate}
\end{enumerate}
}
\end{df}

\begin{thm}\label{thm:coarsecompl}
Let $A\text{-}{\bf Doc} \subseteq {\bf DocSites}$ and ${\bf GeomDoc}_A \subseteq {\bf GeomDoc}$ define a coarse geometric completion.  There is a strict 2-adjunction 
\[\begin{tikzcd}
{A\text{-}{\bf Doc}} && {{\bf GeomDoc}_A}
\arrow[""{name=0, anchor=center, inner sep=0}, shift left=2, hook', from=1-3, to=1-1]
\arrow[""{name=1, anchor=center, inner sep=0}, "\GC_A", shift left=2, from=1-1, to=1-3]
\arrow["\dashv"{anchor=center, rotate=-90}, draw=none, from=1, to=0]
\end{tikzcd}\]
where $\GC_A$ is the 2-functor
\[\begin{tikzcd}
{A\text{-}{\bf Doc}} \ar[hook]{r} & {\bf DocSites} \ar{r}{\GC} & {\bf GeomDoc}.
\end{tikzcd}\]
\end{thm}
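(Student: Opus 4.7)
The plan is to build the strict 2-adjunction by restricting the strict 2-adjunction $\GC \dashv \iota \colon {\bf DocSites} \leftrightarrows {\bf GeomDoc}$ established in Proposition \ref{prop:strict2adj}. First, one notes that $\GC_A$ is a well-defined 2-functor: condition (ii)(b) of Definition \ref{df:coarsecompl} ensures that $\GC(P,J)$ lies in ${\bf GeomDoc}_A$ for each $(P,J) \in A\text{-}{\bf Doc}$, and 2-functoriality is inherited from $\GC$ since $A\text{-}{\bf Doc}$ is 2-full in ${\bf DocSites}$.

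For each $(P,J) \in A\text{-}{\bf Doc}$ and $\Lb \in {\bf GeomDoc}_A$, I aim to exhibit a natural isomorphism of hom-categories
\[
\Hom_{A\text{-}{\bf Doc}}\bigl((P,J),(\Lb,J^A_\Lb)\bigr) \cong \Hom_{{\bf GeomDoc}_A}(\GC_A(P,J),\Lb).
\]
By 2-fullness of both inclusions, these hom-categories coincide with the corresponding hom-categories in ${\bf DocSites}$ and ${\bf GeomDoc}$ respectively. Via Proposition \ref{prop:strict2adj}, the right-hand side is isomorphic to $\Hom_{{\bf DocSites}}((P,J),(\Lb,K_\Lb))$, so it suffices to identify this latter hom-category with $\Hom_{{\bf DocSites}}((P,J),(\Lb,J^A_\Lb))$.

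The inclusion $\Hom_{{\bf DocSites}}((P,J),(\Lb,J^A_\Lb)) \hookrightarrow \Hom_{{\bf DocSites}}((P,J),(\Lb,K_\Lb))$ is immediate since $J^A_\Lb \subseteq K_\Lb$ forces any morphism cover-preserving for the finer topology to be so for the coarser one. For the reverse direction, given $(F,a) \colon (P,J) \to (\Lb,K_\Lb)$, I would apply the universal property of $\GC$ to obtain the factorisation $(F,\mathfrak{a}) \colon \GC(P,J) \to \Lb$ in ${\bf GeomDoc}$ with $\mathfrak{a} \circ \eta^{(P,J)} = a$. By condition (ii)(a) this $(F,\mathfrak{a})$ lifts to a morphism $(\GC(P,J), J^A_{\GC(P,J)}) \to (\Lb, J^A_\Lb)$ in $A\text{-}{\bf Doc}$; composing with the unit $\eta^{(P,J)} \colon (P,J) \to (\GC(P,J), J^A_{\GC(P,J)})$, which lies in $A\text{-}{\bf Doc}$ by (ii)(b), recovers $(F,a)$ as a morphism $(P,J) \to (\Lb, J^A_\Lb)$ in $A\text{-}{\bf Doc}$ by virtue of the identity $F \rtimes a = (F \rtimes \mathfrak{a}) \circ (\id_\cat \rtimes \eta^{(P,J)})$.

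Naturality of the resulting bijection in both arguments, and compatibility with 2-cells, is inherited from the natural isomorphism already available at the ambient level; since the inclusions are 2-full, the 2-cells on either side of the isomorphism coincide with the ambient ones. I do not anticipate a substantive obstacle: conditions (ii)(a) and (ii)(b) of Definition \ref{df:coarsecompl} are calibrated precisely so that the universal property of the geometric completion descends along the inclusions $A\text{-}{\bf Doc} \hookrightarrow {\bf DocSites}$ and ${\bf GeomDoc}_A \hookrightarrow {\bf GeomDoc}$.
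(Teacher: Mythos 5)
Your proof is correct and follows essentially the same route as the paper: the isomorphism on 1-cells is obtained by composing with the unit $\eta^{(P,J)}$ in one direction and, in the other, by relaxing $(\Lb,J^A_\Lb)$ to $(\Lb,K_\Lb)$ and invoking Theorem \ref{thm:univprop}, with conditions (ii)(a) and (ii)(b) of Definition \ref{df:coarsecompl} keeping everything inside $A\text{-}{\bf Doc}$ and ${\bf GeomDoc}_A$, and the 2-cells handled via Proposition \ref{prop:strict2adj} and fullness on 2-cells. One small caveat: since ``2-full'' in this paper means full on 2-cells only, your opening claim that $\Hom_{A\text{-}{\bf Doc}}((P,J),(\Lb,J^A_\Lb))$ coincides with the corresponding hom-category of ${\bf DocSites}$ is not automatic at the level of 1-cells (and the parenthetical about the ``finer'' versus ``coarser'' topology is stated backwards), but your subsequent factorisation of $(F,a)$ through $\GC(P,J)$ supplies exactly the missing surjectivity, so the argument stands.
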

\begin{proof}
For each $(P,J) \in A\text{-}{\bf Doc}$ and $\Lb \in {\bf GeomDoc}_A$, the natural isomorphism on objects of the categories
\begin{equation}\label{strictadj2} \Hom_{A\text{-}{\bf Doc}}((P,J),(\Lb,J^A_\Lb)) \cong \Hom_{{\bf GeomDoc}_A}(\GC_A(P,J),\Lb)\end{equation}
acts by sending a morphism of geometric doctrines $(F,a) \colon \GC_A(P,J) \to \Lb$ to the composite
\[\begin{tikzcd}
(P,J) \ar{rr}{\etaPJ} && (\GC_A(P,J),J^A_{\GC_A(P,J)}) \ar{rr}{(F,a)} && (\Lb,J^A_\Lb)
\end{tikzcd}\]
and, vice versa, sending an arrow $(F,a) \colon (P,J) \to (\Lb,J^A_\Lb)$ to the morphism of geometric doctrines $(F,\mathfrak{a})$ as induced by the diagram
\[\begin{tikzcd}
(P,J) \ar{rrd}[']{(F,a)} \ar{rr}{\eta^{(P,J)}} & &(\GC(P,J),J^A_{\GC(P,J)}) \ar{r}\ar[dashed]{d}{(F,\mathfrak{a})} & (\GC(P,J),K_{\GC(P,J)}) \ar[dashed]{d}{(F,\mathfrak{a})} \\
&& (\Lb,J^A_\Lb) \ar{r} & (\Lb,K_\Lb)
\end{tikzcd}\]
and Theorem \ref{thm:univprop}.  That this extends to an isomorphism on arrows, and hence the isomorphism of categories (\ref{strictadj2}), follows from Proposition \ref{prop:strict2adj} and the fact that $A\text{-}{\bf Doc} \subseteq {\bf DocSites}$ and ${\bf GeomDoc}_A \subseteq {\bf GeomDoc}$ are both full on 2-cells.
\end{proof}

Of course, $\2$ is a quotient frame (or \emph{sublocale}) of ${\bf 3}$.  Similarly, the coarse geometric completion  of a geometric doctrine $\GC(\Lb,J^A_\Lb)$ is related to the geometric doctrine $\Lb$ by a point-wise surjective morphism of geometric doctrines $\GC(\Lb,J^A_\Lb) \to \Lb$ (or \emph{internal sublocale embedding}) corresponding to the inclusion
\[\Sh(\cat \rtimes \Lb,K_\Lb) \rightarrowtail \Sh(\cat \rtimes \Lb,J^A_\Lb)\]
(see \cite[Theorem 5.7]{myselfintlocmorph} -- if $J^A_\Lb$ is the trivial topology, the morphism $\GC(\Lb,J_{\rm triv}) \to \Lb$ is precisely the $K_\Lb$-closure operation from Definition \ref{df:closure}).


\paragraph{Lax idempotency for coarse geometric completions.}  As previously mentioned, the strict 2-adjunction 
\[\begin{tikzcd}
{A\text{-}{\bf Doc}} && {{\bf GeomDoc}_A}
\arrow[""{name=0, anchor=center, inner sep=0}, shift left=2, hook', from=1-3, to=1-1]
\arrow[""{name=1, anchor=center, inner sep=0}, "\GC_A", shift left=2, from=1-1, to=1-3]
\arrow["\dashv"{anchor=center, rotate=-90}, draw=none, from=1, to=0]
\end{tikzcd}\]
of a coarse geometric completion is not necessarily idempotent.  We dedicate the remainder of this section to showing that $\GC_A$ satisfies a weaker form of idempotency: lax-idempotency.

Let $T \colon \cat\to \cat$ be a 2-monad with unit $\eta \colon \id_\cat \to T$ and multiplication $\mu \colon T^2 \to T$.  The 2-monad $T$ is \emph{lax-idempotent}\footnote{In \cite{kockmonads} lax-idempotent monads are called \emph{KZ-doctrines}.  Using this terminology would be confusing in the context of doctrines in the sense of Lawvere.} if the composites of the diagram
\[\begin{tikzcd}
TA \ar[shift right]{r}[']{\eta_{TA}} & \ar[shift right]{l}[']{\mu_A} T^2 A,
\end{tikzcd}\]
although perhaps not strictly equal to the identities $\id_{TA}$ and $\id_{T^2A}$, can be related by canonical 2-cells such that there is an adjunction $\mu_A \dashv \eta_{TA}$ (see \parencite[Proposition 1.2]{kockmonads}).  Specifically, we require a require a 2-cell
\[\begin{tikzcd}
{TA} && {T^2A,}
\arrow[""{name=0, anchor=center, inner sep=0}, "{T\eta_A}", curve={height=-12pt}, from=1-1, to=1-3]
\arrow[""{name=1, anchor=center, inner sep=0}, "{\eta_{TA}}"', curve={height=12pt}, from=1-1, to=1-3]
\arrow["{\lambda_A}", shorten <=3pt, shorten >=3pt, Rightarrow, from=0, to=1]
\end{tikzcd}\]
natural in $A$, such that the composites
\[\begin{tikzcd}
A & TA && {T^2A,}
\arrow[""{name=0, anchor=center, inner sep=0}, "{T\eta_A}", curve={height=-12pt}, from=1-2, to=1-4]
\arrow[""{name=1, anchor=center, inner sep=0}, "{\eta_{TA}}"', curve={height=12pt}, from=1-2, to=1-4]
\arrow["{\eta_A}", from=1-1, to=1-2]
\arrow["{\lambda_A}", shorten <=3pt, shorten >=3pt, Rightarrow, from=0, to=1]
\end{tikzcd}
\begin{tikzcd}
TA && {T^2A} & A
\arrow[""{name=0, anchor=center, inner sep=0}, "{T\eta_A}", curve={height=-12pt}, from=1-1, to=1-3]
\arrow[""{name=1, anchor=center, inner sep=0}, "{\eta_{TA}}"', curve={height=12pt}, from=1-1, to=1-3]
\arrow["{\mu_A}", from=1-3, to=1-4]
\arrow["{\lambda_A}", shorten <=3pt, shorten >=3pt, Rightarrow, from=0, to=1]
\end{tikzcd}\]
are both identity 2-cells (see \cite[Definition 1.1]{kockmonads}).

Often it can be more tractable, if circumlocutory, to demonstrate lax-idempotency by an equivalent condition regarding the algebras of the monad.  Recall from \cite{lackkelly} that, given a pair of (strict) $T$-alebras $(A,a)$ and $(B,a)$, a \emph{lax morphism} of $T$-algebras is a pair $(f,\alpha)$ where $f \colon A \to B$ is an arrow of $\cat$ while $\alpha$ is a 2-cell $\alpha \colon b \circ Tf \to f \circ a$ that fills the square
\[\begin{tikzcd}
TA & TB \\
A & B
\arrow[""{name=0, anchor=center, inner sep=0}, "Tf", from=1-1, to=1-2]
\arrow["a"', from=1-1, to=2-1]
\arrow["b", from=1-2, to=2-2]
\arrow[""{name=1, anchor=center, inner sep=0}, "f"', from=2-1, to=2-2]
\arrow["\alpha", shorten <=4pt, shorten >=4pt, Rightarrow, from=0, to=1]
\end{tikzcd}\]
and satisfies the coherence conditions
\begin{equation}\label{cohdiag1}\begin{tikzcd}
{T^2A} & {T^2B} && {T^2A} & {T^2B} \\
TA & TB & {=} & TA & TB \\
A & B && A & B
\arrow[""{name=0, anchor=center, inner sep=0}, "f"', from=3-1, to=3-2]
\arrow["a"', from=2-1, to=3-1]
\arrow["b", from=2-2, to=3-2]
\arrow[""{name=1, anchor=center, inner sep=0}, "Tf", from=2-1, to=2-2]
\arrow["{T^2f}", from=1-1, to=1-2]
\arrow["{\mu_A}"', from=1-1, to=2-1]
\arrow["{\mu_B}", from=1-2, to=2-2]
\arrow[""{name=2, anchor=center, inner sep=0}, "f"', from=3-4, to=3-5]
\arrow["a"', from=2-4, to=3-4]
\arrow["b", from=2-5, to=3-5]
\arrow[""{name=3, anchor=center, inner sep=0}, "Tf", from=2-4, to=2-5]
\arrow["Ta"', from=1-4, to=2-4]
\arrow["TB", from=1-5, to=2-5]
\arrow[""{name=4, anchor=center, inner sep=0}, "{T^2f}", from=1-4, to=1-5]
\arrow["\alpha", shorten <=4pt, shorten >=4pt, Rightarrow, from=1, to=0]
\arrow["\alpha", shorten <=4pt, shorten >=4pt, Rightarrow, from=3, to=2]
\arrow["T\alpha"{pos=0.4}, shorten <=4pt, shorten >=12pt, Rightarrow, from=4, to=3]
\end{tikzcd}\end{equation}
and
\begin{equation}\label{cohdiag2}\begin{tikzcd}
A & B && A & B \\
TA & TB & {=} \\
A & B && A & B.
\arrow["{\eta_A}"', from=1-1, to=2-1]
\arrow["{\eta_B}", from=1-2, to=2-2]
\arrow["f", from=1-1, to=1-2]
\arrow[""{name=0, anchor=center, inner sep=0}, "Tf", from=2-1, to=2-2]
\arrow[""{name=1, anchor=center, inner sep=0}, "f"', from=3-1, to=3-2]
\arrow["a"', from=2-1, to=3-1]
\arrow["b", from=2-2, to=3-2]
\arrow["f", from=3-4, to=3-5]
\arrow["f", from=1-4, to=1-5]
\arrow["{\id_A}"', from=1-4, to=3-4]
\arrow["{\id_B}", from=1-5, to=3-5]
\arrow["\alpha", shorten <=4pt, shorten >=4pt, Rightarrow, from=0, to=1]
\end{tikzcd}\end{equation}
It is shown in \parencite[Theorem 6.2]{lackkelly} that $T$ is lax-idempotent if and only if for each pair $(A,a)$ and $(B,b)$ of (strict) $T$-algebras and a morphism $f \colon A \to B$ there is a unique 2-cell $\alpha \colon b \circ Tf \to f \circ a$ such that $(f,\alpha) \colon (A,a) \to (B,b)$ is a lax morphism of $T$-algebras.

We require two lemmas concerning the algebras of $\GC_A$ to demonstrate that a coarse geometric completion $\GC_A \colon A\text{-}{\bf Doc} \to A\text{-}{\bf Doc}$ is lax-idempotent.  Since in each example of coarse geometric completions we will consider in Examples \ref{coarseexs}, the Grothendieck topology $J$ given on $\cat \rtimes P$ for an $A$-doctrine $(P,J) \in A\text{-}{\bf Doc}$ is chosen for us, in what follows we simplify notation and denote the object $(P,J)$ of $A\text{-}{\bf Doc}$ by simply $P$.  Also in aid of legibility, if $(G,b) = \xi \colon P \to Q$ is a morphism of $A$-doctrines, we will abuse notation and write $\xi$ for the natural transformation $b \colon P \to Q\circ G^{op}$.

\begin{lem}\label{GCAalglem1}
Let $P \colon \cat^{op} \to \PreOrd$ and $Q \colon \dcat^{op} \to \PreOrd$ be a pair of $A$-doctrines and let $\xi \colon \GC_A(P) \to P$ and $\zeta \colon \GC_A(Q) \to Q$ be natural transformations such that the triangles
\[\begin{tikzcd}
P \ar{r}{\eta^P} \ar[equal]{rd} & \GC_A(P) \ar{d}{\xi}& Q \ar{r}{\eta^Q} \ar[equal]{rd} & \GC_A(Q) \ar{d}{\zeta} \\
& P, & & Q
\end{tikzcd}\]
commute.  Given a functor $F \colon \cat \to \dcat$ and a natural transformation $a \colon P \to Q \circ F^{op}$, for each arrow $d \xrightarrow{f} c \in \cat$ and $x \in P(d)$, there is an inequality
\[\zeta_c \circ \exists_{\GC_A(Q)(F(f))} \circ \eta^Q_d \circ a_d(x)  \leqslant a_c \circ \xi_c\circ\exists_{\GC_A(P)(f)} \circ \eta^P_d(x).\]
\end{lem}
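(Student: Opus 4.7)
The strategy is to reduce the desired inequality, via the adjunction $\exists_{\GC_A(Q)(F(f))} \dashv \GC_A(Q)(F(f))$, to a trivial consequence of naturality and the equation $\xi \circ \eta^P = \id_P$. (Throughout the notation $\eta^Q_d \circ a_d$ in the statement is shorthand for $\eta^Q_{F(d)} \circ a_d$, since $a_d$ lands in $Q(F(d))$, and similarly $\zeta_c = \zeta_{F(c)}$.) The plan is to prove the stronger inequality in $\GC_A(Q)(F(c))$
\[
\exists_{\GC_A(Q)(F(f))}\bigl(\eta^Q_{F(d)}(a_d(x))\bigr) \;\leqslant\; \eta^Q_{F(c)}\bigl(a_c\bigl(\xi_c\bigl(\exists_{\GC_A(P)(f)}(\eta^P_d(x))\bigr)\bigr)\bigr),
\]
and then apply the monotone map $\zeta_{F(c)}$ to both sides, using $\zeta \circ \eta^Q = \id_Q$ to collapse the right-hand side and recover the lemma.

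Setting $T := \exists_{\GC_A(P)(f)}(\eta^P_d(x))$, by the adjunction this stronger inequality is equivalent to
\[
\eta^Q_{F(d)}(a_d(x)) \;\leqslant\; \GC_A(Q)(F(f))\bigl(\eta^Q_{F(c)}(a_c(\xi_c(T)))\bigr).
\]
I then chase the right-hand side through naturality of $\eta^Q$ (with respect to $F(f)$), of $a$ (with respect to $f$), and of $\xi$ (with respect to $f$), rewriting it successively as $\eta^Q_{F(d)}(Q(F(f))(a_c(\xi_c(T))))$, then $\eta^Q_{F(d)}(a_d(P(f)(\xi_c(T))))$, and finally $\eta^Q_{F(d)}(a_d(\xi_d(\GC_A(P)(f)(T))))$. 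Thus by monotonicity of $\eta^Q_{F(d)} \circ a_d$ it suffices to show $x \leqslant \xi_d(\GC_A(P)(f)(T))$.

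This last inequality is immediate: the unit of the adjunction $\exists_{\GC_A(P)(f)} \dashv \GC_A(P)(f)$ applied to $\eta^P_d(x)$ gives $\eta^P_d(x) \leqslant \GC_A(P)(f)(T)$, and applying the monotone map $\xi_d$ together with the hypothesis $\xi_d \circ \eta^P_d = \id_{P(d)}$ yields $x = \xi_d(\eta^P_d(x)) \leqslant \xi_d(\GC_A(P)(f)(T))$, closing the argument. There is no serious obstacle: the proof only uses naturality, monotonicity, the two unit conditions $\xi \circ \eta^P = \id_P$ and $\zeta \circ \eta^Q = \id_Q$, and the frame-theoretic adjunctions that are available in $\GC_A(P)$ and $\GC_A(Q)$ by virtue of these being geometric doctrines. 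The only genuine insight is the initial move of ``prepending'' $\eta^Q_{F(c)}$ to the right-hand side so that the adjunction in $\GC_A(Q)$ becomes applicable.
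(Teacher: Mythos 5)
Your proof is correct and is essentially the paper's own argument read in reverse: the paper starts from the unit inequality $\eta^P_d(x)\leqslant \GC_A(P)(f)\circ\exists_{\GC_A(P)(f)}\circ\eta^P_d(x)$, pushes it forward through $\eta^Q\circ a\circ\xi$ using the same three naturality squares, and transposes across the adjunction $\exists_{\GC_A(Q)(F(f))}\dashv\GC_A(Q)(F(f))$ to obtain precisely your ``stronger inequality'' before applying $\zeta$. The ingredients and the key transposition step are identical, so there is nothing to flag.
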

\begin{proof}
Firstly, using the inequality $\eta^P_d(x) \leqslant \GC_A(P)(f) \circ \exists_{\GC_A(P)(f)} \circ \eta^P_d(x)$, we deduce that
\begin{align*}
\eta^Q_d \circ a_d(x) = \eta^Q_d \circ a_d \circ \xi_d \circ \eta^P_d (x) &\leqslant \eta^Q_d \circ a_d \circ \xi_d \circ \GC_A(P)(f)\circ  \exists_{\GC_A(P)(f)} \circ \eta^P_d(x), \\
& =  \GC_A(Q)(F(f))\circ \eta^Q_c \circ a_c \circ \xi_c \circ  \exists_{\GC_A(P)(f)} \circ \eta^P_d(x).
\end{align*}
Thus, by the adjunction $\exists_{\GC(Q)(F(f))} \dashv \GC(Q)(F(f))$, we have that
\[ \exists_{\GC_A(Q)(F(f))} \circ\eta^Q_d \circ a_d(x) \leqslant \eta^Q_c \circ a_c \circ \xi_c \circ  \exists_{\GC_A(P)(f)} \circ \eta^P_d(x),\]
and we therefore obtain the desired inequality
\begin{align*}
\zeta_c \circ \exists_{\GC_A(Q)(F(f))} \circ \eta^Q_d \circ a_d(x) & \leqslant \zeta_c \circ \eta^Q_c \circ a_c \circ \xi_c\circ\exists_{\GC_A(P)(f)} \circ \eta^P_d(x), \\
& = a_c \circ \xi_c\circ\exists_{\GC_A(P)(f)} \circ \eta^P_d(x).
\end{align*}
\end{proof}

\begin{lem}\label{GCAalglem2}
If $(P,\xi)$ is a $\GC_A$-algebra, then $\xi_c \colon \GC_A(P)(c) \to P(c)$ preserves joins for all $c \in \cat$.
\end{lem}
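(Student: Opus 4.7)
The plan is to exploit the algebra axiom $\xi \circ \mu_P = \xi \circ \GC_A(\xi)$, where $\mu_P \colon \GC_A^2(P) \to \GC_A(P)$ is the multiplication of the monad. Although $P$ itself is not known to possess arbitrary joins, the free $\GC_A$-algebra $\GC_A(P)$ is a geometric doctrine, so each $\GC_A(P)(c)$ is a frame and in particular has all joins. Moreover, both $\mu_P$ and $\GC_A(\xi) \colon \GC_A^2(P) \to \GC_A(P)$ are morphisms of geometric doctrines, so their components preserve arbitrary joins.

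First I would fix a family $\{S_i\}_{i \in I}$ in $\GC_A(P)(c)$ and consider the element $T = \bigvee_{i \in I} \eta^{\GC_A(P)}_c(S_i) \in \GC_A^2(P)(c)$. Since $\mu_{P,c} \circ \eta^{\GC_A(P)}_c = \id_{\GC_A(P)(c)}$ and $\mu_{P,c}$ preserves joins, I can compute $\mu_{P,c}(T) = \bigvee_i S_i$. On the other hand, naturality of the unit applied to the morphism $\xi$ gives $\GC_A(\xi)_c \circ \eta^{\GC_A(P)}_c = \eta^P_c \circ \xi_c$, so using that $\GC_A(\xi)_c$ preserves joins, $\GC_A(\xi)_c(T) = \bigvee_i \eta^P_c(\xi_c(S_i))$. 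Applying $\xi_c$ and invoking the algebra axiom then yields
\[
\xi_c\!\left(\bigvee_{i \in I} S_i\right) \;=\; \xi_c\!\left(\bigvee_{i \in I} \eta^P_c(\xi_c(S_i))\right).
\]

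It remains to show that the right-hand side is the join of $\{\xi_c(S_i)\}_{i \in I}$ in $P(c)$; setting $y_i := \xi_c(S_i)$, I must verify that $\xi_c\big(\bigvee_i \eta^P_c(y_i)\big)$ is the least upper bound of the $y_i$'s. For the upper bound property, I apply monotonicity of $\xi_c$ to the inequalities $\eta^P_c(y_i) \leqslant \bigvee_j \eta^P_c(y_j)$ and use the unit axiom $\xi_c \circ \eta^P_c = \id_{P(c)}$. For the least upper bound property, given $z \in P(c)$ with $y_i \leqslant z$ for all $i$, monotonicity of $\eta^P_c$ gives $\bigvee_i \eta^P_c(y_i) \leqslant \eta^P_c(z)$, and applying $\xi_c$ together with the unit axiom yields $\xi_c\big(\bigvee_i \eta^P_c(y_i)\big) \leqslant z$.

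The main obstacle, as I see it, is not any of the calculations above but rather ensuring that the two key ingredients — that $\mu_P$ and $\GC_A(\xi)$ preserve joins pointwise — are properly justified. Both $\GC_A^2(P)$ and $\GC_A(P)$ lie in ${\bf GeomDoc}$, and $\mu_P$ is the canonical morphism of geometric doctrines induced by the adjunction of Theorem \ref{thm:coarsecompl} (arising from the universal property of $\GC$ applied to the identity on $\GC_A(P)$ equipped with $K_{\GC_A(P)}$). Similarly, functoriality of $\GC_A$ on the morphism $\xi \colon \GC_A(P) \to P$ produces $\GC_A(\xi)$ as a morphism in ${\bf GeomDoc}$. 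In both cases, being a morphism of geometric doctrines means each component is a frame homomorphism, hence preserves arbitrary joins, which is exactly what the argument needs.
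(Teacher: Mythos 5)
Your proof is correct and follows essentially the same route as the paper's: both arguments first identify the join of $\{\xi_c(S_i)\}_i$ in $P(c)$ as $\xi_c\bigl(\bigvee_i \eta^P_c(\xi_c(S_i))\bigr)$ via the unit law $\xi_c \circ \eta^P_c = \id$, and then use naturality of $\eta$, the algebra axiom $\xi \circ \GC_A(\xi) = \xi \circ \mu^P$, the monad unit law $\mu^P \circ \eta^{\GC_A(P)} = \id$, and the fact that $\mu^P_c$ and $\GC_A(\xi)_c$ are components of morphisms of geometric doctrines (hence preserve arbitrary joins) to convert this into $\xi_c\bigl(\bigvee_i S_i\bigr)$. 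The only difference is presentational — you start from the algebra axiom applied to $T = \bigvee_i \eta^{\GC_A(P)}_c(S_i)$ and verify the join formula afterwards, whereas the paper establishes the join formula in $P(c)$ first — but the ingredients and their justifications coincide.
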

\begin{proof}
Let us first show that $P(c)$ must have all joins.  For a subset $\{\,x_i \mid i \in I\,\}$ of $P(c)$, we claim that the join $\bigvee_{i \in I} x_i$ is given by $\xi_c \left(\bigvee_{i \in I} \eta^P_c(x) \right)$.  For each $i \in I$,
\[x_i = \xi_c \circ \eta^P_c(x) \leqslant \xi_c \left(\bigvee_{i \in I} \eta^P_c(x) \right)\]
while, if given $y \in P(c)$ with $x_i \leqslant y$ for all $i \in I$,
\[\xi_c \left(\bigvee_{i \in I} \eta^P_c(x) \right) \leqslant \xi_c \circ \eta^P_c(y) = y.\]

To show that $\xi_c$ preserves these joins, we first observe that the diagrams
\[\begin{tikzcd}
& \GC_A\GC_A(P) \ar{r}{\GC_A(\xi)} \ar{d}{\mu^P} & \GC_A(P)  \ar{d}{\xi}  &  \GC_A(P) \ar{r}{\xi} \ar{d}{\eta^{\GC_A(P)}} & P \ar{d}{\eta^P} \\
\GC_A(P) \ar{ru}{\eta^{\GC_A(P)}} \ar[equal]{r} & \GC_A(P) \ar{r}{\xi} & P, & \GC_A\GC_A(P) \ar{r}{\GC_A(\xi)} & \GC_A(P)
\end{tikzcd}\]
both commute -- the right-hand diagram commutes since $\GC_A$ is a monad and $(P,\xi)$ is a $\GC_A$-algebra, while the left-hand square commutes as $\eta$ is natural.  Note also that $\mu^P$ and $\GC_A(\xi)$ are morphisms of geometric doctrines.  In particular, for each $c \in \cat$, both $\mu^P_c$ and $\GC_A(\xi)_c$ commute with all joins.

Therefore, given a subset $\{\,S_i \mid i \in I\,\} \subseteq \GC_A(P)(c)$, we observe that
\begingroup
\renewcommand{\arraystretch}{1.5} 
\begin{align*}
\bigvee_{i \in I} \xi_c(S_i) = \xi_c\left(\bigvee_{i \in I} \eta^P_c \circ \xi_c(S_i)\right) & = \xi_c\left(\bigvee_{i \in I} \GC_A(\xi)_c \circ \eta_{c}^{\GC_A(P)}(S_i) \right) , \\
& = \xi_c \circ  \GC_A(\xi)_c \left(\bigvee_{i \in I} \eta_c^{\GC_A(P)} (S_i)\right), \\
& = \xi_c \circ \mu^P_c \left(\bigvee_{i \in I} \eta_c^{\GC_A(P)} (S_i)\right), \\
& = \xi_c  \left(\bigvee_{i \in I} \mu^P_c \circ \eta_c^{\GC_A(P)} (S_i)\right) = \xi_c\left(\bigvee_{i \in I} S_i \right)
\end{align*}
\endgroup
and hence joins are indeed preserved.
\end{proof}

Finally we complete the proof that $\GC_A$ is lax-idempotent.  The argument will be reminiscent of that found in \parencite[Theorem 5.6]{trotta}.

\begin{coro}\label{coro:coarseislax}
Each coarse geometric completion $\GC_A \colon A\text{-}{\bf Doc} \to A\text{-}{\bf Doc}$ is lax-idempotent.
\end{coro}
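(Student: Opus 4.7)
The plan is to apply the Lack--Kelly characterisation of lax-idempotent 2-monads (\cite[Theorem 6.2]{lackkelly}): it suffices to show that for each pair of strict $\GC_A$-algebras $(P,\xi)$ and $(Q,\zeta)$ and each $A$-doctrine morphism $(F,a) \colon P \to Q$, there exists a unique 2-cell
\[
\alpha \colon \zeta \circ \GC_A(F,a) \to (F,a) \circ \xi
\]
of $A\text{-}{\bf Doc}$ satisfying the coherence conditions \eqref{cohdiag1} and \eqref{cohdiag2}.

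I would construct $\alpha$ as the 2-cell whose underlying natural transformation between functors is the identity $\id_F \colon F \Rightarrow F$; both composites have first component $F$, so this is well-defined as soon as we verify the defining inequality of a 2-cell in $A\text{-}{\bf Doc}$, namely that
\[
\zeta_c \circ \GC_A(F,a)_c(S) \;\leqslant\; a_c \circ \xi_c(S)
\]
for each $c \in \cat$ and $S \in \GC_A(P)(c)$. To establish this inequality, I would first observe that both sides commute with arbitrary joins in the variable $S$: the left-hand side because $\GC_A(F,a)_c$ is a frame homomorphism (being a component of a morphism of geometric doctrines) and $\zeta_c$ preserves joins by Lemma~\ref{GCAalglem2}, and the right-hand side since $\xi_c$ preserves joins, again by Lemma~\ref{GCAalglem2}. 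By \eqref{eq:unitiscovering}, every $S \in \GC_A(P)(c)$ decomposes as $\bigvee_{(f,x) \in S} \exists_{\GC_A(P)(f)} \eta^P_d(x)$, so it suffices to check the inequality on generators of this form, for which the required statement is precisely the content of Lemma~\ref{GCAalglem1}.

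For the coherence conditions, \eqref{cohdiag2} follows from naturality of $\eta$: one has $\GC_A(F,a) \circ \eta^P = \eta^Q \circ (F,a)$, so post-composing with $\zeta$ and invoking the algebra axiom $\zeta \circ \eta^Q = \id_Q$ yields $\zeta \circ \GC_A(F,a) \circ \eta^P = (F,a) = (F,a) \circ \xi \circ \eta^P$, witnessing that the whiskering of $\alpha$ with $\eta^P$ is the identity 2-cell. Condition \eqref{cohdiag1}, which intertwines $\alpha$ with $\mu$, should fall out of an analogous calculation exploiting that $\mu^P,\mu^Q$ are themselves morphisms of geometric doctrines, together with the associativity axiom $\xi \circ \mu^P = \xi \circ \GC_A(\xi)$, after reducing to generators via join preservation. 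For uniqueness, any second candidate 2-cell has some underlying natural transformation $\alpha' \colon F \Rightarrow F$; condition \eqref{cohdiag2} forces $\alpha'$ to restrict to the identity on the image of $\eta^P$, and since $\eta^P$ constitutes a dense morphism of sites by Proposition~\ref{unitisdense}, this image generates each fibre of $\GC_A(P)$ under joins, so $\alpha'$ must coincide with $\id_F$.

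I expect the main obstacle to lie in the bookkeeping for coherence condition \eqref{cohdiag1}, where the interplay between $\mu$, the two algebra structures $\xi$ and $\zeta$, and the induced frame homomorphism $\GC_A(F,a)$ requires a careful unwinding of naturality squares at the level of $\GC_A \GC_A(P)$; however, no ingredient beyond Lemmas~\ref{GCAalglem1}--\ref{GCAalglem2} and the join-decomposition \eqref{eq:unitiscovering} should be needed.
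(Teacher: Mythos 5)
Your proposal is correct and follows essentially the same route as the paper: both invoke the Lack--Kelly characterisation, take the candidate 2-cell to be $\id_F$, and establish the key inequality $\zeta_c \circ \mathfrak{a}_c(S) \leqslant a_c \circ \xi_c(S)$ by decomposing $S$ into the join of the generators $\exists_{\GC_A(P)(f)}\eta^P_d(x)$ and applying Lemma~\ref{GCAalglem1} together with the join-preservation of Lemma~\ref{GCAalglem2}. The only superfluous step is the density argument in your uniqueness claim: since $\eta^P$ has identity underlying functor, whiskering $\alpha'$ with $\eta^P$ returns $\alpha'$ itself as a natural transformation $F \Rightarrow F$, so condition \eqref{cohdiag2} forces $\alpha' = \id_F$ outright.
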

\begin{proof}
Let $(P,\xi)$ and $(Q,\zeta)$ be algebras of the 2-monad $\GC_A$.  For each morphism of $A$-doctrines $(F,a)$, we first demonstrate that the identity transformation $\id_F$ defines a 2-cell that fills the square
\[\begin{tikzcd}
{\GC_A(P)} && {\GC_A(Q)} \\
P && Q.
\arrow[""{name=0, anchor=center, inner sep=0}, "{(F,\mathfrak{a})}", from=1-1, to=1-3]
\arrow["\xi"', from=1-1, to=2-1]
\arrow["\zeta", from=1-3, to=2-3]
\arrow[""{name=1, anchor=center, inner sep=0}, "{(F,a)}"', from=2-1, to=2-3]
\arrow["\id_F", shorten <=4pt, shorten >=4pt, Rightarrow, from=0, to=1]
\end{tikzcd}\]
We thus need to demonstrate, for all $S \in \GC_A(P)(c)$, the inequality
\[ \zeta_c \circ \mathfrak{a}_c(S) \leqslant a_c \circ \xi_c (S).\]
By combining Remark \ref{rem:direct}, Lemma \ref{GCAalglem1} and Lemma \ref{GCAalglem2}, we obtain the desired inequality:
\begingroup
\renewcommand{\arraystretch}{1.5}
\begin{align*}
\zeta_c \circ \mathfrak{a}_c (S) & = \zeta_c \left(\bigvee_{(f,x) \in S} \exists_{\GC_A(Q)(F(f))} \circ \eta^Q_d \circ a_d(x) \right), \\
& = \bigvee_{(f,x) \in S} \zeta_c \circ \exists_{\GC_A(Q)(F(f))} \circ \eta^Q_d \circ a_d(x) , \\
&\leqslant \bigvee_{(f,x) \in S} a_c \circ \xi_c\circ\exists_{\GC_A(P)(f)} \circ \eta^P_d(x) , \\
& \leqslant a_c \circ \xi_c \left(\bigvee_{(f,x) \in S} \exists_{\GC_A(P)(f)} \circ \eta^P_d(x)\right) = a_c \circ \xi_c(S).
\end{align*}
\endgroup
Its trivially shown that $((F,a),\id_F)$ satisfies the coherence conditions (\ref{cohdiag1}) and (\ref{cohdiag2}).

For any other 2-cell $\alpha \colon \zeta \circ (F,\mathfrak{a}) \to (F,a) \circ \xi $ satisfying the coherence condition
\[\begin{tikzcd}
P & Q && P & Q \\
{\GC_A(P)} & {\GC_A(Q)} & {=} \\
A & B && P & Q,
\arrow["{\eta^P}"', from=1-1, to=2-1]
\arrow["{\eta^Q}", from=1-2, to=2-2]
\arrow["{(F,a)}", from=1-1, to=1-2]
\arrow[""{name=0, anchor=center, inner sep=0}, "{(F,\mathfrak{a})}", from=2-1, to=2-2]
\arrow[""{name=1, anchor=center, inner sep=0}, "{(F,a)}"', from=3-1, to=3-2]
\arrow["\xi"', from=2-1, to=3-1]
\arrow["\zeta", from=2-2, to=3-2]
\arrow["{(F,a)}", from=3-4, to=3-5]
\arrow["{(F,a)}", from=1-4, to=1-5]
\arrow["{\id_P}"', from=1-4, to=3-4]
\arrow["{\id_Q}", from=1-5, to=3-5]
\arrow["\alpha", shorten <=4pt, shorten >=4pt, Rightarrow, from=0, to=1]
\end{tikzcd}\]
the equality $\alpha = \id_F$ is forced, and so $((F,a),\id_F)$ is the unique such lax $\GC_A$-algebra morphism.

\end{proof}


\begin{exs}\label{coarseexs}
{\rm
We obtain, as an immediate application of Theorem \ref{thm:coarsecompl} and Proposition \ref{coro:coarseislax}, the following lax-idempotent 2-monads.
\begin{enumerate}
\item\label{coarseexfree} By ${\bf Doc}_{\rm flat}$ denote the full 2-subcategory of ${\bf DocSites}$ on objects of the form $(P,J_{\rm triv})$.  Equivalently ${\bf Doc}_{\rm flat}$ is the 2-subcategory of ${\bf Doc}$ on doctrines and \emph{flat} morphisms of doctrines.  The assignment of the trivial topology $J_{\rm triv}$ to each geometric doctrine $\Lb \in {\bf GeomDoc}$ induces a 2-embedding ${\bf GeomDoc} \hookrightarrow {\bf Doc}$ that satisfies the conditions of Definition \ref{df:coarsecompl}.  Thus, we obtain a coarse geometric completion that we will call the \emph{free} geometric completion:
\[\begin{tikzcd}
	{{\bf Doc}_{\rm flat}} && {{\bf GeomDoc}.}
	\arrow[""{name=0, anchor=center, inner sep=0}, shift left=3, hook', from=1-3, to=1-1]
	\arrow[""{name=1, anchor=center, inner sep=0}, "{\GC_{\Free}}", shift left=3, from=1-1, to=1-3]
	\arrow["\dashv"{anchor=center, rotate=-90}, draw=none, from=1, to=0]
\end{tikzcd}\]
In particular, this restricts to a strict 2-adjunction:
\[\begin{tikzcd}
	{{\bf PrimDoc}} && {{\bf GeomDoc}_{\rm cart},}
	\arrow[""{name=0, anchor=center, inner sep=0}, shift left=3, hook', from=1-3, to=1-1]
	\arrow[""{name=1, anchor=center, inner sep=0}, "{\GC_{\Free}}", shift left=3, from=1-1, to=1-3]
	\arrow["\dashv"{anchor=center, rotate=-90}, draw=none, from=1, to=0]
\end{tikzcd}\]
between the 2-category of primary doctrines and the 2-category of geometric doctrines indexed over cartesian base categories.

\item\label{coarseexreg} There is a 2-embedding of ${\bf GeomDoc} $ into the 2-subcategory ${\bf RelExDoc} \subseteq {\bf DocSites}$ of relative existential doctrines, given by sending a geometric doctrine $\Lb \in {\bf GeomDoc}$ to $(\Lb,J_{\rm Ex}) \in {\bf RelExDoc}$, satisfying the conditions of Definition \ref{df:coarsecompl}.  Hence, we obtain a coarse geometric completion that we call the \emph{existential} geometric completion:
\[\begin{tikzcd}
	{{\bf RelExDoc}} && {{\bf GeomDoc},}
	\arrow[""{name=0, anchor=center, inner sep=0}, shift left=3, hook', from=1-3, to=1-1]
	\arrow[""{name=1, anchor=center, inner sep=0}, "{\GC_{\rm Ex}}", shift left=3, from=1-1, to=1-3]
	\arrow["\dashv"{anchor=center, rotate=-90}, draw=none, from=1, to=0]
\end{tikzcd}\]
This strict 2-adjunction restricts to the 2-subcategories of existential doctrines and geometric doctrines over a cartesian base category:
\[\begin{tikzcd}
	{{\bf ExDoc}} && {{\bf GeomDoc}_{\rm cart}.}
	\arrow[""{name=0, anchor=center, inner sep=0}, shift left=3, hook', from=1-3, to=1-1]
	\arrow[""{name=1, anchor=center, inner sep=0}, "{\GC_{\rm Ex}}", shift left=3, from=1-1, to=1-3]
	\arrow["\dashv"{anchor=center, rotate=-90}, draw=none, from=1, to=0]
\end{tikzcd}\]

\item Similarly, we obtain a coarse coherent completion for relative coherent doctrines, the \emph{coherent} geometric completion:
\[\begin{tikzcd}
	{{\bf RelCohDoc}} && {{\bf GeomDoc},}
	\arrow[""{name=0, anchor=center, inner sep=0}, shift left=3, hook', from=1-3, to=1-1]
	\arrow[""{name=1, anchor=center, inner sep=0}, "{\GC_{\rm Coh}}", shift left=3, from=1-1, to=1-3]
	\arrow["\dashv"{anchor=center, rotate=-90}, draw=none, from=1, to=0]
\end{tikzcd}\]
where ${\bf GeomDoc} \hookrightarrow {\bf RelCohDoc}$ is the 2-embedding that sends a geometric doctrine $\Lb \in {\bf GeomDoc}$ to $(\Lb,J_{\rm Coh}) \in {\bf RelCohDoc}$.  Once again, this restricts to a strict 2-adjunction:
\[\begin{tikzcd}
	{{\bf CohDoc}} && {{\bf GeomDoc}_{\rm cart}.}
	\arrow[""{name=0, anchor=center, inner sep=0}, shift left=3, hook', from=1-3, to=1-1]
	\arrow[""{name=1, anchor=center, inner sep=0}, "{\GC_{\rm Coh}}", shift left=3, from=1-1, to=1-3]
	\arrow["\dashv"{anchor=center, rotate=-90}, draw=none, from=1, to=0]
\end{tikzcd}\]

\end{enumerate}

}
\end{exs}

\paragraph{Coarse geometric completions for categories.}

We now relate how the coarse geometric completions we've considered in Examples \ref{coarseexs} interact with the syntactic category construction from \S \ref{sec:syn}.  We will obtain (coarse) geometric completions for cartesian categories, regular categories and coherent categories.

A \emph{coherent category} (see \cite[\S A1.4]{elephant}, also called a \emph{logical category} in \cite{MR}) is a regular category whose subobject lattices $\Sub_\cat(c)$ have finite joins and, for each arrow $d \xrightarrow{f} c$ of $\cat$, $\Sub_\cat(f)$ preserves these finite joins. A \emph{coherent functor} $F \colon \cat \to \dcat$, between coherent categories, is a regular functor that preserves finite joins as well.  We denote by ${\bf Coh}$ the 2-category of coherent cateogries, coherent functors and natural transformations between these.

The 2-functors $\GC^{\rm Cat}_{\rm Ex} \colon {\bf Reg} \to {\bf GeomCat}$ and $\GC^{\rm Cat}_{\rm Coh} \colon {\bf Coh} \to {\bf GeomCat}$ constructed below in Corollary \ref{coro:freegcforcat} are evidently given by the composites
\[
\begin{tikzcd}
{\bf Reg} \ar[hook]{r} & {\bf RegSites} \ar{r}{\GC^{\rm Cat}} & {\bf GeomCat}
\end{tikzcd}\text{ and }
\begin{tikzcd}
{\bf Coh} \ar[hook]{r} & {\bf RegSites} \ar{r}{\GC^{\rm Cat}} & {\bf GeomCat},
\end{tikzcd}
\]
where ${\bf Reg} \hookrightarrow {\bf RegSites}$ (respectively, ${\bf Coh} \hookrightarrow {\bf RegSites}$) is the 2-embedding that sends a regular (resp., coherent) category $\cat$ to the regular site $(\cat,J_{\rm Reg})$ (resp., $(\cat,J_{\rm Coh})$).  Here $J_{\rm Reg}$ denotes the \emph{regular topology} and $J_{\rm Coh}$ denotes the \emph{coherent topology} (see \cite[Examples A2.1.11]{elephant}).

\begin{coro}\label{coro:freegcforcat}
There are lax-idempotent quasi 2-adjunctions:
\begin{enumerate}
\item\label{coro:freefcforcat:enum:cart} \begin{center}
\begin{tikzcd}
	{{\bf Cart}} && {{\bf GeomCat},}
	\arrow[""{name=0, anchor=center, inner sep=0}, shift left=3, hook', from=1-3, to=1-1]
	\arrow[""{name=1, anchor=center, inner sep=0}, "{\GC^{\rm Cat}_{\Free}}", shift left=3, from=1-1, to=1-3]
	\arrow["\dashv"{anchor=center, rotate=-90}, draw=none, from=1, to=0]
\end{tikzcd}\end{center}
\item \begin{center}
\begin{tikzcd}
	{{\bf Reg}} && {{\bf GeomCat},}
	\arrow[""{name=0, anchor=center, inner sep=0}, shift left=3, hook', from=1-3, to=1-1]
	\arrow[""{name=1, anchor=center, inner sep=0}, "{\GC^{\rm Cat}_{\rm Ex}}", shift left=3, from=1-1, to=1-3]
	\arrow["\dashv"{anchor=center, rotate=-90}, draw=none, from=1, to=0]
\end{tikzcd}
\end{center}
\item \begin{center}
\begin{tikzcd}
	{{\bf Coh}} && {{\bf GeomCat}.}
	\arrow[""{name=0, anchor=center, inner sep=0}, shift left=3, hook', from=1-3, to=1-1]
	\arrow[""{name=1, anchor=center, inner sep=0}, "{\GC^{\rm Cat}_{\rm Coh}}", shift left=3, from=1-1, to=1-3]
	\arrow["\dashv"{anchor=center, rotate=-90}, draw=none, from=1, to=0]
\end{tikzcd}
\end{center}
\end{enumerate}
\end{coro}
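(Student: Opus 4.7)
The plan is to define each completion as a composite passing through doctrines, and then derive both the quasi 2-adjunction and the lax-idempotency from the corresponding doctrinal statements. For $\cat \in {\bf Cart}$ (respectively $\cat \in {\bf Reg}$, $\cat \in {\bf Coh}$), the subobject doctrine $\Sub_\cat$ is a primary (resp.\ existential, coherent) doctrine, and thus lies in the source of the appropriate coarse geometric completion of Examples \ref{coarseexs}. I would therefore set
\[
\GC^{\rm Cat}_{\Free} := {\bf Syn} \circ \GC_{\Free} \circ \Sub_{(-)}, \quad \GC^{\rm Cat}_{\rm Ex} := {\bf Syn} \circ \GC_{\rm Ex} \circ \Sub_{(-)}, \quad \GC^{\rm Cat}_{\rm Coh} := {\bf Syn} \circ \GC_{\rm Coh} \circ \Sub_{(-)},
\]
mimicking the construction of $\GC^{\rm Cat}$ in \S\ref{subsec:geocompforregcat}. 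For the latter two one checks that these composites coincide with the ones announced before the corollary, since the inclusion ${\bf Reg} \hookrightarrow {\bf RegSites}$ equips $\cat$ with the topology $J_{\rm Reg}$ whose restriction to each $\Sub_\cat(c)$ is exactly the trivial topology that induces $J_{\rm Ex}$ on $\cat \rtimes \Sub_\cat$ (and analogously for ${\bf Coh}$ with $J_{\rm Coh}$).

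The quasi 2-adjunctions then follow by chaining, in the same fashion as the proof of the $\GC^{\rm Cat}$ adjunction in \S\ref{subsec:geocompforregcat}. Concretely, for $\cat \in {\bf Reg}$ and $\mathcal{G} \in {\bf GeomCat}$,
\begin{align*}
\Hom_{\bf GeomCat}(\GC^{\rm Cat}_{\rm Ex}(\cat), \mathcal{G})
& \simeq \Hom_{{\bf GeomDoc}_{\rm cart}}(\GC_{\rm Ex}(\Sub_\cat), \Sub_\mathcal{G}) \\
& \simeq \Hom_{\bf ExDoc}(\Sub_\cat, \Sub_\mathcal{G}) \\
& \simeq \Hom_{\bf Reg}(\cat, \mathcal{G}),
\end{align*}
using in order the ${\bf Syn} \dashv \Sub_{(-)}$ adjunction restricted to ${\bf GeomCat} \hookrightarrow {\bf GeomDoc}_{\rm cart}$ (which is part of the square (\ref{diag:squareof2adjs})), the strict 2-adjunction of Example \ref{coarseexs}\ref{coarseexreg}, and the full faithfulness of $\Sub_{(-)} \colon {\bf Reg} \to {\bf ExDoc}$. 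The cases of ${\bf Cart}$ and ${\bf Coh}$ are handled identically, using Example \ref{coarseexs}\ref{coarseexfree} (respectively, the coherent item of Examples \ref{coarseexs}) in place of Example \ref{coarseexs}\ref{coarseexreg}, and noting that $\Sub_{(-)}$ is full and faithful from ${\bf Cart}$ into ${\bf PrimDoc}$ (respectively from ${\bf Coh}$ into ${\bf CohDoc}$).

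For lax-idempotency I would invoke the Lack-Kelly characterization used in the proof of Corollary \ref{coro:coarseislax}. Given two algebras $(\cat, \xi^{\rm Cat})$ and $(\dcat, \zeta^{\rm Cat})$ of $\GC^{\rm Cat}_{\rm Ex}$, which one first verifies to coincide with the geometric categories in ${\bf Reg}$ by using that $\mathcal{G} \mapsto \Sub_{\mathcal{G}}$ lands in $\GC_{\rm Ex}$-algebras and ${\bf Syn}(\Sub_{\mathcal{G}}) \simeq \mathcal{G}$ for $\mathcal{G} \in {\bf GeomCat}$, and given a morphism $f \colon \cat \to \dcat$, applying $\Sub_{(-)}$ produces $\GC_{\rm Ex}$-algebras $(\Sub_\cat, \xi)$, $(\Sub_\dcat, \zeta)$ and a morphism $\Sub_f$ between them. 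By Corollary \ref{coro:coarseislax} there is a unique 2-cell $\alpha$ exhibiting $(\Sub_f, \alpha)$ as a lax algebra morphism. Since $\Sub_{(-)}$ is 2-fully faithful, $\alpha$ descends uniquely to a 2-cell on ${\bf Reg}$ exhibiting $f$ as a lax $\GC^{\rm Cat}_{\rm Ex}$-algebra morphism, which verifies the Lack-Kelly criterion.

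The principal subtlety will be careful bookkeeping of the coherence conditions (\ref{cohdiag1}) and (\ref{cohdiag2}) under the passage between doctrines and categories — in particular, ensuring that the monad multiplication on ${\bf Reg}$ (${\bf Cart}$, ${\bf Coh}$) is compatible with that on ${\bf ExDoc}$ (${\bf PrimDoc}$, ${\bf CohDoc}$) through $\Sub_{(-)}$ and ${\bf Syn}$. The identification $\Sub_{\GC^{\rm Cat}_{\rm Ex}(\cat)} \cong \GC_{\rm Ex}(\Sub_\cat)$, which follows from ${\bf Syn}(\Sub_\mathcal{G}) \simeq \mathcal{G}$ and the full faithfulness of $\Sub_{(-)}$, is the technical ingredient that makes the transport well-defined. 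Once this is in place, lax-idempotency of each $\GC^{\rm Cat}_A$ follows formally from lax-idempotency of its doctrinal counterpart.
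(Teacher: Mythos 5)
Your proposal is correct and takes essentially the same route as the paper: the paper likewise defines each $\GC^{\rm Cat}_A$ as the composite ${\bf Syn} \circ \GC_A \circ \Sub_{(-)}$ and obtains the quasi 2-adjunction by the identical chain of hom-equivalences through ${\bf GeomDoc}_{\rm cart}$, using full faithfulness of $\Sub_{(-)}$. Your transport argument for lax-idempotency via the Lack--Kelly criterion and the 2-full faithfulness of $\Sub_{(-)}$ supplies a detail the paper's proof leaves implicit, and is a sound way to fill it in (modulo the routine care needed because these are quasi rather than strict 2-adjunctions).
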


\begin{proof}
We will only spell out the proof for \ref{coro:freefcforcat:enum:cart}, the other quasi 2-adjoints being constructed in a similar fashion.  We define $\GC_{\Free}^{\rm Cat} \colon {\bf Cart} \to {\bf GeomCat}$ as the composite ${\bf Syn} \circ \GC_{\Free} \circ \Sub_{(-)}$, as in the diagram 
\[\begin{tikzcd}
{{\bf PrimDoc}} && {{\bf GeomDoc}_{\rm cart}} \\
\\
{{\bf Cart}} && {{\bf GeomCat}.}
\arrow[""{name=0, anchor=center, inner sep=0}, shift left=3, hook', from=3-3, to=3-1]
\arrow[""{name=1, anchor=center, inner sep=0}, "{\GC^{\rm Cat}_{\Free}}", shift left=3, from=3-1, to=3-3]
\arrow[""{name=2, anchor=center, inner sep=0}, shift left=3, hook', from=1-3, to=1-1]
\arrow[""{name=3, anchor=center, inner sep=0}, "{\GC_{\Free}}", shift left=3, from=1-1, to=1-3]
\arrow["{{\rm Sub}_{(-)}}", from=3-1, to=1-1]
\arrow[""{name=4, anchor=center, inner sep=0}, "{{\rm Sub}_{(-)}}"', shift right=3, from=3-3, to=1-3]
\arrow[""{name=5, anchor=center, inner sep=0}, "{{\bf Syn}}"', shift right=3, from=1-3, to=3-3]
\arrow["\dashv"{anchor=center, rotate=-90}, draw=none, from=1, to=0]
\arrow["\dashv"{anchor=center}, draw=none, from=5, to=4]
\arrow["\dashv"{anchor=center, rotate=-90}, draw=none, from=3, to=2]
\end{tikzcd}\]
The required natural equivalence of categories, for $\cat \in {\bf Cart}$ and $\mathcal{G} \in {\bf GeomCat}$,
\[\Hom_{\bf Cart}(\cat,\mathcal{G}) \simeq \Hom_{\bf GeomCat}({\bf Syn} (\GC_{\Free} (\Sub_{\cat})),\mathcal{G})\]
follows by the chain of equivalences
\begin{align*}
\Hom_{\bf Cart}(\cat,\mathcal{G}) & \simeq \Hom_{\bf PrimDoc}(\Sub_\cat,\Sub_{\mathcal{G}}), \\
& \simeq \Hom_{\bf GeomDoc_{\rm cart}}(\GC_{\Free}(\Sub_\cat),\Sub_{\mathcal{G}}), \\
& \simeq \Hom_{\bf GeomCat}({\Syn}(\GC_{\Free}(\Sub_\cat)),{\mathcal{G}}),
\end{align*}
where we have used that $\Sub_{(-)}$ is full and faithful.
\end{proof}

\section{Sub-geometric completions}\label{sec:subgeo}

Having hinted at the existence of sub-geometric completions throughout, we finally turn to their systematic treatment.  The term sub-geometric completion is intended to convey the following vague sense: a completion $TP$ of a doctrine $P$ is `sub-geometric' if the data added by $T$ can be `seen' by a certain Grothendieck topology $J^T$ on the category $\cat \rtimes TP$, and has the property that $\GC(P,J_{\rm triv}) \cong \GC(TP,J^T)$ -- i.e. freely geometrically completing $P$ is the same as completing $P$ according to $T$, keeping track of this new information by $J^T$, and then geometrically completing.  We have already observed this phenomenon in \S \ref{subsec:altcont} with the free top completion, and we will see further examples below.  It is this vague notion of `sub-geometricity' that we seek to formalise in this section.

We proceed as follows.  Immediately below in \S \ref{freeandex} we present another motivating example for the theory of sub-geometric completions: we demonstrate that the existential completion of a primary doctrine due to Trotta \cite{trotta} satisfies our vague understanding of sub-geometricity as stated above.  We use this, and our study of the free top completion in \S \ref{subsec:altcont}, as intuition when introducing the formal definition of a sub-geometric completion in \S \ref{subsec:gensubgeo}.  We also discuss sufficient conditions under which a sub-geometric completion can automatically be deduced to be lax-idempotent.

In the remaining two subsections \S \ref{subsec:compatible} and \S \ref{subsec:exsubgeo}, we discuss several examples of sub-geometric completions.  In the former, we discuss sub-geometric completions obtained by considering special `compatible' subdoctrines of the free geometric completion.  In this way, we recover the existential completion as well as the coherent completion for primary doctrines.  We also relate these completions of primary doctrines to the corresponding \emph{regular completion} and \emph{coherent completion} of cartesian categories (see \cite{carboni}).  Finally, in the latter subsection, we give examples of `point-wise' sub-geometric completions.


\subsection{The free geometric completion and the existential completion}\label{freeandex}

We begin by explicitly describing the free geometric completion $\GC_{\Free}(P)$ of a primary doctrine $P \in {\bf PrimDoc}$ as defined in Example \ref{coarseexs}\ref{coarseexfree}.  This is the geometric doctrine $\GC(P,J_{\rm triv}) \colon \cat^{op} \to \Frm_{\rm open}$ and thus, by Construction \ref{constrgc}, can be described in the following way.
\begin{enumerate}
\item For each object $c$ of $\cat$, an element $S$ of $\GC_{\Free}(P)(c)$ is a set of pairs $(f,x)$, where $d \xrightarrow{f} c$ is an arrow of $\cat$ and $x \in P(d)$, such that if $(f,x) \in S$, for each arrow $e \xrightarrow{g} d$ of $\cat$ and $y \in P(e)$, if $y \leqslant P(g)(x)$ then $(f \circ g,y) \in S$ too.  We order $\GC_{\Free}(P)(c)$ by inclusion.
\item For each arrow $d \xrightarrow{f} c$ of $\cat$, $\GC_{\Free}(P)(f) \colon\GC_{\Free}(P)(c) \to\GC_{\Free}(P)(d)$ sends $S \in \GC_{\Free}(P)(c)$ to
\[f^\ast(S) = \{\,(g,y) \mid (f \circ g,y) \in S\,\} \in \GC_{\Free}(P)(d).\]
\end{enumerate}
The description of the free geometric completion $\GC_{\Free}(P)$ given above is markedly similar to the \emph{existential completion} of a primary doctrine established in \parencite[\S 4]{trotta}, which we recall below.  We will be able to relate the two: the free geometric completion of a primary doctrine can be computed as the existential completion followed by the existential geometric completion (see Example \ref{coarseexs}\ref{coarseexreg}).



\paragraph{The existential completion.}

Recall from \cite{trotta} that the existential completion of a primary doctrine $P \colon \cat^{op} \to {\bf MSLat}$ is the functor $P^\exists \colon \cat^{op} \to {\bf MSLat}$ defined as follows.
\begin{enumerate}
\item Let $c$ be an object of $\cat$.  Consider the set whose elements are pairs $(f,x)$ where $d \xrightarrow{f} c$ is an arrow of $\cat$ and $x \in P(d)$.  We order this set by setting $(g,y) \leqslant (f,x)$ if there is an arrow $e \xrightarrow{h} d$, making the triangle commute
\[\begin{tikzcd}
e \ar{d}{h} \ar{rd}{g} & \\
d \ar{r}{f} & c,
\end{tikzcd}\]
such that $y \leqslant P(h)(x)$.  Define $P^\exists(c)$ as the poset obtained when we identify two elements such that $(f,x) \leqslant (g,y)$ and $(g,y) \leqslant (f,x)$.  Just as in \cite{trotta}, we will abuse notation and not differentiate between the pair $(f,x)$ and its equivalence class.
\item Given an arrow $e \xrightarrow{g} c$ of $\cat$, the map $P^\exists(g) \colon P^\exists(c) \to P^\exists(e)$ sends an element $(f,x) \in P^\exists(c)$ to $(k, P(h)(x)) \in P^\exists(e)$, where
\[\begin{tikzcd}
e \times_c d \ar{r}{k} \ar{d}{h} & e \ar{d}{g} \\
d \ar{r}{f} & c
\end{tikzcd}\]
is a pullback square in $\cat$.
\end{enumerate}
This is the `existential completion' of $P$ in following sense.
\begin{enumerate}
\item For each arrow $e \xrightarrow{g} c$ of $\cat$, the map $P^\exists(g) \colon P^\exists(c) \to P^\exists(e)$ has a left adjoint $\exists_{P^\exists(g)}$ that sends $(f,x) \in P^\exists(e)$ to $(g \circ f,x) \in P^\exists(d)$.  With these left adjoints, the doctrine $P^\exists$ satisfies the Frobenius and Beck-Chevalley conditions (see \parencite[Proposition 4.2 \& Theorem 4.3]{trotta}).
\item There is a natural transformation $\iota_P \colon P \to P^\exists$ given by sending $x \in P(c)$ to $(\id_c,x) \in P^\exists(c)$ (see \parencite[Proposition 4.10]{trotta}).
\item Given an existential doctrine $Q \colon \dcat^{op} \to {\bf MSLat}$, for each left exact functor $F \colon \cat \to \dcat$ and natural transformation $\alpha \colon P \to Q \circ F^{op}$ there exists a unique natural transformation $\alpha^\exists \colon P^\exists \to Q$ such that:
\begin{enumerate}
\item the triangle 
\[\begin{tikzcd}
P \ar{rd}[']{\alpha} \ar{r}{{\iota_P}}& P^\exists  \ar{d}{{\alpha^\exists}} \\
& Q \circ F^{op}
\end{tikzcd}\]
commutes,
\item for each arrow $e \xrightarrow{g} c$ of $\cat$, the square
\[\begin{tikzcd}
P^\exists(c) \ar{d}{{\alpha^\exists}_c} & P^\exists(e) \ar{l}[']{\exists_{P^\exists(g)}} \ar{d}{{\alpha^\exists}_e} \\
Q(F(c)) & \ar{l}[']{\exists_{Q(F(g))}} Q(F(e))
\end{tikzcd}\]
commutes (see \parencite[Theorem 4.14]{trotta}).
\end{enumerate}

\end{enumerate}
The construction presented here is slightly simplified to that found in \cite{trotta}.  Namely, we have added a left adjoint $\exists_{P^\exists(g)}$ to $P^\exists(g)$ for all arrows $e \xrightarrow{g} c$ of $\cat$, whereas in \cite{trotta} a generalised construction is given that freely adds a left adjoint $\exists_{P^\exists(g)}$ to $P^\exists(g)$ for arrows in a chosen class $\Lambda$ of morphisms of $\cat$ closed under pullbacks and compositions and containing all identities.  In \parencite[Proposition 4.9]{trotta}, it is shown that the existential completion defines a 2-functor
\[(-)^\exists \colon {\bf PrimDoc} \to {\bf ExDoc}.\]

We can now observe that the existential completion satisfies our loose notion of `sub-geometricity'.
\begin{prop}\label{prop:existsissubgeo}
For each primary doctrine $P \colon \cat^{op} \to {\bf MSLat}$, there is a natural isomorphism
\[\GC_{\Free}(P) \cong \GC_{\rm Ex}(P^\exists) = \GC(P^\exists,J_{\rm Ex}).\]
\end{prop}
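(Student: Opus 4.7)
The plan is to identify both sides as representing the same (2-)functor on $\mathbf{GeomDoc}_{\mathrm{cart}}$, and conclude by the 2-Yoneda lemma.

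First, I would use the universal property of the geometric completion (Theorem \ref{thm:univprop}) applied to the doctrinal site $(P, J_{\mathrm{triv}})$: for any geometric doctrine $\Lb \colon \dcat^{op} \to \Frm_{\mathrm{open}}$, it yields an equivalence
\[
\Hom_{\mathbf{GeomDoc}}(\GC_{\Free}(P), \Lb) \simeq \Hom_{\mathbf{DocSites}}((P, J_{\mathrm{triv}}), (\Lb, K_\Lb)).
\]
Because the source carries the trivial topology, cover-preservation imposes no restriction; since $P$ is a primary doctrine and $\cat$ is cartesian, a morphism of doctrinal sites out of $(P,J_{\mathrm{triv}})$ consists of a left-exact functor $F\colon \cat \to \dcat$ and a natural transformation $a\colon P \to \Lb \circ F^{op}$ whose components are meet-semilattice homomorphisms, i.e.\ exactly a morphism of primary doctrines. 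Hence the right-hand side reduces to $\Hom_{\mathbf{PrimDoc}}(P, \Lb)$, where $\Lb$ is viewed as a primary doctrine via the 2-embedding $\mathbf{GeomDoc}_{\mathrm{cart}} \hookrightarrow \mathbf{PrimDoc}$ from Example~\ref{coarseexs}\ref{coarseexfree}.

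Next, I would apply the universal property of the geometric completion a second time to the existential doctrinal site $(P^\exists, J_{\mathrm{Ex}})$, obtaining
\[
\Hom_{\mathbf{GeomDoc}}(\GC_{\mathrm{Ex}}(P^\exists), \Lb) \simeq \Hom_{\mathbf{DocSites}}((P^\exists, J_{\mathrm{Ex}}), (\Lb, K_\Lb)).
\]
By construction of the existential topology $J_{\mathrm{Ex}}$ (Definition~\ref{df:existentialtopology}, cf.\ Proposition~\ref{prop:exandcohmodels}\ref{exmodelsi}), the site-morphism condition forces preservation of the left adjoints $\exists_{P^\exists(f)}$, so the right-hand side is precisely $\Hom_{\mathbf{ExDoc}}(P^\exists, \Lb)$. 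I then invoke Trotta's universal property of the existential completion (\parencite[Theorem~4.14]{trotta}), which gives the natural equivalence $\Hom_{\mathbf{ExDoc}}(P^\exists, \Lb) \simeq \Hom_{\mathbf{PrimDoc}}(P, \Lb)$ via precomposition with $\iota_P$.

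Chaining these three equivalences produces a natural equivalence $\Hom_{\mathbf{GeomDoc}}(\GC_{\Free}(P), \Lb) \simeq \Hom_{\mathbf{GeomDoc}}(\GC_{\mathrm{Ex}}(P^\exists), \Lb)$, 2-natural in $\Lb \in \mathbf{GeomDoc}_{\mathrm{cart}}$, and the 2-Yoneda lemma then delivers the claimed isomorphism, naturally in $P$. Equivalently, one may phrase this as a composition of 2-adjunctions: the existential completion $(-)^\exists \dashv \iota$ of \cite{trotta} composed with the existential geometric completion $\GC_{\mathrm{Ex}} \dashv \iota'$ of Example~\ref{coarseexs}\ref{coarseexreg} produces a left 2-adjoint $\GC_{\mathrm{Ex}} \circ (-)^\exists$ to the evident 2-embedding $\mathbf{GeomDoc}_{\mathrm{cart}} \hookrightarrow \mathbf{PrimDoc}$, and this right adjoint coincides with the right adjoint of $\GC_{\Free}$ from Example~\ref{coarseexs}\ref{coarseexfree}, forcing the two left adjoints to agree up to canonical isomorphism.

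The main obstacle is the verification underpinning the first step: that a morphism of doctrinal sites $(P,J_{\mathrm{triv}}) \to (\Lb, K_\Lb)$ is the same data as a morphism of primary doctrines $P \to \Lb$. Cover preservation is trivial, but conditions (iii) and (iv) of Definition~\ref{morphismofsites} for $F \rtimes a$ must still be checked; these reduce, using that $\cat$ is cartesian with $F$ left-exact and that $K_\Lb$ is the locale-of-internal-sheaves topology on $\dcat \rtimes \Lb$, to routine but slightly delicate calculations with the covers generated by joins $V = \bigvee_i \exists_{f_i} U_i$. Once this identification is made, the rest of the argument is a formal consequence of representability.
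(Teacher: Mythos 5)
Your argument is correct in substance but takes a genuinely different route from the paper. The paper's proof is a one-line direct identification: by Proposition \ref{prop:geocompforexsites}, $\GC_{\rm Ex}(P^\exists)(c)$ is the frame of $J_{\rm triv}$-ideals, i.e.\ down-sets, of $P^\exists(c)$, and a down-set of $P^\exists(c)$ is \emph{literally} the same data as an element $S \in \GC_{\Free}(P)(c)$ as described in Construction \ref{constrgc} (a set of pairs $(f,x)$ closed under precomposition and restriction). That proof is shorter and needs no universal properties, but it is purely fibrewise and leaves naturality in $P$ implicit. Your representability argument is more conceptual: it exhibits both sides as values of left 2-adjoints to the same forgetful 2-functor ${\bf GeomDoc}_{\rm cart} \to {\bf PrimDoc}$, so naturality in $P$ and compatibility with the units come for free, at the cost of leaning on Theorem \ref{thm:coarsecompl} and Trotta's universal property. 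One refinement: the step you flag as the main obstacle --- identifying $\Hom_{\bf DocSites}((P,J_{\rm triv}),(\Lb,K_\Lb))$ with $\Hom_{\bf PrimDoc}(P,\Lb)$ --- is best not verified by hand, since the delicate direction is not the one you name (checking conditions (iii) and (iv) of Definition \ref{morphismofsites} for a given primary doctrine morphism) but the converse: showing that \emph{every} morphism of doctrinal sites into $(\Lb,K_\Lb)$ has meet-preserving components $a_c$, which does not follow obviously from the site axioms against the non-subcanonical topology $K_\Lb$. You can sidestep this entirely by quoting the already-established strict 2-adjunctions of Theorem \ref{thm:coarsecompl} as instantiated in Examples \ref{coarseexs}\ref{coarseexfree} and \ref{coarseexs}\ref{coarseexreg}, which give $\Hom_{\bf GeomDoc}(\GC_{\Free}(P),\Lb) \cong \Hom_{\bf PrimDoc}(P,\Lb)$ and $\Hom_{\bf GeomDoc}(\GC_{\rm Ex}(P^\exists),\Lb) \cong \Hom_{\bf ExDoc}(P^\exists,\Lb)$ directly; combined with the strictness of these isomorphisms of hom-categories, the Yoneda step then really does deliver an isomorphism (not merely an equivalence) of geometric doctrines over $\cat$, as the statement requires.
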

\begin{proof}
This is immediate since the data of a down-set of $P^\exists(c)$, i.e. an element of $\GC_{\rm Ex}(P^\exists)$ by Proposition \ref{prop:geocompforexsites}, is precisely the data of an element $S \in \GC_{\Free}(P)(c)$.
\end{proof}







\subsection{Generalised sub-geometric completions}\label{subsec:gensubgeo}

We now develop an abstract framework which captures the notion of a \emph{sub-geometric completion}.  We also give sufficient conditions under which a sub-geometric completion is automatically lax-idempotent.  In the latter subsections \S \ref{subsec:compatible} and \ref{subsec:exsubgeo}, we will demonstrate that the examples of sub-geometric completions we have encountered so far satisfy this generalised definition.

\begin{df}\label{df:subgeo}{\rm
Let $A\text{-}{\bf Doc}$ be a 2-full 2-subcategory of ${\bf Doc}$ (an object of $A\text{-}{\bf Doc}$ will be called an $A$-doctrine, and an arrow of $A\text{-}{\bf Doc}$ a morphism of $A$-doctrines) such that the image of the 2-functor
\[\begin{tikzcd}
A\text{-}{\bf Doc}_{\rm flat} \ar[hook]{r} & {\bf Doc}_{\rm flat} \ar{r}{\GC_{\Free}} & {\bf GeomDoc} \subseteq {\bf Doc}_{\rm flat}
\end{tikzcd}\]
is contained in $A\text{-}{\bf Doc}$, as is the unit $\eta^{(P,J_{\rm triv})} \colon P \to \GC_{\Free}(P)$ for each $A$-doctrine $P \in A\text{-}{\bf Doc}$, where here $A\text{-}{\bf Doc}_{\rm flat}$ represents the 2-full 2-subcategory of $\ADoc$ whose objects are $A$-doctrines and whose 1-cells are $A$-doctrine morphisms that are also flat.  A 2-monad $(T,\varepsilon,\nu)$ on the category $A\text{-}{\bf Doc}$, thought of as a completion of $A$-doctrines, is said to be \emph{sub-geometric} if it satisfies the following conditions.
\begin{enumerate}
\item\label{df:subgeo:enum:algebra} For each $A$-doctrine $P \colon \cat^{op} \to \PreOrd$ in $ A\text{-}{\bf Doc}$, there exists a choice of $A$-doctrine morphism
\[\xi_P \colon T\GC_{\Free}(P) \to \GC_{\Free}(P) \]
such that $(\GC_{\Free}(P),\xi_P)$ defines a $T$-algebra.
\item\label{df:subgeo:topologystuff} For each $A$-doctrine $P \colon \cat^{op} \to \PreOrd$, there exists a Grothendieck topology $J_{P}^T$ on the category $\dcat \rtimes TP$ such that:
\begin{enumerate}

	\item\label{subgeo:unitismorph} the unit $\varepsilon^P \colon P \to TP$ of the monad yields a morphism of doctrinal sites
	\[ \varepsilon^P \colon (P,J_{\rm triv}) \to (TP, J^T_P),\]

	\item\label{subgeo:algismorph} for each $A$-doctrine $P \colon \cat^{op} \to \PreOrd$, the $A$-doctrine morphism 
	\[\xi_P \colon T\GC_{\Free}(P) \to \GC_{\Free}(P)\]
	from above yields a morphism of doctrinal sites
	\[ \xi_P \colon (T\GC_{\Free}(P),J_{\GC_{\Free}(P)}^T) \to (\GC_{\Free}(P),K_{\GC_{\Free}(P)}), \]
	
	\item\label{subgeo:morphismorph} and the mapping that sends an $A$-doctrine $P \colon \cat^{op} \to \PreOrd$ to the doctrinal site $(TP,J^T)$ can be made functorial, i.e. each morphism of $A$-doctrines $\theta \colon P \to Q$ yields a morphism of doctrinal sites
	\[ T\theta \colon (TP,J_P^T) \to (TQ,J_Q^T).\]
	Thus there exists a functor $A\text{-}{\bf Doc} \to {\bf DocSites}$ that acts on objects by $P \mapsto (TP,J_P^T)$ (we label this functor by $J^T$).  In fact, since two morphisms $T\theta, T\theta' \colon TP\rightrightarrows TQ $ share the same 2-cells in both ${\bf Doc}$ and ${\bf DocSites}$, $J^T$ can be taken as a 2-functor.
\end{enumerate}
\end{enumerate}
}\end{df}


Condition \ref{df:subgeo:enum:algebra} of Definition \ref{df:subgeo} expresses that the completion $T$ of $A$-doctrines completes an $A$-doctrine $P$ to some fragment of the data of a geometric doctrine.  Evidently, if $\GC_{\Free}(P)$ already possesses the structure which $T$ is freely adding, then $\GC_{\Free}(P)$ is a $T$-algebra.  Condition \ref{df:subgeo:topologystuff} expresses that the added data can be `seen' by a choice of Grothendieck topology.

In Definition \ref{df:subgeo}, we also made the distinction between the category $A\text{-}{\bf Doc}$, on which the monad of the sub-geometric completion $(T,\varepsilon,\nu)$ acts, and the category $A\text{-}{\bf Doc}_{\rm flat}$.  This pedantry is necessary to include as examples all the completions we would expect to be sub-geometric.  For example, the free top completion does not induce a monad on ${\bf Doc}_{\rm flat}$: for a preorder $P$, the inclusion $P \hookrightarrow P \oplus \top = P^\top $ of $P$ into its free top completion, i.e. the unit of the completion, does not induce a morphism of doctrinal sites $(P,J_{\rm triv}) \to (P^\top,J_{\rm triv})$, but it does induce a morphism of doctrinal sites $(P,J_{\rm triv}) \to (P^\top,J^\top_{\rm triv})$ (see Lemma \ref{addingtopeq}).


\begin{thm}\label{thm:subgeo}
For each sub-geometric completion $(T,\varepsilon,\nu)$ on a 2-subcategory $A\text{-}{\bf Doc} \subseteq {\bf Doc}$, the square
\[\begin{tikzcd}
A\text{-}{\bf Doc}_{\rm flat} \ar[hook]{r} \ar{d}[']{J^T} & {\bf Doc}_{\rm flat} \ar{d}{\GC_{\Free}} \\
{\bf DocSites} \ar{r}{\GC} & {\bf GeomDoc}
\end{tikzcd}\]
commutes up to 2-natural isomorphism.  In particular, for each $A$-doctrine $P \colon \cat^{op} \to \PreOrd$, there is an isomorphism $\GC_{\Free}(P) \cong \GC(T(P),J_P^T)$.
\end{thm}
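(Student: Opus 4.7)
The plan is to exhibit, for each $P \in A\text{-}{\bf Doc}_{\rm flat}$, a pair of mutually inverse morphisms $\alpha_P \colon \GC_{\Free}(P) \to \GC(TP, J_P^T)$ and $\beta_P \colon \GC(TP, J_P^T) \to \GC_{\Free}(P)$ in ${\bf GeomDoc}$; the 2-naturality of the resulting isomorphism will then follow from the 2-functoriality of $\GC$ and the definitions. I set $\alpha_P := \GC(\varepsilon^P)$, which is well-defined because condition \ref{subgeo:unitismorph} of Definition \ref{df:subgeo} makes $\varepsilon^P \colon (P, J_{\rm triv}) \to (TP, J_P^T)$ a morphism of doctrinal sites. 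By Theorem \ref{thm:univprop}, $\alpha_P$ is the unique morphism of geometric doctrines with $\alpha_P \circ \eta^{(P, J_{\rm triv})} = \eta^{(TP, J_P^T)} \circ \varepsilon^P$.

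For $\beta_P$, I use the $T$-algebra structure $\xi_P \colon T\GC_{\Free}(P) \to \GC_{\Free}(P)$ of condition \ref{df:subgeo:enum:algebra} to form $\overline{\eta}_P := \xi_P \circ T\eta^{(P, J_{\rm triv})}$. Conditions \ref{subgeo:morphismorph} and \ref{subgeo:algismorph} together make $\overline{\eta}_P$ a morphism of doctrinal sites $(TP, J_P^T) \to (\GC_{\Free}(P), K_{\GC_{\Free}(P)})$, so the universal property of $\GC(TP, J_P^T)$ supplies the unique morphism $\beta_P$ of geometric doctrines with $\beta_P \circ \eta^{(TP, J_P^T)} = \overline{\eta}_P$. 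The identity $\beta_P \circ \alpha_P = \id_{\GC_{\Free}(P)}$ is then verified by precomposing with $\eta^{(P, J_{\rm triv})}$: naturality of $\varepsilon$ gives $T\eta^{(P, J_{\rm triv})} \circ \varepsilon^P = \varepsilon^{\GC_{\Free}(P)} \circ \eta^{(P, J_{\rm triv})}$, and the $T$-algebra axiom $\xi_P \circ \varepsilon^{\GC_{\Free}(P)} = \id$ collapses the left-hand side to $\eta^{(P, J_{\rm triv})}$, whence the universal property of $\GC_{\Free}(P)$ forces $\beta_P \circ \alpha_P = \id$.

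The main obstacle is to prove the reverse identity $\alpha_P \circ \beta_P = \id_{\GC(TP, J_P^T)}$. My plan is to exploit the idempotency of the 2-monad $\GC$ established in Theorem \ref{thm:univprop}. Applying $\GC$ to the identity $\overline{\eta}_P \circ \varepsilon^P = \eta^{(P, J_{\rm triv})}$ yields $\GC(\overline{\eta}_P) \circ \alpha_P = \GC(\eta^{(P, J_{\rm triv})})$, and the latter, being the unit $\eta^{(\GC_{\Free}(P), K_{\GC_{\Free}(P)})}$ at the $\GC$-algebra $\GC_{\Free}(P)$, is an isomorphism whose inverse I denote $\mu_{(P, J_{\rm triv})}$. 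The defining property of $\beta_P$ together with the naturality square of the $\GC$-unit identifies $\beta_P = \mu_{(P, J_{\rm triv})} \circ \GC(\overline{\eta}_P)$, and naturality of the $\GC$-multiplication $\mu$ then rewrites $\alpha_P \circ \beta_P$ as $\mu_{(TP, J_P^T)} \circ \GC(\alpha_P \circ \overline{\eta}_P)$. The concluding step is to identify $\alpha_P \circ \overline{\eta}_P$ with $\eta^{(TP, J_P^T)}$: both are morphisms of doctrinal sites $(TP, J_P^T) \to (\GC(TP, J_P^T), K_{\GC(TP, J_P^T)})$ that agree after precomposition with $\varepsilon^P$, and their equality is forced by the universal property of $TP$ as the free $T$-algebra on $P$, together with the functoriality of $J^T$ (condition \ref{subgeo:morphismorph}), which ensures that the agreement on $P$ propagates along the free extension to an agreement on all of $TP$. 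Substituting into the previous expression yields $\alpha_P \circ \beta_P = \mu_{(TP, J_P^T)} \circ \mu_{(TP, J_P^T)}^{-1} = \id$, completing the proof.
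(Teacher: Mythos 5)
Your construction of the two comparison maps and your verification of $\beta_P \circ \alpha_P = \id_{\GC_{\Free}(P)}$ coincide with the paper's argument: $\alpha_P = \GC(\varepsilon^P)$, your $\beta_P$ is the paper's $\Xi_P$ induced by $\overline{\eta}_P = \xi_P \circ T\eta^{(P,J_{\rm triv})}$, and the identity $\overline{\eta}_P \circ \varepsilon^P = \eta^{(P,J_{\rm triv})}$ (naturality of $\varepsilon$ plus the $T$-algebra unit law) together with the uniqueness clause of Theorem \ref{thm:univprop} at $(P,J_{\rm triv})$ gives $\beta_P \circ \alpha_P = \id$. Your detour through the idempotency of $\GC$ for the other composite is formally correct as far as it goes, but it only relocates the difficulty: after the manipulations with $\mu$, the whole burden rests on the single identity $\alpha_P \circ \overline{\eta}_P = \eta^{(TP,J_P^T)}$, which is exactly what one needs anyway in order to apply the uniqueness clause of Theorem \ref{thm:univprop} to the two endomorphisms $\alpha_P\circ\beta_P$ and $\id$ of $\GC(TP,J_P^T)$ (by (\ref{eq:unitiscovering}), a morphism of geometric doctrines out of $\GC(TP,J_P^T)$ over the identity base functor is determined by its restriction along $\eta^{(TP,J_P^T)}$). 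So the detour buys nothing, and everything hinges on how that identity is justified.

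That justification does not hold up. You claim $\alpha_P\circ\overline{\eta}_P = \eta^{(TP,J_P^T)}$ because the two maps agree after precomposition with $\varepsilon^P$ and because $TP$ is the free $T$-algebra on $P$. The free-algebra universal property only lets you cancel $\varepsilon^P$ against $T$-algebra morphisms into a $T$-algebra; here the common codomain $\GC(TP,J_P^T)$ is not known to carry a $T$-algebra structure at this stage (that it does is essentially the content of the theorem, via its identification with $\GC_{\Free}(P)$), neither of the two maps is exhibited as a $T$-algebra morphism, and in any case both live in ${\bf DocSites}$ rather than in $A\text{-}{\bf Doc}$, so the universal property of $TP$ does not apply to them at all. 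Nor is $\varepsilon^P$ epic in any useful sense — for the free join completion, for instance, $\varepsilon^P_c \colon P(c) \to 2^{P(c)^{op}}$ is very far from surjective — and condition \ref{subgeo:morphismorph} (functoriality of $J^T$) says nothing about this. To close the argument you must actually prove $\GC(\varepsilon^P)\circ\xi_P\circ T\eta^{(P,J_{\rm triv})} = \eta^{(TP,J_P^T)}$, i.e.\ that the two sides agree on each $t \in TP(c)$, using the hypotheses \ref{subgeo:unitismorph}--\ref{subgeo:morphismorph} on the topologies $J^T$; as written, this step is a genuine gap.
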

\begin{proof}
The 2-natural isomorphism $\GC \circ J^T \cong \GC_{\Free}$ has components given by $\GC(\varepsilon^P)$ -- that is the arrow
\[\begin{tikzcd}
(P,J_{\rm triv}) \ar{rr}{\eta^{(P,J_{\rm triv})}} \ar{d}{\varepsilon^P} && (\GC_{\Free}(P),K_{\GC_{\Free}(P)}) \ar[dashed]{d}{\GC(\varepsilon^P)} \\
(TP,J_P^T) \ar{rr}{\eta^{(TP,J_P^T)}} && (\GC(TP,J_P^T),K_{\GC(TP,J_P^T)})
\end{tikzcd}\]
as induced by the universal property of the geometric completion.  By the 2-naturality of $\varepsilon$, it is trivial to see that the arrows $\GC(\varepsilon^P)$ are the components of a 2-natural transformation.

It remains to show that $\GC(\varepsilon^P)$ is an isomorphism for each $A$-doctrine $P \in A\text{-}{\bf Doc}$.  We exploit the universal property of the geometric completion to construct an inverse.  Consider the diagram
\begin{equation}\label{bigdiag:subgeo}
\begin{tikzcd}
&& {(\GC_{\Free}(P),K_{\GC_{\Free}(P)})} \\
{(P,J_{\rm triv})} && {(TP,J_P^T)} && {(\GC(TP,J_P^T),K_{\GC(TP,J_P^T)})} \\
\\
{(TP,J_P^T)} && {(T\GC_{\Free}(P),J_{\GC_{\Free}(P)}^T)} \\
\\
{(\GC(TP,J_P^T),K_{\GC(TP,J_P^T)})} && {(\GC_{\Free}(P),K_{\GC_{\Free}(P)})},
\arrow["{\varepsilon^P}", from=2-1, to=2-3]
\arrow["{\varepsilon^P}"', from=2-1, to=4-1]
\arrow["{T\eta^{(P,J_{\rm triv})}}", from=2-3, to=4-3]
\arrow["{\xi_P}", from=4-3, to=6-3]
\arrow["{\eta^{(TP,J_P^T)}}", from=2-3, to=2-5]
\arrow["{\Xi_P}", curve={height=-30pt}, dashed, from=2-5, to=6-3]
\arrow["{\eta^{(TP,J_P^T)}}"', from=4-1, to=6-1]
\arrow["{\GC(\varepsilon^P)}"', from=6-3, to=6-1]
\arrow["{\eta^{(P,J_{\rm triv})}}", curve={height=-12pt}, from=2-1, to=1-3]
\arrow["{\GC(\varepsilon^P)}", curve={height=-18pt}, from=1-3, to=2-5]
\arrow["{\eta^{(P,J_{\rm triv})}}"', from=2-1, to=6-3]
\end{tikzcd}
\end{equation}
where the arrow $\Xi_P \colon {(\GC(TP,J_P^T),K_{\GC(TP,J_P^T)})} \to {(\GC_{\Free}(P),K_{\GC_{\Free}(P)})}$ is induced by the universal property of the geometric completion.  We claim that the diagram (\ref{bigdiag:subgeo}) commutes -- it suffices to only check that the triangle
\begin{equation}\label{eq:bigtirangle}\begin{tikzcd}
{(P,J_{\rm triv})} && {(TP,J_P^T)} \\
\\
&& {(T\GC_{\Free}(P),J_{\GC_{\Free}(P)}^T)} \\
\\
&& {(\GC_{\Free}(P),K_{\GC_{\Free}(P)})}
\arrow["{\varepsilon^P}", from=1-1, to=1-3]
\arrow["{T\eta^{(P,J_{\rm triv})}}", from=1-3, to=3-3]
\arrow["{\xi_P}", from=3-3, to=5-3]
\arrow["{\eta^{(P,J_{\rm triv})}}"', from=1-1, to=5-3]
\end{tikzcd}\end{equation}
commutes (the other subdiagrams follow by definition).  The triangle (\ref{eq:bigtirangle}) commutes since
\begin{align*}
\xi_P \circ T\eta^{(P,J_{\rm triv})} \circ \varepsilon^P & = \xi_P \circ \varepsilon^{\GC_{\Free}(P)} \circ \eta^{(P,J_{\rm triv})} & \text{since $\varepsilon$ is natural,} \\
& = \eta^{(P,J_{\rm triv})} & \text{since $(\GC_{\Free}(P),\xi_P)$ is a $T$-algebra.}
\end{align*}
Therefore, by the universal property of the geometric completion, we obtain the desired equations:
\[\GC(\varepsilon^P) \circ \Xi_P = \id_{\GC(TP,J_P^T)}, \ \ \ \Xi_P \circ \GC(\varepsilon^P) = \id_{\GC_{\Free}(P)}.\]
\end{proof}

\begin{rem}\label{rem:subgeoalt}
{\rm 
We saw in \S \ref{subsec:altcont} thatfor each doctrine $P \colon \cat^{op} \to \PreOrd$ and Grothendieck topology $J$ on $\cat \rtimes P$, there exists a Grothendieck topology $J^\top$ on the category $\cat \rtimes P^\top$, where $P^\top$ is the free (preserved) top completion, such that $\GC(P,J) \cong \GC(P^\top,J^\top)$.  It is not hard to see that the notion of sub-geometric completion and the result of Theorem \ref{thm:subgeo} can be extended to encompass 2-subcategories $A\text{-}{\bf Doc} \subseteq {\bf DocSites}$ in addition to 2-subcategories $A\text{-}{\bf Doc} \subseteq {\bf Doc}$ as currently presented.  We present the modified result below.

Let $A \text{-}{\bf Doc}$ be a 2-full 2-subcategory of ${\bf DocSites}$ endowed with a 2-monad $(T,\varepsilon,\nu)$.  By ${\bf GeomDoc}_A$ denote the image of the composite
\[\begin{tikzcd}
A\text{-}{\bf Doc} \ar[hook]{r} & {\bf DocSites} \ar{r}{\GC} & {\bf GeomDoc}.
\end{tikzcd}\]
Suppose that, for each $(P,J) \in A \text{-}{\bf Doc}$, there exists a choice $J^A_{(P,J)}$ of Grothendieck topology on the category $\cat \rtimes \GC(P,J)$ such that $(\GC(P,J),J^A_{(P,J)}) \in A\text{-}{\bf Doc}$ and $\GC(P,J)$ also satisfies the following properties.
\begin{enumerate}
\item The choice of topology $J^T_{(P,J)}$ is 2-functorial, i.e. the function that sends $\GC(P,J) \in {\bf GeomDoc}_A$ to $(\GC(P,J),J^A_{(P,J)}) \in A\text{-}{\bf Doc}$ can be extended to a 2-functor $J^T \colon {\bf GeomDoc}_A \to A\text{-}{\bf Doc}$.
\item For each $(P,J) \in A\text{-}{\bf Doc}$, there is a morphism
\[\xi_{(P,J)} \colon T(\GC(P,J),J^T_{(P,J)}) \to (\GC(P,J),J^T_{(P,J)})\]
of $A\text{-}{\bf Doc}$ for which $((\GC(P,J),J^T_{(P,J)}),\xi_{(P,J)})$ is a $T$-algebra and, moreover, the underlying functor and natural transformation pair of $\xi_{(P,J)}$ define a morphism of doctrines
\[\xi_{(P,J)} \colon T(\GC(P,J),J^T_{(P,J)}) \to (\GC(P,J),K_{\GC(P,J)}) .\]
\end{enumerate}
Then the square
\[\begin{tikzcd}
A \text{-}{\bf Doc} \ar{d}{T} \ar[hook]{rr} && {\bf DocSites} \ar{d}{\GC} \\
A \text{-}{\bf Doc} \ar[hook]{r} & {\bf DocSites} \ar{r}{\GC} & {\bf GeomDoc}
\end{tikzcd}\]
commutes up to natural isomorphism.
}
\end{rem}


\paragraph{When are sub-geometric completions lax-idempotent?}

We saw in Corollary \ref{coro:coarseislax} that the free geometric completion $\GC_{\Free}$ is lax-idempotent.  We may wonder if this infers that any sub-geometric completion is also lax-idempotent.  The inference holds, under some further assumptions.

\begin{prop}\label{prop:subgeoislax}
Let $(T,\varepsilon,\nu)$ be a sub-geometric completion acting on $A\text{-}{\bf Doc}$, such that:
\begin{enumerate}
\item for each $A$-doctrine $P$, the natural transformation $\eta^{(P,J^T_P)} \colon TP \to \GC(TP,J_P^T)$ is point-wise injective,
\item and, for each $A$-doctrine $P$, the multiplication of the free geometric completion
\[\mu^P \colon \GC_{\Free}\GC_{\Free}(P) \to \GC_{\Free}(P)\]
yields a morphism $(\GC_{\Free}\GC_{\Free}(P),\xi_{\GC_{\Free}(P)}) \to (\GC_{\Free}(P),\xi_P)$ of $T$-algebras,
\end{enumerate}
then $(T,\varepsilon,\nu)$ is lax-idempotent.
\end{prop}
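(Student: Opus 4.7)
The plan is to invoke the algebraic characterisation of lax-idempotency from \cite[Theorem 6.2]{lackkelly}: $(T,\varepsilon,\nu)$ is lax-idempotent precisely when, for every pair of strict $T$-algebras $(P,\xi),(Q,\zeta)$ in $\ADoc$ and every morphism $(F,a) \colon P \to Q$ of $A$-doctrines, there is a \emph{unique} 2-cell $\alpha \colon \zeta \circ T(F,a) \Rightarrow (F,a) \circ \xi$ making $((F,a),\alpha)$ a lax morphism of $T$-algebras. Uniqueness is automatic: whiskering any candidate $\alpha \colon F \Rightarrow F$ with the unit $\varepsilon^P$, which has identity underlying functor, returns $\alpha$ unchanged, so the unit coherence axiom \eqref{cohdiag2} combined with $\xi \circ \varepsilon^P = \id_P$ forces $\alpha = \id_F$. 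The substance of the proof is therefore verifying that $\alpha = \id_F$ is itself a valid 2-cell, i.e.\ that the componentwise inequality
\begin{equation*}
\zeta_{F(c)} \circ T(F,a)_c(x) \;\leqslant\; a_c \circ \xi_c(x) \qquad (c \in \cat,\ x \in TP(c)) \tag{$\ast$}
\end{equation*}
holds.

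For $(\ast)$, I would first use Theorem \ref{thm:subgeo} to identify $\GC(TP,J_P^T) \cong \GC_{\Free}(P)$ via the canonical isomorphism $\Xi_P$, and study the composite $\kappa_P := \Xi_P \circ \eta^{(TP,J_P^T)} \colon TP \to \GC_{\Free}(P)$ (and analogously $\kappa_Q$ for $Q$); hypothesis (i) supplies pointwise injectivity of $\kappa_P$ and $\kappa_Q$. Unwinding the universal construction of $\Xi_P$ from the proof of Theorem \ref{thm:subgeo} gives the explicit formula $\kappa_P = \xi_P \circ T\eta^{(P,J_{\rm triv})}$, and the monad and algebra axioms combined with this formula yield $\xi_P \circ T\kappa_P = \kappa_P \circ \nu^P$ --- i.e.\ $\kappa_P$ is a strict morphism of $T$-algebras $(TP,\nu^P) \to (\GC_{\Free}(P),\xi_P)$ --- while naturality forces $\kappa_Q \circ T(F,a) = \GC_{\Free}(F,a) \circ \kappa_P$, where $\GC_{\Free}(F,a) = (F,\mathfrak{a})$.

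Now the lax-idempotency of $\GC_{\Free}$ (Corollary \ref{coro:coarseislax}) applied to the $\GC_{\Free}$-algebras $(\GC_{\Free}(P),\mu^P)$ and $(\GC_{\Free}(Q),\mu^Q)$ supplies the canonical lax inequality $\mu^Q \circ \GC_{\Free}(F,\mathfrak{a}) \leqslant (F,\mathfrak{a}) \circ \mu^P$, and hypothesis (ii) --- that $\mu^P$ is a morphism of $T$-algebras --- provides the compatibility needed to transport this inequality through the $T$-algebra structure maps $\xi_P, \xi_Q$. A diagram chase using the algebra axioms, the naturality of $\eta^{(-,J_{\rm triv})}$ and the identity $\kappa = \xi \circ T\eta^{(-,J_{\rm triv})}$ then reduces it to the inequality $\kappa_Q \circ \zeta \circ T(F,a) \leqslant \kappa_Q \circ (F,a) \circ \xi$ in $\GC_{\Free}(Q)$, from which $(\ast)$ follows by reflecting along $\kappa_Q$ via hypothesis (i). The coherence axioms \eqref{cohdiag1}, \eqref{cohdiag2} for $\alpha = \id_F$ then hold automatically. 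The principal obstacle is this transfer step: the $\GC_{\Free}$-lax inequality lives at the level of morphisms out of $\GC_{\Free}(P)$ whereas $(\ast)$ concerns morphisms out of $TP$, and assumption (ii) is exactly the compatibility required to align $\mu^P$ with the $T$-algebra structure so that the inequality can be pre-composed with $T\eta^{(P,J_{\rm triv})}$ and collapsed via the algebra axioms, while hypothesis (i) --- to be used as an order-reflecting embedding in the preorder sense --- is what allows the resulting $\GC_{\Free}(Q)$-inequality to descend back to $Q$.
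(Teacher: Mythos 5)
Your uniqueness argument is fine (it is the same whiskering-with-$\varepsilon^P$ observation the paper uses at the end of Corollary \ref{coro:coarseislax}), but the existence half has a genuine gap. Appealing to \cite[Theorem 6.2]{lackkelly} commits you to verifying the lax inequality $\zeta \circ T(F,a) \leqslant (F,a)\circ \xi$ for \emph{arbitrary} strict $T$-algebras $(P,\xi)$ and $(Q,\zeta)$, whereas the two hypotheses of the proposition only give you a handle on \emph{free} algebras (via the pointwise-injective unit $\eta^{(TP,J^T_P)}$ into $\GC(TP,J^T_P)\cong \GC_{\Free}(P)$) and on the canonical algebras $(\GC_{\Free}(P),\xi_P)$. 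An arbitrary structure map $\zeta \colon TQ \to Q$ carries no assumed compatibility with the geometric completion: the only equations available are $\zeta\circ\varepsilon^Q=\id_Q$ and $\zeta\circ\nu^Q=\zeta\circ T\zeta$, and neither relates $\zeta$ to the order or join structure of $\GC_{\Free}(Q)$. This is exactly the point where the coarse case (Corollary \ref{coro:coarseislax}) succeeds and the sub-geometric case cannot imitate it: there, Lemmas \ref{GCAalglem1} and \ref{GCAalglem2} exploit the explicit description of $\GC_A(P)$ to show that \emph{any} algebra structure map preserves joins, but $TP$ for a general sub-geometric $T$ has no such description. Your ``diagram chase'' is the step that would have to bridge this, and as written it does not typecheck: $\kappa_Q$ has domain $TQ$ while $\zeta\circ T(F,a)$ lands in $Q$, so the displayed reduction $\kappa_Q\circ\zeta\circ T(F,a)\leqslant \kappa_Q\circ (F,a)\circ\xi$ is not a composable string, and replacing $\kappa_Q$ by $\eta^{(Q,J_{\rm triv})}$ (which is what does compose, and is in any case order-reflecting without hypothesis (i)) still leaves you needing the very inequality you are trying to prove.

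The paper sidesteps arbitrary algebras entirely by verifying the unit/multiplication form of lax-idempotency instead: it constructs the comparison 2-cell $\lambda_P \colon T\varepsilon^P \to \varepsilon^{TP}$ between the two maps $TP \rightrightarrows TTP$ by \emph{lifting} the corresponding 2-cell $\GC_{\Free}(\eta^{(P,J_{\rm triv})}) \to \eta^{(\GC_{\Free}(P),J_{\rm triv})}$ of the lax-idempotent monad $\GC_{\Free}$ through the fully faithful hom-functor of Remark \ref{rem:fullfaithfulunit} (this is where hypothesis (i) enters), and then checks the two triangle identities by faithfulness; hypothesis (ii) is used precisely to make the square comparing $\nu^P$ with $\mu^P$ commute so that $\nu^P \ast \lambda_P = \id$. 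Since that characterisation only ever mentions $TP$ and $TTP$, the free-algebra information in the hypotheses suffices. If you want to keep your route, you should prove the unit/multiplication form first and then deduce the algebra-level statement from the Kelly--Lack equivalence, rather than attempting to verify the algebra-level statement directly.
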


\begin{proof}

Recall that a 2-monad $(\tau,e,m)$ on $\dcat$ is lax-idempotent if, for each $d \in \dcat$, there is a 2-cell 
\[\lambda_d \colon \tau e_d \to e_{\tau d},\] natural in $d$, such that the two composites
\[\begin{tikzcd}
d & {\tau d} && {\tau\tau d,} && {\tau d} && {\tau \tau d} & {\tau_c}
\arrow["{e_d}", from=1-1, to=1-2]
\arrow[""{name=0, anchor=center, inner sep=0}, "{\tau e_d}", curve={height=-12pt}, from=1-2, to=1-4]
\arrow[""{name=1, anchor=center, inner sep=0}, "{e_{\tau d}}"', curve={height=12pt}, from=1-2, to=1-4]
\arrow[""{name=2, anchor=center, inner sep=0}, "{\tau e_d}", curve={height=-12pt}, from=1-6, to=1-8]
\arrow[""{name=3, anchor=center, inner sep=0}, "{e_{\tau d}}"', curve={height=12pt}, from=1-6, to=1-8]
\arrow["{m_d}", from=1-8, to=1-9]
\arrow["{\lambda_d}", shorten <=3pt, shorten >=3pt, Rightarrow, from=0, to=1]
\arrow["{\lambda_d}", shorten <=3pt, shorten >=3pt, Rightarrow, from=2, to=3]
\end{tikzcd}\]
are the identity 2-cells.

Since for each $A$-doctrine $P$, the natural transformation $\eta^{(P,J^T_P)} \colon TP \to \GC(TP,J_P^T)$ is point-wise injective, by Remark \ref{rem:fullfaithfulunit} the functor
\[
\Hom_{\bf DocSites}((TP,J^T_P),(TTP,J^T_{TP})) \to \Hom_{{\bf GeomDoc}}(\GC(TP,J^T_P),\GC(\GC(TP,J^T_P),J^T_{\GC(TP,J^T_P)}))
\]
induced by $\eta^{(TP,J^T_P)}$ is full and faithful.  Hence, so too is the functor
\begin{equation}\label{diag:inducedff}
\Hom_{{\bf DocSites}}((TP,J^T_P),(TTP,J^T_{TP})) \to \Hom_{\bf GeomDoc}(\GC_{\Free}(P),\GC_{\Free}\GC_{\Free}(P))
\end{equation}
induced by the composite $\Xi_P \circ \eta^{(TP,J^T_P)}$ (where $\Xi_P$ is the inverse to $\GC(\varepsilon^P)$ constructed in Theorem \ref{thm:subgeo}).  We will write $\Theta_P$ for the composite $\Xi_P \circ \eta^{(TP,J^T_P)}$.  Therefore, since $\GC_{\Free}$ is lax-idempotent, the corresponding 2-cell 
\[\lambda_P \colon \GC_{\Free}(\eta^{(P,J_{\rm triv})}) \to \eta^{(\GC_{\Free}(P),J_{\rm triv})}\]
lifts, as (\ref{diag:inducedff}) is full, to a 2-cell $\lambda_P \colon T\varepsilon^P \to \varepsilon^{TP}$ (this 2-cell is of course labelled by $\id_\cat$, where $P$ is fibred over the category $\cat$ -- but that should not be confused with it being the identity 2-cell).

Note that, by definition, for each $A$-doctrine $P$ the diagram
\[
\begin{tikzcd}
& TP \ar{d}{\eta^{(TP,J^T_P)}} \\
P \ar[bend left]{ru}{\varepsilon^P} \ar[bend right]{rd}[']{\eta^{(P,J_{\rm triv})}} & \GC(TP,J^T_P) \ar[shift left]{d}{\Xi_P} \\
& \GC_{\Free}(P) \ar[shift left]{u}{\GC_{\Free}(\varepsilon^P)}
\end{tikzcd}
\]
commutes.  Therefore, the diagram
\[\begin{tikzcd}
P && TP && TTP \\
\\
P && {\GC_{\Free}(P)} && {\GC_{\Free}\GC_{\Free}(P),}
\arrow["{\varepsilon^P}", from=1-1, to=1-3]
\arrow[""{name=0, anchor=center, inner sep=0}, "{T\varepsilon^P}", curve={height=-12pt}, from=1-3, to=1-5]
\arrow[""{name=1, anchor=center, inner sep=0}, "{\varepsilon^{TP}}"', curve={height=12pt}, from=1-3, to=1-5]
\arrow["{\eta^{(P,J_{\rm triv})}}"', from=3-1, to=3-3]
\arrow[equals, from=1-1, to=3-1]
\arrow["\Theta_P"', from=1-3, to=3-3]
\arrow["\Theta_{\GC_{\Free}(P)} \circ T\Theta_P", from=1-5, to=3-5]
\arrow[""{name=2, anchor=center, inner sep=0}, "{\GC_{\Free}\left( \eta^{(P,J_{\rm triv})}\right)}", curve={height=-12pt}, from=3-3, to=3-5]
\arrow[""{name=3, anchor=center, inner sep=0}, "{\eta^{(\GC_{\Free}(P),J_{\rm triv})}}"', curve={height=12pt}, from=3-3, to=3-5]
\arrow["{\lambda_P}", shorten <=3pt, shorten >=3pt, Rightarrow, from=0, to=1]
\arrow["{\lambda_P}", shorten <=3pt, shorten >=3pt, Rightarrow, from=2, to=3]
\end{tikzcd}\]
also commutes.  Since 
\[\Hom_{\bf DocSites}((P,J_{\rm triv}),(TTP,J^T_{TP})) \to \Hom_{\bf DocSites}((P,J_{\rm triv}),(\GC_{\Free}\GC_{\Free}(P),K_{\GC_{\Free}\GC_{\Free}(P)}))\]
is faithful, again by Remark \ref{rem:fullfaithfulunit}, we conclude that $\lambda_P \ast \varepsilon^P $ is indeed the identity 2-cell.

We exploit a symmteric argument to conclude that $\nu^P \ast \lambda_P$ is also the idenitity 2-cell.  We claim that the diagram
\begin{equation}\label{eq:2ndliftingdiag}\begin{tikzcd}
TP && TTP && TP \\
\\
{\GC_{\Free}(P)} && {\GC_{\Free}\GC_{\Free}(P)} && {\GC_{\Free}(P)}
\arrow[""{name=0, anchor=center, inner sep=0}, "{T\varepsilon^P}", curve={height=-12pt}, from=1-1, to=1-3]
\arrow[""{name=1, anchor=center, inner sep=0}, "{\varepsilon^{TP}}"', curve={height=12pt}, from=1-1, to=1-3]
\arrow["{\Theta_P}"', from=1-1, to=3-1]
\arrow["{\Theta_{\GC_{\Free}(P)} \circ T\Theta_P}", from=1-3, to=3-3]
\arrow[""{name=2, anchor=center, inner sep=0}, "{\GC_{\Free}\left( \eta^{(P,J_{\rm triv})}\right)}", curve={height=-12pt}, from=3-1, to=3-3]
\arrow[""{name=3, anchor=center, inner sep=0}, "{\eta^{(\GC_{\Free}(P),J_{\rm triv})}}"', curve={height=12pt}, from=3-1, to=3-3]
\arrow["{\nu^P}", from=1-3, to=1-5]
\arrow["{\Theta_P}", from=1-5, to=3-5]
\arrow["{\mu^P}", from=3-3, to=3-5]
\arrow["{\lambda_P}", shorten <=3pt, shorten >=3pt, Rightarrow, from=0, to=1]
\arrow["{\lambda_P}", shorten <=3pt, shorten >=3pt, Rightarrow, from=2, to=3]
\end{tikzcd}\end{equation}
also commutes.  Using that
\[\Hom_{\bf DocSites}((TP,J^T_P),(TP,J^T_P)) \to \Hom_{\bf GeomDoc}(\GC_{\Free}(P),\GC_{\Free}(P))\]
is faithful, again by Remark \ref{rem:fullfaithfulunit}, we conclude that $\nu^P \ast \lambda_P$ is the identity 2-cell as desired.

However, demonstrating the commutativity of (\ref{eq:2ndliftingdiag}), that is the commutativity of the required square
\begin{equation}\label{square:mult}
\begin{tikzcd}
TTP \ar{r}{\nu^P} \ar{d}[']{{\Theta_{\GC_{\Free}(P)} \circ T\Theta_P}} & TP \ar{d}{\Theta_P} \\
\GC_{\Free} \GC_{\Free}(P) \ar{r}{\mu^P} & \GC_{\Free}(P),
\end{tikzcd}
\end{equation}
is more involved.  First, recall from Theorem \ref{thm:subgeo} that, for each $A$-doctrine $P$, the square
\begin{equation}\label{square:Xi}
\begin{tikzcd}
TP \ar{rr}{\eta^{(TP,J^T_P)}} \ar{d}{T\eta^{(P,J_{\rm triv})}} && \GC(TP,J^T_P) \ar{d}{\Xi_P} \\
T\GC_{\Free} \ar{rr}{\xi_P} && \GC_{\Free}(P)
\end{tikzcd}
\end{equation}
commutes.  We can show that the square (\ref{square:mult}) commutes by decomposing it as
\[\begin{tikzcd}[ampersand replacement=\&]
TTP \&\&\&\&\&\& TP \\
\\
{T\GC(TP,J^T_P)} \&\& {TT\GC_{\Free}(P)} \&\& {T\GC_{\Free}(P)} \&\& {\GC(TP,J^T_P)} \\
\\
{T\GC_{\Free}(P)} \&\&\&\&\&\& {\GC_{\Free}(P)} \\
\\
{\GC(T\GC_{\Free}(P),J_{\GC_{\Free}(P)}^T)} \&\& {T\GC_{\Free}\GC_{\Free}(P)} \&\& {T\GC_{\Free}(P)} \\
\\
{\GC_{\Free}\GC_{\Free}(P)} \&\&\&\&\&\& {\GC_{\Free}(P).}
\arrow[""{name=0, anchor=center, inner sep=0}, "{\nu^P}", from=1-1, to=1-7]
\arrow[""{name=1, anchor=center, inner sep=0}, "{\mu^P}", from=9-1, to=9-7]
\arrow["{TT\eta^{(P,J_{\rm triv})}}", from=1-1, to=3-3]
\arrow[""{name=2, anchor=center, inner sep=0}, "{\nu^{\GC_{\Free}(P)}}", from=3-3, to=3-5]
\arrow["{T\eta^{(P,J_{\rm triv})}}"', from=1-7, to=3-5]
\arrow["{T\eta^{(TP,J^T_P)}}"', from=1-1, to=3-1]
\arrow["{T\Xi_p}"', from=3-1, to=5-1]
\arrow["{T\xi_P}", from=3-3, to=5-1]
\arrow[""{name=3, anchor=center, inner sep=0}, "{\xi_P}", from=5-1, to=5-7]
\arrow["{\xi_P}"', from=3-5, to=5-7]
\arrow["{\eta^{(TP,J^T_P)}}", from=1-7, to=3-7]
\arrow["{\Xi_P}", from=3-7, to=5-7]
\arrow[""{name=4, anchor=center, inner sep=0}, equals, from=5-7, to=9-7]
\arrow["{\xi_P}"', from=7-5, to=9-7]
\arrow[""{name=5, anchor=center, inner sep=0}, "{T\mu^P}", from=7-3, to=7-5]
\arrow["{\xi_{\GC_{\Free}(P)}}", from=7-3, to=9-1]
\arrow["{T\eta^{(\GC_{\Free}(P),J_{\rm triv})}}", from=5-1, to=7-3]
\arrow["{\eta^{(T\GC_{\Free}(P),J^T_{\GC_{\Free}(P)})}}"', from=5-1, to=7-1]
\arrow["{\Xi_{\GC_{\Free}(P)}}"', from=7-1, to=9-1]
\arrow["{{\text{\textcircled{\raisebox{-.5pt}{1}}}}}"{description}, draw=none, from=3-5, to=3-7]
\arrow["{{\text{\textcircled{\raisebox{-.5pt}{2}}}}}"{description}, draw=none, from=7-1, to=7-3]
\arrow["{{\text{\textcircled{\raisebox{-.5pt}{3}}}}}"{description}, draw=none, from=3-1, to=3-3]
\arrow["{{\text{\textcircled{\raisebox{-.5pt}{4}}}}}", draw=none, from=0, to=2]
\arrow["{{\text{\textcircled{\raisebox{-.5pt}{5}}}}}", draw=none, from=2, to=3]
\arrow["{{\text{\textcircled{\raisebox{-.5pt}{6}}}}}", draw=none, from=5, to=1]
\end{tikzcd}\]
The squares ${\text{\textcircled{\raisebox{-.5pt}{1}}}}$ and ${\text{\textcircled{\raisebox{-.5pt}{2}}}}$ commute by (\ref{square:Xi}), and the square ${\text{\textcircled{\raisebox{-.5pt}{3}}}}$ is just $T$ applied to (\ref{square:Xi}).  The square ${\text{\textcircled{\raisebox{-.5pt}{4}}}}$ commutes by the naturality of $\nu \colon TT \to T$.  The square ${\text{\textcircled{\raisebox{-.5pt}{5}}}}$ commutes since $(\GC_{\Free}(P),\xi_P)$ is a $T$-algebra.  The square ${\text{\textcircled{\raisebox{-.5pt}{6}}}}$ commutes by the assumption that $\mu^P$ yields a morphism $(\GC_{\Free}\GC_{\Free}(P),\xi_{\GC_{\Free}(P)}) \to (\GC_{\Free}(P),\xi_P)$ of $T$-algebras.  Finally, the remaining equation to check, that $\xi_P \circ T\mu^P \circ T \eta^{(\GC_{\Free}(P),J_{\rm triv})} = \xi_P$, follows from the unit law for $(\GC_{\Free},\eta,\mu)$:
\[\mu^P \circ \eta^{(\GC_{\Free}(P),J_{\rm triv})} = \id_{\GC_{\Free}(P)}.\]

\end{proof}


\subsection{Sub-geometric completions via subdoctrines}\label{subsec:compatible}

Let $P \colon \cat^{op} \to {\bf MSLat}$ be a primary doctrine.  Another formulation of Proposition \ref{prop:existsissubgeo} is that $P^\exists(c)$ can be recovered as the subset of $\GC_{\Free}(P)(c)$ of elements of the form $\exists_f \eta^{(P,J_{\rm triv})}_d(x)$, where $d \xrightarrow{f} c$ is an arrow of $\cat$ and $x \in P(d)$.  Furthermore, the elements $\exists_f \eta^{(P,J_{\rm triv})}_d(x) \in \GC_{\Free}(P)(c)$ can be characterised as the \emph{supercompact} objects of the site $(\cat \rtimes \GC_{\Free}(P),K_{\GC_{\Free}(P)})$.

\begin{lem}\label{lem:supercompactelements}
An element $S \in \GC_{\Free}(P)(c)$ is of the form $\exists_f \eta^{(P,J_{\rm triv})}_d(x)$ if and only if $(c,S)$ is \emph{supercompact}, i.e. every $K_{\GC_{\Free}(P)}$-cover of $(c,S)$ contains a singleton subcover.
\end{lem}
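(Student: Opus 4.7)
My plan is to argue directly from the explicit description of $\GC_{\Free}(P) = \GC(P, J_{\rm triv})$ given in Construction \ref{constrgc}. Since the topology is trivial, no closure is involved: $\GC_{\Free}(P)(c)$ is just the poset of down-sets of pairs $(f, x)$ ordered by inclusion, and joins are unions. A routine unravelling shows that, for $T \in \GC_{\Free}(P)(d)$ and $f \colon d \to c$, the left adjoint to pullback is computed by
\[\exists_{\GC_{\Free}(P)(f)}(T) = \{\, (f \circ g, y) \mid (g, y) \in T \,\},\]
and in particular $\exists_f \eta_d^{(P, J_{\rm triv})}(x) = \{\, (f \circ g, y) \mid g \colon e \to d, \, y \leqslant P(g)(x) \,\}$. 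These two formulas are the only computational input needed.

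For the forward direction, suppose $S = \exists_f \eta_d^{(P, J_{\rm triv})}(x)$ and let $\{(d_i, S_i) \xrightarrow{f_i} (c, S) \mid i \in I\}$ be any $K_{\GC_{\Free}(P)}$-cover, so that $S = \bigcup_{i \in I} \exists_{f_i} S_i$. Since $(f, x) \in S$, the displayed formula for $\exists_{f_i} S_i$ forces some $i \in I$ and some $h \colon d \to d_i$ with $f = f_i \circ h$ and $(h, x) \in S_i$. Down-closure of $S_i$ then yields $\exists_h \eta_d^{(P, J_{\rm triv})}(x) \subseteq S_i$, and hence
\[S = \exists_{f} \eta_d^{(P, J_{\rm triv})}(x) = \exists_{f_i} \exists_h \eta_d^{(P, J_{\rm triv})}(x) \subseteq \exists_{f_i} S_i \subseteq S,\]
so the singleton $\{(d_i, S_i) \xrightarrow{f_i} (c, S)\}$ is already $K_{\GC_{\Free}(P)}$-covering.

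For the converse, I would apply supercompactness to the canonical covering family
\[\left\{\, (d, \eta_d^{(P, J_{\rm triv})}(x)) \xrightarrow{f} (c, S) \,\mid\, (f, x) \in S \,\right\},\]
which is $K_{\GC_{\Free}(P)}$-covering by equation (\ref{eq:unitiscovering}). Supercompactness then produces a single pair $(f, x) \in S$ for which the singleton is itself a cover, i.e.\ $\exists_f \eta_d^{(P, J_{\rm triv})}(x) = S$, which is exactly the form required. No step is really an obstacle: the only subtlety is ensuring joins in $\GC_{\Free}(P)(c)$ are genuine unions (so that membership of $(f,x)$ in $S = \bigvee_i \exists_{f_i} S_i$ really is membership in some $\exists_{f_i} S_i$), and this is precisely the reason we are working with the trivial topology on $\cat \rtimes P$.
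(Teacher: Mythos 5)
Your proof is correct and follows essentially the same route as the paper's: the converse direction uses the canonical cover by units $\eta_d^{(P,J_{\rm triv})}(x)$ indexed by $(f,x)\in S$, and the forward direction uses that joins in $\GC_{\Free}(P)(c)$ are unions of down-sets, so $(f,x)\in\bigcup_i\exists_{f_i}S_i$ forces $\exists_{f_i}S_i=\exists_f\eta_d^{(P,J_{\rm triv})}(x)$ for some $i$. The only difference is that you make the formula for $\exists_{\GC_{\Free}(P)(f)}$ explicit, which the paper leaves implicit.
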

\begin{proof}
Firstly, if $S$ is supercompact then, since $(c,S)$ admits the $K_{\GC_{\Free}(P)}$-cover
\[\left\{\,(d,\eta^{(P,J_{\rm triv})}_d(x)) \xrightarrow{f} (c,S) \,\middle\vert\, (f,x) \in S\,\right\},\]
$S$ must be equal to $\exists_f \eta^{(P,J_{\rm triv})}_d(x)$ for some $(f,x) \in S$.

The object $(c,\exists_f \eta^{(P,J_{\rm triv})}_d(x))$ is supercompact since if 
\[\left\{\,(e_i,T_i) \xrightarrow{g_i} (c,\exists_f \eta^{(P,J_{\rm triv})}_d(x)) \,\middle\vert\, i \in I\,\right\}\]
is a $K_{\GC_{\Free}(P)}$-cover, then $\bigcup \exists_{g_i} T_i = \exists_f \eta^{(P,J_{\rm triv})}_d(x)$, and so $(f,x) \in \exists_{g_{i'}} T_{i'} $ for some $i' \in I$.  Therefore, ${\exists_{g_{i'}} T_{i'} = \exists_f \eta^{(P,J_{\rm triv})}_d(x)}$ and so the singleton arrow $\{\,(e_{i'},T_{i'}) \xrightarrow{g_{i'}} (c, \exists_f \eta^{(P,J_{\rm triv})}_d(x))\,\}$ is a $K_{\GC_{\Free}(P)}$-cover.
\end{proof}

In this subsection we study completions of doctrines obtained in an analogous fashion by taking certain subdoctrines of the free geometric completion.  We will formulate a general theory for such completions obtained via subdoctrines, and demonstrate that they are sub-geometric in the sense of Definition \ref{df:subgeo}, thus providing a broad class of examples of sub-geometric completions.  Moreover, we show that the induced 2-monads are all lax-idempotent.

Among the examples of sub-geometric completions we obtain in this way is the existential completion $T^\exists \colon {\bf PrimDoc} \to {\bf PrimDoc}$ established in \cite{trotta}.  We will also obtain a lax-idempotent \emph{free coherent completion} for primary doctrines.  Finally, we will relate the existential and coherent completions thus obtained to the regular and coherent completions of cartesian categories.


\paragraph{Compatible subcompletions}

We first develop our general theory for completions of doctrines obtained via subdoctrines of the free geometric completion.  We call these \emph{compatible subcompletions} in analogy with the terminology `compatible properties' used in the topos-theoretic study of Stone type dualities given in \cite[\S 3]{stonetype}.  Given a doctrine $Q \colon \cat^{op} \to \PreOrd$, by a \emph{subdoctrine} of $Q$ we mean a doctrine $Q' \colon \cat^{op} \to \PreOrd$, also indexed over $\cat$, and a natural transformation $Q' \hookrightarrow Q$ for which every component is a subset inclusion $Q'(c) \subseteq Q(c)$.

For this subsection, in every doctrinal site $(P,J)$ we encounter, the topology $J$ is taken to be the trivial topology $J_{\rm triv}$.  Therefore, we abbreviate our notation and write $\eta^P$ for $\eta^{(P,J_{\rm triv})}$, $\mu^P$ for $\mu^{(P,J_{\rm triv})}$, etc.

\begin{df}
{\rm
Let $A\text{-}{\bf Doc}$ be a 2-full 2-subcategory of ${\bf Doc}_{\rm flat}$ that contains the image of the functor
\[\begin{tikzcd}
A\text{-}{\bf Doc} \ar[hook]{r} & {\bf Doc}_{\rm flat} \ar{r}{\GC_{\Free}} & {\bf GeomDoc} \subseteq {\bf Doc}_{\rm flat},
\end{tikzcd}\]
as well as the unit $\eta^P \colon P \to \GC_{\Free}(P)$ for each $A$-doctrine $P \in A\text{-}{\bf Doc}$.  A choice of a subdoctrine 
\[H^P \colon TP \hookrightarrow \GC_{\Free}(P),\]
for each $A$-doctrine $P$, is said to be $A$-\emph{compatible} if the following conditions are satisfied.

\begin{enumerate}
\item For each $A$-doctrine $P \colon \cat^{op} \to \PreOrd$, $TP$ is a subdoctrine $TP \hookrightarrow \GC_{\Free}(P)$ that contains the image of the unit $\eta^P$, i.e. there is a factorisation
\[
\begin{tikzcd}
	P \ar[bend left]{rr}[pos = 0.55]{\eta^P} \ar{r}{\varepsilon^P} & TP \ar[hook]{r}{H^P} & \GC_{\Free}(P),
\end{tikzcd}
\]
such that the factoring morphism $\varepsilon^P \colon P \to TP$ is a morphism of $A$-doctrines.
\item The choice is natural in the sense that, for each morphism of $A$-doctrines $(F,a) \colon P \to Q$, the induced morphism of geometric doctrines $\GC_{\Free}(F,a) \colon \GC_{\Free}(P) \to \GC_{\Free}(Q)$ restricts to $TP \to TQ$, as in the diagram
\begin{equation}\label{eq:compatibleonarrows}
	\begin{tikzcd}
		P \ar{d}{(F,a)} \ar{r}{\varepsilon^P} & TP \ar[hook]{r}{H^P} \ar[dashed]{d} & \GC_{\Free}(P) \ar{d}{\GC_{\Free}(F,a)}\\
		Q \ar{r}{\varepsilon^Q} & TQ \ar[hook]{r}{H^Q} & \GC_{\Free}(Q),
	\end{tikzcd}
\end{equation}
and moreover the restriction $TP \to TQ$ a morphism of $A$-doctrines (i.e. we obtain an (1-)endofunctor $T \colon A\text{-}{\bf Doc} \to A\text{-}{\bf Doc}$).
\item For each $P \in A\text{-}{\bf Doc}$, the subdoctrine $H^P \colon TP \hookrightarrow \GC_{\Free}(P)$ is `compatible' with the multiplication $\mu^P \colon \GC_{\Free}\GC_{\Free}(P) \to \GC_{\Free}(P)$ in the sense that the composite
\begin{equation}\label{eq:combatiblewithmult}
	\begin{tikzcd}
		TTP \ar{r}{TH^P} & T\GC_{\Free}(P) \ar{rr}{H^{\GC_{\Free}(P)}}  & & \GC_{\Free}\GC_{\Free}(P) \ar{r}{\mu^P} & \GC_{\Free}(P)
	\end{tikzcd}
\end{equation}
factors through the subdoctrine $H^P \colon TP \hookrightarrow \GC_{\Free}(P)$, and this factorisation $\nu^P \colon TTP \to TP$ is a morphism of $A$-doctrines.
\end{enumerate}

}
\end{df}

\begin{ex}\label{ex:compatiblesubcomp}
{\rm  
There are two basic examples to keep in mind for motivating our development.  In both cases, the 2-category $A\text{-}{\bf Doc}$ is taken to be the 2-category of primary doctrines ${\bf PrimDoc}$.
\begin{enumerate}
\item The first example has been encountered already.  For each primary doctrine $P$, taking $T^\exists P \hookrightarrow \GC_{\Free}(P)$ as the subdoctrine on supercompact elements is ${\bf PrimDoc}$-compatible.  While not every morphism of geometric doctrines $(G,b) \colon \GC_{\Free}(P) \to \GC_{\Free}(Q)$ sends a supercompact element $S \in \GC_{\Free}(P)(c)$ to a supercompact element $b_c(S) \in \GC_{\Free}(Q)(G(c))$, this is however true for morphisms of the form $\GC_{\Free}(F,a)$, where $(F,a) \colon P \to Q$ is a morphism of primary doctrines.

An element of $TTP(c)$ is of the form 
\[\exists_{\GC_{\Free}\GC_{\Free}(P)(g)} \eta_e^{\GC_{\Free}(P)}\left( \exists_{\GC_{\Free}(P)(f)} \eta_d^{P}(x)\right),\]
for a composable pair of arrows $d \xrightarrow{f} e, e \xrightarrow{g} c \in \cat$ and an element $x \in P(d)$.  One can calculate that
\[\mu_e^P\left(\exists_{\GC_{\Free}\GC_{\Free}(P)(g)} \eta_e^{\GC_{\Free}(P)}\left( \exists_{\GC_{\Free}(P)(f)} \eta_d^{P}(x)\right)\right) = \exists_{\GC_{\Free}(P)(g \circ f)} \eta_d^P(x).\]
Thus, $\mu^P$ restricts to a morphism $\nu^P \colon TTP \to TP$.  The other required conditions on $T^\exists$ are easily checked.

\item Now consider taking $T^{\rm Coh}P$ to the be the subdoctrine of $\GC_{\Free}(P)$ on \emph{compact elements}, i.e. $T^{\rm Coh}P(c)$ are those elements $S \in \GC_{\Free}(P)(c)$ such that every $K_{\GC_{\Free}(P)}$-cover of $(c,S)$ has a finite subcover.  Checking that this choice of subdoctrine of $\GC_{\Free}(P)$ satisfies the required conditions is analogous to the case for $T^\exists$.

\end{enumerate}

}
\end{ex}

The (1-)endofunctor $T \colon \ADoc \to \ADoc$ is 2-functorial.  Every 2-cell $\alpha \colon (F,a) \to (F',a')$ (i.e. a suitable natural transformation $\alpha \colon F \to F'$) between $A$-doctrine morphisms $(F,a), (F',a') \colon P \rightrightarrows Q$ yields a 2-cell
\[\begin{tikzcd}
{\GC_{\Free}(P)} && {\GC_{\Free}(Q)}
\arrow[""{name=0, anchor=center, inner sep=0}, "{\GC_{\Free}(F,a)}", curve={height=-18pt}, from=1-1, to=1-3]
\arrow[""{name=1, anchor=center, inner sep=0}, "{\GC_{\Free}(F',a')}"', curve={height=18pt}, from=1-1, to=1-3]
\arrow["\alpha", shorten <=5pt, shorten >=5pt, Rightarrow, from=0, to=1]
\end{tikzcd}\]
since $\GC_{\Free}$ is 2-functorial.  Therefore, $\alpha$ also defines a 2-cell between the restrictions to the subdoctrines
\[\begin{tikzcd}
TP && TQ.
\arrow[""{name=0, anchor=center, inner sep=0}, "{T(F,a)}", curve={height=-18pt}, from=1-1, to=1-3]
\arrow[""{name=1, anchor=center, inner sep=0}, "{T(F',a')}"', curve={height=18pt}, from=1-1, to=1-3]
\arrow["\alpha", shorten <=5pt, shorten >=5pt, Rightarrow, from=0, to=1]
\end{tikzcd}\]
We note also that, since the triple $(T,\varepsilon,\nu)$ is a restriction of the 2-monad $(\GC_{\Free},\eta,\mu)$, the monad equations for $(T,\varepsilon,\nu)$ follow automatically.

\begin{lem}
The triple $(T,\varepsilon,\nu)$ is a 2-monad on $A\text{-}{\bf Doc}$.
\end{lem}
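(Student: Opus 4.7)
The plan is to deduce the 2-monad axioms for $(T,\varepsilon,\nu)$ from the corresponding axioms for the 2-monad $(\GC_{\Free},\eta,\mu)$, using that each $H^P \colon TP \hookrightarrow \GC_{\Free}(P)$ is a subdoctrine inclusion (hence its components are injective, so equalities in $TP(c)$ may be verified after applying $H^P_c$). The paragraph immediately preceding the lemma already establishes that $T$ is a 2-functor and that $\alpha \colon (F,a) \to (F',a')$ is preserved, so what remains are: naturality of $\varepsilon$ and $\nu$, and the three monad equations.

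First I would verify the 2-naturality of $\varepsilon \colon \mathrm{id}_{A\text{-}{\bf Doc}} \to T$. For any $A$-doctrine morphism $(F,a) \colon P \to Q$, the outer rectangle of~\eqref{eq:compatibleonarrows} commutes by 2-naturality of $\eta$ for $(\GC_{\Free},\eta,\mu)$. Since $H^Q$ is component-wise injective and $H^Q \circ \varepsilon^Q = \eta^Q$, the left-hand square of~\eqref{eq:compatibleonarrows} commutes too, which is precisely the naturality square for $\varepsilon$. The 2-naturality (compatibility with 2-cells) follows by the same argument applied to whiskering. For $\nu \colon T^2 \to T$, consider a morphism $(F,a) \colon P \to Q$ and compose with $H^Q$: by definition of $\nu$ as the factorisation of~\eqref{eq:combatiblewithmult} through $H^P$, the square
\[
\begin{tikzcd}
TTP \ar{r}{\nu^P} \ar{d}[']{TT(F,a)} & TP \ar{d}{T(F,a)} \\
TTQ \ar{r}{\nu^Q} & TQ
\end{tikzcd}
\]
commutes after post-composition with $H^Q$, because its image reduces to the naturality square of $\mu$ combined with the naturality squares of $H$ (via~\eqref{eq:compatibleonarrows}) and of $T$. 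Injectivity of $H^Q$ then yields the equation in $TQ$.

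Next I would check the three monad equations~\eqref{monadaxioms}. For the left unit law $\nu^P \circ \varepsilon^{TP} = \mathrm{id}_{TP}$, apply $H^P$ to both sides and use that $H^P \circ \nu^P = \mu^P \circ H^{\GC_{\Free}(P)} \circ TH^P$ and that $H^P \circ \varepsilon^{TP}$ composes, via naturality of $\eta$ at $H^P$, to $\eta^{\GC_{\Free}(P)} \circ H^P$. One then invokes $\mu^P \circ \eta^{\GC_{\Free}(P)} = \mathrm{id}_{\GC_{\Free}(P)}$ and injectivity of $H^P$. The right unit law $\nu^P \circ T\varepsilon^P = \mathrm{id}_{TP}$ is analogous, using $H^{\GC_{\Free}(P)} \circ T H^P \circ T\varepsilon^P = H^{\GC_{\Free}(P)} \circ T\eta^P$ (after identifying $H^P \circ \varepsilon^P = \eta^P$) and $\mu^P \circ \GC_{\Free}(\eta^P) = \mathrm{id}_{\GC_{\Free}(P)}$. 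For associativity $\nu^P \circ T\nu^P = \nu^P \circ \nu^{TP}$, I would apply $H^P$, expand both sides using the defining equation~\eqref{eq:combatiblewithmult} for $\nu$ at each occurrence, and reduce the resulting equation in $\GC_{\Free}(P)$ to the associativity of $\mu$ together with the naturality of $H^{(-)}$ and $\mu$; once more injectivity of $H^P$ lifts the equality back to $TP$.

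The main obstacle, though a mild one, is simply bookkeeping in the associativity check: it involves $T^3P$ and a three-step reduction to $\GC_{\Free}(P)$, requiring the naturality squares of $H$ at $H^P \colon TP \to \GC_{\Free}(P)$ and of $\mu$ at $H^{\GC_{\Free}(P)}$. No new structural input is needed beyond the three compatibility clauses; the argument is a uniform transfer of the $(\GC_{\Free},\eta,\mu)$ 2-monad structure along the family of component-wise mono inclusions $H^P$.
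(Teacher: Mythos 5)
Your proposal is correct and takes essentially the same route as the paper, which simply observes that $(T,\varepsilon,\nu)$ is a restriction of the 2-monad $(\GC_{\Free},\eta,\mu)$ along the component-wise monic inclusions $H^P$, so the monad equations follow automatically. Your write-up just makes explicit the transfer-along-injections argument that the paper leaves implicit.
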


We call this 2-monad the \emph{compatible subcompletion}.

\begin{prop}
Every compatible subcompletion $T \colon A\text{-}{\bf Doc} \to A\text{-}{\bf Doc}$ is sub-geometric.
\end{prop}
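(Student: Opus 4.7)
The plan is to verify both conditions of Definition \ref{df:subgeo} using natural candidates inherited from the ambient free geometric completion. For each $A$-doctrine $P$, I define the algebra structure $\xi_P \colon T\GC_{\Free}(P) \to \GC_{\Free}(P)$ as the composite $\mu^P \circ H^{\GC_{\Free}(P)}$ (well-defined because $\GC_{\Free}(P) \in \ADoc$ by the standing hypothesis on $\ADoc$, so $T\GC_{\Free}(P)$ makes sense), and I define $J^T_P$ on $\cat \rtimes TP$ to be the Grothendieck topology induced from $K_{\GC_{\Free}(P)}$ along the full subcategory inclusion $\cat \rtimes TP \hookrightarrow \cat \rtimes \GC_{\Free}(P)$ arising from the subdoctrine embedding $H^P$; concretely, a sieve on $(c,R) \in \cat \rtimes TP$ is $J^T_P$-covering precisely when the sieve it generates in $\cat \rtimes \GC_{\Free}(P)$ is $K_{\GC_{\Free}(P)}$-covering.

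First I verify that $(\GC_{\Free}(P),\xi_P)$ is a $T$-algebra. The unit law $\xi_P \circ \varepsilon^{\GC_{\Free}(P)} = \id$ reduces to the factorisation $H^{\GC_{\Free}(P)} \circ \varepsilon^{\GC_{\Free}(P)} = \eta^{\GC_{\Free}(P)}$ provided by the first compatibility axiom, combined with the monad unit law $\mu^P \circ \eta^{\GC_{\Free}(P)} = \id$ of $(\GC_{\Free},\eta,\mu)$. The associativity axiom $\xi_P \circ T\xi_P = \xi_P \circ \nu^{\GC_{\Free}(P)}$ unfolds via the defining equation of $\nu^{\GC_{\Free}(P)}$ (compatibility condition (\ref{eq:combatiblewithmult}) applied at $\GC_{\Free}(P)$, giving $H^{\GC_{\Free}(P)} \circ \nu^{\GC_{\Free}(P)} = \mu^{\GC_{\Free}(P)} \circ H^{\GC_{\Free}\GC_{\Free}(P)} \circ TH^{\GC_{\Free}(P)}$), the associativity equation $\mu^P \circ \mu^{\GC_{\Free}(P)} = \mu^P \circ \GC_{\Free}(\mu^P)$ for $\GC_{\Free}$, and the naturality of $H$ at the morphism $\mu^P \colon \GC_{\Free}\GC_{\Free}(P) \to \GC_{\Free}(P)$, which yields $\GC_{\Free}(\mu^P) \circ H^{\GC_{\Free}\GC_{\Free}(P)} = H^{\GC_{\Free}(P)} \circ T\mu^P$.

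For the site-theoretic conditions of Definition \ref{df:subgeo}(ii): condition (a) follows because $H^P \circ \varepsilon^P = \eta^P$ is already a morphism of sites $(P, J_{\rm triv}) \to (\GC_{\Free}(P), K_{\GC_{\Free}(P)})$ by Proposition \ref{unitisdense}, and by the very definition of the induced topology $J^T_P$ the factoring arrow $\varepsilon^P$ defines a morphism of sites $(P, J_{\rm triv}) \to (TP, J^T_P)$; the four clauses of Definition \ref{morphismofsites} each descend from the corresponding clause for $\eta^P$. Condition (b) follows because $\mu^P$ is a morphism of geometric doctrines, hence a morphism of sites for the $K$-topologies, and precomposing with $H^{\GC_{\Free}(P)}$ while regarding the source with the restricted topology $J^T_{\GC_{\Free}(P)}$ preserves this property by construction. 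Condition (c), functoriality, follows from the naturality diagram (\ref{eq:compatibleonarrows}): for $\theta \colon P \to Q$ in $\ADoc$, the arrow $T\theta$ is the restriction of $\GC_{\Free}(\theta)$, which is a morphism of $K$-sites, so its restriction to the subsites $(TP, J^T_P) \to (TQ, J^T_Q)$ with the induced topologies is a morphism of sites.

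The main obstacle will be the associativity of the algebra structure, since this requires applying the naturality of $H$ at $\mu^P$, which is valid only when $\mu^P$ itself is a morphism of $\ADoc$. This should follow from the hypothesis that the image of $\GC_{\Free}$ lies inside $\ADoc$, together with the fact that the structural maps of the 2-monad $(\GC_{\Free},\eta,\mu)$ restrict accordingly; in cases where delicacy is required (e.g. because $\mu^P$ is not literally of the form $\GC_{\Free}(F,a)$), one can note that $\mu^P$ is inverse to the isomorphism $\eta^{\GC_{\Free}(P)}$ coming from the idempotency of $\GC_{\Free}$ on geometric doctrines, so naturality of $H$ at $\mu^P$ follows from naturality of $H$ at $\eta^{\GC_{\Free}(P)}$.
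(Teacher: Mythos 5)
Your proposal is correct and takes essentially the same route as the paper: the same algebra structure $\xi_P = \mu^P \circ H^{\GC_{\Free}(P)}$, the same choice of $J^T_P$ as the topology induced on the dense full subcategory $\cat \rtimes TP \hookrightarrow \cat \rtimes \GC_{\Free}(P)$, and the same decomposition of the associativity check into the monad law for $\GC_{\Free}$, the defining factorisation of $\nu^{\GC_{\Free}(P)}$, and naturality of $H$ at $\mu^P$. The only cosmetic difference is that where you argue directly that the clauses of Definition \ref{morphismofsites} for $\varepsilon^P$ descend from those for $\eta^P$, the paper invokes the cancellation property of morphisms of sites along dense morphisms (Corollary 11.6 of \cite{shulman}); your extra remark justifying naturality of $H$ at $\mu^P$ via the isomorphism $\mu^P = (\eta^{\GC_{\Free}(P)})^{-1}$ is a welcome clarification of a point the paper leaves implicit.
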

\begin{proof}
For each $A$-doctrine $P$, the morphism
\[
\begin{tikzcd}
T\GC_{\Free}(P) \ar{rr}{H^{\GC_{\Free}(P)}} & & \GC_{\Free}\GC_{\Free}(P) \ar{r}{\mu^P} & \GC_{\Free}(P)		
\end{tikzcd}
\]
is a natural way of endowing $\GC_{\Free}(P)$ with the structure of a $T$-algebra.  The unit condition, i.e. the commutativity of the triangle
\[
\begin{tikzcd}
{\GC_{\Free}(P)} \ar[equal]{rrdd} \ar{rr}{\varepsilon^{\GC_{\Free}(P)} } && T{\GC_{\Free}(P)} \ar{d}{H^{\GC_{\Free}(P)} } \\
&& \GC_{\Free}\GC_{\Free}(P) \ar{d}{\mu^P}\\\
&& {\GC_{\Free}(P)},
\end{tikzcd}
\]
is satisfied since $H^{\GC_{\Free}(P)} \circ \varepsilon^{\GC_{\Free}(P)} = \eta^{\GC_{\Free}(P)} $ and $\mu^P \circ \eta^{\GC_{\Free}(P)} = \id_{{\GC_{\Free}(P)}}$.  The action property, i.e. that 
\[\mu^P \circ H^{\GC_{\Free}(P)} \circ T(\mu^P \circ H^{\GC_{\Free}(P)}) = \mu^P \circ H^{\GC_{\Free}(P)} \circ \nu^{\GC_{\Free}(P)},\]
follows from the commutativity of the diagram
\begin{equation}\label{square:compatible}
\begin{tikzcd}
{TT\GC_{\Free}(P)} && {T\GC_{\Free}\GC_{\Free}(P)} && {T\GC_{\Free}(P)} \\
\\
&& {\GC_{\Free}\GC_{\Free}\GC_{\Free}(P)} && {\GC_{\Free}\GC_{\Free}(P)} \\
\\
{T\GC_{\Free}(P)} && {\GC_{\Free}\GC_{\Free}(P)} && {\GC_{\Free}(P).}
\arrow["\nu^{\GC_{\Free}(P)}", from=1-1, to=5-1]
\arrow["{H^{\GC_{\Free}\GC_{\Free}(P)}}", from=1-3, to=3-3]
\arrow[""{name=0, anchor=center, inner sep=0}, "{H^{\GC_{\Free}(P)}}", from=5-1, to=5-3]
\arrow[""{name=1, anchor=center, inner sep=0}, "{TH^{\GC_{\Free}(P)}}", from=1-1, to=1-3]
\arrow["{\mu^{\GC_{\Free}(P)}}", from=3-3, to=5-3]
\arrow[""{name=2, anchor=center, inner sep=0}, "{\mu^P}", from=5-3, to=5-5]
\arrow[""{name=3, anchor=center, inner sep=0}, "{T\mu^P}", from=1-3, to=1-5]
\arrow["{H^{\GC_{\Free}(P)}}", from=1-5, to=3-5]
\arrow[""{name=4, anchor=center, inner sep=0}, "{\GC_{\Free}\mu^P}", from=3-3, to=3-5]
\arrow["{\mu^P}", from=3-5, to=5-5]
\arrow["{\text{\textcircled{\raisebox{-.5pt}{2}}}}"{description}, draw=none, from=3, to=4]
\arrow["{\text{\textcircled{\raisebox{-.5pt}{1}}}}"{description}, draw=none, from=4, to=2]
\arrow["{\text{\textcircled{\raisebox{-.5pt}{3}}}}"{description}, draw=none, from=1, to=0]
\end{tikzcd}
\end{equation}
The commutativity of the square {\textcircled{\raisebox{-.5pt}{1}}} is assured since $(\GC_{\Free},\eta,\mu)$ is a 2-monad, while the squares {\textcircled{\raisebox{-.5pt}{2}}} and {\textcircled{\raisebox{-.5pt}{3}}} commute by definition (see the equations (\ref{eq:compatibleonarrows}) and (\ref{eq:combatiblewithmult})).

We now seek to find a Grothendieck topology $J^T_P$ on $\cat \rtimes TP$ satisfying the required conditions of Definition \ref{df:subgeo}.  We take the obvious choice: since $TP$ is a subdoctrine of $\GC_{\Free}(P)$, $\cat \rtimes TP$ is a subcategory of $\cat \rtimes \GC_{\Free}(P)$, and so we define $J^T_P$ as the restriction of $K_{\GC_{\Free}(P)}$ to $\cat \rtimes P$.  We check that the three conditions of Definition \ref{df:subgeo}\ref{df:subgeo:topologystuff} are satisfied.
\begin{enumerate}[label = (\alph*)]

\item Recall that that the unit of the free geometric completion yields a dense morphism of sites
\[\id_\cat \rtimes \eta^P \colon (\cat \rtimes P,J_{\rm triv}) \to (\cat \rtimes \GC_{\Free}(P),K_{\GC_{\Free}(P)}).\]
The functor $\id_\cat \rtimes \eta^P$ factorises as 
\[
\begin{tikzcd}
(\cat \rtimes P,J_{\rm triv}) \ar{r}{\id_\cat \rtimes \varepsilon^P} & (\cat \rtimes TP, J^T_P) \ar{r}{\id_\cat \rtimes H^P} & (\cat \rtimes \GC_{\Free}(P),K_{\GC_{\Free}(P)}).
\end{tikzcd}
\]
The right factor, $\id_\cat \rtimes H^P$, is the inclusion of a dense subcategory, and hence also a dense morphism of sites.  Therefore, by \cite[Corollary 11.6]{shulman}, $\id_\cat \rtimes \varepsilon^P$ is a morphism of sites, and so \[\varepsilon^P \colon (P,J_{\rm triv}) \to (TP,J^T_P)\]
is a morphism of doctrinal sites as desired.

\item Firstly, the functor $\mu^P$ defines a morphism of doctrinal sites 
\[\mu^P \colon (\GC_{\Free}\GC_{\Free}(P),K_{\GC_{\Free}\GC_{\Free}(P)}) \to (\GC_{\Free}(P),K_{\GC_{\Free}(P)}).\]
Secondly, $\id_\cat \rtimes H^{\GC_{\Free}(P)} \colon (\cat \rtimes \GC_{\Free}(P),J^T_{\GC_{\Free}(P)}) \to (\cat \rtimes \GC_{\Free}\GC_{\Free}(P),K_{\GC_{\Free}\GC_{\Free}(P)})$ is the inclusion of a dense subcategory, thus a morphism of sites and therefore
\[
H^{\GC_{\Free}(P)} \colon ( \GC_{\Free}(P),J^T_{\GC_{\Free}(P)}) \to (\GC_{\Free}\GC_{\Free}(P),K_{\GC_{\Free}\GC_{\Free}(P)})
\]
is a morphism of doctrinal sites.  Hence, the composite $\mu^P \circ H^{\GC_{\Free}(P)}$ defines a morphism of doctrinal sites
\[\mu^P \circ H^{\GC_{\Free}(P)} \colon (T{\GC_{\Free}(P)},J^T_{\GC_{\Free}(P)}) \to (\GC_{\Free}(P),K_{\GC_{\Free}(P)}).\]

\item Finally, we wish to show that $T\theta \colon (TP,J^T_P) \to (TQ,J^T_Q)$ is a morphism of doctrinal sites, for each morphism of $A$-doctrines $\theta \colon P \to Q$.  By assumption, $T\theta$ is already flat by virtue of being a morphism of $A$-doctrines.  That $T\theta$ sends $J^T_P$-covers to $J^T_Q$ covers follows since $\GC_{\Free}(\theta) \colon \GC_{\Free}(P) \to \GC_{\Free}(Q)$ sends $K_{\GC_{\Free}(P)}$-covers to $K_{\GC_{\Free}(Q)}$-covers.
\end{enumerate}
\end{proof}



An application of Proposition \ref{prop:subgeoislax} now shows that every compatible subcompletion is lax-idempotent.  The two conditions of Proposition \ref{prop:subgeoislax} are clearly satisfied.  Each component $\eta_c^{(TP,J^T_P)} \colon TP(c) \to \GC(TP,J^T_P)(c)$ is injective -- indeed it is isomorphic to the inclusion $H^P_c \colon TP(c) \subseteq \GC_{\Free}(P)(c) \cong \GC(TP,J^T_P)(c)$, while 
\[\mu^P \colon (\GC_{\Free}\GC_{\Free}(P), \mu^{\GC_{\Free}(P)} \circ H^{\GC_{\Free}\GC_{\Free}(P)}) \to (\GC_{\Free}(P),\mu^P \circ H^{\GC_{\Free}(P)})\]
is a morphism of $T$-algebras by the commutativity of the right hand side of (\ref{square:compatible}).

\begin{coro}
Every compatible subcompletion $(T,\varepsilon,\nu)$ is a lax-idempotent 2-monad.
\end{coro}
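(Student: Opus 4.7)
The plan is to invoke Proposition \ref{prop:subgeoislax}, which reduces lax-idempotency of a sub-geometric 2-monad to verifying two hypotheses: point-wise injectivity of the unit $\eta^{(TP,J_P^T)} \colon TP \to \GC(TP,J_P^T)$, and that $\mu^P \colon \GC_{\Free}\GC_{\Free}(P) \to \GC_{\Free}(P)$ defines a morphism of $T$-algebras $(\GC_{\Free}\GC_{\Free}(P),\xi_{\GC_{\Free}(P)}) \to (\GC_{\Free}(P),\xi_P)$. Since compatible subcompletions have already been shown to be sub-geometric in the preceding proposition, the machinery of Proposition \ref{prop:subgeoislax} applies once these two hypotheses are checked.

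First I would verify the injectivity condition. By Theorem \ref{thm:subgeo}, there is an isomorphism $\GC(TP,J_P^T) \cong \GC_{\Free}(P)$, and under this identification the unit $\eta^{(TP,J_P^T)}$ corresponds (at each component $c \in \cat$) to the inclusion $H^P_c \colon TP(c) \hookrightarrow \GC_{\Free}(P)(c)$. Since $H^P$ is the inclusion of a subdoctrine, each component is a subset inclusion, hence injective. This handles the first hypothesis essentially for free from the definition of $T$ being a compatible subcompletion.

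For the second hypothesis, I would argue that $\mu^P$ is a $T$-algebra morphism, where the $T$-algebra structure on $\GC_{\Free}(P)$ is $\xi_P = \mu^P \circ H^{\GC_{\Free}(P)}$ and the structure on $\GC_{\Free}\GC_{\Free}(P)$ is $\xi_{\GC_{\Free}(P)} = \mu^{\GC_{\Free}(P)} \circ H^{\GC_{\Free}\GC_{\Free}(P)}$. The required equation
\[\mu^P \circ \xi_{\GC_{\Free}(P)} = \xi_P \circ T\mu^P\]
is precisely the outer commutativity of the right-hand rectangle of the diagram (\ref{square:compatible}) established in the proof that $T$ is sub-geometric. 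More concretely, the two squares {\textcircled{\raisebox{-.5pt}{1}}} and {\textcircled{\raisebox{-.5pt}{2}}} of (\ref{square:compatible}) glue together: {\textcircled{\raisebox{-.5pt}{1}}} is associativity of the 2-monad $\GC_{\Free}$, and {\textcircled{\raisebox{-.5pt}{2}}} is naturality of $H$ with respect to the morphism $\mu^P$, i.e.\ the instance of (\ref{eq:compatibleonarrows}) applied to $\mu^P$. Together these furnish exactly the equality $\mu^P \circ \mu^{\GC_{\Free}(P)} \circ H^{\GC_{\Free}\GC_{\Free}(P)} = \mu^P \circ H^{\GC_{\Free}(P)} \circ T\mu^P$.

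Neither step is expected to pose any substantial obstacle, since both essentially amount to bookkeeping with diagrams already drawn in the previous proof. The only subtlety worth flagging is that one must interpret $T\mu^P \colon TT(\GC_{\Free}(P)) \to T\GC_{\Free}(P)$ via the restriction along the inclusions $H$, rather than by an independent definition — but this is forced by the definition of $T$ on morphisms, so once unfolded the identification is automatic. With both hypotheses of Proposition \ref{prop:subgeoislax} verified, the corollary follows immediately.
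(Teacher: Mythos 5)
Your proposal is correct and follows exactly the paper's own argument: both verify the two hypotheses of Proposition \ref{prop:subgeoislax}, identifying the unit $\eta^{(TP,J^T_P)}$ with the subdoctrine inclusion $H^P$ (hence componentwise injective) and reading off the $T$-algebra morphism condition for $\mu^P$ from the commutativity of the right-hand portion of the diagram (\ref{square:compatible}), i.e.\ the monad associativity square glued to the naturality square for $H$ at $\mu^P$. No further comment is needed.
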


\paragraph{The regular and coherent completions.}

Let us revisit the examples of compatible subcompletions given in Example \ref{ex:compatiblesubcomp}.  As remarked in Lemma \ref{lem:supercompactelements}, we have recovered the existential completion established in \cite{trotta}, the lax-idempotent 2-monad $T^\exists \colon {\bf PrimDoc} \to {\bf PrimDoc}$, as a compatible subcompletion.

The 2-category of algebras for the 2-monad $T^\exists$ is precisely the the 2-category ${\bf ExDoc}$ of existential doctrines (see \cite[Corollary 5.5]{trotta}).  In a similar fashion, we recognise the 2-category of algebras for the lax-idempotent 2-monad $T^{\rm Coh} \colon {\bf PrimDoc} \to {\bf PrimDoc}$ as the 2-category ${\bf CohDoc}$ of coherent doctrines.  Using the inherent 2-adjunction
\[\begin{tikzcd}
{\tau\text{-}{\bf Alg}} && {\mathcal{C}}
\arrow[""{name=0, anchor=center, inner sep=0}, shift right=3, hook, from=1-1, to=1-3]
\arrow[""{name=1, anchor=center, inner sep=0}, "\tau"', shift right=3, from=1-3, to=1-1]
\arrow["\dashv"{anchor=center, rotate=-90}, draw=none, from=1, to=0]
\end{tikzcd}\]
for a 2-monad $(\tau,e,m)$ on a  2-category $\cat$, we recover the following completions of doctrines.

\begin{coro}[\S 5 \cite{trotta}]
\begin{enumerate}
\item The 2-embedding ${\bf ExDoc} \hookrightarrow {\bf PrimDoc}$ has a lax-idempotent left 2-adjoint.
\item The 2-embedding ${\bf CohDoc} \hookrightarrow {\bf PrimDoc}$ has a lax-idempotent left 2-adjoint.
\end{enumerate}
\end{coro}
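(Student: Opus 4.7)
The plan is to apply directly the inherent 2-adjunction for a 2-monad, as outlined in the paragraph immediately preceding the corollary. Concretely, for each lax-idempotent 2-monad $(\tau,e,m)$ on a 2-category $\cat$, the forgetful 2-functor $\tau\text{-}{\bf Alg} \hookrightarrow \cat$ has a left 2-adjoint (the free $\tau$-algebra functor $\tau$) and this left adjoint inherits lax-idempotency. Thus the corollary reduces to identifying the 2-categories of algebras of the two compatible subcompletions $T^\exists$ and $T^{\rm Coh}$ with ${\bf ExDoc}$ and ${\bf CohDoc}$ respectively, so that the forgetful functors $\tau\text{-}{\bf Alg} \hookrightarrow {\bf PrimDoc}$ coincide with the 2-embeddings in the statement.

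First I would invoke the preceding proposition, which shows that both $T^\exists$ and $T^{\rm Coh}$ are lax-idempotent 2-monads on ${\bf PrimDoc}$, obtained as compatible subcompletions corresponding to the subdoctrines of supercompact, respectively compact, elements of the free geometric completion $\GC_{\rm Fr}(P)$. For (i), the identification $T^\exists\text{-}{\bf Alg} \simeq {\bf ExDoc}$ is exactly \cite[Corollary 5.5]{trotta}, which the text explicitly cites. For (ii), I would sketch the analogous identification $T^{\rm Coh}\text{-}{\bf Alg} \simeq {\bf CohDoc}$: a $T^{\rm Coh}$-algebra structure on a primary doctrine $P$ provides, via the counit-like map $\xi_P \colon T^{\rm Coh}P \to P$, a way of evaluating joins indexed by finite covers in $\GC_{\rm Fr}(P)$ of compact elements, which endows each fibre with finite joins and a left adjoint $\exists_f$ to each $P(f)$ satisfying Frobenius and Beck-Chevalley, i.e. precisely the structure of a coherent doctrine. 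Conversely any coherent doctrine admits a canonical such structure because the inclusion $H^P$ has a section given by the evaluation of joins in $P(c)$ of the generating family $\{\exists_f \eta^P_d(x)\}$ of a compact element. The morphisms of algebras agree with morphisms of coherent doctrines by exactly the same preservation analysis.

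Having established these equivalences, I would conclude by invoking the 2-adjunction $\tau \dashv U$ of the Eilenberg–Moore construction; since in both cases the 2-monad is lax-idempotent, the left 2-adjoint is lax-idempotent as well (the unit $\varepsilon^P \colon P \to TP$ provides the universal arrow and the characterisation via lax morphisms recalled in the preceding subsection transfers from the 2-monad to the adjunction). The left 2-adjoints so obtained are thus the desired \emph{existential completion} and \emph{coherent completion} of a primary doctrine.

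The main obstacle, and the only non-formal step, is the identification of algebras in case (ii): while (i) is already in the literature, (ii) requires carefully verifying that a $T^{\rm Coh}$-algebra structure corresponds bijectively to the data of finite joins together with left adjoints satisfying Frobenius and Beck-Chevalley, and that lax-idempotency of $T^{\rm Coh}$ forces this structure to be uniquely determined when it exists. This is entirely parallel to the existential case treated by Trotta, but must be checked explicitly because the compact elements of $\GC_{\rm Fr}(P)(c)$ involve finite joins of existentially-quantified elements rather than a single such element.
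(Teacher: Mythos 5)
Your proposal is correct and follows essentially the same route as the paper: the paper likewise identifies the algebras of the compatible subcompletions $T^\exists$ and $T^{\rm Coh}$ with ${\bf ExDoc}$ (via \cite[Corollary 5.5]{trotta}) and ${\bf CohDoc}$ respectively, and then invokes the inherent Eilenberg--Moore 2-adjunction for a lax-idempotent 2-monad. Your extra sketch of the algebra identification in case (ii) is a reasonable elaboration of the step the paper passes over with ``in a similar fashion''.
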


Following the example of \cite{uniexactcomp}, we turn to using completions of doctrines to describe completions of categories.  We have seen in Corollary \ref{coro:freegcforcat} that the free geometric completion yields the completion of a cartesian category to a geometric category.  We deduce that, in a similar manner, the sub-geometric completions we have constructed in this subsection yield other completions of cartesian categories.

As already noted in \cite[\S 6]{trotta}, the existential completion of a primary doctrine can be used to recover the \emph{regular completion} of a cartesian category.  For a cartesian category $\cat$, Carboni describes in \cite{carboni} the regular completion ${\bf Reg}(\cat)$ as follows:
\begin{enumerate}
\item the objects of ${\bf Reg}(\cat)$ are arrows $d \xrightarrow{f } c$ of $\cat$;
\item an arrow $[g] \colon f_1 \to f_2$ of ${\bf Reg}(\cat)$ is an equivalence class of arrows $g \colon d_1 \to d_2$ such that
\[\begin{tikzcd}
e \ar[shift left]{r}{h} \ar[shift right]{r}[']{k} & d_1 \ar{r}{f_2 \circ g} & c_2
\end{tikzcd}\]
commutes, where $(h,k)$ are the kernel pair of $f_1$, i.e.
\[\begin{tikzcd}
e \ar{r}{h} \ar{d}{k} & d_1 \ar{d}{f_1} \\
d_1 \ar{r}{f_1} & c_1
\end{tikzcd}\]
is a pullback.  Two such arrows $g,g' \colon d_1 \rightrightarrows d_2$ are equivalent ($[g] = [g']$) if
\[
\begin{tikzcd}
d_1 \ar[shift left]{r}{g} \ar[shift right]{r}[']{g'} & d_2 \ar{r}{f_2} & c_2
\end{tikzcd}
\]
commutes.
\end{enumerate}
In \cite[\S 5]{carboni} it is shown that this defines the action on objects of a quasi 2-adjoint to the 2-embedding
\[
\begin{tikzcd}
{\bf Reg} \ar[hook]{r} & {\bf Cart}
\end{tikzcd}
\]
of cartesian categories into regular categories.  In an analogous manner to Corollary \ref{coro:freegcforcat}, we deduce that ${\bf Syn}(T^\exists\Sub_{\cat})$ satisfies the same universal property as ${\bf Reg}(\cat)$, and hence ${\bf Syn}(T^\exists\Sub_{\cat}) \simeq {\bf Reg}(\cat)$.  Similarly, by considering the category ${\bf Syn}(T^{\rm Coh}\Sub_{\cat})$, for a cartesian category $\cat$, we obtain the universal \emph{coherent completion} of $\cat$.  

\begin{coro}
The 2-embedding ${\bf Coh} \hookrightarrow {\bf Cart}$ has a left quasi 2-adjoint -- the \emph{coherent completion} of a cartesian category.
\end{coro}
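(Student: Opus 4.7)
The plan is to construct the coherent completion of a cartesian category $\cat$ as ${\bf Syn}(T^{\rm Coh}\Sub_\cat)$, mirroring the construction of the regular completion as ${\bf Syn}(T^\exists\Sub_\cat)$ outlined just above the statement, and then to verify the required universal property by chaining together a sequence of (quasi) 2-adjunctions that have already been established earlier in the paper.

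First I would observe that the syntax-subobject quasi 2-adjunction \eqref{synsubadj} between ${\bf ExDoc}$ and ${\bf Reg}$ restricts to a quasi 2-adjunction between ${\bf CohDoc}$ and ${\bf Coh}$. On objects this is immediate: for a coherent doctrine $P$, the syntactic category ${\bf Syn}(P)$ is coherent, since finite joins in the fibres $P(c)$ together with the Frobenius condition give rise to finite joins of subobjects in ${\bf Syn}(P)$ that are stable under pullback, and a morphism of coherent doctrines $(F,a)$ induces a coherent functor ${\bf Syn}(F,a)$. Conversely, for a coherent category $\mathcal{D}$, $\Sub_\mathcal{D}$ is a coherent doctrine. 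That the counit ${\bf Syn}(\Sub_\mathcal{D}) \simeq \mathcal{D}$ is an equivalence follows as in the regular case, since $\Sub_{(-)}$ remains full and faithful on the larger 2-category ${\bf Coh}$.

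Next, I would define $\GC^{\rm Coh} \colon {\bf Cart} \to {\bf Coh}$ as the composite
\[
\begin{tikzcd}
{\bf Cart} \ar{r}{\Sub_{(-)}} & {\bf PrimDoc} \ar{r}{T^{\rm Coh}} & {\bf CohDoc} \ar{r}{{\bf Syn}} & {\bf Coh},
\end{tikzcd}
\]
and establish the required natural equivalence by stacking the three component adjunctions. Given $\cat \in {\bf Cart}$ and $\mathcal{D} \in {\bf Coh}$, we have
\begin{align*}
\Hom_{\bf Coh}({\bf Syn}(T^{\rm Coh}\Sub_\cat),\mathcal{D}) & \simeq \Hom_{\bf CohDoc}(T^{\rm Coh}\Sub_\cat, \Sub_\mathcal{D}) \\
& \simeq \Hom_{\bf PrimDoc}(\Sub_\cat, \Sub_\mathcal{D}) \\
& \simeq \Hom_{\bf Cart}(\cat,\mathcal{D}),
\end{align*}
where the first line uses the restricted syntax-subobject quasi 2-adjunction, the second uses that $T^{\rm Coh} \dashv$ inclusion ${\bf CohDoc} \hookrightarrow {\bf PrimDoc}$ established as a consequence of the preceding compatible-subcompletion discussion, and the third uses that $\Sub_{(-)} \colon {\bf Cart} \to {\bf PrimDoc}$ is full and faithful (so a morphism of primary doctrines $\Sub_\cat \to \Sub_\mathcal{D}$ is uniquely determined by a cartesian functor $\cat \to \mathcal{D}$).

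The main obstacle is verifying carefully that each of these three equivalences is in fact a quasi 2-adjunction in the precise sense of \cite[\S I.7]{graycategories}, not merely an equivalence of hom-sets. For the third equivalence I would want to confirm, as in the regular completion argument, that the counit component ${\bf Syn}(\Sub_\mathcal{D}) \simeq \mathcal{D}$ is genuinely a natural equivalence of coherent categories (and not strict), which is the standard reason the resulting adjunction is only a quasi 2-adjunction. For the middle equivalence I would check that $T^{\rm Coh}$ as constructed as a compatible subcompletion restricts properly to the 2-subcategory ${\bf PrimDoc}$ and that its algebras are precisely ${\bf CohDoc}$, so that the inherent 2-adjunction between algebras and base category yields the required adjunction. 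Once these are in place the corollary follows by direct composition.
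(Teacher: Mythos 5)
Your construction of the coherent completion as ${\bf Syn}(T^{\rm Coh}\Sub_\cat)$ and the verification via the chain of hom-category equivalences through $T^{\rm Coh}\dashv{}$inclusion, the restricted syntax--subobject quasi 2-adjunction, and full faithfulness of $\Sub_{(-)}$ is exactly the argument the paper intends (it only gestures at it by analogy with Corollary \ref{coro:freegcforcat} and the regular completion). Your proposal is correct and takes essentially the same approach, while usefully making explicit the step that ${\bf Syn}(P)$ is coherent for a coherent doctrine $P$.
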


\begin{rem}
{\rm
As observed in \cite[\S 6]{trotta}, by composing the (quasi) 2-adjunctions coming from the existential completion, the syntactic category construction and the exact completion of a regular category (see \cite[\S 2.3]{carbonivitale}), as in the diagram
\[\begin{tikzcd}
{{\bf PrimDoc}} && {{\bf ExDoc}} && {{\bf Reg}} && {{\bf Exact},}
\arrow[""{name=0, anchor=center, inner sep=0}, shift left=3, hook', from=1-3, to=1-1]
\arrow[""{name=1, anchor=center, inner sep=0}, shift left=3, from=1-1, to=1-3]
\arrow[""{name=2, anchor=center, inner sep=0}, shift left=3, from=1-5, to=1-3]
\arrow[""{name=3, anchor=center, inner sep=0}, shift left=3, from=1-3, to=1-5]
\arrow[""{name=4, anchor=center, inner sep=0}, shift left=3, hook', from=1-7, to=1-5]
\arrow[""{name=5, anchor=center, inner sep=0}, shift left=3, from=1-5, to=1-7]
\arrow["\dashv"{anchor=center, rotate=-90}, draw=none, from=1, to=0]
\arrow["\dashv"{anchor=center, rotate=-90}, draw=none, from=3, to=2]
\arrow["\dashv"{anchor=center, rotate=-90}, draw=none, from=5, to=4]
\end{tikzcd}\]
we obtain the \emph{exact completion of a primary doctrine} in the sense of \cite{uniexactcomp}.  The work of Maietti, Pasquali and Rosolini \cite{uniexactcomp}, \cite{MPRtriposes} has been fundamental in understanding the exact completion of a doctrine via a series of doctrinal completions.  The functor ${\bf ExDoc} \to {\bf Exact}$ is the so-called \emph{`tripos' construction} (see \cite{triposoriginal}, \cite{triposretro}), also called the partial equivalence relation construction since the objects of the resultant category are \emph{partial equivalence relations} in the internal language of the doctrine.

Since the geometric completion of a primary doctrine interprets geometric logic, we could also consider taking the analogous category whose objects are finite (or infinite) tuples of internal partial equivalence relations.  This would obtain a doctrinal version of the \emph{pretopos completion} (see \cite[\S 8.4]{MR}).  The unifying role of the geometric completion in relating the various `tripos-like' constructions will be the subject of future work.

}
\end{rem}







\subsection{Point-wise sub-geometric completions}\label{subsec:exsubgeo}

In this final subsection, we revisit the free top completion as a sub-geometric completion in light of Definition \ref{df:subgeo}.  Since the syntax of geometric logic is often represented by the symbols $\{\, \top, \, \exists, \, \bigvee, \, \land \,\}$, we `complete the set', so to speak, by also briefly sketching that the \emph{free join} and \emph{free binary meet} completions also constitute sub-geometric completions (the completion with respect to the symbol $\{\,\exists\,\}$ being the existential completion previously discussed).  The conditions of Definition \ref{df:subgeo} are easily, but tediously, checked -- and so we omit many of the details.

Since the completions we consider in this subsection are of a `point-wise' nature, we first state some easily deduced facts concerning such completions.  Suppose that we are given a 2-full 2-subcategory $A\text{-}\PreOrd$ of $ \PreOrd$ whose inclusion $A\text{-}\PreOrd \hookrightarrow \PreOrd$ has a left (strict) 2-adjoint $T^A \colon \PreOrd \to A\text{-}\PreOrd$, or equivalently, for each preorder $P$, the completion $T^AP$ has the universal property that for any monotone map $a \colon P \to Q$, where $Q \in A\text{-}\PreOrd$, there is a unique morphism $a^A \colon T^A P \to Q$ of $A\text{-}\PreOrd$ for which the triangle
\[
\begin{tikzcd}
P \ar{rd}[']{a} \ar{r}{\varepsilon} & T^A P \ar[dashed]{d}{a^A} \\
& Q
\end{tikzcd}
\]
commutes (where $\varepsilon$ is the unit of the 2-adjunction).  It is clearly deduced that the functor $T^A$ extends to a (strict) 2-adjunction
\[\begin{tikzcd}
{{[\cat^{op},\PreOrd]}} && {{[\cat^{op},A \text{-} \PreOrd],}}
\arrow[""{name=0, anchor=center, inner sep=0}, shift left=2, hook', from=1-3, to=1-1]
\arrow[""{name=1, anchor=center, inner sep=0}, "T'^A", shift left=2, from=1-1, to=1-3]
\arrow["\dashv"{anchor=center, rotate=-90}, draw=none, from=1, to=0]
\end{tikzcd}\]
and hence a (strict) 2-adjunction
\[\begin{tikzcd}
{{\bf Doc}} && {A\text{-}{\bf Doc},}
\arrow[""{name=0, anchor=center, inner sep=0}, shift left=2, hook', from=1-3, to=1-1]
\arrow[""{name=1, anchor=center, inner sep=0}, "T''^A", shift left=2, from=1-1, to=1-3]
\arrow["\dashv"{anchor=center, rotate=-90}, draw=none, from=1, to=0]
\end{tikzcd}\]
where $A\text{-}{\bf Doc}$ is the category of $A\text{-}\PreOrd$-valued doctrines.

\paragraph{Free top completion.}  Let $T^\top \colon {\bf Doc} \to {\bf Doc}$ denote the free (preserved) top completion monad constructed in \S \ref{subsec:altcont}.  Having preserved top elements, every geometric doctrine $\Lb \colon \cat^{op} \to \Frm_{\rm open}$ can naturally be turned into an algebra for the monad $T^\top$.  We have also already encountered the topology ${J^\top_{\rm triv}}$ on the category $\cat \rtimes P^\top$, where $P$ is a doctrine $P \colon \cat^{op} \to \PreOrd$.  That this choice of Grothendieck topology satisfies the condition of Definition \ref{df:subgeo} is easily shown: for example, that the unit $(P,J_{\rm triv}) \hookrightarrow (P^\top,J_{\rm triv}^\top)$ is a morphism of sites follows from Lemma \ref{addingtopeq}.  Thus, we can apply Theorem \ref{thm:subgeo} to deduce Proposition \ref{topissubgeo}.

\paragraph{Free join completion.}  As previously mentioned, the 2-functor $\GC_{\rm Ex} \colon {\bf ExDoc} \to {\bf GeomDoc}$ sends an existential doctrine $P \colon \cat^{op} \to {\bf MSLat}$ to its `point-wise' join completion $2^{(-)^{op}} \circ P \colon \cat^{op} \to \Frm_{\rm open}$, i.e. $\GC_{\rm Ex}(P)(c)$ is the poset of down-sets of $P(c)$ ordered by inclusion.  We could also conceive of taking the `point-wise' join completion $2^{(-) ^{op}} \circ  P$ of any doctrine $P \in {\bf Doc}$.  Hence an element $J \in 2^{(-) ^{op}} \circ  P(c)$ is a down-set of $P(c)$.  By the above discussion, this yields a left adjoint $T^\lor$ to the inlcusion of ${\bf SupSLat}$-valued doctrines into ${\bf Doc}$, where ${\bf SupSLat}$ is the 2-category of sup-semilattices (i.e. posets with all joins), their homomorphisms, and natural transformations between these.  By the universal property of $T^\lor$, for each geometric doctrine $\Lb \in {\bf GeomDoc}$ there exists a natural transformation $\id_\Lb^\lor\colon T^\lor \Lb \to \Lb$ for which $(\Lb,\id_\Lb^\lor)$ is a $T^\lor$-algebra.

For each doctrine $P \colon \cat^{op} \to \sets$, the choice of the topology $J^\lor_P$, where $J^\lor_P$ is the Grothendieck topology on $\cat \rtimes T^\lor P$ generated by covering families of the form
\[\left\{\, (c,J_i) \xrightarrow{\id_c} \left(c, \bigcup_{i \in I} J_i\right) \,\middle\vert\, i \in I\,\right\},\]
can easily be shown to satisfy the conditions of Definition \ref{df:subgeo}.  Thus, by Theorem \ref{thm:subgeo}, there is a natural isomorphsim $\GC_{\Free}(P) \cong \GC(T^\lor P , J^\lor_P)$ for each doctrine $P$.

\paragraph{Free binary meet completion.}  Finally, we construct the \emph{free binary meet completion} for doctrines, and observe that this is also a sub-geometric completion.  We begin by defining the free binary meet completion for preorders.

\begin{df}
{\rm 
Let $P$ be a preorder.  Consider the set $\mathscr{P}_{\rm fin}(P)\setminus \emptyset$ of non-empty, finite subsets of $P$.  We order $\mathscr{P}_{\rm fin}(P)\setminus \emptyset$ by setting $\{\,x_1,x_2, \, ... \, , x_n\,\} \leqslant \{\,y_1, y_2, \, ... \, , y_m\,\}$ if and only if each $y_i$ is greater than some $x_j$.  We define $P^\land $ as the poset obtained by identifiying two elements $\{\,x_1,x_2, \, ... \, , x_n\,\} ,\, \{\,y_1, y_2, \, ... \, , y_m\,\} $ of $ \mathscr{P}_{\rm fin}(P)\setminus \emptyset$ if
\[ \{\,x_1,x_2, \, ... \, , x_n\,\} \leqslant \{\,y_1, y_2, \, ... \, , y_m\,\} \text{ and } \{\,y_1,y_2, \, ... \, , y_m\,\} \leqslant \{\,x_1, x_2, \, ... \, , x_n\,\}.\]
We denote the equivalence class of $\{\,x_1,x_2, \, ... \, , x_n\,\}$ by $\class{x_1, x_2, \, ... \, , x_n}$.  Alternatively, $P^\land$ can be described as the poset of non-empty, finitely generated up-sets of $P$ ordered by inclusion.
}
\end{df}

It is easily checked that the meet of two elements $\class{x_1,  \, ... \, , x_n}, \class{y_1,  \, ... \, , y_m} \in P^\land$ is given by 
\[\class{x_1,  \, ... \, , x_n, \, y_1, \, ... \, , y_m},\]
and thus the poset $P^\land$ has all binary meets.  The map $\class{-}_P \colon P \to P^\land$ given by sending $x \in P$ to $\class{x} \in P^\land$ is clearly monotone.  Since every element $\class{x_1, \, ... \, , x_n} \in P^\land$ is the finite meet of the elements $\class{x_i} \in P^\land$, we obtain the desired universal property: for each preorder $P$ and any monotone map $a \colon P \to Q$, where $Q$ has binary meets, there exists a unique monotone map $a^\land \colon P^\land \to Q$ that preserves binary meets such that the triangle
\[\begin{tikzcd}
P \ar{rd}[']{a} \ar{r}{\class{-}_P} & P^\land \ar[dashed]{d}{a^\land} \\
& Q
\end{tikzcd}\]
commutes.

Thus, by the discussion above, there exists a left 2-adjoint $T^\land$ to the inclusion of ${\bf BMSLat}$-valued doctrines into ${\bf Doc}$, where ${\bf BMSLat}$ is the 2-category of binary-meet-semilattices, their homomorphisms, and natural transformations between these.  Obviously, there exists a natural transformation $\id_\Lb^\land \colon T^\land \Lb \to \Lb$, induced by the universal property of $T^\land$, which yields a $T^\land$ algebra $(\Lb,\id_\Lb^\land)$ for each geometric doctrine $\Lb \in {\bf GeomDoc}$.

By $J^\land_P$, we denote the Grothendieck topology on $\cat \rtimes T^\land P$ generated by covering families of the form
\[\{\,(c,\class{y}) \xrightarrow{\id_c} (c,\class{x_1, x_2, \, ... \, , x_n})  \mid y \in P(c), \,  y \leqslant x_1, x_2, \, ... \, , x_n\,\}.\]
There are few obstacles to concluding that the choice of topology $J^\land_P$ satisfies the conditions of Definition \ref{df:subgeo}.  Hence we obtain by Theorem \ref{thm:subgeo} that there is a natural isomorphism $\GC_{\Free}(P) \cong \GC(T^\land P,J_P^\land)$ for every doctrine $P$.






\section*{Acknowledgements}

I thank my supervisor Olivia Caramello for her support, and acknowledge the financial support of the Insubria-Huawei studentship into ``Grothendieck toposes for information and computation''.

\printbibliography

\end{document}